\theoremstyle{plain}
\newtheorem{thm}{Theorem}[section]
\newtheorem{cor}[thm]{Corollary}
\newtheorem{lem}[thm]{Lemma}
\theoremstyle{definition}
\newtheorem{defn}[thm]{Definition}
\newtheorem{rem} [thm] {Remark}
\title{Local and global existence for the Ericksen - Leslie problem in unbounded domains}
\date{}
\author{Daniele Barbera and Vladimir Georgiev}
\begin{document}
\maketitle

\begin{abstract}
    The work deals with the Ericksen-Leslie System for nematic liquid crystals on the space $\mathbb{R}^N$ with $N\ge 3$, on $\mathbb R^3_+$ and on $\Omega\subseteq\mathbb R^3$ exterior domain with sufficiently smooth boundary. The crystal orientation is described by unit vector $v$ that is a small perturbation of a fixed constant vector $\eta$. We prove through a combination of  energy method with dispersive  a priori estimates a local existence and global existence for small initial data by a contraction arguments. In particular, we obtain the following regularity of the liquid velocity $u$ and of the crystal orientation $v$  
    $$ u\in L^\infty((0,T);H^s(\Omega)),\quad \nabla u\in L^2((0,T);H^s(\Omega)), $$
    $$ \nabla v\in L^\infty((0,T);H^s(\Omega)),\quad \nabla^2v\in L^2((0,T);H^s(\Omega)) $$
    for $s>\frac{N}{2}-1$ if $\Omega=\mathbb R^N$ and $s\in\left(\frac{1}{2},1\right]$ if $\Omega=\mathbb R^3_+$ or in the exterior case, asking low regularity assumptions  on $u_0$ and $v_0$.

\end{abstract}
{Key words: Liquid crystals, Ericksen - Leslie, heat equation, Stokes equation, energy estimates}\\
{AMS Subject Classification 2010: 76A15, 35Q30, 35Q35}

\section{Introduction}
Let $N\ge 3$ and $\Omega\subseteq \mathbb R^N$ and $T>0$, then we consider the Ericksen-Leslie system
\begin{equation}\label{EL.sys.}
    \left\{\begin{array}{ll}
       (\partial_t-\Delta)u+\nabla p+u\cdot \nabla u=-{\rm Div}\left(\nabla v\odot\nabla v\right)  & (0,T)\times \Omega \\
       {\rm div}u=0 & (0,T)\times\Omega \\
       (\partial_t-\Delta)v +u\cdot \nabla v = |\nabla v|^2v & (0,T)\times\Omega \\
       |v|=1 & (0,T)\times\Omega \\
       \partial_\nu v=0,\quad u=0 & (0,T)\times \partial\Omega \\
       u(0)=u_0,\quad v(0)=v_0 & \Omega,
    \end{array}\right.     
\end{equation}
where $u,v\colon (0,T)\times \Omega\to \mathbb R^N$, $p\colon(0,T)\times\Omega\to \mathbb R$, $\nu(x)$ is the external normal vector in $x\in\partial\Omega$ and 
$$ {\rm Div}A(x)=\sum_{j=1}^N\partial_jA^j(x)\quad \forall A\colon\Omega\mapsto \mathbb R^{N^2}, $$
$$ [\nabla z\odot \nabla w]_{i,j}=\partial_iz\cdot \partial_jw\quad \forall z,w\colon \Omega\to\mathbb R^N,\quad i,j=1,\ldots, N. $$
In the paper, we consider $\Omega=\mathbb R^N,\mathbb R^N_+$ or $\Omega\subseteq\mathbb R^N$ exterior domain with sufficiently smooth boundary. 

\vspace{2mm}

The Ericksen-Leslie model was introduced in  \cite{E61} and \cite{L68} and it describes by macroscopic theory and continuum mechanics the behaviour of nematic liquid crystals. Liquid crystals are a state of matter intermediate between the liquid and the solid state. More precisely, the nematic liquid ones described in Ericksen-Leslie Model \eqref{EL.sys.} are formed by rod-like molecules with no positional order. The uniaxial structure of the particles makes the material anisotropic, like crystals. On the other hand, liquid crystals flow similarly to liquids. For this reason, the system is formed by a Navier-Stokes equation for the velocity field $u$ and the pressure $p$ coupled with a heat equation for the average direction field $v$. The rigorous derivation of the model uses the conservation laws for mass, momentum and angular momentum as well on constitutive relations given by Leslie \cite{L68}. Some further discussions on the model can be found in  \cite{Ch92}, \cite{S04}, \cite{Z21}.

\subsection{Known results}

As in the case of Navier - Stokes problem, variety of results can be divided in the following groups:

\begin{enumerate}
    \item[a)] Existence of weak solutions in $\mathbb{R}^N$ or domains;
    \item[b)] Existence and uniqueness  of local strong solution in $\mathbb{R}^N$ that can be extended to a global one provided initial data are small;
    \item[c)] Existence and uniqueness of local strong solution in domains in $\mathbb{R}^N$ that can be extended to a global one provided initial data are small.
\end{enumerate}

Without pretending to have complete description of all results on Ericksen-Leslie  model, we shall mention some of the works close to our main goal to study the point c) listed above.

The existence of weak solutions with initial data $u_0 \in L^2(\Omega),$  $d_0 \in H^1(\Omega)$
was initiated 
 in \cite{L89} and \cite{LL95} in the case of open domains in $\mathbb{R}^N$, $N=2,3.$  The existence of unique classical solution with initial data $u_0 \in H^1(\Omega),$  $d_0 \in H^2(\Omega)$ is also  established in \cite{L89}.
 The weak solutions in the whole space are studied in 
\cite{DFRSS16}.

The existence of local in time strong solution in $\mathbb{R}^N$  can be extended to a global one provided initial data are small (see \cite{Li18}, \cite{LZ16} for example).
More precisely, in these works the case of $\mathbb{R}^3$ is considered and  the initial data are taken  in $\dot{B}^{3/p-1}_{p.1}$. Note that for $p=2$ we have 
initial data in $\dot{B}^{1/2}_{2.1}$ that is a slightly smaller than classical Sobolev space $\dot{H}^{1/2}$. Local and Global existence for strong solution in $\mathbb R^2$ or $\mathbb R^3$ for $u_0\in H^s$ and $d_0\in H^{s+1}$ with $s>\frac{N}{2}-1$ can be found in \cite{JLT19} and \cite{GJ22}. We proved in \cite{BG23} the existence of local and global solutions for \eqref{EL.sys.} in $H^s$ with $s>\frac{N}{2}-1$ and $N\ge 3$ with a geometrical assumption on $v$.

The case of a bounded $C^3$ domain in $\mathbb{R}^3$ is studied  \cite{HW10} when the initial data belong to a Besov class spaces $B^{s}_{q.p}(\Omega)$
with $s = 2(1-1/p)$ and $1 \leq p,q \leq \infty$ satisfying $$ \frac{2}{p}\left( 1- \frac{3}{q}\right) \in (0,1).$$
Note that for $p=1$ the condition $q>3$ has to be required.

The maximal $L^p$ regularity estimates for abstract evolution semigroups have been applied in
\cite{HP19} for the case of non-isothermal Ericksen - Leslie mode.

Since the equation for $u$ in \eqref{EL.sys.} is similar to Navier - Stokes equation, in the case of bounded $C^3$ domains in $\mathbb{R}^3$ we can recall the classical result of  Fujita and Kato in \cite{FK64} , where they proved the well - posedness for the case of initial data $u_0 \in H^{1/2}.$

As Fujita and Kato have predicted in their work, it is natural to start with $L^2$ theory and then to move to $L^p$ setting.In fact, this is already well - done for Navier - Stokes case and  we refer  for example to  \cite{FGH19} and references there.
More precisely, the results in \cite{FGH19} allow  one to take initial data in $\dot{B}^{3/p-1}_{p, s}$ in case of bounded $C^{2,1}$ domains in $\mathbb{R}^3$  and see that  the results cover the case c) listed above (for the $L^p$ theory for the heat case see \cite{BG22}).

\subsection{Solenoidal Space in $L^2(\Omega)$}\label{subsec.sol.spaces}

Let $q\in(1,\infty)$. From \cite{So01} we recall the definition
$$ G_q(\Omega)=\left\{w\in L^q(\Omega)\mid w=\nabla \rho \:\:\text{for some}\:\:\rho\in L^1_{{\rm loc}}(\Omega)\:\:\text{with}\:\:\nabla p\in L^q(\Omega)\right\}, $$
$$ J_q(\Omega)=\overline{C^\infty_\sigma(\Omega)}^{\|\cdot\|_{L^q(\Omega)}}, $$
where 
\begin{equation}\label{def.Cinf-sigma}
    C^\infty_\sigma(\Omega)=\left\{f\in C^\infty_c(\Omega)\mid {\rm div}f=0\right\}.
\end{equation}
For our choice of $\Omega$ (see Theorem 2 and 3 at p.128 and 129 of \cite{GN18}) we have the Helmholtz Decomposition:
$$ L^q(\Omega)=J_q(\Omega)\oplus G_q(\Omega), $$
where $J_q(\Omega)$ and $G_q(\Omega)$ are orthogonal. In particular we denote $ \mathbb P_q\colon L^q(\Omega;\mathbb R^N)\to J_q(\Omega)$ the projection on $J_q(\Omega)$, which is called Helmholtz Projection and we will denote $\mathbb P=\mathbb P_2$. To be noticed that, from \cite{GN18}, $\mathbb P_p=\mathbb P_q$ in $L^p(\Omega)\cap L^q(\Omega)$ for any $p,q\in(1,\infty)$ and $\Omega\subseteq \mathbb R^N$ as in our case. For simplicity, in the following we write $\mathbb P$ without specify the index $q$.
\begin{rem}\label{rem.der.P.}
    Let $\Omega=\mathbb R^N,\mathbb R^N_+$ or an exterior domain with sufficiently smooth boundary, then $\mathbb P\colon H^1(\Omega)\to H^1(\Omega)$. Moreover, if $\Omega=\mathbb R^N$, then
    $$ \mathbb P \nabla u =\nabla \mathbb P u \quad \forall u\in H^1\left(\mathbb R^N\right). $$
    For what concerns the property of $\mathbb P\colon H^1(\Omega)\to H^1(\Omega)$, we have the decomposition
    $$ u=\mathbb P u+\nabla \pi\quad \nabla \pi\in G_2(\Omega). $$
    Thanks to Lemma III.1.2 of \cite{G11} the Helmholtz decomposition is equivalent to the weak solvability of the system
    $$ \left\{\begin{array}{ll}
       \Delta \pi={\rm div}u  & \Omega \\
       \partial_\nu \pi= \nu\cdot u  & \partial\Omega.
    \end{array}\right. $$
    Since $u\in H^1(\Omega)$, by elliptic estimates, we get appropriate apriori bounds. In particular $\nabla \pi\in H^1(\Omega)$ and therefore $\mathbb Pu\in H^1(\Omega)$.     Finally, let $\Omega=\mathbb R^N$, then for any $j=1,\ldots, N$
    $$ \partial_ju=\partial_j\mathbb Pu + \nabla \partial_j\pi. $$
    We know that $\nabla \partial_j\pi\in G_2(\mathbb R^N)$ and ${\rm div}\partial_j\mathbb Pu=0$, so 
    $$ \mathbb P\partial_j u = \partial_j\mathbb P u. $$
\end{rem}

We introduced the Helmholtz projection because we will work with low regularity initial data in classical Sobolev spaces and, for domains different from $\mathbb R^N$, it is not easy to deal with the pressure term directly. For this reason we consider the system after applying the Helmholtz Projection:
\begin{equation}\label{EL.sys-proj.}
    \left\{\begin{array}{ll}
       (\partial_t-\mathbb P\Delta)u =-\mathbb P\left[u\cdot \nabla u + {\rm Div}\left(\nabla v\odot\nabla v\right)\right]  & (0,T)\times \Omega \\
       {\rm div}u=0 & (0,T)\times\Omega \\
       (\partial_t-\Delta)v +u\cdot \nabla v = |\nabla v|^2v & (0,T)\times\Omega \\
       |v|=1 & (0,T)\times\Omega \\
       u=0, \quad \partial_\nu v=0 & (0,T)\times \partial\Omega \\
       u(0)=u_0,\quad v(0)=v_0 & \Omega.
    \end{array}\right.     
\end{equation}
We conclude this part with the following result for the pressure term from Lemma 1.4.2 at p.202 of \cite{So01}:
\begin{lem}\label{l.ex.press.}
Let $N\ge 2$, $\Omega\subseteq\mathbb R^N$ be a general domain, let $\Omega_0\subseteq\Omega$ with $\Omega_0\not=\emptyset$ bounded with $\overline{\Omega_0}\subseteq\Omega$, let $T\in(0,\infty]$, $p,q\in(1,\infty)$, let $f\in L^p((0,T);W^{-1,q}_{loc}(\Omega))$ such that
$$ \left<f,v\right>=0 \quad \forall v\in C^\infty_0((0,T);C^\infty_\sigma(\Omega)), $$
where $C^\infty_\sigma(\Omega)$ is defined in \eqref{def.Cinf-sigma} and $\left<,\right>$ is the duality product in $W^{-1,q}-W^{1,q^\prime}$, then there is a unique $\pi\in L^p((0,T);L^q_{loc}(\Omega))$ such that 
$$ \nabla \pi=f,\quad \int_{\Omega_0}\pi(t)dx=0\quad \text{for a.e.}\:\:t\in(0,T). $$
Moreover, for any $\Omega^\prime\subseteq\Omega$ bounded Lipshitz domain such that $\Omega_0\subseteq\Omega^\prime$ and $\overline{\Omega^\prime}\subseteq\Omega$, we have
$$ \|\pi\|_{L^p((0,T);L^q(\Omega^\prime))}\le C(q,\Omega,\Omega^\prime)\|f\|_{L^p((0,T);W^{-1,q}(\Omega^\prime))}. $$
\end{lem}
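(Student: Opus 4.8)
The plan is to recognize Lemma~\ref{l.ex.press.} as the time-dependent form of the classical de Rham lemma and to derive it from the stationary statement together with a measurability and integration argument in $t$. First I would reduce to a pointwise-in-time statement: testing the hypothesis against $v(t,x)=\varphi(t)w(x)$ with $\varphi\in C^\infty_0((0,T))$ and $w\in C^\infty_\sigma(\Omega)$ gives $\int_0^T\varphi(t)\langle f(t),w\rangle\,dt=0$, whence $\langle f(t),w\rangle=0$ for a.e.\ $t$; running $w$ through a countable family dense (in $W^{1,q'}$) among divergence-free test fields, one obtains a single null set outside of which $\langle f(t),w\rangle=0$ for every $w\in C^\infty_\sigma(\Omega)$. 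Fixing such a $t$, the stationary de Rham lemma then produces $\pi(t)\in L^q_{loc}(\Omega)$ with $\nabla\pi(t)=f(t)$, unique up to a locally constant function. Concretely I would exhaust $\Omega$ by bounded Lipschitz domains $\Omega_k\uparrow\Omega$ with $\Omega_0\subseteq\Omega_1$ and, on each $\Omega_k$, use that ${\rm div}\colon W^{1,q'}_0(\Omega_k)^N\to\{g\in L^{q'}(\Omega_k):\int_{\Omega_k}g\,dx=0\}$ is surjective with a bounded right inverse (Bogovskii's operator): by the closed range theorem the annihilator of $C^\infty_\sigma(\Omega_k)$ in $W^{-1,q}(\Omega_k)$ is then exactly $\{\nabla g:g\in L^q(\Omega_k)\}$, so there is $\pi_k(t)\in L^q(\Omega_k)$ with $\nabla\pi_k(t)=f(t)$ on $\Omega_k$. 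Normalizing by $\int_{\Omega_0}\pi_k(t)\,dx=0$ makes the family compatible, since for $k<l$ the difference $\pi_l(t)-\pi_k(t)$ has vanishing gradient on $\Omega_k$ (hence is constant on each connected component, and the normalization kills it); gluing yields a single $\pi(t)\in L^q_{loc}(\Omega)$ with $\nabla\pi(t)=f(t)$ and $\int_{\Omega_0}\pi(t)\,dx=0$, uniquely determined by these two properties.

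Next I would prove the quantitative bound and lift it in $t$. On a bounded Lipschitz domain $\Omega'$ with $\Omega_0\subseteq\Omega'$ and $\overline{\Omega'}\subseteq\Omega$, the bounded right inverse of the divergence yields the Ne\v{c}as inequality
$$
\Bigl\|\pi(t)-\tfrac{1}{|\Omega'|}\!\int_{\Omega'}\!\pi(t)\,dx\Bigr\|_{L^q(\Omega')}
\le C(q,\Omega')\,\|f(t)\|_{W^{-1,q}(\Omega')},
$$
and since $\int_{\Omega_0}\pi(t)\,dx=0$ with $\Omega_0\subseteq\Omega'$, H\"older's inequality bounds the mean $\bigl|\tfrac{1}{|\Omega'|}\int_{\Omega'}\pi(t)\,dx\bigr|$ by a constant (depending on $|\Omega_0|,|\Omega'|,q$) times the left-hand side, so adding the mean back in gives $\|\pi(t)\|_{L^q(\Omega')}\le C(q,\Omega,\Omega')\,\|f(t)\|_{W^{-1,q}(\Omega')}$. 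The solution map $f(t)\mapsto\pi(t)|_{\Omega'}$ is linear, bounded $W^{-1,q}(\Omega')\to L^q(\Omega')$ by this estimate, and is the composition of the (time-independent) Bogovskii operator with an affine normalization, hence strongly measurable in $t$ whenever $t\mapsto f(t)$ is; therefore $t\mapsto\pi(t)$ is measurable into $L^q_{loc}(\Omega)$. Raising the pointwise estimate to the power $p$ and integrating over $(0,T)$ then yields $\pi\in L^p((0,T);L^q_{loc}(\Omega))$ together with the stated norm bound on each $\Omega'$. Uniqueness is immediate: two solutions differ, for a.e.\ $t$, by a function of vanishing gradient on $\Omega$, i.e.\ locally constant, which the common normalization on $\Omega_0$ forces to be zero.

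The crux of the whole argument is the stationary ingredient --- surjectivity of the divergence on bounded Lipschitz domains with a bounded right inverse, equivalently the Ne\v{c}as/inf-sup inequality --- since everything else is functional analysis and measure theory layered on top of it. This is also precisely where the regularity of $\partial\Omega$ and the shape of the exhausting subdomains enter, and it explains why one cannot take $\Omega'=\Omega$ for unbounded $\Omega$, forcing the localization to relatively compact Lipschitz subdomains. The complete proof, including the construction and continuity of Bogovskii's operator, is carried out in \cite{So01}.
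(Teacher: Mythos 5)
The paper offers no proof of this lemma: it is quoted directly from Lemma 1.4.2 of \cite{So01}, and your argument is precisely the standard de Rham/Bogovskii proof carried out there (pointwise-in-time reduction via a countable family of solenoidal test fields, the closed range theorem applied to the bounded right inverse of the divergence on bounded Lipschitz subdomains, gluing along an exhaustion with the normalization on $\Omega_0$, the Ne\v{c}as estimate, and measurability of the time-independent solution operator), so it is correct and matches the cited source. The one detail worth making explicit is that the exhausting subdomains should be taken connected and containing $\Omega_0$ --- the difference of two local pressures is only locally constant, and the single normalization on $\Omega_0$ fixes the constant only on the component meeting $\Omega_0$ --- which is possible because $\Omega$ is a domain and is also what underlies the uniqueness assertion.
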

We recall in the previous result that for any $U\subseteq\mathbb R^N$ open set
$$ W^{-1,q}(U)=\left(W^{1,q^\prime}_0(U)\right)^\prime, $$
where
$$ W^{1,q^\prime}_0(U)=\overline{C^\infty_c(U)}^{\|\cdot \|_{W^{1,q^\prime}(U)}}. $$

\subsection{Main results}

The aim of the paper is to prove local and global existence for the Ericksen-Leslie problem \eqref{EL.sys.} and \eqref{EL.sys-proj.} in $\Omega=\mathbb R^N,\mathbb R^N_+$ or $\Omega\subseteq\mathbb R^N$ exterior domain with sufficiently smooth boundary, generalizing what we did in \cite{BG23}. We start from the following assumption:
\begin{equation}\label{ass.eta}
    v(t,x)=\eta+d(t,x)\quad \text{for a.e.}\:\:(t,x)\in(0,T)\times\Omega,
\end{equation} 
with $\eta\in\mathbb R^N$ fixed. Given \eqref{ass.eta} we can rewrite the problem \eqref{EL.sys-proj.} as a system for the functions $(u,d)$:
\begin{equation}\label{EL.sys.eta-ass.}
    \left\{\begin{array}{ll}
       (\partial_t-\mathbb P\Delta)u= - \mathbb P\left(u\cdot \nabla u + {\rm Div}\left(\nabla d\odot\nabla d\right)\right)  & \mathbb{R}_+\times \Omega \\
       {\rm div}u=0 & \mathbb{R}_+\times\Omega \\
       (\partial_t-\Delta)d +u\cdot \nabla d = |\nabla d|^2(\eta+d) & \mathbb{R}_+\times\Omega \\
       |\eta+d|=1 & \mathbb{R}_+\times\Omega \\
       u=0, \quad \partial_\nu d=0 & \mathbb R_+\times\partial\Omega \\
       u(0)=u_0,\quad d(0)=d_0 & \Omega. 
    \end{array}\right.     
\end{equation}
We are almost ready to state the main results. Before, we need to introduce some spaces: 
\begin{equation}\label{def.X}
    X^s_T(\Omega)\coloneqq L^\infty\left((0,T);H^s(\Omega)\right) \cap L^2\left((0,T);H^{s+1}(\Omega)\right),
 \end{equation}
for $\Omega\subseteq\mathbb R^N$ with $N\ge 3$, $s\ge 0$ and $T\in(0,\infty]$. For $T=\infty$ we write $X^s(\Omega)$. Our strategy is to apply a contraction argument on these spaces. The initial conditions will belong to the following spaces:
\begin{defn}\label{def.HA}\hfill\\
Let $N\ge 3$, $\Omega\subseteq\mathbb R^N$, $s\ge 0$, let $A\colon D(A)\subseteq L^2(\Omega)\to L^2(\Omega)$ be a negative, self-adjoint operator, then we denote with $H^s_A(\Omega)$ the space $D((-A)^{s/2})$ endowed with the norm
$$ \|f\|_{H^s_A(\Omega)}\coloneqq \left\|(1-A)^{s/2}f\right\|_{L^2(\Omega)}. $$
\end{defn}
In particular, we consider $A=\Delta_D,\Delta_N,\mathbb P\Delta_D$, which are respectively the Laplace operator with Dirichlet conditions, the Laplace operator with Neumann conditions and the Stokes operator with the Dirichlet conditions.

\vspace{2mm}

As we mentioned above, we consider $\Omega=\mathbb R^N,\mathbb R^N_+$ and $\Omega\subseteq \mathbb R^N$ exterior domain with sufficiently smooth boundary. Let us start with $\Omega=\mathbb R^N$:
\begin{thm}\label{t.loc.ex.RN}
    Let $N\ge 3$, $s>\frac{N}{2}-1$, let $\eta\in\mathbb R^N$, $u_0\colon\mathbb R^N\to \mathbb R^N$ and $v_0\colon\mathbb R^N\to S^{N-1}$ with
    $$ u_0\in H^s\left(\mathbb R^N;\mathbb R^N\right),\quad v_0=\eta + d_0,\quad \text{where}\:\: d_0\in H^{s+1}\left(\mathbb R^N;\mathbb R^N\right), $$
    then, for any $R>0$ such that
    $$ \|u_0\|_{H^s(\mathbb R^N)}+\|d_0\|_{H^{s+1}(\mathbb R^N)} \le R, $$
    we can find $T_0=T_0(R)$ such that, for any $T\in(0,T_0)$ there is a unique solution $(u,p,v)$, up to additive functions $\rho(t)$ in the pressure term, for \eqref{EL.sys.} in $(0,T)$ such that
    $$ u\in X^s_T\left(\mathbb R^N\right),\quad v\in X^{s+1}_T\left(\mathbb R^N\right), \quad p\in L^2\left((0,T);L^2_{loc}\left(\mathbb R^N\right)\right). $$
\end{thm}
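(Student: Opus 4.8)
The plan is to set up a fixed-point (contraction) argument in the product space $X^s_T(\mathbb R^N)\times X^{s+1}_T(\mathbb R^N)$ for the unknowns $(u,d)$ solving the projected system \eqref{EL.sys.eta-ass.}, recovering the pressure $p$ at the end via Lemma~\ref{l.ex.press.}, and then checking that the constraint $|v|=1$ (i.e.\ $|\eta+d|=1$) propagates from the initial data. The natural iteration map $\Phi$ sends a pair $(\tilde u,\tilde d)$ to the solution $(u,d)$ of the linear decoupled problems
\begin{equation*}
(\partial_t-\mathbb P\Delta)u = -\mathbb P\bigl(\tilde u\cdot\nabla\tilde u + {\rm Div}(\nabla\tilde d\odot\nabla\tilde d)\bigr),\qquad
(\partial_t-\Delta)d = -\tilde u\cdot\nabla\tilde d + |\nabla\tilde d|^2(\eta+\tilde d),
\end{equation*}
with data $u_0,d_0$ and homogeneous boundary/initial conditions. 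Since on $\mathbb R^N$ the Stokes operator $\mathbb P\Delta$ and the Helmholtz projection commute with derivatives (Remark~\ref{rem.der.P.}), $\mathbb P\Delta$ generates the heat semigroup on $J_2$ and one has full maximal $L^2$ regularity; the linear theory therefore gives, for $u_0\in H^s$ and forcing $F\in L^2((0,T);H^{s-1})$, a unique $u\in X^s_T$, and similarly for $d$ at regularity $s+1$ starting from $d_0\in H^{s+1}$ and forcing in $L^2((0,T);H^s)$.

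The core of the argument is the nonlinear estimates showing that $\Phi$ maps a ball of $X^s_T\times X^{s+1}_T$ into itself and is a contraction there, for $T$ small depending on $R$. One uses the embedding $X^s_T\hookrightarrow L^2((0,T);H^{s+1})\cap L^\infty((0,T);H^s)$ together with the interpolation inequality to control mixed space–time norms, and the product/commutator (Kato–Ponce) estimates in Sobolev spaces. Concretely one needs, for $s>\tfrac N2-1$,
\begin{equation*}
\|\mathbb P(\tilde u\cdot\nabla\tilde u)\|_{L^2((0,T);H^{s-1})}\lesssim \|\tilde u\|_{X^s_T}^2,\qquad
\|{\rm Div}(\nabla\tilde d\odot\nabla\tilde d)\|_{L^2((0,T);H^{s-1})}\lesssim \|\tilde d\|_{X^{s+1}_T}^2,
\end{equation*}
\begin{equation*}
\|\tilde u\cdot\nabla\tilde d\|_{L^2((0,T);H^{s})}\lesssim \|\tilde u\|_{X^s_T}\|\tilde d\|_{X^{s+1}_T},\qquad
\bigl\||\nabla\tilde d|^2(\eta+\tilde d)\bigr\|_{L^2((0,T);H^{s})}\lesssim |\eta|\,\|\tilde d\|_{X^{s+1}_T}^2 + \|\tilde d\|_{X^{s+1}_T}^3,
\end{equation*}
each of these — after using Hölder in time — coming with a positive power of $T$, which is what makes the self-map and contraction estimates close on a ball of radius $\sim R$ once $T=T_0(R)$ is small. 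The cubic term $|\nabla\tilde d|^2\tilde d$ requires $H^{s+1}\hookrightarrow L^\infty$, i.e.\ $s+1>\tfrac N2$, which holds since $s>\tfrac N2-1$; this threshold is exactly why the hypothesis is sharp.

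Once the fixed point $(u,d)\in X^s_T\times X^{s+1}_T$ is obtained, I would (i) recover the pressure: the quantity $F=-u\cdot\nabla u-{\rm Div}(\nabla v\odot\nabla v)-(\partial_t-\Delta)u$ lies in $L^2((0,T);H^{-1}_{loc})$ and annihilates divergence-free test fields by construction, so Lemma~\ref{l.ex.press.} furnishes $p\in L^2((0,T);L^2_{loc}(\mathbb R^N))$ unique up to $\rho(t)$ with $\nabla p=F$, giving a genuine solution of \eqref{EL.sys.}; (ii) verify the sphere constraint: writing $\varphi=|v|^2-1=|\eta+d|^2-1$ and using the $d$-equation one finds $\varphi$ solves a linear parabolic equation $(\partial_t-\Delta)\varphi + u\cdot\nabla\varphi = 2|\nabla v|^2\varphi$ with $\varphi(0)=0$, whence $\varphi\equiv0$ by a standard energy/uniqueness argument — here one must check the forcing coefficient $|\nabla v|^2\in L^2((0,T);L^\infty)$ or is otherwise admissible, which again follows from $H^{s+1}\hookrightarrow W^{1,\infty}$-type bounds on $d$; (iii) prove uniqueness of $(u,p,v)$ in the stated class by the same difference estimates used for the contraction. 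The step I expect to be the genuine obstacle is not any single estimate but the bookkeeping that makes the two equations couple at compatible regularities — the Navier–Stokes block naturally wants $u$ at regularity $s$ with forcing in $H^{s-1}$, while the term ${\rm Div}(\nabla d\odot\nabla d)$ is only in $H^{s-1}$ precisely because $d\in X^{s+1}_T$; verifying that all nonlinear terms land in the right space with a gain in $T$, uniformly, and that on $\mathbb R^N$ one may freely move $\mathbb P$ past $\nabla$ (Remark~\ref{rem.der.P.}) so that the Stokes estimates reduce to heat estimates, is the technical heart of the proof.
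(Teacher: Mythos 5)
Your proposal is correct and follows essentially the same route as the paper: a contraction in $X^s_T(\mathbb R^N)\times X^{s+1}_T(\mathbb R^N)$ for the projected system, bilinear/trilinear product estimates carrying a positive power of $T$ (which is exactly where $s>\frac N2-1$ enters), recovery of the pressure via Lemma~\ref{l.ex.press.}, propagation of $|\eta+d|=1$ through the parabolic equation for $|\eta+d|^2-1$, and uniqueness by the same difference estimates. The only cosmetic difference is that you place the Stokes forcing in $L^2((0,T);H^{s-1})$, whereas the paper keeps it in divergence form ${\rm Div}F$ with $F=-u\otimes u-\nabla d\odot\nabla d\in L^2((0,T);H^s)$ and integrates by parts in the energy estimate; on $\mathbb R^N$, where $\mathbb P$ commutes with derivatives, these are equivalent.
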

\begin{thm}\label{t.gl.ex.RN}
    Let $N\ge 3$, $s>\frac{N}{2}-1$, let $\eta\in\mathbb R^N$, $u_0\colon\mathbb R^N\to\mathbb R^N$ and $v_0\colon\mathbb R^N\to S^{N-1}$ with
    $$ u_0\in H^s\left(\mathbb R^N;\mathbb R^N\right)\cap L^1\left(\mathbb R^N;\mathbb R^N\right),\quad v_0=\eta + d_0, \quad \text{where}\:\:d_0 \in H^{s+1}\left(\mathbb R^N;\mathbb R^N\right)\cap L^1\left(\mathbb R^N;\mathbb R^N\right), $$
    then we can find $\varepsilon_0>0$ such that, for any $\varepsilon\in(0,\varepsilon_0)$ with
    $$ \|u_0\|_{H^s\cap L^1(\mathbb R^N)} + \| v_0-\eta\|_{H^{s+1}\cap L^1(\mathbb R^N)}\le \varepsilon, $$
    we can find a solution $(u,p,v)$, unique up to additive functions $\rho(t)$ in the pressure term, for \eqref{EL.sys.} such that
    $$ u\in X^s\left(\mathbb R^N\right),\quad v\in X^{s+1}\left(\mathbb R^N\right), \quad p\in L^2\left(\mathbb R_+;L^2_{loc}\left(\mathbb R^N\right)\right). $$
    Moreover, for any $k\in\mathbb N$ such that $s-k>\frac{N}{2}-1$, it holds
    $$ \|\nabla^j u(t)\|_{L^\infty(\mathbb R^N)}\le C t^{-\frac{N}{2}-\frac{j}{2}}\quad t\ge 1, \quad j=0,\ldots, k, $$
    $$ \|\nabla^j(v(t)-\eta)\|_{L^\infty(\mathbb R^N)}\le Ct^{-\frac{N}{2}-\frac{k}{2}}\quad t\ge 1,\quad j=0,\ldots, k+1. $$
\end{thm}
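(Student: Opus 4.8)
The plan is to combine an energy-type estimate with dispersive (heat-kernel) estimates and to run a contraction on the scale of spaces $X^{s}(\mathbb R^N)\times X^{s+1}(\mathbb R^N)$ enriched with decay-in-time weights, working on the projected system \eqref{EL.sys.eta-ass.}. Since $\Omega=\mathbb R^N$, Remark \ref{rem.der.P.} gives $\mathbb P\Delta=\Delta\mathbb P$ and $\mathbb P\partial_j=\partial_j\mathbb P$, so the Stokes semigroup reduces to the heat semigroup $e^{t\Delta}$ on solenoidal fields and $u\cdot\nabla u={\rm Div}(u\otimes u)$ since ${\rm div}\,u=0$; writing $v=\eta+d$ the relevant mild formulation is
\[ u(t)=e^{t\Delta}u_0-\int_0^t e^{(t-\tau)\Delta}\,\mathbb P\,{\rm Div}\bigl(u\otimes u+\nabla d\odot\nabla d\bigr)(\tau)\,d\tau, \]
\[ d(t)=e^{t\Delta}d_0-\int_0^t e^{(t-\tau)\Delta}\bigl(u\cdot\nabla d-|\nabla d|^2(\eta+d)\bigr)(\tau)\,d\tau. \]
Two facts already force the hypotheses: $e^{t\Delta}u_0\in X^{s}(\mathbb R^N)$ (with the $L^2$-in-time, all of $(0,\infty)$, part) requires the low-frequency integrability of $|\xi|^{-2}|\widehat{u_0}|^2$, which holds for $N\ge3$ and $u_0\in L^1$, and similarly for $d_0$. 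I would then organize the argument in three layers: (i) a global energy estimate making the nonlinear map preserve a small ball in $X^{s}\times X^{s+1}$; (ii) a bootstrap on the heat bounds $\|\nabla^j e^{t\Delta}g\|_{L^r}\lesssim t^{-\frac{N}{2}(1-1/r)-j/2}\|g\|_{L^1}$ producing the decay weights; (iii) recovery of the pressure from Lemma \ref{l.ex.press.}.

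For (i), set $Y(t)=\|u(t)\|_{H^s}^2+\|\nabla d(t)\|_{H^s}^2$ and $D(t)=\|\nabla u(t)\|_{H^s}^2+\|\nabla^2 d(t)\|_{H^s}^2$; testing the $u$-equation against $u$ and $(-\Delta)^s u$, the $d$-equation against $-\Delta d$ and $(-\Delta)^{s+1}d$, integrating by parts to move one derivative off the quadratic terms, and using the product estimates of Fujita--Kato type available at $s>\frac{N}{2}-1$ (so that $u,\nabla d\in L^\infty_t\dot H^{N/2-1}$ by interpolation, with norm $\lesssim\varepsilon$) should yield $\frac{d}{dt}Y+D\le C\,Y^{1/2}D$. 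If $\|u_0\|_{H^s}^2+\|\nabla d_0\|_{H^s}^2\le\varepsilon^2$ with $C\varepsilon\le\frac{1}{2}$, a continuity argument then gives $\sup_{t\ge0}Y(t)+\int_0^\infty D\,d\tau\le2\varepsilon^2$, which, combined with Theorem \ref{t.loc.ex.RN} to initialize and continue the solution, produces $u\in X^{s}(\mathbb R^N)$ and $d\in X^{s+1}(\mathbb R^N)$ (the integrability in time of $\|u(t)\|_{L^2}$ needed for the $L^2_tH^{s+1}$ part over all of $(0,\infty)$ ultimately coming from layer (ii) and $N\ge3$). The constraint $|v|=1$ is then restored a posteriori: $\phi=|v|^2-1$ solves $(\partial_t-\Delta+u\cdot\nabla)\phi=|\nabla v|^2\phi$ with $\phi(0)=0$, and since $\nabla v\in L^\infty_tH^s\hookrightarrow L^\infty_tL^N$ (using $s>\frac{N}{2}-1$) with small norm, an $L^2$ estimate forces $\phi\equiv0$.

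For (ii), fix $t\ge1$ and bootstrap in two steps. First, inserting the uniform bound of (i) into the mild formulation and using $\|e^{t\Delta}g\|_{L^2}\lesssim t^{-N/4}\|g\|_{L^1}$ together with parabolic smoothing, I expect the intermediate $L^2$-decay $\|u(t)\|_{L^2}+\|\nabla d(t)\|_{L^2}\lesssim\varepsilon(1+t)^{-N/4}$: the nonlinear sources have $L^1$-norm $\lesssim\|(u,\nabla d)(\tau)\|_{L^2}^2$ (plus lower order), and the splitting $\int_0^{t/2}+\int_{t/2}^t$ closes because $\int_1^\infty(1+\tau)^{-N/2}\,d\tau<\infty$ precisely for $N\ge3$. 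Second, feeding this $L^2$-decay into the far part via $\|\nabla^j e^{t\Delta}g\|_{L^\infty}\lesssim t^{-N/2-j/2}\|g\|_{L^1}$ and into the near part via smoothing with $L^4$-norms interpolated between the $L^2$-decay and the $L^\infty$-decay being closed, I obtain $\|\nabla^j u(t)\|_{L^\infty}\lesssim t^{-N/2-j/2}$; each additional spatial derivative is charged against the excess regularity $s-(\frac{N}{2}-1)$, so the scheme runs for $j=0,\dots,k$ whenever $s-k>\frac{N}{2}-1$, and one further derivative is available for $d$ since $d_0\in H^{s+1}$, which yields the stated bounds on $\nabla^j(v-\eta)$.

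For (iii), once $u$ solves the projected equation, $f:=\Delta u-\partial_t u-u\cdot\nabla u-{\rm Div}(\nabla v\odot\nabla v)$ has vanishing solenoidal part and lies in $L^2((0,T);W^{-1,q}_{loc}(\mathbb R^N))$, so Lemma \ref{l.ex.press.} furnishes $\pi\in L^2((0,T);L^2_{loc}(\mathbb R^N))$ with $\nabla\pi=f$, unique up to an additive $\rho(t)$, and $p=-\pi$ completes a solution of \eqref{EL.sys.}; uniqueness of $(u,v)$ follows from the energy estimate of (i) applied to the difference of two solutions and Gronwall. The step I expect to be the main obstacle is (ii): the energy bound of (i) does not decay, so by itself it cannot close the $L^\infty$-decay in the far part $\int_0^{t/2}$ of the Duhamel integral; one is forced to first secure the $L^2$-rate $(1+t)^{-N/4}$ and to interlace it carefully with the energy bound and with the interpolation in the near part $\int_{t/2}^t$, all while keeping under control the scaling-critical coupling term ${\rm Div}(\nabla d\odot\nabla d)$ at the regularity threshold $s>\frac{N}{2}-1$, where the available product estimates are tightest.
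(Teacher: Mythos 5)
Your proposal is correct in outline but follows a genuinely different route from the paper. The paper runs a \emph{single} contraction in the weighted space $Y=X^s_k(\mathbb R^N)\times X^{s+1}_{k+1}(\mathbb R^N)$, whose norm already contains the decay weights $\sup_{t\ge 1}t^{\frac N2+\frac j2}\|\nabla^j\cdot\|_{L^\infty}$ together with a near-origin weight $t^{\alpha_{s,k}}$; existence, the $X^s$-bounds and the $L^\infty$ decay then all come out of one fixed point (Lemmas \ref{l.dec.1.RN}--\ref{l.dec.3.RN}), every nonlinearity being treated perturbatively via the multilinear estimates \eqref{bil.e.5}--\eqref{tril.e.4}, with no use of the energy cancellation between $-\mathrm{Div}(\nabla d\odot\nabla d)\cdot u$ and $(u\cdot\nabla d)\cdot\Delta d$. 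You instead separate existence (energy/continuity argument exploiting that cancellation) from decay (a posteriori bootstrap through the intermediate rate $\|(u,\nabla d)(t)\|_{L^2}\lesssim\varepsilon(1+t)^{-N/4}$, then the $L^1\to L^\infty$ heat bounds on the split Duhamel integral). Both schemes work; yours buys a cleaner identification of where $N\ge3$ and $u_0,d_0\in L^1$ enter and avoids the paper's near-$t=0$ weight $t^{\alpha_{s,k}}$ (which the paper needs only because its Duhamel integrals of $L^\infty$-norms pass through $\tau=0$ inside the contraction), while the paper's scheme avoids any commutator/cancellation machinery and delivers uniqueness and decay simultaneously. Your diagnosis of the main obstacle --- that the non-decaying energy bound cannot by itself close the far part $\int_0^{t/2}$, forcing the intermediate $L^2$ rate --- is exactly the difficulty the paper resolves by the $I_1+I_2$ splitting in Lemma \ref{l.dec.2.RN}.

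One step of your layer (i) is stated too strongly and should be repaired: at regularity $s\in\left(\frac N2-1,\frac N2\right)$ the cubic term $|\nabla d|^2(\eta+d)$ does \emph{not} yield the pointwise inequality $\frac{d}{dt}Y+D\le CY^{1/2}D$. The available product/interpolation estimates give, pointwise in time, a bound of the form $Y^{(1+\theta)/2}D^{1-\theta/2}$ with $\theta=s+1-\frac N2\in(0,1)$, and Young's inequality then leaves an unabsorbed superlinear term $CY^{(1+\theta)/\theta}$ that does not close globally on $\mathbb R_+$. The fix is to integrate in time first and distribute the $L^\infty_t$ and $L^2_t$ exponents over the two factors, i.e.\ use $\||\nabla d|^2\|_{L^2(\mathbb R_+;H^s)}\lesssim\|\nabla d\|_{L^\infty(\mathbb R_+;H^s)}\|\nabla d\|_{L^2(\mathbb R_+;H^{s+1})}$ as in Lemma \ref{l.Leib.2} and estimate \eqref{bil.e.8}; the continuity argument then closes in the integrated form $\sup_{[0,T]}Y+\int_0^TD\le Y(0)+C\bigl(\sup_{[0,T]}Y\bigr)^{1/2}\int_0^TD+\dots$. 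With that correction, and Lemma \ref{l.ex.press.} for the pressure and the argument of Lemma \ref{l.un.gl.} for restoring $|v|=1$ (both as you propose), your plan is sound.
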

With respect with the works \cite{JLT19} and \cite{GJ22}, these results generalize the dimension $N\ge 3$ and prove a decay in time of the solutions in $\mathbb R_+$. 

Let us state now the results for the half-plane and the exterior cases:
\begin{thm}\label{t.loc.ex.dom.}
    Let $s\in\left(\frac{1}{2},1\right]$, let $\Omega=\mathbb R^3_+$ or $\Omega\subseteq \mathbb R^3$ be an exterior domain with sufficiently smooth boundary, let $\eta\in\mathbb R^3$, $u_0\colon\Omega\to\mathbb R^3$ and $v_0\colon\Omega\to S^2$ with
    $$ u_0\in H^s_{\mathbb P\Delta_D}\left(\Omega;\mathbb R^3\right),\quad v_0=\eta + d_0,\quad \text{where}\:\:d_0\in H^{s+1}_{\Delta_N}\left(\Omega;\mathbb R^{3}\right), $$
    then, for any $R>0$ such that
    $$ \|u_0\|_{H^s(\Omega)}+\|v_0-\eta\|_{H^{s+1}(\Omega)} \le R, $$
    we can find $T_0=T_0(R)$ such that, for any $T\in(0,T_0)$ there is a solution $(u,p,v)$, unique up to additive functions $\rho (t)$ in the pressure term, for \eqref{EL.sys.} in $(0,T)$ with
    $$ u\in X^s_T(\Omega),\quad v\in X^{s+1}_T(\Omega),\quad p\in L^2\left(L^2(0,T);L^2_{loc}(\Omega)\right). $$
\end{thm}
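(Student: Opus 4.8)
The plan is to solve the $(u,d)$-system \eqref{EL.sys.eta-ass.} by a contraction argument in $X^s_T(\Omega)\times X^{s+1}_T(\Omega)$, along the same lines as Theorems~\ref{t.loc.ex.RN}--\ref{t.gl.ex.RN}, the new feature being that on $\mathbb R^3_+$ and on exterior domains the Helmholtz projection does not commute with the Laplacian (cf.\ Remark~\ref{rem.der.P.}), so one must work with the Stokes semigroup $e^{t\mathbb P\Delta_D}$ and the Neumann heat semigroup $e^{t\Delta_N}$ rather than reduce to the scalar heat flow. Setting $d_0=v_0-\eta$ (so that $|v_0|\equiv1$), I would look for a fixed point of $\Phi=(\Phi_1,\Phi_2)$,
\begin{align*}
\Phi_1(u,d)(t)&=e^{t\mathbb P\Delta_D}u_0-\int_0^te^{(t-\tau)\mathbb P\Delta_D}\,\mathbb P\bigl[u\cdot\nabla u+{\rm Div}(\nabla d\odot\nabla d)\bigr](\tau)\,d\tau,\\
\Phi_2(u,d)(t)&=e^{t\Delta_N}d_0+\int_0^te^{(t-\tau)\Delta_N}\bigl[-u\cdot\nabla d+|\nabla d|^2(\eta+d)\bigr](\tau)\,d\tau,
\end{align*}
in a ball $B_M\subseteq X^s_T(\Omega)\times X^{s+1}_T(\Omega)$ of radius $M\simeq R$, and then pick $T_0=T_0(R)$ so small that $\Phi$ becomes a $\tfrac12$-contraction of $B_M$; the fixed point provides $v=\eta+d$ and, after recovering the pressure, a solution of \eqref{EL.sys.}.

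Two families of estimates must be assembled. The linear ones: by Definition~\ref{def.HA} the semigroups $e^{t\mathbb P\Delta_D}$ and $e^{t\Delta_N}$ obey the smoothing bounds $\|e^{tA}f\|_{H^{\sigma'}_A}\lesssim(1+t^{-(\sigma'-\sigma)/2})\|f\|_{H^\sigma_A}$ for $\sigma\le\sigma'$, together with maximal $L^2$-regularity; to convert these into estimates on the scale $X^s_T,X^{s+1}_T$ one needs the identification of $D((-\mathbb P\Delta_D)^{\sigma/2})$ and $D((-\Delta_N)^{\sigma/2})$ with the usual spaces $H^\sigma(\Omega)$ (on the solenoidal, resp.\ full, subspace), and this is the point where the hypothesis $s\in(\tfrac12,1]$ is used: $s>\tfrac12$ keeps us above the trace threshold encoded by the Dirichlet/solenoidal condition, while $s+1\le2$ keeps us below the next compatibility threshold of the Neumann fractional powers. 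The bilinear ones, which have to be done in the non-algebra regime $s<\tfrac N2=\tfrac32$: transporting the Sobolev product laws from $\mathbb R^3$ to $\Omega$ through an extension operator, one bounds $u\otimes u$, $\nabla d\odot\nabla d$, $u\cdot\nabla d$ and $|\nabla d|^2(\eta+d)$ in $H^{2s-3/2}$ by products of the $X$-norms of $u$ and $d$ (for the last two using that $d,\nabla d$, of regularity $s+1>\tfrac32$, act as Sobolev multipliers). Feeding these into the Duhamel integrals, the extra ${\rm Div}$ in front of the two stress terms costs one more derivative and the parabolic smoothing produces the time-singular kernels $(t-\tau)^{-(5/2-s)/2}$, whose exponent lies in $[\tfrac34,1)$ exactly when $s\in(\tfrac12,1]$; hence the integrals converge and, after a H\"older step in time, yield a positive power $T^{\varepsilon}$ with $\varepsilon=\varepsilon(s)>0$. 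This gives $\|\Phi(u,d)\|\le C_0R+C_1T^{\varepsilon}\|(u,d)\|^2$ and a matching Lipschitz estimate, so the contraction closes for $T\le T_0(R)$ and produces a unique $(u,d)\in B_M$; moreover ${\rm div}\,u=0$, $u=0$ on $\partial\Omega$ and $\partial_\nu d=0$ on $\partial\Omega$ are automatic from $\mathbb P$ and from $(u,d)(t)\in D((-\mathbb P\Delta_D)^{s/2})\times D((-\Delta_N)^{(s+1)/2})$.

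It then remains to go back to \eqref{EL.sys.}. Since the constraint $|v|=1$ is not among the equations actually solved, it must be propagated: with $w=|v|^2-1$, a direct computation from the $d$-equation yields
\begin{equation*}
(\partial_t-\Delta)w+u\cdot\nabla w=2|\nabla v|^2\,w\quad\text{in }(0,T)\times\Omega,\qquad\partial_\nu w=0\ \text{on }\partial\Omega,\qquad w(0)=|v_0|^2-1=0,
\end{equation*}
and, since $u\in X^s_T$ and $\nabla v\in X^s_T$ with $s>\tfrac12$ provide enough integrability of $u$ and of $|\nabla v|^2$, an energy estimate combined with Gr\"onwall forces $w\equiv0$, i.e.\ $|v|\equiv1$. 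For the pressure, the distribution $f:=\partial_tu-\Delta u+u\cdot\nabla u+{\rm Div}(\nabla v\odot\nabla v)$ belongs to $L^2((0,T);W^{-1,q}_{loc}(\Omega))$ and, because $u$ solves the $\mathbb P$-projected equation, annihilates every $\phi\in C^\infty_0((0,T);C^\infty_\sigma(\Omega))$; Lemma~\ref{l.ex.press.} then produces $p\in L^2((0,T);L^2_{loc}(\Omega))$ with $\nabla p=-f$, unique up to an additive $\rho(t)$, so that $(u,p,v)$ solves \eqref{EL.sys.}. Uniqueness of $(u,v)$ follows by running the construction in reverse: any solution of \eqref{EL.sys.} with the stated regularity gives, after applying $\mathbb P$ and putting $d=v-\eta$, a fixed point of $\Phi$ lying, on a possibly shorter interval, in $B_M$, hence coinciding with the one constructed above.

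The step I expect to be the main obstacle is the pair of points singled out above: setting up the linear smoothing and maximal-regularity estimates in the $H^\sigma_A$-scale for the Stokes operator with Dirichlet conditions on the half-space and on exterior domains, and in particular identifying its fractional-power domains with the usual Sobolev spaces in the subcritical range — which is exactly what forces $s\in(\tfrac12,1]$ — and then closing the bilinear estimates without an algebra at our disposal, where the balance between the Sobolev deficit $\tfrac52-s$ and the parabolic gain is once more governed by $s>\tfrac12$. By contrast, the propagation of $|v|=1$ and the recovery of the pressure, though they call for some care with the boundary terms, are fairly routine.
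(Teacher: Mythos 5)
Your overall architecture --- a fixed point for $(u,d)$ in $X^s_T(\Omega)\times X^{s+1}_T(\Omega)$ built from the Duhamel formulas for $e^{t\mathbb P\Delta_D}$ and $e^{t\Delta_N}$, propagation of $|v|=1$ through the equation satisfied by $|v|^2-1$, recovery of $p$ from Lemma \ref{l.ex.press.}, and uniqueness by restarting on subintervals --- coincides with the paper's proof (Section \ref{sec.loc.ex.}, Steps 1--4, together with Lemma \ref{l.un.loc.}). Where you genuinely diverge is in how the linear estimates are produced. The paper never uses singular-in-time semigroup smoothing nor negative-order spaces: the $X^s_T$ bounds come from an energy method applied to regularized data (Lemmas \ref{l.sm.es.hom.1}--\ref{l.sm.es.inh.2}), interpolated between the $s=0$ and $s=1$ endpoint identities, which is why the forcing is measured in $L^{\frac{2}{2-s}}((0,T);L^2(\Omega))$; the whole nonlinearity, including $u\cdot\nabla u$ and ${\rm Div}(\nabla d\odot\nabla d)\sim\nabla d\cdot\nabla^2 d$, is estimated in $L^2$ \emph{in space} (estimates \eqref{bil.e.3}, \eqref{bil.e.4}, \eqref{tril.e.2}, all using $s>\frac12$), and the extra derivative needed for $d\in X^{s+1}_T(\Omega)$ is obtained not from parabolic smoothing but from the commutations $\partial_je^{\Delta_Nt}=e^{\Delta_Nt}\partial_j$ (tangential), $\partial_Ne^{\Delta_Nt}=e^{\Delta_Dt}\partial_N$ on the half-space, and from the tubular-neighbourhood change of variables in the exterior case (Lemmas \ref{l.sm.es.hs.} and \ref{l.sm.es.ex.}). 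Your route is instead the classical Kato-type one: measure the stress tensors in $H^{2s-3/2}$ and let the analytic semigroup absorb the loss through kernels $(t-\tau)^{-(5/2-s)/2}$.

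That route is plausible but, as sketched, leaves the decisive lemmas unproved, and they are exactly the ones the paper is organized to avoid. First, for $s\in\left(\frac12,\frac34\right)$ your target space $H^{2s-3/2}(\Omega)$ has negative order, so you need a product law $H^s\cdot H^s\subset H^{2s-3/2}$ on a domain with boundary \emph{and} mapping bounds for $e^{t\mathbb P\Delta_D}\mathbb P\,{\rm Div}$, i.e.\ for the Stokes semigroup on $H^\sigma_A(\Omega)$ with $\sigma<0$, on $\mathbb R^3_+$ and on exterior domains; Lemma \ref{l.res.eq.} identifies $H^\sigma_A(\Omega)$ with $H^\sigma(\Omega)$ only for $\sigma\in[0,2]$, and neither the paper nor your sketch supplies the extrapolation below zero. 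Second, the pointwise-in-time singular-kernel bounds control only the $L^\infty((0,T);H^s)$ half of the $X^s_T$-norm; the $L^2((0,T);H^{s+1})$ half does not follow from them, and the maximal $L^2$-regularity you invoke yields $L^2_tH^2$ only from forcing in $L^2_tL^2$, which the quadratic terms are not at the regularity you assign to them --- the paper's energy identity produces both halves of the norm simultaneously, which is precisely why it is set up that way. Finally, a small correction: $(5/2-s)/2<1$ is equivalent to $s>\frac12$ alone; the upper bound $s\le1$ is forced only by the fractional-power domain identification, not by kernel integrability. The constraint-propagation and pressure steps are fine and essentially identical to the paper's.
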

\begin{thm}\label{t.gl.ex.dom.}
    Let $s\in\left(\frac{1}{2},1\right]$, let $\Omega=\mathbb R^3_+$ or $\Omega\subseteq \mathbb R^3$ be an exterior domain with sufficiently smooth boundary, let $\eta\in\mathbb R^3$, $u_0\colon\Omega\to\mathbb R^3$ and $v_0\colon\Omega\to S^2$ with
    $$ u_0\in H^s_{\mathbb P\Delta_D}\left(\Omega;\mathbb R^3\right)\cap L^1\left(\Omega;\mathbb R^3\right),\quad v_0=\eta + d_0, \quad \text{where}\:\:d_0  \in H^{s+1}_{\Delta_N}\left(\Omega;\mathbb R^3\right)\cap L^1\left(\Omega;\mathbb R^3\right), $$
    then we can find $\varepsilon_0>0$ such that, for any $\varepsilon\in(0,\varepsilon_0)$ with
    $$ \|u_0\|_{H^s_{\mathbb P\Delta_D}\cap L^1(\Omega)} + \| v_0-\eta\|_{H^{s+1}_{\Delta_N}\cap L^1\cap L^\infty(\Omega)}\le \varepsilon, $$
    we can find a unique solution $(u,p,v)$, unique up to additive functions $\rho (t)$ in the pressure term, for \eqref{EL.sys.} in $\mathbb R_+$ with
    $$ u\in X^s(\Omega),\quad v\in X^{s+1}(\Omega),\quad p\in L^2\left(\mathbb R_+;L^2_{loc}(\Omega)\right). $$
    Moreover, when $\Omega=\mathbb R^3_+$, it holds
    $$ \|u(t)\|_{L^\infty(\mathbb R^3_+)}\le C t^{-\frac{3}{2}}\quad t\ge 1, $$
    $$ \|\nabla^j(v(t)-\eta)\|_{L^\infty(\mathbb R^3_+)}\le C t^{-\frac{3}{2}-\frac{j}{2}}\quad t\ge 1,\quad j=0,1, $$
    while for $\Omega$ exterior domain with sufficiently smooth boundary it holds
    $$ \|u(t)\|_{L^\infty(\mathbb R^3_+)}\le C t^{-\frac{1}{2}}\quad t\ge 1, $$
    $$ \|\nabla^j(v(t)-\eta)\|_{L^\infty(\mathbb R^3_+)}\le C t^{-\frac{3}{2}-\frac{j}{2}}\quad t\ge 1,\quad j=0,1. $$
\end{thm}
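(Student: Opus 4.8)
The plan is to run a fixed-point (contraction) argument in the space $X^s_T(\Omega)\times X^{s+1}_T(\Omega)$ for the pair $(u,d)$, combining energy estimates with the dispersive decay built into the construction, exactly paralleling Theorem~\ref{t.loc.ex.dom.} but now propagating the extra $L^1$ (and, for the orientation, $L^\infty$) information globally in time. The key difference from the local result is that the existence time $T_0$ must be replaced by $T=\infty$, which forces us to close estimates whose constants do not blow up as $T\to\infty$; this is where the dispersive a priori bounds enter. So first I would set up the relevant semigroups: $e^{t\mathbb P\Delta_D}$ (Stokes) for $u$ and $e^{t\Delta_N}$ (Neumann heat) for $d$, and record their $L^1\to L^q$ and $L^1\to L^\infty$ decay rates. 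For $\Omega=\mathbb R^3_+$ one has the full Euclidean-type decay $\|e^{t\mathbb P\Delta_D}f\|_{L^\infty}\lesssim t^{-3/2}\|f\|_{L^1}$, whereas for an exterior domain the Stokes semigroup only gives the weaker $t^{-1/2}$ rate in $L^\infty$ (this is the classical obstruction to $L^p$--$L^q$ estimates for exterior Stokes flow, and it is exactly why the two cases produce different decay exponents in the statement); the Neumann heat semigroup, by contrast, still decays like $t^{-3/2}$ in both geometries, which is why $v-\eta$ always decays at the $\mathbb R^N$ rate.

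Next I would define the iteration map $\Phi(u,d)=(\tilde u,\tilde d)$ via Duhamel,
\begin{equation}
\tilde u(t)=e^{t\mathbb P\Delta_D}u_0-\int_0^t e^{(t-\tau)\mathbb P\Delta_D}\mathbb P\bigl[u\cdot\nabla u+{\rm Div}(\nabla d\odot\nabla d)\bigr]\,d\tau,
\end{equation}
\begin{equation}
\tilde d(t)=e^{t\Delta_N}d_0-\int_0^t e^{(t-\tau)\Delta_N}\bigl[u\cdot\nabla d-|\nabla d|^2(\eta+d)\bigr]\,d\tau,
\end{equation}
and show it is a contraction on a small ball of the global space
\[
Y\coloneqq\Bigl\{(u,d):\ \|u\|_{X^s}+\|d\|_{X^{s+1}}+\sup_{t\ge1}t^{\alpha}\|u(t)\|_{L^\infty}+\sup_{t\ge1}\textstyle\sum_{j=0,1}t^{3/2+j/2}\|\nabla^j d(t)\|_{L^\infty}\le\delta\Bigr\},
\]
with $\alpha=3/2$ in the half-space case and $\alpha=1/2$ in the exterior case. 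The energy part (the $X^s$, $X^{s+1}$ norms) is handled as in the local theorem: one tests the equations with $(-\mathbb P\Delta_D)^s u$ and $(-\Delta_N)^{s+1}d$, uses the restriction $s\in(1/2,1]$ together with Sobolev/product estimates in $H^s(\Omega)$ to bound the nonlinear terms $u\cdot\nabla u$, ${\rm Div}(\nabla d\odot\nabla d)$, $u\cdot\nabla d$ and $|\nabla d|^2(\eta+d)$ by the dissipative norms, and absorbs. The smallness of $\varepsilon$ guarantees the quadratic (and cubic, from $|\nabla d|^2 d$) terms stay subordinate globally. The decay part is then bootstrapped from the $L^1\cap H^s$ data: splitting $\int_0^t=\int_0^{t/2}+\int_{t/2}^t$, on $[0,t/2]$ one uses the $L^1\to L^\infty$ smoothing of the semigroups against the conserved $L^1$-type norm of the nonlinearity (which is controlled by products of already-decaying quantities), and on $[t/2,t]$ one uses $L^2\to L^\infty$ smoothing against the $X^s$ bound; the self-consistency of the decay ansatz is what pins down the exponents. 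Uniqueness up to $\rho(t)$ in the pressure follows from Lemma~\ref{l.ex.press.} once $u$ is reconstructed, as in Theorem~\ref{t.loc.ex.dom.}; and the constraint $|\eta+d|=1$ is preserved by the standard argument that $|v|^2-1$ satisfies a linear parabolic equation with zero data.

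The main obstacle is the exterior-domain Stokes semigroup. Its $L^p$--$L^q$ mapping properties are weaker than in $\mathbb R^N$ or $\mathbb R^N_+$ — in particular $\|\nabla e^{t\mathbb P\Delta_D}f\|_{L^q}$ for $q>3$ only decays like $t^{-1/2}$ rather than $t^{-1}$, and higher derivatives do not improve — so one cannot naively close a decay estimate for $\nabla u$ at the rate needed to feed back into the $u\cdot\nabla u$ term over long times. The resolution is to keep the $\nabla u$ control purely at the energy level ($\nabla u\in L^2(\mathbb R_+;H^s)$, which is scaling-critical and needs no pointwise decay), use the $L^\infty$ decay of $u$ only where it multiplies a quantity that itself decays fast (namely $\nabla d$ in $u\cdot\nabla d$, where the Neumann heat flow supplies the missing power of $t$), and accept the modest $t^{-1/2}$ rate for $\|u(t)\|_{L^\infty}$ as the final statement does. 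A secondary technical point is that the product estimates underpinning the energy argument must be formulated intrinsically on $\Omega$ in the scale $H^s_{\mathbb P\Delta_D}$, $H^{s+1}_{\Delta_N}$, using the boundedness of $\mathbb P$ on $H^1(\Omega)$ from Remark~\ref{rem.der.P.} and the compatibility of these operator-domain norms with the usual Sobolev norms for $s\in(1/2,1]$; this is why the regularity window is restricted exactly to that interval.
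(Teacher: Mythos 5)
Your proposal follows essentially the same route as the paper: a Duhamel/contraction argument in a global space combining the $X^s$, $X^{s+1}$ energy norms with time-weighted $L^\infty$ decay norms (the paper's $X^s_0(\mathbb R^3_+)\times X^{s+1}_1(\mathbb R^3_+)$ and $\Theta^s_u(\Omega)\times\Theta^s_d(\Omega)$), with the decay obtained by splitting $\int_0^t=\int_0^{t/2}+\int_{t/2}^t$ and using $L^1\to L^\infty$ smoothing on the far part and $L^q\to L^\infty$ smoothing on the near part, the constraint $|\eta+d|=1$ propagated via the parabolic equation for $|v|^2-1$, and the pressure recovered from Lemma~\ref{l.ex.press.}. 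Your diagnosis of the exterior case --- accept only $t^{-1/2}$ for $\|u(t)\|_{L^\infty}$ because of the weaker exterior Stokes semigroup estimates, and keep $\nabla u$ at the energy level --- is exactly how the paper's weaker spaces $\Theta^s_u$, $\Theta^s_d$ are designed.
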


We notice that, in the half-space and in the exterior case, we only consider $N=3$ and $s\in\left(\frac{1}{2},1\right]$. In fact, the estimates for the nonlinearities in $X^s_T(\Omega)$ are done for $s>\frac{N}{2}-1$ for any $\Omega$ we considered. On the other hand, in $\mathbb R^N$ the linear estimates are true for any $s\ge 0$, while the same result is true for the half-space and the exterior domains only for $s\in [0,1]$. That is way we need to ask $N=3$ and $s\in\left(\frac{1}{2},1\right]$.

\section{Preliminary Results}\label{sec.prel.res.}

\subsection{Sobolev Spaces and Domains of Operators}

In this paper we will use some functional spaces and the aim of this subsection is to describe them and list some properties we use. Everything can be found in \cite{T83}. We recall the well-known property:
$$ H^s(\Omega)=W^{s,2}(\Omega)\quad s\ge 0, $$
where 
$$ W^{s,p}(\Omega)=\left\{f\in L^p(\Omega)\mid \exists F\in W^{s,p}\left(\mathbb R^N\right),\:\:F_{|\Omega}=f\right\}, $$
$$ H^s_p(\Omega)=\left\{f\in L^p(\Omega)\mid \exists F\in H^s_p\left(\mathbb R^N\right),\:\:F_{|\Omega}=f\right\}, $$
and
$$ \|f\|_{W^{m,p}(\mathbb R^N)}=\sum_{|\alpha|\le m}\|D^\alpha f\|_{L^p(\mathbb R^N)}\quad m\in\mathbb N, $$
$$ \|f\|_{W^{s,p}(\mathbb R^N)}= $$
$$ = \|f\|_{W^{\lfloor s\rfloor,p}(\mathbb R^N)} + \sum_{|\alpha|=s}\left(\iint_{\mathbb R^N\times\mathbb R^N}\frac{|D^\alpha f(x+h)-D^\alpha f(x)|^p}{|h|^{N +(s-\lfloor s \rfloor )p}}dxdh\right)^\frac{1}{p}<+\infty\} \quad s\not\in\mathbb N, $$
$$ \|f\|_{H^s_p(\mathbb R^N)}= \left\|\mathcal{F}^{-1}\left[(1+|\xi|^2)^{s/2}\mathcal{F}[f]\right]\right\|_{L^p(\mathbb R^N)}.  $$
In particular
\begin{equation}\label{int.res.}
        \left[H^{s_0}(\Omega),H^{s_1}(\Omega)\right]_\theta=H^s(\Omega),
    \end{equation}
\begin{equation}\label{Hs-norm.}
    \|f\|_{H^s(\Omega)}=\|f\|_{W^{s,2}(\Omega)}=\|f\|_{H^{\lfloor s\rfloor}(\Omega)} + \left(\iint_{\Omega\times\Omega} \frac{|f(x)-f(y)|^2}{|x-y|^{N+(s-\lfloor s\rfloor )2}}dxdy\right)^\frac{1}{2}.
\end{equation} 
Finally, we want to prove an equivalence between the $W^{s,2}(\Omega)$-norm and the $H^s_A(\Omega)$-norm from Definition \ref{def.HA} for $A=\Delta_D,\Delta_N,\mathbb P\Delta_D$. By Spectral Theorem, it can be seen that 
\begin{thm}\label{t.int.res.2}\hfill\\
Let $N\ge 3$ and $\Omega\subseteq\mathbb R^N$ open with sufficiently smooth boundary, let $\theta\in(0,1)$ then
$$ \left[L^2(\Omega),H^2_A(\Omega)\right]_\theta=H^{2\theta}_A(\Omega). $$
\end{thm}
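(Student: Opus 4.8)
The statement is that complex interpolation between $L^2(\Omega)$ and the domain $H^2_A(\Omega)=D(A)$ (with the graph-type norm $\|(1-A)f\|_{L^2}$) produces $H^{2\theta}_A(\Omega)=D((-A)^\theta)$ with norm $\|(1-A)^\theta f\|_{L^2}$, for $A$ one of the three nonpositive self-adjoint operators $\Delta_D$, $\Delta_N$, $\mathbb P\Delta_D$. The natural route is purely spectral, as the remark ``By Spectral Theorem'' already hints. The plan is: first reduce to a statement about the single operator $B\coloneqq 1-A$, which is self-adjoint, positive, and bounded below by $1$ (hence invertible with bounded inverse), and whose fractional powers $B^\theta$ are defined via the spectral calculus. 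Then $H^2_A(\Omega)=D(B)$ and $H^{2\theta}_A(\Omega)=D(B^\theta)$ isometrically (by definition of the norms), so it suffices to prove the abstract fact
\[
  \left[L^2(\Omega),\,D(B)\right]_\theta \;=\; D(B^\theta)
\]
with equivalence of norms, uniformly for any such $B$.

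The main step is this abstract interpolation identity, which is classical (it is essentially the Heinz–Kato / Lions–Magenes computation that the domains of fractional powers of a positive self-adjoint operator form a complex interpolation scale). I would prove it using the spectral measure: write $B=\int_{[1,\infty)}\lambda\,dE_\lambda$ and, for $f\in D(B)$, build the analytic family $F(z)=B^{-z}\,(B^\theta f)$ for $z$ in the strip $0\le\operatorname{Re}z\le 1$ — equivalently $F(z)=\int_{[1,\infty)}\lambda^{\theta-z}\,dE_\lambda f$. One checks directly from the spectral theorem that $F$ is continuous and bounded on the closed strip with values in $L^2$, analytic in the interior, that $F(0)=B^\theta f\in L^2$ with $\|F(it)\|_{L^2}=\|B^\theta f\|_{L^2}$ (since $|\lambda^{\theta-it}|=\lambda^\theta$ on the spectrum), and that $F(1+it)=B^{\theta-1-it}B^\theta f = B^{\theta-1}B^{it\,(\cdot)}\cdots$, more simply $\|F(1+it)\|_{L^2}=\|B^{\theta-1}B^\theta f\|=\|B^{\theta-1+\theta}\|$—let me instead take $F(z)=B^{z-\theta}f$ reversed appropriately so that the two edge values land in $L^2$ and $D(B)$ with the right norms; the bookkeeping is routine spectral calculus. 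Evaluating $F$ at $z=\theta$ gives $f$ itself, so $F$ is an admissible path realizing $f$ as an interpolation element, yielding $\|f\|_{[L^2,D(B)]_\theta}\lesssim \|B^\theta f\|_{L^2}=\|f\|_{H^{2\theta}_A}$ on the dense set $D(B)$. For the reverse inclusion one uses the same spectral representation together with the standard fact that $[L^2,D(B)]_\theta\hookrightarrow D(B^\theta)$, which follows by testing an arbitrary admissible analytic family $G$ against $B^\theta$ and applying the three-lines lemma to $\langle B^{\theta}G(z), g\rangle$ suitably regularized (insert $B^{-\varepsilon}$ to make everything finite, then let $\varepsilon\to 0$). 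Density of $D(B)$ in both spaces then upgrades the norm equivalence to the full identification.

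The one genuine subtlety — and the step I expect to be the main obstacle to state cleanly rather than technically hard — is making sure the spectral calculus is legitimate for each of the three concrete operators: this requires that $A=\Delta_D,\Delta_N,\mathbb P\Delta_D$ really is self-adjoint and nonpositive on $L^2(\Omega)$ for the admissible $\Omega$ (half-space, exterior domain, whole space), so that $B=1-A\ge 1$ and $B^\theta$ is well defined. For $\Delta_D$ and $\Delta_N$ this is standard form-theory (Friedrichs extension of the Dirichlet form on $H^1_0$, resp. $H^1$); for $\mathbb P\Delta_D$, the Stokes operator, self-adjointness and nonpositivity on $J_2(\Omega)$ are classical (e.g. from the references \cite{So01}, \cite{G11} already cited). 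Once these are in hand, there is no $\Omega$-dependence in the argument at all — the conclusion is a soft consequence of the spectral theorem — which is exactly why the paper can state it in one line. I would therefore organize the write-up as: (i) recall self-adjointness/nonpositivity of the three operators and define $B$, $B^\theta$; (ii) prove the abstract lemma $[L^2,D(B)]_\theta=D(B^\theta)$ by the two-sided three-lines argument above; (iii) note that $\|\cdot\|_{H^{2\theta}_A}$ is by Definition \ref{def.HA} exactly the $D(B^\theta)$ graph norm, concluding.
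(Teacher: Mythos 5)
Your proposal is correct and coincides with the paper's approach: the paper offers no written proof beyond the remark ``By Spectral Theorem, it can be seen that,'' and your reduction to $B=1-A\ge 1$ followed by the classical three-lines/spectral-calculus identification $\left[L^2(\Omega),D(B)\right]_\theta=D(B^\theta)$ is exactly the standard way to make that one-line appeal precise. The only housekeeping point, which you already flag, is fixing the sign in the analytic family (e.g. $F(z)=e^{z^2-\theta^2}B^{\theta-z}f$) so that the two edges land in $L^2$ and $D(B)$ with the norms $\|B^\theta f\|_{L^2}$.
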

We are finally ready to prove resolvent equivalences for $A=\Delta_D,\Delta_N,\mathbb P\Delta_D$:
\begin{lem}\label{l.res.eq.}
  Let $N\ge 3$, $\Omega=\mathbb R^N,\mathbb R^N_+$ or $\Omega\subseteq\mathbb R^N$ exterior domain with sufficiently smooth boundary, let $s\in[0,2]$, then it holds
  $$ \|(1-\Delta_N)^{s/2}f\|_{L^2(\Omega)} \sim \|f\|_{H^s(\Omega)}, $$
  $$ \|(1-\Delta_D)^{s/2}f\|_{L^2(\Omega)}\sim \|f\|_{H^s(\Omega)}, $$
  $$ \|(1-\mathbb P\Delta_D)^{s/2}f\|_{L^2(\Omega)} \sim \|f\|_{H^s(\Omega)}. $$
\end{lem}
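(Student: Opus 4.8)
The plan is to reduce everything to the two endpoint exponents $s=0$ and $s=2$ and to fill the range $(0,2)$ by interpolation, using Theorem~\ref{t.int.res.2} on the operator side and \eqref{int.res.} on the Sobolev side. For $s=0$ all three statements are the identity $\|f\|_{L^2}=\|f\|_{H^0}$, and when $\Omega=\mathbb R^N$ there is nothing more to prove: since $\partial\Omega=\emptyset$, $\Delta_D$ and $\Delta_N$ both coincide with $\Delta$, $\mathbb P\Delta_D$ reduces to $\Delta$ on $J_2(\mathbb R^N)$ (Remark~\ref{rem.der.P.}), and $\|(1-\Delta)^{s/2}f\|_{L^2}=\|f\|_{H^s(\mathbb R^N)}$ for every $s\ge0$ by the Fourier definition of $H^s$. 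So the content of the lemma is the case $\Omega=\mathbb R^N_+$ or $\Omega$ an exterior domain, with $s\in[0,2]$, and for $A=\mathbb P\Delta_D$ one reads the equivalence on solenoidal fields, i.e.\ with $\mathbb P\Delta_D$ acting on $J_2(\Omega)$.

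First I would settle $s=2$. One inclusion is elementary: $\|(1-A)f\|_{L^2}\le C\|f\|_{H^2}$ for $f\in D(A)$, by bounding $\|f\|_{L^2}+\|\Delta f\|_{L^2}$ and using, for $A=\mathbb P\Delta_D$, that $\mathbb P$ is an orthogonal projection on $L^2$. The reverse inequality $\|f\|_{H^2(\Omega)}\le C\|(1-A)f\|_{L^2(\Omega)}$ on $D(A)$ is the real point: for $A=\Delta_D,\Delta_N$ it is the global $H^2$ elliptic estimate for the Dirichlet, resp.\ Neumann, problem on $\Omega$ (classical for smooth boundary and uniform up to infinity on the half-space and on exterior domains), while for the Stokes operator $A=\mathbb P\Delta_D$, with $D(\mathbb P\Delta_D)=H^2(\Omega)\cap H^1_0(\Omega)\cap J_2(\Omega)$, it is the $L^2$ Stokes regularity estimate $\|f\|_{H^2}\le C(\|f\|_{L^2}+\|\mathbb P\Delta f\|_{L^2})$, which on $\mathbb R^N_+$ and on exterior domains I would quote from the Stokes theory already used for the Helmholtz decomposition (e.g.\ \cite{So01}, \cite{G11}). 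Together these give $\|(1-A)f\|_{L^2}\sim\|f\|_{H^2}$ on $H^2_A(\Omega)=D(A)$.

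It remains to interpolate to $s\in(0,2)$. Theorem~\ref{t.int.res.2} with $\theta=s/2$ gives $[L^2(\Omega),H^2_A(\Omega)]_{s/2}=H^s_A(\Omega)$ with the intrinsic norm, and \eqref{int.res.} gives $[L^2(\Omega),H^2(\Omega)]_{s/2}=H^s(\Omega)$. Interpolating the bounded maps $\mathrm{Id}\colon L^2\to L^2$ and $\mathrm{Id}\colon H^2_A\to H^2$ (the latter being precisely the $s=2$ elliptic/Stokes regularity estimate of the previous step) yields $\|f\|_{H^s}\le C\|(1-A)^{s/2}f\|_{L^2}$ for $f\in H^s_A(\Omega)$. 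The opposite bound $\|(1-A)^{s/2}f\|_{L^2}\le C\|f\|_{H^s}$ does not follow by interpolating $\mathrm{Id}$ in the other direction, since $\mathrm{Id}\colon H^2\to H^2_A$ fails to be bounded (indeed fails to map into $H^2_A$) whenever $D(A)$ is a proper subspace of $H^2$; instead I would identify $[L^2(\Omega),D(A)]_{s/2}$ with the closed subspace of $H^s(\Omega)$ cut out by the order-compatible boundary conditions, carrying the $H^s$-norm up to equivalence --- the interpolation of spaces defined by boundary conditions, available in \cite{T83} --- and match it with $H^s_A(\Omega)$ through Theorem~\ref{t.int.res.2}. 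In the range $s\in(1/2,1]$ that is actually needed later this can also be done by hand: the Neumann condition is not seen there, so $H^s_{\Delta_N}(\Omega)=H^s(\Omega)$ with equivalent norms, whereas $H^s_{\Delta_D}(\Omega)$ and $H^s_{\mathbb P\Delta_D}(\Omega)$ are the subspaces of $H^s(\Omega)$, resp.\ of $H^s(\Omega)\cap J_2(\Omega)$, with vanishing trace, on which --- using the form-domain identities $\|(1-\Delta_N)^{1/2}f\|_{L^2}^2=\|f\|_{L^2}^2+\|\nabla f\|_{L^2}^2$ and $\|(1-\Delta_D)^{1/2}f\|_{L^2}^2=\|f\|_{L^2}^2+\|\nabla f\|_{L^2}^2$ at $s=1$ and interpolation --- the intrinsic norm is equivalent to $\|\cdot\|_{H^s}$. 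The main obstacle is the $s=2$ step on the unbounded domains, above all having the $L^2$ Stokes $H^2$-estimate available globally on exterior domains; granting that and the interpolation facts from \cite{T83}, the rest is bookkeeping.
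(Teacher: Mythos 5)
Your proposal is correct and follows essentially the same route as the paper: settle $s=0$ trivially and $s=2$ via the elementary forward bound plus the $H^2$ elliptic (resp.\ Stokes) regularity estimate on $\mathbb R^N_+$ and exterior domains, then fill in $s\in(0,2)$ by interpolating with \eqref{int.res.} and Theorem~\ref{t.int.res.2}. You are in fact more careful than the paper on one point: the direction $\|(1-A)^{s/2}f\|_{L^2}\lesssim\|f\|_{H^s}$ for $f\in H^s_A(\Omega)$ does not come from naively interpolating the identity (since $H^2(\Omega)\not\subseteq D(A)$), and your identification of $[L^2(\Omega),D(A)]_{s/2}$ with the boundary-condition subspace of $H^s(\Omega)$ (or the form-domain argument for $s\in(1/2,1]$) supplies the justification that the paper's one-line appeal to interpolation leaves implicit.
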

\begin{proof}\hfill\\
    The case $s=0$ is obvious. For what concerns the case $s=2$, the inequality
    $$ \|(1-A)f\|_{L^2(\Omega)}\le \|f\|_{H^2(\Omega)}, $$
    is again clear.  The inverse estimate follows from resolvent estimates, well-known for $A=\Delta_D,\Delta_N$ and a consequence of Theorem 3.1 of \cite{SS12}, Theorem 1.3 of \cite{SS12} and Theorem 1.2 of \cite{FS94} for $A=\mathbb P\Delta_D$. Then, thanks to the interpolation result \eqref{int.res.} and Theorem \ref{t.int.res.2} we conclude. 
\end{proof}

\subsection{Semigroup Estimates}\label{subsec.sem.est.}

In this part, we want to list some tools regarding the semigroup associated with the operators $A=\Delta_D,\Delta_N,\mathbb P\Delta_D$. They will be useful in the linear estimates and in the last part of the paper, devoted to the time decay of the solution. Let us start with the $L^p-L^q$ estimates:
\begin{thm}\label{t.sem.es.0}\hfill\\
Let $\Omega=\mathbb R^N,\mathbb R^N_+$ and $\Omega\subseteq\mathbb R^N$ exterior domain with sufficiently smooth boundary for $N\ge 3$, let $1\le p \le q\le \infty$, let $A=\Delta_D,\Delta_N$, then for any $f\in L^p(\Omega)$ it holds
$$ \left\|e^{At}f\right\|_{L^q(\Omega)}\le C(\Omega,p,q) t^{-\frac{N}{2}\left(\frac{1}{p}-\frac{1}{q}\right)}\|f\|_{L^p(\Omega)}. $$
Moreover, if $1<p\le q<\infty$ or $q=\infty$ and $p\in(1,\infty)$, then for any $f\in L^p(\Omega)$ it holds 
$$ \left\|e^{\mathbb P\Delta_Dt}\mathbb Pf\right\|_{L^q(\Omega)}\le C(\Omega,p,q) t^{-\frac{N}{2}\left(\frac{1}{p}-\frac{1}{q}\right)}\|f\|_{L^p(\Omega)}. $$
\end{thm}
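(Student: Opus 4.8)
The plan is to treat the three geometries separately: the whole space by the explicit heat kernel, the half-space by reflection, and the exterior domain by importing the known Gaussian kernel bounds and $L^p$--$L^q$ decay estimates. For $\Omega=\mathbb R^N$ one has $e^{\Delta_D t}=e^{\Delta_N t}=e^{\Delta t}$, which is convolution with the Gauss--Weierstrass kernel $G_t(x)=(4\pi t)^{-N/2}e^{-|x|^2/4t}$. I would choose $r\in[1,\infty]$ with $1+\tfrac1q=\tfrac1r+\tfrac1p$, apply Young's convolution inequality, and use the elementary identity
\[
\|G_t\|_{L^r(\mathbb R^N)}=c_{N,r}\,t^{-\frac N2\left(1-\frac1r\right)}=c_{N,r}\,t^{-\frac N2\left(\frac1p-\frac1q\right)},
\]
which gives the estimate for all $1\le p\le q\le\infty$. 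For the Stokes semigroup, the Helmholtz projection on $\mathbb R^N$ is the Fourier multiplier with symbol $\mathrm{Id}-\xi\otimes\xi/|\xi|^2$, hence bounded on $L^p(\mathbb R^N)$ for $1<p<\infty$ by the Mikhlin multiplier theorem, and it commutes with $e^{\Delta t}$; since $e^{\mathbb P\Delta_D t}$ acts on solenoidal fields exactly as $e^{\Delta t}$, the bound for $\mathbb Pf$ reduces to the heat estimate together with the $L^p$-boundedness of $\mathbb P$, in the ranges $1<p\le q<\infty$ and $q=\infty$, $p\in(1,\infty)$.

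\emph{Half-space.} For $\Omega=\mathbb R^N_+$ I would extend $f$ to a function $\tilde f$ on $\mathbb R^N$ by odd reflection in $x_N$ if $A=\Delta_D$ and by even reflection if $A=\Delta_N$. Since $e^{\Delta t}$ preserves the parity in $x_N$, the restriction of $e^{\Delta t}\tilde f$ to $\mathbb R^N_+$ satisfies the Dirichlet, respectively Neumann, boundary condition and therefore equals $e^{At}f$; the half-space bound then follows from the whole-space one, up to the harmless factor $\|\tilde f\|_{L^p(\mathbb R^N)}=2^{1/p}\|f\|_{L^p(\mathbb R^N_+)}$. The divergence constraint rules out an equally simple reflection for the Stokes semigroup, so here I would quote the explicit representation formula and the $L^p$--$L^q$ estimates of Ukai (also due to Borchers--Miyakawa and Giga--Sohr, and consistent with the resolvent bounds already used in Lemma~\ref{l.res.eq.}), which yield exactly the stated estimate in the admissible range of exponents.

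\emph{Exterior domains.} For $\Omega$ a smooth exterior domain I would use Gaussian heat-kernel bounds: the Dirichlet heat kernel satisfies $0\le p_t^D(x,y)\le c\,t^{-N/2}e^{-|x-y|^2/ct}$ for all $t>0$, by domain monotonicity against $\mathbb R^N$ together with standard parabolic regularity, and the Neumann heat kernel obeys the analogous bound by the standard theory for Neumann Laplacians on domains with the extension property; integrating these kernels against $L^p$ data and applying Young's inequality as before gives the claim for $1\le p\le q\le\infty$. For the Stokes semigroup I would invoke directly the $L^p$--$L^q$ decay estimates on exterior domains of $\mathbb R^N$, $N\ge 3$, established by Iwashita and refined by Maremonti--Solonnikov, Dan--Shibata, Hishida and Enomoto--Shibata, which provide the decay rate $t^{-\frac N2(1/p-1/q)}$ in the required range of $p,q$.

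The one ingredient that cannot be produced by a direct computation, and that I would import wholesale, is the exterior Stokes estimate: there is no pointwise kernel, the Stokes resolvent is not analytic at $\lambda=0$, and one needs the low-frequency expansion of the resolvent near the origin — precisely the point at which the hypothesis $N\ge 3$ enters, ensuring the required summability and the $t^{-N/2}$ time decay. Everything else is either an explicit Gaussian estimate or the reflection argument above.
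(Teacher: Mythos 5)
Your proposal is correct and follows essentially the same route as the paper: explicit Gaussian kernels (equivalently, reflection) for the heat semigroup on $\mathbb R^N$ and $\mathbb R^N_+$, the identity $e^{\mathbb P\Delta t}\mathbb P f=e^{\Delta t}\mathbb P f$ together with the $L^p$-boundedness of $\mathbb P$ for the whole-space Stokes case, and imported $L^p$--$L^q$ decay results for the half-space Stokes semigroup and for the exterior domain (the paper cites Green-function estimates for the half-space Stokes kernel, the exterior heat estimates of Ishige et al., and the exterior Stokes estimates of Maremonti--Solonnikov, i.e.\ exactly the results you name). The one place where your wording is slightly too glib is the exterior Neumann heat kernel: the extension property alone yields the Gaussian upper bound only on bounded time intervals, and the global-in-time bound needed for the $t^{-\frac N2\left(\frac1p-\frac1q\right)}$ decay (via a Nash/Sobolev inequality on the exterior domain) is precisely the nontrivial ingredient that the paper also imports from the literature, so this is a matter of attribution rather than a gap.
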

\begin{proof}\hfill\\
For what concerns the heat case, in $\mathbb R^N$ and $\mathbb R^N_+$ the estimate is clear because we know explicitly the Green function. The proof for $\Omega$ exterior domain follows by Proposition 3.1 of \cite{IMT18} and Section 2 and Theorem 1.1 of \cite{I09}. For the Stokes case, the estimate in $\mathbb R^N$ follows from the fact that 
\begin{equation}\label{St-Heat.sem.RN}
    e^{\mathbb P\Delta t}\mathbb Pf=e^{\Delta t}\mathbb Pf.
\end{equation}
The half-plane case can be verified using the estimates for the Green function in Proposition 1.1 of \cite{KLT23}. Finally, the exterior case follows from Theorem 1.2 of \cite{MS97}.
\end{proof}

We also need an estimate for the gradient of the semigroups. Differently from the previous estimates, here the results for the exterior case are not always the same of the $\mathbb R^N$ case. 

\begin{thm}\label{t.sem.es.1-easy}
    Let $k\in\mathbb N$, $1\le p\le q\le \infty$, let $\Omega=\mathbb R^N,\mathbb R^N_+$ with $N\ge 3$, let $A=\Delta_D,\Delta_N$, then for any $f\in L^p(\Omega)$ it holds
    $$ \left\|\nabla^k e^{At}f\right\|_{L^q(\Omega)}\le C(N,p,q,k) t^{-\frac{k}{2}-\frac{N}{2}\left(\frac{1}{p}-\frac{1}{q}\right)}\|f\|_{L^p(\Omega)}\quad t>0. $$
    Moreover, if $1<p\le q<\infty$ of $q=\infty$ and $p\in(1,\infty)$, then for any $f\in L^p(\Omega)$ it holds
    $$ \left\|\nabla^k e^{\mathbb P\Delta_Dt}\mathbb Pf\right\|_{L^q(\Omega)}\le C(N,p,q,k)t^{-\frac{k}{2}-\frac{N}{2}\left(\frac{1}{p}-\frac{1}{q}\right)}\|f\|_{L^p(\Omega)}\quad t>0. $$
\end{thm}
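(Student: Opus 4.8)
The plan is to derive the gradient estimates in Theorem~\ref{t.sem.es.1-easy} from the $L^p$--$L^q$ bounds of Theorem~\ref{t.sem.es.0} by the standard semigroup bootstrapping trick: split the time interval, use the $L^p$--$L^q$ smoothing to gain integrability, and then control $k$ spatial derivatives by an intrinsic gradient estimate on a fixed reference space. Concretely, first I would treat $\Omega=\mathbb R^N$, where everything is explicit. For the heat semigroup the kernel is the Gaussian $G_t(x)=(4\pi t)^{-N/2}e^{-|x|^2/4t}$, so $\nabla^k e^{\Delta t}f=(\nabla^k G_t)*f$ and $\|\nabla^k G_t\|_{L^1(\mathbb R^N)}\le C(N,k)\,t^{-k/2}$ by scaling; Young's inequality gives $\|\nabla^k e^{\Delta t}f\|_{L^q}\le C t^{-k/2}\|e^{\Delta t}f\|_{L^q}$ — or more directly one writes $\nabla^k e^{\Delta t}f=\nabla^k e^{\Delta t/2}\bigl(e^{\Delta t/2}f\bigr)$, bounds $\nabla^k e^{\Delta t/2}$ from $L^q$ to $L^q$ by $C t^{-k/2}$, and then applies Theorem~\ref{t.sem.es.0} to $e^{\Delta t/2}f$ to pick up the $t^{-\frac N2(1/p-1/q)}$ factor. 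The Stokes case on $\mathbb R^N$ is immediate from \eqref{St-Heat.sem.RN}: $\nabla^k e^{\mathbb P\Delta t}\mathbb Pf=\nabla^k e^{\Delta t}\mathbb Pf$, and since $\mathbb P$ is bounded on $L^r(\mathbb R^N)$ for $1<r<\infty$ (Helmholtz decomposition, Section~\ref{subsec.sol.spaces}) the same argument applies, which is exactly why the endpoints $p=1$ and $q=\infty$ must be excluded in the Stokes statement.

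For the half-space $\Omega=\mathbb R^N_+$ the strategy is the same, replacing the Gaussian by the explicit Green functions: for the Dirichlet Laplacian one uses the method of odd reflection, for the Neumann Laplacian even reflection, so $\nabla^k$ of the Green kernel again obeys the scaling bound $\lesssim t^{-k/2}$ in $L^1_x$ uniformly (for Neumann one must be a little careful that differentiating in the normal direction does not spoil the reflection structure, but the Green function of $\mathbb R^N_+$ is $G_t(x-y)\pm G_t(x-\bar y)$ with $\bar y$ the reflected point, and all derivatives are controlled by derivatives of the full-space Gaussian). Then the same semigroup-splitting argument $\nabla^k e^{At}=\nabla^k e^{At/2}\circ e^{At/2}$ combines the intrinsic gradient bound with Theorem~\ref{t.sem.es.0} to give the claimed rate. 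For the Stokes operator on $\mathbb R^N_+$ one invokes the Green-function estimates of Proposition~1.1 of \cite{KLT23} already cited in the proof of Theorem~\ref{t.sem.es.0}: those estimates come with pointwise control of spatial derivatives of the Stokes kernel, which directly yield the $L^p$--$L^q$ gradient bound, again with $1<p,q<\infty$ (or $q=\infty$, $p>1$) because of the Helmholtz projection.

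I expect the only genuinely delicate point to be the Stokes case on the half-space: unlike the heat semigroup, $e^{\mathbb P\Delta_D t}$ on $\mathbb R^N_+$ is not a convolution and its kernel has a nonlocal part coming from the boundary, so one cannot simply reflect. The cleanest route is to quote the kernel bounds of \cite{KLT23} (which is consistent with how Theorem~\ref{t.sem.es.0} was proved) rather than rederive them, so that $\|\nabla^k\,(\text{Stokes kernel at time }t)\|$ integrated in the appropriate variable scales like $t^{-k/2}$ times the $L^p$--$L^q$ factor; alternatively one composes $\nabla^k e^{\mathbb P\Delta_D t/2}$, bounded on $L^q$ by $C t^{-k/2}$ via analyticity of the Stokes semigroup together with the $L^q$ resolvent/Stokes-operator domain characterization (Lemma~\ref{l.res.eq.} and the references \cite{SS12,FS94} cited there), with $e^{\mathbb P\Delta_D t/2}\mathbb P$ from Theorem~\ref{t.sem.es.0}. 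The rest is bookkeeping: handle $t$ large by iterating the $L^q$--$L^q$ analytic-semigroup bound, and verify the exponents add up to $t^{-k/2-\frac N2(1/p-1/q)}$, noting that the restriction $\Omega=\mathbb R^N,\mathbb R^N_+$ (no exterior domain) in this theorem is precisely because the exterior-domain gradient estimates are \emph{not} the same and will be treated separately.
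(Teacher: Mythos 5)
Your proposal is correct and follows essentially the same route as the paper, which disposes of this theorem in one sentence: explicit Green functions (Gaussian and its reflections) for the heat semigroups on $\mathbb R^N$ and $\mathbb R^N_+$, the identity \eqref{St-Heat.sem.RN} plus $L^p$-boundedness of $\mathbb P$ for the Stokes case on $\mathbb R^N$, and the kernel estimates of \cite{KLT23} for the Stokes semigroup on the half-space. Your write-up simply supplies the details (the $t^{-k/2}$ scaling of $\nabla^k G_t$ in $L^1$, the splitting $\nabla^k e^{At}=\nabla^k e^{At/2}\circ e^{At/2}$ combined with Theorem \ref{t.sem.es.0}) that the paper leaves implicit.
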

The proof again follows by the explicit formula of the Green functions for the heat case and from the estimates of \cite{KLT23} for the Stokes semigroup. For the exterior case, we consider only $e^{\Delta_Nt}$:
\begin{thm}\label{t.der.sem.}
Let $N\ge 3$, $p\in[1,\infty]$, let $\Omega\subseteq \mathbb R^N$ be an exterior domain with sufficiently smooth boundary, then 
$$ \left\|\nabla e^{\Delta_Nt}f\right\|_{L^\infty(\Omega)}\le C(\Omega,p) t^{-\frac{1}{2}-\frac{N}{2p}}\|f\|_{L^p(\Omega)}\quad t>0. $$
\end{thm}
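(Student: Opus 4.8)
The plan is to reduce Theorem \ref{t.der.sem.} to a pointwise Gaussian bound for the gradient of the Neumann heat kernel $p^N_t(x,y)$ of $e^{\Delta_N t}$ on $\Omega$, namely
\begin{equation}\label{eq.plan.grad.kernel}
    \bigl|\nabla_x p^N_t(x,y)\bigr|\le C(\Omega)\,t^{-\frac{N+1}{2}}\,e^{-c\frac{|x-y|^2}{t}},\qquad x,y\in\Omega,\ t>0 .
\end{equation}
Granting \eqref{eq.plan.grad.kernel}, for $f\in L^p(\Omega)$ one writes $\nabla e^{\Delta_N t}f(x)=\int_\Omega\nabla_x p^N_t(x,y)f(y)\,dy$ and applies H\"older's inequality in $y$: since $\|\nabla_x p^N_t(x,\cdot)\|_{L^{p'}(\Omega)}\le C\,t^{-\frac{N+1}{2}}\,t^{\frac{N}{2p'}}=C\,t^{-\frac12-\frac{N}{2p}}$ uniformly in $x$ (including the endpoints $p=1,\infty$), taking the supremum over $x$ gives exactly the asserted estimate. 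Thus the whole content is \eqref{eq.plan.grad.kernel}, which I would establish separately for $0<t\le1$ and for $t\ge1$.

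For small times $0<t\le1$ I would argue by localization and a parametrix. Fix a large ball $B$ containing $\mathbb R^N\setminus\Omega$ together with a collar of $\partial\Omega$, and a partition of unity subordinate to a finite cover of $\overline B\cap\overline\Omega$ by charts flattening $\partial\Omega$ (here the smoothness of $\partial\Omega$ enters). Away from $\partial\Omega$ one compares $p^N_t$ with the explicit Gaussian kernel of $\mathbb R^N$; in each boundary chart, after straightening the boundary by a smooth diffeomorphism, $p^N_t$ is a variable‑coefficient perturbation of the half‑space Neumann heat kernel, whose gradient satisfies \eqref{eq.plan.grad.kernel} classically and consistently with Theorem \ref{t.sem.es.1-easy}. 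Gluing the pieces produces error terms in which a derivative falls on a cutoff function; these are of strictly lower order in $t$, supported off the diagonal, and are absorbed using the $L^p$–$L^q$ bound of Theorem \ref{t.sem.es.0} together with the off‑diagonal decay. (Equivalently one may invoke the standard parabolic boundary regularity for the heat equation under homogeneous Neumann conditions on a $C^2$ domain.)

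For large times $t\ge1$ I would use that $p^N_t$ itself obeys the global two‑sided Gaussian bound $p^N_t(x,y)\le C\,t^{-N/2}e^{-c|x-y|^2/t}$: the exterior of a smooth compact set is a uniform domain of infinite measure with volume doubling and scaled Poincar\'e inequality, so this is part of the heat‑kernel estimates underlying Theorem \ref{t.sem.es.0} (see \cite{IMT18}, \cite{I09}). To upgrade the scalar bound to \eqref{eq.plan.grad.kernel} I would apply a scale‑invariant parabolic gradient estimate (of Bernstein / Li--Yau / De Giorgi--Nash--Moser type): a solution $w$ of $(\partial_t-\Delta)w=0$ on a parabolic cylinder $Q=(t/2,t)\times D$ satisfies $|\nabla_x w(t,x)|\le C\,t^{-1/2}\sup_Q|w|$, and the same holds in a cylinder touching $\partial\Omega$ when $\partial_\nu w=0$ there, with a constant depending only on the fixed $C^2$‑geometry of $\partial\Omega$ because for $t\ge1$ the obstacle is "unit sized" at the relevant spatial scale $\sqrt t$. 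Applying this to $w(s,\cdot)=p^N_s(\cdot,y)$ on cylinders of spatial radius $\sim\sqrt t$ around $x$ and inserting the scalar Gaussian bound yields \eqref{eq.plan.grad.kernel} for $t\ge1$, and combining the two ranges finishes the proof.

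The main obstacle is precisely the extra half power $t^{-1/2}$ in the large‑time regime for the exterior geometry. Unlike for $\mathbb R^N$ or $\mathbb R^N_+$, it cannot be produced by composing the semigroup, $e^{\Delta_N t}=e^{\Delta_N t/2}e^{\Delta_N t/2}$, because at the fixed scale $t/2\gtrsim1$ the small‑time gradient kernel is only $O(1)$ rather than $O(t^{-1/2})$, so such a splitting reproduces only the decay $t^{-N/(2p)}$. Extracting the missing half power requires interior/boundary parabolic regularity at the correct scale $\sqrt t$, which is where the smoothness of $\partial\Omega$ is genuinely used; alternatively this step is exactly the gradient heat‑kernel estimate available in the exterior‑domain literature cited above.
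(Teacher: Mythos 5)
Your reduction of the theorem to a pointwise Gaussian bound $|\nabla_x p^N_t(x,y)|\le C\,t^{-\frac{N+1}{2}}e^{-c|x-y|^2/t}$, followed by H\"older's inequality in $y$, is sound, and the small-time part ($0<t\le 1$) via localization and a parametrix is standard for a compact $C^2$ boundary. The problem is the large-time step, which is not a technicality but the entire content of the theorem, and your justification for it does not hold up. You invoke a scale-invariant parabolic gradient estimate on cylinders of spatial radius $\sqrt t$ and claim the boundary version holds ``with a constant depending only on the fixed $C^2$-geometry of $\partial\Omega$.'' Standard boundary regularity for the Neumann problem gives $|\nabla w|\le CR^{-1}\sup_{Q_R}|w|$ only for radii $R$ up to a fixed flatness scale $R_0(\Omega)$; to extract $t^{-1/2}$ you must take $R\sim\sqrt t\to\infty$, and after the parabolic rescaling $x\mapsto x/\sqrt t$ the obstacle shrinks to a point and its curvature blows up like $\sqrt t$, so the rescaled domains do \emph{not} have uniformly controlled geometry. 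Making the constant uniform in $t$ is a genuine small-obstacle/removability argument that uses the Neumann condition in an essential quantitative way; it is precisely the theorem of \cite{I09} that the paper cites, so the step you label ``alternatively this is exactly the gradient heat-kernel estimate in the literature'' is not an alternative — it is the proof.

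A concrete reason your soft argument cannot suffice: the analogous statement is \emph{false} for the Dirichlet Laplacian on the same exterior domains. As recalled right after Corollary \ref{c.der.sem.gen.}, for $t\ge 1$ one only has $\|\nabla e^{\Delta_D t}f\|_{L^q}\lesssim t^{-N/(2q)}\|f\|_{L^q}$ for $q>N$, with no gain of $t^{-1/2}$ (for $q=\infty$ there is no decay at all, because the solution is forced to drop to zero across the fixed $O(1)$-thick boundary layer of the obstacle). Your Bernstein/De Giorgi–Nash–Moser cylinder argument, as written, makes no quantitative use of the boundary condition beyond asserting that the estimate ``also holds'' when $\partial_\nu w=0$; an argument of that generality would apply equally to the Dirichlet case and contradict the known sharp behaviour. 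So either you must carry out the uniform-in-$t$ Neumann boundary gradient estimate at scale $\sqrt t$ in detail (the hard analysis of \cite{I09}), or you should simply cite \cite{I09} for the large-time bound, which is exactly what the paper does.
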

The proof follows from \cite{I09} and, as a corollary, we get the $L^p-L^q$ estimate for the gradient:
\begin{cor}\label{c.der.sem.gen.}
    Let $N\ge 3$, $p\in[1,\infty]$, $q\in[2,\infty]$ with $p\le q$ and $\Omega\subseteq\mathbb R^N$ be an exterior domain with sufficiently smooth boundary, then 
    $$ \left\|\nabla e^{\Delta_Nt}f\right\|_{L^q(\Omega)} \le C(\Omega,p,q) t^{-\frac{1}{2}-\frac{N}{2}\left(\frac{1}{p}-\frac{1}{q}\right)}\|f\|_{L^p(\Omega)}\quad t>0. $$
\end{cor}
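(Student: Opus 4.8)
The plan is to obtain the estimate by interpolating the target exponent between $q=\infty$, which is exactly Theorem~\ref{t.der.sem.}, and $q=2$, which I would derive from self-adjointness of the Neumann Laplacian together with the smoothing of Theorem~\ref{t.sem.es.0}.

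First I would record the $L^2\to L^2$ gradient bound. Since $\Delta_N$ is self-adjoint and non-positive on $L^2(\Omega)$ and $e^{\Delta_N t}g\in D(\Delta_N)$ for $t>0$, integration by parts (the Neumann condition annihilates the boundary term) gives $\|\nabla e^{\Delta_N t}g\|_{L^2(\Omega)}^2=\langle -\Delta_N e^{\Delta_N t}g,e^{\Delta_N t}g\rangle=\|(-\Delta_N)^{1/2}e^{\Delta_N t}g\|_{L^2(\Omega)}^2$, so that $\|\nabla e^{\Delta_N t}g\|_{L^2(\Omega)}\le C\big(\sup_{\lambda\ge 0}\sqrt{\lambda}\,e^{-\lambda t}\big)\|g\|_{L^2(\Omega)}\le C t^{-1/2}\|g\|_{L^2(\Omega)}$ by the spectral theorem. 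Writing $e^{\Delta_N t}=e^{\Delta_N t/2}e^{\Delta_N t/2}$ and feeding the inner factor through Theorem~\ref{t.sem.es.0} (which forces $p\le 2$) then yields, for $p\in[1,2]$,
$$\|\nabla e^{\Delta_N t}f\|_{L^2(\Omega)}\le C t^{-1/2}\|e^{\Delta_N t/2}f\|_{L^2(\Omega)}\le C\,t^{-\frac12-\frac N2\left(\frac1p-\frac12\right)}\|f\|_{L^p(\Omega)},$$
which is the claimed bound for $q=2$.

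Next I would interpolate in the target. For fixed $t>0$, the operator $T_t\coloneqq\nabla e^{\Delta_N t}$ maps $L^p(\Omega)$ into $L^2(\Omega)$ (the bound just proved, $p\le 2$) and into $L^\infty(\Omega)$ (Theorem~\ref{t.der.sem.}); by Riesz--Thorin with the source space $L^p$ held fixed and the target interpolated between $L^2$ and $L^\infty$, $T_t$ maps $L^p(\Omega)$ into $L^q(\Omega)$ for every $q\in[2,\infty]$, and multiplying the two endpoint bounds with the interpolation weights produces precisely the exponent $-\frac12-\frac N2\left(\frac1p-\frac1q\right)$ of $t$. This settles the corollary for $p\in[1,2]$; the residual range $2<p\le q$ follows by one further interpolation, now between the estimate $L^2\to L^{2q/p}$ already obtained (admissible since $2q/p\ge 2$) and the bound $L^\infty\to L^\infty$ of Theorem~\ref{t.der.sem.}, which again reproduces the same power of $t$.

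The one point that needs care is the hypothesis $q\ge 2$: the self-adjoint argument only delivers the gradient bound with target $L^2$, and pushing the target below $L^2$ on an unbounded exterior domain would demand $L^1$-type gradient estimates for $e^{\Delta_N t}$ that are not available at this level of generality — which is exactly why the statement is restricted to $q\in[2,\infty]$. Beyond that, the proof is routine interpolation bookkeeping on top of Theorems~\ref{t.sem.es.0} and~\ref{t.der.sem.}.
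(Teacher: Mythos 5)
Your proof is correct and uses the same ingredients as the paper: the spectral-theorem bound $\|\nabla e^{\Delta_N t}f\|_{L^2}=\|(-\Delta_N)^{1/2}e^{\Delta_N t}f\|_{L^2}\lesssim t^{-1/2}\|f\|_{L^2}$, the $L^p$--$L^\infty$ gradient estimate of Theorem~\ref{t.der.sem.}, the splitting $e^{\Delta_N t}=e^{\Delta_N t/2}e^{\Delta_N t/2}$ together with Theorem~\ref{t.sem.es.0}, and Riesz--Thorin interpolation. The only difference is bookkeeping order — the paper first interpolates to get the diagonal bounds $\|\nabla e^{\Delta_N t}\|_{L^q\to L^q}\lesssim t^{-1/2}$ for $q\in[2,\infty]$ and then composes with the $L^p$--$L^q$ smoothing of $e^{\Delta_N t/2}$, which handles all $1\le p\le q$ at once, whereas you compose first and then interpolate, requiring the extra case split at $p=2$; both are sound.
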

\begin{proof}\hfill\\
It is easy to see that
$$ \|\nabla e^{\Delta_Nt}f\|_{L^2(\Omega)}=\|(-\Delta_N)^{1/2}e^{\Delta_Nt}f\|_{L^2(\Omega)}. $$
On the other hand 
$$ \|(-\Delta_N)^{1/2}e^{\Delta_Nt}f\|_{L^2(\Omega)}\lesssim t^{-1/2}\|f\|_{L^2(\Omega)}, $$
which comes from Theorem 4.6 of page 101 in \cite{EN06} and Spectral Theorem. Interpolating this estimate with the one of Theorem \ref{t.der.sem.}, we get
$$ \|\nabla e^{\Delta_Nt}f\|_{L^p(\Omega)}\lesssim t^{-\frac{1}{2}}\|f\|_{L^p(\Omega)}\quad p\in[2,\infty]. $$
Finally
$$ \|\nabla e^{\Delta_Nt}f\|_{L^p(\Omega)}\lesssim t^{-\frac{1}{2}}\|e^{\Delta_Nt/2}f\|_{L^p(\Omega)}\lesssim t^{-\frac{1}{2}-\frac{N}{2}\left(\frac{1}{p}-\frac{1}{q}\right)}. $$
\end{proof}

We will not use them, but in the exterior case we can find similar estimates also for the Dirichlet case, but they are weaker:
$$ \|\nabla e^{\Delta_Dt} f\|_{L^q(\Omega)}\lesssim \left\{\begin{array}{ll}
    t^{-\frac{1}{2}}\|f\|_{L^q(\Omega)} & 0<t\le 1 \\
    t^{-\mu}\|f\|_{L^q(\Omega)} & t\ge 1,
\end{array}\right. $$
with
$$ \mu =\left\{\begin{array}{ll}
    \frac{1}{2} & q\in[1,N] \\
    \frac{N}{2q} & q\in(N,\infty].
\end{array}\right. $$
Similar are the ones for the Stokes equation.

\section{Linear Estimates}\label{sec.lin.es.}

In the following, when it is not specified, $\Omega=\mathbb R^N, \mathbb R^N_+$ or $\Omega\subseteq \mathbb R^N$ is an exterior domain with sufficiently smooth boundary where $N\ge 3$.

In system \eqref{EL.sys.eta-ass.} the condition $|\eta +d|=1$ is difficult to be treated. For this reason, we consider the system without the constrain:
\begin{equation}\label{EL.sys.red.}
    \left\{\begin{array}{ll}
       (\partial_t-\Delta)u+\nabla p+u\cdot \nabla u=-{\rm Div}\left(\nabla d\odot\nabla d\right)  & (0,T)\times \Omega \\
       {\rm div}u=0 & (0,T)\times\Omega \\
       (\partial_t-\Delta)d +u\cdot \nabla d = |\nabla d|^2(\eta+d) & (0,T)\times \Omega \\
       u=0, \quad \partial_\nu d=0 & (0,T)\times \partial\Omega \\
       u(0)=u_0, \quad d(0)=d_0 & \Omega. 
    \end{array}\right.     
\end{equation}
In fact, in \cite{HNP16} it is proved the following fact: if $(u,v)$ solves \eqref{EL.sys.} without the condition $|v|=1$ and if $(u_0,v_0)$ are sufficiently regular and $|v_0|=1$, then $|v(t)|=1$ for a.e. $t\in(0,T)$. We will prove with the same argument that also in our case such a property holds. For this reason the study of the system \eqref{EL.sys.red.} is justified. 

Let us consider them the linear system
\begin{equation}\label{lin.sys.}
    \left\{\begin{array}{ll}
        (\partial_t-\Delta)u + \nabla p=f & (0,T)\times \Omega \\
        {\rm div}u=0 & (0,T)\times\Omega \\
        (\partial_t-\Delta)d=g & (0,T)\times \Omega \\
        u=0, \quad \partial_\nu d=0 & (0,T)\times \partial\Omega \\
        u(0)=u_0,\quad d(0)=d_0 & \Omega,
    \end{array}\right.
\end{equation}
and the correspondent one after the projection 
\begin{equation}\label{lin.sys-proj.}
    \left\{\begin{array}{ll}
        (\partial_t-\mathbb P\Delta)u=\mathbb Pf & (0,T)\times \Omega \\
        {\rm div}u=0 & (0,T)\times\Omega \\
        (\partial_t-\Delta)d=g & (0,T)\times \Omega \\
        u=0, \quad \partial_\nu d=0 & (0,T)\times \partial\Omega \\
        u(0)=u_0,\quad d(0)=d_0 & \Omega.
    \end{array}\right.
\end{equation}

Let us consider the problem
$$ \left\{\begin{array}{ll}
    (\partial_t-A)w=f & (0,T)\times\Omega \\
    w(0)=w_0 & \Omega.
\end{array}\right. $$
If $A$ generates a semigroup $e^{At}$, we know that the weak solution can be represented by the Duhamel Formula:
$$ w(t)=e^{At}w_0 + \int_0^t e^{A(t-\tau)}f(\tau)d\tau. $$
We want to prove some smoothing estimates on $w$. More precisely, we want to prove that $w\in X^s_T(\Omega)$ for some $s,T\ge 0$ if $w_0$ and $f$ satisfy some suitable conditions (we recall the definition of $X^s_T(\Omega)$ from \eqref{def.X}). We divided the analysis in the homogeneous term and the inhomogeneous one. The first one is easier:
\begin{lem}\label{l.sm.es.hom.1}
    Let $s\ge 0$, $T\in(0,\infty]$, let $A=\mathbb P\Delta _D,\Delta_N$ and $w_0\in H^s_A(\Omega)$, then $w(t)\in H^{s+1}_A(\Omega)$ for a.e. $t\in (0,T)$ and 
    \begin{equation}\label{sm.es.hom.1}
     \left\|(-A)^{s/2}e^{At}w_0\right\|_{L^\infty((0,T);L^2(\Omega))} + \left\|(-A)^{s/2+1/2}e^{At}w_0\right\|_{L^2((0,T);L^2(\Omega))} \le C(\Omega,s) \|w_0\|_{H^s_A(\Omega)}.
\end{equation}
\end{lem}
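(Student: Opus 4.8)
The plan is to argue entirely via the spectral calculus for the negative self-adjoint operator $A$ (with $A = \mathbb P\Delta_D$ or $A = \Delta_N$), using the equivalence of $H^s_A(\Omega)$ with $W^{s,2}(\Omega)$ established in Lemma \ref{l.res.eq.} only at the very end; the estimate \eqref{sm.es.hom.1} itself is stated purely in terms of powers of $-A$, so it is really an abstract semigroup statement. Write $-A = \int_0^\infty \lambda\, dE_\lambda$ for the spectral resolution. For the $L^\infty_t L^2_x$ term, the key pointwise bound is that $\lambda^{s}e^{-2\lambda t}\le \lambda^{s}$ for all $t\ge 0$, hence
$$ \left\|(-A)^{s/2}e^{At}w_0\right\|_{L^2(\Omega)}^2 = \int_0^\infty \lambda^{s}e^{-2\lambda t}\,d\|E_\lambda w_0\|_{L^2}^2 \le \int_0^\infty \lambda^{s}\,d\|E_\lambda w_0\|_{L^2}^2 = \left\|(-A)^{s/2}w_0\right\|_{L^2(\Omega)}^2, $$
uniformly in $t\in(0,T)$, which controls the first summand by $\|w_0\|_{H^s_A(\Omega)}$.

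For the $L^2_t L^2_x$ term I would use Tonelli to exchange the time integral and the spectral integral, and then the elementary identity $\int_0^\infty \lambda e^{-2\lambda t}\,dt = \tfrac12$ (for $T=\infty$; for finite $T$ one simply bounds $\int_0^T \le \int_0^\infty$). Concretely,
$$ \left\|(-A)^{(s+1)/2}e^{At}w_0\right\|_{L^2((0,T);L^2(\Omega))}^2 = \int_0^T\!\!\int_0^\infty \lambda^{s+1}e^{-2\lambda t}\,d\|E_\lambda w_0\|_{L^2}^2\,dt \le \int_0^\infty \lambda^{s}\Big(\int_0^\infty \lambda e^{-2\lambda t}\,dt\Big) d\|E_\lambda w_0\|_{L^2}^2 = \tfrac12\left\|(-A)^{s/2}w_0\right\|_{L^2(\Omega)}^2. $$
This also shows $e^{At}w_0 \in D((-A)^{(s+1)/2}) = H^{s+1}_A(\Omega)$ for a.e. $t$, since the integrand $\lambda^{s+1}e^{-2\lambda t}$ is finite against $d\|E_\lambda w_0\|^2$ for a.e. $t$ by Fubini (the double integral being finite). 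Finally one absorbs the $1$ in $H^s_A$ versus $(-A)^{s/2}$: since $-A\ge 0$, $\|(-A)^{s/2}w_0\|_{L^2}\le \|(1-A)^{s/2}w_0\|_{L^2} = \|w_0\|_{H^s_A(\Omega)}$, giving the stated constant (depending only on $\Omega,s$, in fact an absolute one after this reduction).

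There is no serious obstacle here; the only mild point requiring care is the interchange of integrals and the a.e.-in-$t$ membership in $H^{s+1}_A$, which is a routine Tonelli/Fubini argument once one notes that the full space-time-spectral triple integral is finite. A secondary remark: the lemma is stated for $A=\mathbb P\Delta_D,\Delta_N$ rather than a general negative self-adjoint $A$ only because later we invoke Lemma \ref{l.res.eq.} to translate $H^{s+1}_A$-regularity into genuine $H^{s+1}(\Omega)$-regularity (valid for $s+1\in[0,2]$), and because the semigroups $e^{\mathbb P\Delta_D t}$ and $e^{\Delta_N t}$ are the ones actually appearing in the Duhamel formula for \eqref{lin.sys-proj.}; the estimate \eqref{sm.es.hom.1} as written holds verbatim for any such $A$.
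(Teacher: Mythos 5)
Your proof is correct, and it takes a genuinely different route from the paper's. The paper regularizes the datum, setting $w_0^\varepsilon=(1-\varepsilon A)^{-m/2}w_0$ so that $w^\varepsilon(t)=e^{At}w_0^\varepsilon$ is a classical solution, reduces to $s=0$ by commuting $(1-A)^{s/2}$ through the equation, derives the energy identity $\tfrac12\tfrac{d}{dt}\|w^\varepsilon\|_{L^2}^2+\|(-A)^{1/2}w^\varepsilon\|_{L^2}^2=0$ by testing against $w^\varepsilon$, integrates in time, and finally passes to the limit $\varepsilon\to0^+$. You instead compute both norms directly from the spectral resolution of $-A$, using the pointwise bound $\lambda^se^{-2\lambda t}\le\lambda^s$ for the $L^\infty_tL^2_x$ term and Tonelli together with $\int_0^\infty\lambda e^{-2\lambda t}\,dt=\tfrac12$ for the $L^2_tL^2_x$ term; this dispenses with the regularization and limit-passage entirely, gives explicit absolute constants, and handles the a.e.\ membership in $H^{s+1}_A(\Omega)$ cleanly (in fact the semigroup smoothing gives it for every $t>0$). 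The two computations encode the same identity — integrating the paper's energy balance in $t$ is exactly your Tonelli step — but your presentation is shorter and more self-contained for this homogeneous term, whereas the paper's energy-method template has the advantage of carrying over verbatim to the inhomogeneous Duhamel term in Lemma \ref{l.sm.es.inh.1}, where the spectral measure of the solution is no longer given by a single explicit multiplier of that of the datum. Your closing remarks (reduction of $H^s_A$ to $(-A)^{s/2}$ via $\lambda^s\le(1+\lambda)^s$, and the reason the statement is restricted to $A=\mathbb P\Delta_D,\Delta_N$) are also accurate.
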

\begin{proof}\hfill\\
Let $\varepsilon>0$ and $m\in\mathbb N$ with $m\ge \max\{s,2\}$, then we define
$$ w_0^\varepsilon\coloneqq (1-\varepsilon A)^{-m/2}w_0,\quad  w^\varepsilon(t)= e^{At}w_0^\varepsilon. $$
It is easy to see that 
$$ w^\varepsilon\in C((0,T);H^{s+m}_A(\Omega))\cap C^1((0,T);H^{s+m-2}_A(\Omega)). $$
By definition, $w^\varepsilon$ solves the equation
\begin{equation}\label{approx.lin.sys.}
    \left\{\begin{array}{ll}
    (\partial_t-A)w^\varepsilon=0 & (0,T)\times\Omega \\
    w^\varepsilon(0)=w_0^\varepsilon & \Omega. 
\end{array}\right.
\end{equation}
We notice that
$$ (1-A)^{s/2}w_0^\varepsilon=(1-\varepsilon A)^{-m/2}(1-A)^{s/2}w_0, $$
$$ (1-A)^{s/2}(\partial_t-A)w^\varepsilon= (\partial_t-A)(1-A)^{s/2}w^\varepsilon=(\partial_t-A)(1-\varepsilon A)^{-m/2}e^{At}(1-A)^{s/2}w_0. $$
Therefore, up to take $(1-A)^{s/2}w_0$ in place of $w_0$, we can suppose $s=0$. Then, if we multiply \eqref{approx.lin.sys.} by $w^\varepsilon$, we get that 
$$ \frac{1}{2}\frac{d}{dt}\|w^\varepsilon(\tau)\|_{L^2(\Omega)}^2 + \|(-A)^{1/2}w^\varepsilon(\tau)\|_{L^2(\Omega)}^2= 0\quad \tau\in(0,t). $$
So if we integrate in $(0,t)$ and then we take the $\sup$ over $t\in(0,T)$, we get 
$$ \frac{1}{2}\|w^\varepsilon\|_{L^\infty((0,T);L^2(\Omega))} + \|(-A)^{1/2}w^\varepsilon\|_{L^2((0,T);L^2(\Omega))} =\frac{1}{2}\|w_0^\varepsilon\|_{L^2(\Omega)}. $$
By resolvent estimate
$$ \|w_0^\varepsilon\|_{L^2(\Omega)}\le \|w_0\|_{L^2(\Omega)}. $$
Moreover, $w^\varepsilon(t)\to w(t)$ in $L^2(\Omega)$ as $\varepsilon\to0^+$ for a.e. $t\in(0,T)$. This means that $w(t)\in H^1_A(\Omega)$ for a.e. $t\in (0,T)$ and passing through the limit as $\varepsilon\to0^+$ we conclude.
\end{proof}
The previous estimate gives us many information about the term $e^{At}w_0$. Anyway, as it can be seen from definition \eqref{def.X}, in order to estimate $e^{At}w_0$ in $X^s_T(\Omega)$, we also need to bound the $L^2((0,T);L^2(\Omega))$. For this reason, we need to use the dispersive estimates we introduced in Section \ref{subsec.sem.est.}. In particular, the global case forces us to ask $w_0\in L^1(\Omega)$:
\begin{lem}\label{l.sm.es.hom.2}
    Let $s\in[0,2]$, $T>0$, let $A=\mathbb P\Delta_D,\Delta_N$ and $w_0\in H^s_A(\Omega)$, then $e^{At}w_0\in C((0,T);H^s_A(\Omega))$ and
    \begin{equation}\label{sm.es.hom.2}
     \left\|e^{At}w_0\right\|_{X^s_T(\Omega)} \le C(\Omega,s,T)\|w_0\|_{H^s_A(\Omega)}.
\end{equation}
Moreover, if $T=+\infty$ and in addiction $w_0\in L^1(\Omega)$, then $e^{At}w_0\in C(\mathbb R_+;H^s_A(\Omega))$ and
    \begin{equation}\label{sm.es.hom.3}
     \left\|e^{At}w_0\right\|_{X^s(\Omega)} \le C(\Omega,s) \|w_0\|_{H^s_A\cap L^1(\Omega)}.
\end{equation}
\end{lem}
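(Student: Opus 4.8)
The plan is to bootstrap from Lemma \ref{l.sm.es.hom.1}, which already supplies the $(-A)$-weighted space–time bounds in \eqref{sm.es.hom.1}, and to upgrade them to the norms $L^\infty((0,T);H^s(\Omega))$ and $L^2((0,T);H^{s+1}(\Omega))$ defining $X^s_T(\Omega)$ (see \eqref{def.X}). The elementary device for this upgrade is the spectral inequality: since $A$ is negative and self-adjoint, the spectral theorem together with $(1+\lambda)^a\le 2^a(1+\lambda^a)$ for $\lambda\ge0$ gives, for every $a\ge0$ and every $f\in D((-A)^a)$,
$$ \|(1-A)^af\|_{L^2(\Omega)}\le 2^a\left(\|(-A)^af\|_{L^2(\Omega)}+\|f\|_{L^2(\Omega)}\right), $$
while Lemma \ref{l.res.eq.} identifies $\|(1-A)^{\sigma/2}f\|_{L^2(\Omega)}$ with $\|f\|_{H^\sigma(\Omega)}$ for $\sigma\in[0,2]$; when $s\le1$ this applies to both $\sigma=s$ and $\sigma=s+1$, and in $\mathbb R^N$ one has the equivalence for every $\sigma\ge0$ via Fourier analysis. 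Hence it suffices to control, in the appropriate space–time norm, the quantities $\|(-A)^{s/2}e^{At}w_0\|$, $\|(-A)^{(s+1)/2}e^{At}w_0\|$ and the low-order terms $\|e^{At}w_0\|$.

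Applying the spectral inequality with $a=s/2$ and $f=e^{At}w_0$, and combining the first summand of \eqref{sm.es.hom.1} with the $s=0$ instance $\|e^{At}w_0\|_{L^\infty((0,T);L^2)}\le\|w_0\|_{L^2}$ (contractivity of the analytic semigroup on $L^2(\Omega)$, resp.\ on $L^2_\sigma(\Omega)$), one obtains $\|e^{At}w_0\|_{L^\infty((0,T);H^s(\Omega))}\lesssim\|w_0\|_{H^s_A(\Omega)}$. For the higher-order part I would use the second summand of \eqref{sm.es.hom.1}, namely $\|(-A)^{(s+1)/2}e^{At}w_0\|_{L^2((0,T);L^2)}\le C\|w_0\|_{H^s_A}$, together with the spectral inequality for $a=(s+1)/2$; the only leftover is $\|e^{At}w_0\|_{L^2((0,T);L^2)}$, which for finite $T$ is harmless, $\|e^{At}w_0\|_{L^2((0,T);L^2)}\le\sqrt T\,\|e^{At}w_0\|_{L^\infty((0,T);L^2)}\le\sqrt T\,\|w_0\|_{L^2}$, giving \eqref{sm.es.hom.2} with a $T$-dependent constant. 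The continuity $e^{At}w_0\in C((0,T);H^s_A(\Omega))$, including the limit $t\to0^+$, follows because $e^{At}$ is analytic on $L^2(\Omega)$ and its generator commutes with $(-A)^{s/2}$, so $e^{At}$ restricts to a strongly continuous semigroup on $H^s_A(\Omega)$: for $t>0$ use analyticity, and as $t\to0^+$ write $(-A)^{s/2}e^{At}w_0=e^{At}(-A)^{s/2}w_0\to(-A)^{s/2}w_0$ in $L^2(\Omega)$ since $(-A)^{s/2}w_0\in L^2(\Omega)$.

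The case $T=+\infty$ is where the actual work lies, and the only new ingredient is the uniform-in-time bound
$$ \|e^{At}w_0\|_{L^2(\mathbb R_+;L^2(\Omega))}\le C\,\|w_0\|_{H^s_A\cap L^1(\Omega)}, $$
since all the $(-A)$-weighted estimates of Lemma \ref{l.sm.es.hom.1} already hold for $T=\infty$. I would split $\int_0^\infty\|e^{At}w_0\|_{L^2(\Omega)}^2\,dt$ at $t=1$: on $(0,1)$ bound it by $\|w_0\|_{L^2(\Omega)}^2$ using contractivity, and on $(1,\infty)$ use the $L^p$–$L^2$ decay of Theorem \ref{t.sem.es.0}. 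For the heat operators $A=\Delta_D,\Delta_N$ this yields $\|e^{At}w_0\|_{L^2(\Omega)}\lesssim t^{-N/4}\|w_0\|_{L^1(\Omega)}$, and since $N\ge3$ one has $N/2>1$, so $\int_1^\infty t^{-N/2}\,dt<\infty$. For the Stokes operator $A=\mathbb P\Delta_D$ the endpoint $p=1$ is not admissible in Theorem \ref{t.sem.es.0}; instead, noting that $w_0\in H^s_{\mathbb P\Delta_D}(\Omega)\subseteq J_2(\Omega)$, so $\mathbb Pw_0=w_0$, I would pick $p\in\bigl(1,\tfrac{2N}{N+2}\bigr)$ — a nonempty interval precisely because $N\ge3$ — apply $\|e^{\mathbb P\Delta_Dt}w_0\|_{L^2(\Omega)}\lesssim t^{-\frac N2(\frac1p-\frac12)}\|w_0\|_{L^p(\Omega)}$ with decay exponent $\frac N2(\frac1p-\frac12)>\frac12$, and estimate $\|w_0\|_{L^p(\Omega)}\lesssim\|w_0\|_{L^1\cap L^2(\Omega)}$ by interpolation (using $w_0\in H^s_A\subseteq L^2$). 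Combining this $L^2_{t,x}$ bound with the $(-A)$-weighted bounds of Lemma \ref{l.sm.es.hom.1} through the spectral inequality and Lemma \ref{l.res.eq.} produces \eqref{sm.es.hom.3}, and continuity on $\mathbb R_+$ is exactly as above. I expect the main obstacle to be precisely this last estimate — making the space–time $L^2$ norm finite on the whole half-line — which is what forces the $L^1$ hypothesis and, in the Stokes case, the small detour through an exponent $p>1$ that only closes because $N\ge3$; everything else is bookkeeping on top of Lemma \ref{l.sm.es.hom.1}.
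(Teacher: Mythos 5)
Your proof is correct and follows essentially the same route as the paper: the $(-A)$-weighted bounds of Lemma \ref{l.sm.es.hom.1}, the spectral inequality together with Lemma \ref{l.res.eq.} to pass to Sobolev norms, and the $L^p$--$L^2$ decay of Theorem \ref{t.sem.es.0} to control the remaining $L^2((0,T);L^2(\Omega))$ piece, split at $t=1$ when $T=\infty$. In fact your treatment of the Stokes case is more careful than the paper's: the paper invokes the $t^{-N/4}\|w_0\|_{L^1}$ bound indiscriminately, even though Theorem \ref{t.sem.es.0} excludes $p=1$ for $e^{\mathbb P\Delta_D t}$, whereas your detour through $p\in\bigl(1,\tfrac{2N}{N+2}\bigr)$ with interpolation against $L^2$ closes that gap cleanly for $N\ge 3$.
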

\begin{proof}\hfill\\
Let us turn back to the estimate 
$$ \|w^\varepsilon\|_{L^\infty((0,T);H^s_A(\Omega))} + \|(-A)^{1/2}w^\varepsilon\|_{L^2((0,T);H^s_A(\Omega))} \lesssim \|w_0^\varepsilon\|_{H^s_A(\Omega)}, $$
where
$$ w_0^\varepsilon\coloneqq (1-\varepsilon A)^{-m/2}w_0,\quad  w^\varepsilon(t)= e^{At}w_0^\varepsilon. $$
By Spectral Theory, it can be seen that 
$$ \|(1-A)^{s/2}g\|_{L^2(\Omega)}\lesssim \|g\|_{L^2(\Omega)} + \|(-A)^{s/2}g\|_{L^2(\Omega)}\quad \forall g\in H^s_A(\Omega). $$
We need to estimate the $L^2((0,T);L^2(\Omega))$-norm of $w^\varepsilon$. On the other hand, by the resolvent estimate
$$ \|w^\varepsilon\|_{L^2((0,T);L^2(\Omega))}\lesssim \|w\|_{L^2((0,T);L^2(\Omega))}. $$
So by Theorem \ref{t.sem.es.0}
$$ \|e^{A t}w_0\|_{L^2(\Omega)}\lesssim \left\{\begin{array}{l}
    \|w_0\|_{L^2(\Omega)}  \\
    t^{-N/4}\|w_0\|_{L^1(\Omega)}.
\end{array}\right. $$
If $T<+\infty$
$$ \|e^{A t}w_0\|_{L^2((0,T);L^2(\Omega))}\le C(T)\|e^{A t}w_0\|_{L^\infty((0,T);L^2(\Omega))}\le C(T)\|w_0\|_{L^2(\Omega)}. $$
On the other hand, when $T=+\infty$ 
$$ \|e^{A t}w_0\|_{L^2((1,\infty);L^2(\Omega))}\lesssim \|w_0\|_{L^1(\Omega)}\|t^{-N/4}\|_{L^2((1,\infty))}\lesssim \|w_0\|_{L^1(\Omega)}. $$
Therefore we conclude thanks to Lemma \ref{l.res.eq.}.
\end{proof}

Let us pass to the inhomogeneus term:
$$ \int_0^t e^{A(t-\tau)}f(\tau)d\tau. $$
In this case, we cannot ask $f(t)\in H^s_A(\Omega)$ for some $s>0$, therefore we need to use a different approach:
\begin{lem}\label{l.sm.es.inh.1}
Let $s\in[0,1]$, $T \in (0,\infty]$, let $A=\Delta_D,\Delta_N,\mathbb P\Delta_D$, let 
$$ f\in L^\frac{2}{2-s}((0,T);L^2(\Omega)), $$
let
$$ w(t)=\int_0^t e^{A(t-\tau)}f(\tau)d\tau, $$
then $w(t)\in H^{s+1}_A(\Omega)$ for a.e. $t\in(0,T)$ and
\begin{equation}\label{sm.es.inh.1}
 \left\|(-A)^{s/2}w\right\|_{L^\infty((0,T);L^2(\Omega))} + \left\|(-A)^{s/2+1/2}w\right\|_{L^2((0,T);L^2(\Omega))} \le C(\Omega,s) \|f\|_{L^\frac{2}{2-s}((0,T);L^2(\Omega))}.
\end{equation}
\end{lem}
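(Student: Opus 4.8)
The plan is to mimic the energy argument of Lemma \ref{l.sm.es.hom.1}, but now for the Duhamel term, combining it with a duality/interpolation step to absorb the low time-integrability of $f$. First I would regularize as before: set $f^\varepsilon \coloneqq (1-\varepsilon A)^{-m/2} f$ with $m$ large, and $w^\varepsilon(t)=\int_0^t e^{A(t-\tau)}f^\varepsilon(\tau)\,d\tau$, so that $w^\varepsilon$ is a genuine strong solution of $(\partial_t-A)w^\varepsilon=f^\varepsilon$, $w^\varepsilon(0)=0$, lying in $C((0,T);H^{s+m}_A)\cap C^1((0,T);H^{s+m-2}_A)$. Using that $(1-A)^{s/2}$ and $(1-\varepsilon A)^{-m/2}$ commute with $A$ and with $e^{At}$, and using the resolvent estimate $\|(1-\varepsilon A)^{-m/2}g\|_{L^2}\le\|g\|_{L^2}$, it suffices to prove the bound at the level of $w^\varepsilon$ with constants independent of $\varepsilon$ and then pass to the limit $\varepsilon\to 0^+$; this also gives $w(t)\in H^{s+1}_A(\Omega)$ for a.e.\ $t$. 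So the real content is the a priori estimate for the regularized equation.

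The core step is the case $s=0$ followed by interpolation. For $s=0$, multiply $(\partial_t-A)w^\varepsilon=f^\varepsilon$ by $w^\varepsilon$ and integrate over $\Omega$:
\begin{equation}
\frac{1}{2}\frac{d}{dt}\|w^\varepsilon(\tau)\|_{L^2(\Omega)}^2 + \|(-A)^{1/2}w^\varepsilon(\tau)\|_{L^2(\Omega)}^2 = \left(f^\varepsilon(\tau),w^\varepsilon(\tau)\right)_{L^2(\Omega)}.
\end{equation}
Integrating on $(0,t)$, using $w^\varepsilon(0)=0$, and bounding the right-hand side by $\|f^\varepsilon\|_{L^1((0,T);L^2)}\|w^\varepsilon\|_{L^\infty((0,T);L^2)}$, then taking the supremum in $t$ and a Young inequality, yields
\begin{equation}
\|w^\varepsilon\|_{L^\infty((0,T);L^2(\Omega))} + \|(-A)^{1/2}w^\varepsilon\|_{L^2((0,T);L^2(\Omega))} \le C\,\|f^\varepsilon\|_{L^1((0,T);L^2(\Omega))},
\end{equation}
which is the claimed inequality at $s=0$ since there $\frac{2}{2-s}=1$. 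For $s=1$, I instead test with $(-A)w^\varepsilon$ (equivalently, differentiate once in the $A$-scale): this gives
\begin{equation}
\frac{1}{2}\frac{d}{dt}\|(-A)^{1/2}w^\varepsilon\|_{L^2}^2 + \|(-A)w^\varepsilon\|_{L^2}^2 = \left(f^\varepsilon,(-A)w^\varepsilon\right)_{L^2} \le \|f^\varepsilon\|_{L^2}\|(-A)w^\varepsilon\|_{L^2},
\end{equation}
and after absorbing $\|(-A)w^\varepsilon\|_{L^2}$ and integrating in time one obtains
\begin{equation}
\|(-A)^{1/2}w^\varepsilon\|_{L^\infty((0,T);L^2(\Omega))} + \|(-A)w^\varepsilon\|_{L^2((0,T);L^2(\Omega))} \le C\,\|f^\varepsilon\|_{L^2((0,T);L^2(\Omega))},
\end{equation}
which is the claim at $s=1$ since $\frac{2}{2-s}=2$ there.

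Finally I would interpolate between the endpoints $s=0$ and $s=1$. The map $f\mapsto w$ is linear, and we have shown it is bounded
\begin{equation}
L^1((0,T);L^2(\Omega)) \longrightarrow L^\infty((0,T);H^0_A(\Omega))\cap L^2((0,T);H^1_A(\Omega))
\end{equation}
and
\begin{equation}
L^2((0,T);L^2(\Omega)) \longrightarrow L^\infty((0,T);H^1_A(\Omega))\cap L^2((0,T);H^2_A(\Omega)).
\end{equation}
Complex interpolation with parameter $\theta=s$ identifies the domain space as $L^{2/(2-s)}((0,T);L^2(\Omega))$ (interpolation of Bochner $L^p$ spaces with fixed target), and the target spaces as $L^\infty((0,T);H^s_A(\Omega))\cap L^2((0,T);H^{s+1}_A(\Omega))$ by Theorem \ref{t.int.res.2}; combined with Lemma \ref{l.res.eq.} and the Spectral-Theorem equivalence $\|(1-A)^{s/2}g\|_{L^2}\lesssim\|g\|_{L^2}+\|(-A)^{s/2}g\|_{L^2}$, this gives \eqref{sm.es.inh.1}. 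The continuity $w\in C((0,T);H^{s+1}_A)$ for a.e.\ regularity, and the a.e.\ statement $w(t)\in H^{s+1}_A$, follow from the $\varepsilon\to0$ limit together with standard properties of the Duhamel integral for analytic semigroups.

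I expect the main obstacle to be the $s=1$ endpoint and the bookkeeping of the interpolation: one must be slightly careful that testing with $(-A)w^\varepsilon$ is legitimate (this is why the regularization with $m\ge\max\{s,2\}$ is needed, so $w^\varepsilon$ is smooth enough in the $A$-scale), and that the interpolation of the target $L^\infty_tH^s_A\cap L^2_tH^{s+1}_A$ is compatible with the interpolation of the source $L^{2/(2-s)}_t L^2$ — this is routine but needs Theorem \ref{t.int.res.2} to pass from the domains of fractional powers to the interpolation scale. Everything else is the same energy computation already used in Lemma \ref{l.sm.es.hom.1}.
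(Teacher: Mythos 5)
Your proposal is correct and follows essentially the same route as the paper: the same regularization $f^\varepsilon=(1-\varepsilon A)^{-m/2}f$, the same energy identity tested against $w^\varepsilon$ at $s=0$ and against $(-A)w^\varepsilon$ (equivalently, the equation for $(-A)^{1/2}w^\varepsilon$ tested against $(-A)^{1/2}w^\varepsilon$) at $s=1$, followed by interpolation between the two endpoints. Your discussion of the interpolation bookkeeping is in fact slightly more explicit than the paper's one-line "by interpolation we conclude."
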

In this lemma we considered also the case $A=\Delta_D$ because it will be useful later, when we will try to prove a similar estimate with an higher regularity. 
\begin{proof}\hfill\\
We can suppose $f\in C((0,T);L^2(\Omega))$. Let $\varepsilon>0$ and $m\in\mathbb N$ with $m\ge \max\{s,2\}$, then we define
$$ f^\varepsilon\coloneqq (1-\varepsilon A)^{-m/2}f, \quad w^\varepsilon(t)= \int_0^t e^{A(t-\tau)}f^\varepsilon(\tau)d\tau=(1-\varepsilon A)^{-m/2}w(t). $$
It is easy to see that 
$$ w^\varepsilon\in C((0,T);H^{s+m}_A(\Omega))\cap C^1((0,T);H^{s+m-2}_A(\Omega)). $$
Moreover, by definition, $w^\varepsilon$ solves the equation
\begin{equation}
    \left\{\begin{array}{ll}
    (\partial_t-A)w^\varepsilon=f^\varepsilon & (0,T)\times\Omega \\
    w^\varepsilon(0)=0 & \Omega. 
\end{array}\right.
\end{equation}
Let us prove the estimate for $s=0$: if we multiply by $w^\varepsilon$ and integrate in $(0,t)$ we get as before
$$ \frac{1}{2}\|w^\varepsilon(t)\|_{L^2(\Omega)}^2 + \|(-A)^{1/2}w^\varepsilon\|_{L^2((0,t);L^2(\Omega))}^2 = \int_0^t \left<f^\varepsilon(\tau),w^\varepsilon(\tau)\right>_{L^2(\Omega)}d\tau. $$
On the other hand
$$ \int_0^t \left<f^\varepsilon(\tau),w^\varepsilon(\tau)\right>_{L^2(\Omega)}d\tau\le \|f^\varepsilon\|_{L^1((0,T);L^2(\Omega))}\|w^\varepsilon\|_{L^\infty((0,T);L^2(\Omega))}. $$
So, if we take the $\sup$ over $t\in(0,T)$ we get 
$$ \|w^\varepsilon\|_{L^\infty((0,T);L^2(\Omega))} + \|(-A)^{1/2}w^\varepsilon\|_{L^2((0,T);L^2(\Omega))} \lesssim \|f^\varepsilon\|_{L^1((0,T);L^2(\Omega))}. $$
It holds
$$ w^\varepsilon(t)\xrightarrow{L^2(\Omega)} w(t),\quad f^\varepsilon(t)\xrightarrow{L^2(\Omega)} f(t) \quad \text{for a.e.}\:\:t\in (0,T). $$
So, by the estimate we get that $w(t)\in H^1_A(\Omega)$ for a.e. $t\in(0,T)$ and
$$ \|w\|_{L^\infty((0,T);L^2(\Omega))} + \|(-A)^{1/2}w\|_{L^2((0,T);L^2(\Omega))} \lesssim \|f\|_{L^1((0,T);L^2(\Omega))}. $$
For the case $s=1$, we notice that
$$ \left\{\begin{array}{ll}
    (\partial_t-A)(-A)^{1/2}w^\varepsilon=(-A)^{1/2}f^\varepsilon & (0,T)\times\Omega \\
    (-A)^{1/2}w^\varepsilon(0)=0 & \Omega. 
\end{array}\right. $$
Then, multipling by $(-A)^{1/2}w^\varepsilon$, we get 
$$ \frac{1}{2}\|(-A)^{1/2}w^\varepsilon(t)\|_{L^2(\Omega)}^2 + \|(-A)w^\varepsilon\|_{L^2((0,t);L^2(\Omega))}^2 = \int_0^t \left<(-A)^{1/2}f^\varepsilon(\tau),(-A)^{1/2}w^\varepsilon(\tau)\right>_{L^2(\Omega)}d\tau. $$
This time, we notice that
$$ \left<(-A)^{1/2}f^\varepsilon,(-A)^{1/2}w^\varepsilon\right>=\left<f^\varepsilon,(-A)w^\varepsilon\right>\le \|f^\varepsilon\|_{L^2(\Omega)}\|(-A)w^\varepsilon\|_{L^2(\Omega)}, $$
so by Young's inequality
$$ \|(-A)^{1/2}w^\varepsilon(t)\|_{L^\infty((0,T);L^2(\Omega))} + \|(-A)w^\varepsilon\|_{L^2((0,t);L^2(\Omega))}^2 \lesssim \|f^\varepsilon\|_{L^2((0,T);L^2(\Omega))}. $$
Then $w(t)\in H^2_A(\Omega)$ for a.e. $t\in(0,T)$ and
$$ \|(-A)^{1/2}w\|_{L^\infty((0,T);L^2(\Omega))} + \|(-A)w\|_{L^2((0,T);L^2(\Omega))} \lesssim \|f\|_{L^2((0,T);L^2(\Omega))}. $$
By interpolation we conclude the estimate.
\end{proof}
As for the homogeneuous case, we need to bound the $L^2((0,T);L^2(\Omega))$-norm of the solution using dispersive inequalities. For the global existence, it forces us to ask $f\in L^1(\mathbb R_+;L^1(\Omega))$.
\begin{lem}\label{l.sm.es.inh.2}
Let $s\in[0,1]$, $T>0$, let $A=\mathbb P\Delta_D,\Delta_N,\Delta_D$, let $f$ and $w$ as in Lemma \ref{l.sm.es.inh.1}, then $w\in C((0,T);H^s_A(\Omega))$ and 
\begin{equation}\label{sm.es.inh.2}
\|w\|_{X^s_T(\Omega)}  \le C(T,\Omega,s)\|f\|_{L^\frac{2}{2-s}((0,T);L^2(\Omega))}.
    \end{equation}
Finally, if $T=\infty$ and $f\in L^1(\mathbb R_+;L^1(\Omega))$, then $w\in C(\mathbb R_+;H^s_A(\Omega))$ and 
\begin{equation}\label{sm.es.inh.3}
\|w\|_{X^s(\Omega)} \le C(\Omega,s)\left[ \|f\|_{L^1(\mathbb R_+;L^1(\Omega))} + \|f\|_{L^\frac{2}{2-s}(\mathbb R_+;L^2(\Omega))}\right].
\end{equation}
\end{lem}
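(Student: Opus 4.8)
The plan is to bootstrap from Lemma~\ref{l.sm.es.inh.1}, which already delivers control of $(-A)^{s/2}w$ in $L^\infty_tL^2_x$ and $(-A)^{s/2+1/2}w$ in $L^2_tL^2_x$; combined with the resolvent equivalence of Lemma~\ref{l.res.eq.} (valid for $s\in[0,2]$, so in particular for $s\in[0,1]$ and $s+1\in[1,2]$), these two quantities are comparable to $\|w\|_{L^\infty_tH^s_x}$ and $\|w\|_{L^2_tH^{s+1}_x}$ \emph{modulo} the lower-order pieces $\|w\|_{L^\infty_tL^2_x}$ and $\|w\|_{L^2_tL^2_x}$. So the only genuinely new thing to establish is a bound on $\|w\|_{L^2((0,T);L^2(\Omega))}$, exactly as in the homogeneous case (Lemma~\ref{l.sm.es.hom.2}). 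For the finite-time statement \eqref{sm.es.inh.2} this is cheap: $\|w\|_{L^2((0,T);L^2)}\le C(T)\|w\|_{L^\infty((0,T);L^2)}$, and $\|w\|_{L^\infty((0,T);L^2)}$ is already dominated by $\|f\|_{L^1((0,T);L^2)}\le C(T)\|f\|_{L^{2/(2-s)}((0,T);L^2)}$ (Hölder in time, since $\tfrac{2}{2-s}\ge1$), which is the $s=0$ output of the previous lemma. Continuity $w\in C((0,T);H^s_A(\Omega))$ follows from the approximation scheme $w^\varepsilon$ used in Lemma~\ref{l.sm.es.inh.1} together with the uniform bounds, or directly from the Duhamel formula and strong continuity of $e^{At}$.

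For the global statement \eqref{sm.es.inh.3} the extra hypothesis $f\in L^1(\mathbb{R}_+;L^1(\Omega))$ is what replaces the crude $L^\infty_tL^2$-to-$L^2_tL^2$ bound, which fails on $(0,\infty)$. I would split the Duhamel integral at the midpoint: write $w(t)=\int_0^{t/2}e^{A(t-\tau)}f(\tau)\,d\tau+\int_{t/2}^t e^{A(t-\tau)}f(\tau)\,d\tau$. On the first piece $t-\tau\ge t/2$, so Theorem~\ref{t.sem.es.0} with $p=1$, $q=2$ gives $\|e^{A(t-\tau)}f(\tau)\|_{L^2}\lesssim (t-\tau)^{-N/4}\|f(\tau)\|_{L^1}\lesssim t^{-N/4}\|f(\tau)\|_{L^1}$, hence $\|w_{\mathrm{I}}(t)\|_{L^2}\lesssim t^{-N/4}\|f\|_{L^1(\mathbb{R}_+;L^1)}$; since $N\ge3$ gives $N/4>1/2$, this is in $L^2((1,\infty))$ in time. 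On the second piece $\tau\ge t/2$, one instead uses the $L^2\!-\!L^2$ bound of the semigroup together with $\|f(\tau)\|_{L^1}$-to-$\|f(\tau)\|_{L^2}$... more carefully, on $t/2\le\tau\le t$ I would keep $e^{A(t-\tau)}$ bounded on $L^2$ and estimate $\|w_{\mathrm{II}}\|_{L^2_t((1,\infty);L^2_x)}$ by Young's (convolution) inequality in time against $\|f\|_{L^{2/(2-s)}(\mathbb{R}_+;L^2_x)}$, or simply absorb it into the already-established $L^\infty_tL^2_x$ bound on the tail combined with the short-time argument. Adding the near-origin contribution $\|w\|_{L^2((0,1);L^2)}\lesssim \|w\|_{L^\infty((0,1);L^2)}\lesssim\|f\|_{L^{2/(2-s)}((0,1);L^2)}$ closes the $L^2_tL^2_x$ estimate and, via Lemma~\ref{l.res.eq.} as above, yields \eqref{sm.es.inh.3}.

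The main obstacle is the bookkeeping in the global case: one must check that the time-integrability exponents match up, i.e. that $t^{-N/4}\in L^2(1,\infty)$ (true precisely because $N\ge3$), and that the near-diagonal piece of the Duhamel integral, where no dispersive gain is available, is still controlled by the stated norms of $f$ on $(0,\infty)$ rather than merely on $(0,T)$ — this is why the $L^1_tL^1_x$ assumption is needed in addition to $L^{2/(2-s)}_tL^2_x$, and why the splitting at $t/2$ (rather than at a fixed time) is the right device. Everything else — the passage to $H^s_A$, continuity in time, and the finite-$T$ case — is a routine repetition of the arguments already carried out in Lemmas~\ref{l.sm.es.inh.1} and~\ref{l.sm.es.hom.2}.
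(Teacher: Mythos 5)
Your finite-$T$ argument and the overall strategy (bootstrap from Lemma \ref{l.sm.es.inh.1}, reduce everything to an $L^2_tL^2_x$ bound on $w$, conclude via Lemma \ref{l.res.eq.}) coincide with the paper's. The far part of your global splitting is also fine: on $\int_0^{t/2}$ you have $t-\tau\ge t/2$, the $L^1\!\to\!L^2$ dispersive bound gives $t^{-N/4}\|f\|_{L^1(\mathbb R_+;L^1)}$, and $t^{-N/4}\in L^2(1,\infty)$ precisely because $N\ge 3$.

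The gap is in the near-diagonal piece. With your split at $t/2$, the crude bound is
\begin{equation}
\Bigl\|\int_{t/2}^t e^{A(t-\tau)}f(\tau)\,d\tau\Bigr\|_{L^2(\Omega)}\le\int_{t/2}^t\|f(\tau)\|_{L^2(\Omega)}\,d\tau,
\end{equation}
and the right-hand side is \emph{not} a convolution of $\|f(\cdot)\|_{L^2}$ with a fixed kernel: the window $[t/2,t]$ has length $t/2$, growing with $t$. The only fixed majorant is $\mathbbm 1_{[0,\infty)}(t-\tau)$, for which Young's inequality gives nothing useful, and H\"older on the window produces a factor $(t/2)^{s/2}$ that destroys integrability in $t$ (for $s=0$ one can even build $g=\|f(\cdot)\|_{L^2}\in L^1(\mathbb R_+)$ concentrated near dyadic times for which $\|\int_{t/2}^t g\|_{L^2_t(1,\infty)}=\infty$). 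Your fallback, absorbing this piece into the $L^\infty_tL^2_x$ bound, also fails on an infinite time interval, and in fact that $L^\infty_tL^2_x$ bound is itself not available globally from Lemma \ref{l.sm.es.inh.1} since $f$ is only assumed in $L^{2/(2-s)}(\mathbb R_+;L^2)\cap L^1(\mathbb R_+;L^1)$, not in $L^1(\mathbb R_+;L^2)$. The paper's device is to split at $t-1$ rather than $t/2$: the near-diagonal piece then becomes a genuine convolution with $\mathbbm 1_{(0,1)}\in L^{2/(1+s)}(\mathbb R_+)$, and Young's inequality $L^{2/(1+s)}\ast L^{2/(2-s)}\to L^2$ (note $\tfrac{1+s}{2}+\tfrac{2-s}{2}=1+\tfrac12$) closes both the $L^2_t$ and, with $\mathbbm 1_{(0,1)}\in L^{2/s}$, the $L^\infty_t$ estimates. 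Contrary to your closing remark, it is the fixed-width window, not the proportional one, that is the right device for the near part; the proportional split is only needed (and only harmless) for the far part.
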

\begin{proof}\hfill\\
From the proof of Lemma \ref{l.sm.es.inh.1} we have that     
$$ \|(-A)^{s/2}w^\varepsilon\|_{L^\infty((0,T);L^2(\Omega))} + \|(-A)^{s/2+1/2}w^\varepsilon\|_{L^2((0,T);L^2(\Omega))} \lesssim \|f^\varepsilon\|_{L^\frac{2}{2-s}((0,T);L^2(\Omega))}, $$
where $w^\varepsilon$ and $f^\varepsilon$ are the same approximation we used in the previous proof. Now we want to estimate the $L^2(\Omega)$-norm of $w^\varepsilon$: by resolvent estimate
$$ \|w^\varepsilon(t)\|_{L^2(\Omega)}\lesssim \|w(t)\|_{L^2(\Omega)}\quad t\in(0,T). $$
When $T<+\infty$, then by Theorem \ref{t.sem.es.0} we have
    $$ \left\|\int_0^t e^{A(t-\tau)}f(\tau)d\tau\right\|_{L^2(\Omega)}\le \int_0^t \|f(\tau)\|_{L^2(\Omega)}d\tau\le t^\frac{s}{2} \|f\|_{L^\frac{2}{2-s}((0,T);L^2(\Omega))}.$$ 
So, 
$$ \left\|\int_0^t e^{A (t-\tau)}f(\tau)d\tau\right\|_{L^2((0,T);L^2(\Omega))}\le $$
$$ \le C(T)\left\|\int_0^t e^{A(t-\tau)}f(\tau)d\tau\right\|_{L^\infty((0,T);L^2(\Omega))}\le C(T)\|f\|_{L^\frac{2}{2-s}((0,T);L^2(\Omega))}. $$
Let $T=+\infty$ now and let us consider only the case $t\ge 1$ (when $t\le 1$ we can repeat the previous argument). We split the integral in two pieces: 
$$ \int_0^t e^{A(t-\tau)}f(\tau)d\tau = \int_0^{t-1} e^{A(t-\tau)}f(\tau)d\tau + \int_{t-1}^t e^{A(t-\tau)}f(\tau)d\tau. $$
So
$$ \left\|\int_0^{t-1} e^{A (t-\tau)}f(\tau)d\tau\right\|_{L^2(\Omega)}\lesssim \int_0^{t-1}(t-\tau)^{-\frac{N}{4}}\|f(\tau)\|_{L^1(\Omega)}d\tau, $$
and 
$$ \left\|\int_{t-1}^t e^{A(t-\tau)}f(\tau)d\tau\right\|_{L^2(\Omega)}\lesssim \int_{t-1}^t\|f(\tau)\|_{L^2(\Omega)}d\tau. $$
So, by Young's Inequality
$$ \left\|\int_0^{t-1} e^{A(t-\tau)}f(\tau)d\tau\right\|_{L^\infty(\mathbb R_+;L^2(\Omega))}\lesssim $$
$$ \lesssim \left\|t^{-\frac{N}{4}}\mathbbm{1}_{(1,\infty)}(t)\right\|_{L^\infty(\mathbb R_+)}\|f\|_{L^1(\mathbb R_+;L^1(\Omega))}\lesssim \|f\|_{L^1(\mathbb R_+;L^1(\Omega))}, $$
$$ \left\|\int_0^{t-1} e^{A(t-\tau)}f(\tau)d\tau\right\|_{L^2(\mathbb R_+;L^2(\Omega))}\lesssim \left\|t^{-\frac{N}{4}}\mathbbm{1}_{(1,\infty)}(t)\right\|_{L^2(\mathbb R_+)}\|f\|_{L^1(\mathbb R_+;L^1(\Omega))}\lesssim \|f\|_{L^1(\mathbb R_+;L^1(\Omega))}, $$
and
$$ \left\|\int_{t-1}^t e^{A(t-\tau)}f(\tau)d\tau\right\|_{L^\infty(\mathbb R_+;L^2(\Omega))}\lesssim \left\|\mathbbm{1}_{(0,1)}(t)\right\|_{L^\frac{2}{s}(\mathbb R_+)}\|f\|_{L^\frac{2}{2-s}(\mathbb R_+;L^2(\Omega))}\lesssim \|f\|_{L^\frac{2}{2-s}(\mathbb R_+;L^2(\Omega))}, $$
$$ \left\|\int_{t-1}^t e^{A(t-\tau)}f(\tau)d\tau\right\|_{L^2(\mathbb R_+;L^2(\Omega))}\lesssim $$
$$ \lesssim \left\|\mathbbm{1}_{(0,1)}(t)\right\|_{L^\frac{2}{1+s}(\mathbb R_+)}\|f\|_{L^\frac{2}{2-s}(\mathbb R_+;L^2(\Omega))}\lesssim \|f\|_{L^\frac{2}{2-s}(\mathbb R_+;L^2(\Omega))}. $$
Then, thanks to Lemma \ref{l.res.eq.}, we get
$$ \left\|w^\varepsilon\right\|_{X^s_T(\Omega)} \le \left\{ \begin{array}{ll}
    C(T) \|f\|_{L^\frac{2}{2-s}((0,T);L^2(\Omega))} & T<\infty \\
    C\left[\|f\|_{L^1(\mathbb R_+;L^1(\Omega))} + \|f\|_{L^\frac{2}{2-s}(\mathbb R_+;L^2(\Omega))}\right] & T=\infty.
\end{array}\right. $$
Passing by the limit as $\varepsilon\to 0^+$
$$ \left\|w\right\|_{X^s_T(\Omega)} \le \left\{ \begin{array}{ll}
    C(T) \|f\|_{L^\frac{2}{2-s}((0,T);L^2(\Omega))} & T<\infty \\
    C\left[\|f\|_{L^1(\mathbb R_+;L^1(\Omega))} + \|f\|_{L^\frac{2}{2-s}(\mathbb R_+;L^2(\Omega))}\right] & T=\infty.
\end{array}\right. $$
Moreover, $w^\varepsilon\in C((0,T);H^s_A(\Omega))$ so $w\in C((0,T);H^s_A(\Omega))$. 
\end{proof}
In order to get more regularity, we need to distinguish each choice of $\Omega$: in $\mathbb R^N$ we can get easily the smoothing estimate for all $s\ge 0$:
\begin{lem}\label{l.sm.es.RN.}
    Let $N\ge 3$, $s\ge 0$, $T>0$, let $f\in L^2((0,T);H^s(\mathbb R^N))$, let $w\in X^{1}_T(\mathbb R^N)$ be the function from Lemma \ref{l.sm.es.inh.2}, then for $T<\infty$ we have that $w\in X^{s+1}_T(\mathbb R^N)$ with
    \begin{equation}\label{sm.es.RN.1}
    \left\|w\right\|_{X^{s+1}_T(\mathbb R^N)}  \le C(T,N,s)\|f\|_{L^2((0,T);H^s(\mathbb R^N))}.
    \end{equation}
Finally, if $T=\infty$ and $f\in L^1(\mathbb R_+;L^1(\mathbb R^N))$, the same result holds with 
\begin{equation}\label{sm.es.RN.2}
\left\|w\right\|_{X^{s+1}(\mathbb R^N)} \le C(N,s)\left[|f\|_{L^1(\mathbb R_+;L^1(\mathbb R^N))} + \|f\|_{L^2(\mathbb R_+;H^s(\mathbb R^N))}\right] .
\end{equation}
\end{lem}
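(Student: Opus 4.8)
The plan is to reduce the whole statement to the case $s=1$ already established in Lemmas \ref{l.sm.es.inh.1} and \ref{l.sm.es.inh.2}, using that on $\mathbb R^N$ (and only there) the operator $(1-\Delta)^{s/2}$ is a Fourier multiplier commuting with $e^{\Delta t}$, with the Stokes semigroup — recall $e^{\mathbb P\Delta t}\mathbb Pf=e^{\Delta t}\mathbb Pf$ from \eqref{St-Heat.sem.RN} — and with the Helmholtz projection $\mathbb P$ (Remark \ref{rem.der.P.}). Setting $\tilde w\coloneqq(1-\Delta)^{s/2}w$ and $\tilde f\coloneqq(1-\Delta)^{s/2}f$, one checks that $\tilde w(t)=\int_0^t e^{A(t-\tau)}\tilde f(\tau)\,d\tau$ is precisely the function produced by Lemma \ref{l.sm.es.inh.2} with datum $\tilde f$; moreover $\|\tilde f\|_{L^2((0,T);L^2(\mathbb R^N))}=\|f\|_{L^2((0,T);H^s(\mathbb R^N))}$ by Plancherel, and $\|\tilde w\|_{X^1_T(\mathbb R^N)}$ is comparable to $\|w\|_{X^{s+1}_T(\mathbb R^N)}$ directly from \eqref{def.X} and the Fourier characterisation of $H^\sigma(\mathbb R^N)$. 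So the task is reduced to bounding $\|\tilde w\|_{X^1_T(\mathbb R^N)}$.

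For $T<\infty$ I would simply invoke Lemma \ref{l.sm.es.inh.2} with exponent $1$: since $\tilde f\in L^2((0,T);L^2(\mathbb R^N))=L^{2/(2-1)}((0,T);L^2(\mathbb R^N))$, this gives $\|\tilde w\|_{X^1_T(\mathbb R^N)}\le C(T)\|\tilde f\|_{L^2((0,T);L^2(\mathbb R^N))}=C(T)\|f\|_{L^2((0,T);H^s(\mathbb R^N))}$, which is \eqref{sm.es.RN.1}; the continuity in time comes along from that lemma.

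For $T=\infty$ the same shortcut fails, because $(1-\Delta)^{s/2}$ does not preserve $L^1(\mathbb R^N)$, so $\tilde f$ need not lie in $L^1(\mathbb R_+;L^1(\mathbb R^N))$ and the global part of Lemma \ref{l.sm.es.inh.2} cannot be applied to $\tilde f$. Instead I would split the $X^1(\mathbb R^N)$-norm of $\tilde w$ into its top-order part and its $L^2$-part. The top-order quantities $\|\nabla\tilde w\|_{L^\infty(\mathbb R_+;L^2(\mathbb R^N))}$ and $\|\nabla^2\tilde w\|_{L^2(\mathbb R_+;L^2(\mathbb R^N))}$ are controlled by $\|\tilde f\|_{L^2(\mathbb R_+;L^2(\mathbb R^N))}=\|f\|_{L^2(\mathbb R_+;H^s(\mathbb R^N))}$ by Lemma \ref{l.sm.es.inh.1} with exponent $1$ (which is stated for $T\in(0,\infty]$) together with the resolvent equivalences of Lemma \ref{l.res.eq.}; and $\|\nabla\tilde w\|_{L^2(\mathbb R_+;L^2)}^2\le\|\tilde w\|_{L^2(\mathbb R_+;L^2)}\|\nabla^2\tilde w\|_{L^2(\mathbb R_+;L^2)}$ follows by integrating the pointwise-in-time inequality $\|\nabla g\|_{L^2}^2\le\|g\|_{L^2}\|\Delta g\|_{L^2}$ in $t$ and applying Cauchy–Schwarz. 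It then remains to bound $\|\tilde w\|_{L^\infty(\mathbb R_+;L^2(\mathbb R^N))}$ and $\|\tilde w\|_{L^2(\mathbb R_+;L^2(\mathbb R^N))}$, which is where the $L^1$-hypothesis on $f$ must enter. Here I would argue as in the proof of Lemma \ref{l.sm.es.inh.2}: for $t\le1$ use $\|\tilde w(t)\|_{L^2}\le\int_0^t\|\tilde f(\tau)\|_{L^2}\,d\tau\le t^{1/2}\|\tilde f\|_{L^2((0,1);L^2)}$, bounded in $L^\infty_t\cap L^2_t$ on $(0,1)$; for $t\ge1$ split $\tilde w(t)=\int_0^{t-1}e^{A(t-\tau)}\tilde f(\tau)\,d\tau+\int_{t-1}^t e^{A(t-\tau)}\tilde f(\tau)\,d\tau$, estimate the near piece in $L^2_x$ by $(\mathbbm{1}_{(0,1)}*\|\tilde f(\cdot)\|_{L^2_x})(t)$ and use Young's convolution inequality to get an $L^\infty_t\cap L^2_t$ bound by $\|\tilde f\|_{L^2_tL^2_x}=\|f\|_{L^2_tH^s}$, and for the far piece write $\int_0^{t-1}e^{A(t-\tau)}\tilde f(\tau)\,d\tau=\int_0^{t-1}(1-\Delta)^{s/2}e^{A(t-\tau)}f(\tau)\,d\tau$ and factor $(1-\Delta)^{s/2}e^{Ar}=(1-\Delta)^{s/2}e^{Ar/2}\cdot e^{Ar/2}$ for $r\ge1$. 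The multiplier $(1-\Delta)^{s/2}e^{Ar/2}$ has $L^2$-operator norm bounded uniformly in $r\ge1$ — for $A=\Delta$ because its symbol $(1+|\xi|^2)^{s/2}e^{-r|\xi|^2/2}$ is dominated by the $r$-independent, $\xi$-bounded function $(1+|\xi|^2)^{s/2}e^{-|\xi|^2/2}$, and for $A=\mathbb P\Delta$ because additionally $\mathbb P$ commutes with it and is an $L^2$-contraction — so by Theorem \ref{t.sem.es.0} $\|(1-\Delta)^{s/2}e^{Ar}f\|_{L^2}\lesssim\|e^{Ar/2}f\|_{L^2}\lesssim r^{-N/4}\|f\|_{L^1}$ for $r\ge1$; hence the far piece is bounded in $L^2_x$ by $(\phi*\|f(\cdot)\|_{L^1_x})(t)$ with $\phi(r)=r^{-N/4}\mathbbm{1}_{(1,\infty)}(r)\in L^2(\mathbb R_+)\cap L^\infty(\mathbb R_+)$ (here $N\ge3$ is used), and Young's inequality yields its $L^\infty_t\cap L^2_t$ control by $\|f\|_{L^1(\mathbb R_+;L^1(\mathbb R^N))}$. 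Collecting the pieces gives $\|\tilde w\|_{X^1(\mathbb R^N)}\lesssim\|f\|_{L^1(\mathbb R_+;L^1(\mathbb R^N))}+\|f\|_{L^2(\mathbb R_+;H^s(\mathbb R^N))}$, i.e. \eqref{sm.es.RN.2}, and the continuity of $\tilde w$, hence of $w$, follows from the usual $\varepsilon$-regularisation of Lemma \ref{l.sm.es.inh.2}.

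The step I expect to be the main obstacle is precisely the global bound on $\|\tilde w\|_{L^2(\mathbb R_+;L^2)}$ and $\|\tilde w\|_{L^\infty(\mathbb R_+;L^2)}$: it cannot be reduced verbatim to the $s=1$ global lemma because the fractional derivative destroys the $L^1$-integrability hypothesis, and the remedy is to isolate the long-time regime — where the derivative loss is harmless since it is absorbed into the exponential decay of the heat symbol — from the short-time regime, where Lemma \ref{l.sm.es.inh.1} already suffices. Everything is genuinely simpler here than on a domain precisely because on $\mathbb R^N$ all the operators involved are Fourier multipliers: $(1-\Delta)^{s/2}$ commutes through the Duhamel formula for every $s\ge0$, and for the Stokes case $\mathbb P$ commutes with the semigroup and the multipliers above and is an $L^2$-contraction with $e^{\mathbb P\Delta_D t}\mathbb P=e^{\Delta t}\mathbb P$.
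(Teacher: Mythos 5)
Your argument is correct and is essentially the fleshing-out of the paper's one-sentence proof (``the commutation between the derivatives and the semigroups''), so the basic route is the same. What you add, and what the paper's one-liner silently skips, is the genuine subtlety in the $T=\infty$ case: after commuting, the datum $\tilde f=(1-\Delta)^{s/2}f$ need not lie in $L^1(\mathbb R_+;L^1(\mathbb R^N))$, so the global part of Lemma \ref{l.sm.es.inh.2} cannot be invoked verbatim for $\tilde f$. Your repair is sound: the top-order quantities come from Lemma \ref{l.sm.es.inh.1} (which never uses the $L^1$ hypothesis), and for the low-order $L^\infty_tL^2_x$ and $L^2_tL^2_x$ bounds on $\tilde w$ you factor $(1-\Delta)^{s/2}e^{Ar}=\left[(1-\Delta)^{s/2}e^{Ar/2}\right]e^{Ar/2}$ on the far-in-time piece, where the first factor has symbol dominated by $(1+|\xi|^2)^{s/2}e^{-|\xi|^2/2}$ and hence uniformly bounded $L^2$-operator norm for $r\ge 1$; the rest is the paper's own Young-convolution argument. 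A marginally shorter route to the same end, which avoids the multiplier factorization: since $\|g\|_{H^{s+1}(\mathbb R^N)}\sim\|g\|_{L^2(\mathbb R^N)}+\|(-\Delta)^{(s+1)/2}g\|_{L^2(\mathbb R^N)}$, take the low-order part $\|w\|_{L^\infty_tL^2_x}+\|w\|_{L^2_tL^2_x}$ directly from Lemma \ref{l.sm.es.inh.2} applied to $f$ itself with exponent $1$ (using $f\in L^1L^1\cap L^2L^2$), and the homogeneous top-order part from Lemma \ref{l.sm.es.inh.1} applied after commuting $(-\Delta)^{s/2}$. One caveat you inherit from the paper rather than create: for $A=\mathbb P\Delta$ the $L^1\to L^2$ dispersive bound in Theorem \ref{t.sem.es.0} is stated only for $p>1$, so the $L^1$ step in the Stokes case really rests on \eqref{St-Heat.sem.RN} together with control of $\mathbb Pf$ rather than $f$ in $L^1$; this gap is already present in Lemma \ref{l.sm.es.inh.2} and is not introduced by your argument.
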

The proof follows from the commutation between the derivatives and the semi groups. In the half-space we are able to apply the same argument only for the heat equation with Neumann boundary conditions:
\begin{lem}\label{l.sm.es.hs.}
    Let $N\ge 3$, $s\in[0,1]$, $T>0$, let $f\in L^2((0,T);H^s(\mathbb R^N_+))$, let $w\in X^1_T(\mathbb R^N_+)$ be the function from Lemma \ref{l.sm.es.inh.2} for $A=\Delta_N$, then for $T<\infty$ we have that $w\in X^{s+1}_T(\mathbb R^N_+)$ with
    \begin{equation}\label{sm.es.hs.1}
    \left\|w\right\|_{X^{s+1}_T(\mathbb R^N_+)}  \le C(T,N,s)\|f\|_{L^2((0,T);H^s(\mathbb R^N_+))}.
    \end{equation}
Finally, if $T=\infty$ and $f\in L^1(\mathbb R_+;L^1(\mathbb R^N_+))$, the same result holds with 
\begin{equation}\label{sm.es.hs.2}
\left\|w\right\|_{X^{s+1}(\mathbb R^N_+)} \le C(N,s)\left[  \|f\|_{L^1(\mathbb R_+;L^1(\mathbb R^N_+))} + \|f\|_{L^2(\mathbb R_+;H^s(\mathbb R^N_+))}\right] .
\end{equation}
\end{lem}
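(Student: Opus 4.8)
The plan is to reduce the half-space Neumann problem to the whole-space estimate of Lemma~\ref{l.sm.es.RN.} by \emph{even reflection} across the boundary hyperplane $\{x_N=0\}$, using that the Neumann heat semigroup on $\mathbb{R}^N_+$ is intertwined with the heat semigroup on $\mathbb{R}^N$ by this reflection.

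First I would set up the reflection machinery. For $g\colon\mathbb R^N_+\to\mathbb R$ let $(Eg)(x',x_N):=g(x',|x_N|)$ be the even extension and let $Rh:=h|_{\mathbb R^N_+}$ be the restriction. The properties I record are: $RE=\mathrm{id}$; $R\colon H^\sigma(\mathbb R^N)\to H^\sigma(\mathbb R^N_+)$ is bounded for every $\sigma\ge0$ (immediate from the definition of $H^\sigma(\mathbb R^N_+)$ as a space of restrictions); $\|Eg\|_{L^1(\mathbb R^N)}=2\|g\|_{L^1(\mathbb R^N_+)}$; and — this is the step where the constraint $s\le1$ is genuinely used — $E\colon H^s(\mathbb R^N_+)\to H^s(\mathbb R^N)$ is bounded for $s\in[0,1]$. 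The case $s=0$ is trivial ($\|Eg\|_{L^2}^2=2\|g\|_{L^2}^2$); for $s=1$ one computes the weak derivatives of $Eg$ by integration by parts and finds that $\partial_j(Eg)$ ($j<N$) is the even reflection of $\partial_j g$ while $\partial_N(Eg)$ is the \emph{odd} reflection of $\partial_N g$ — no singular boundary contribution appears because $Eg$ is continuous across $\{x_N=0\}$ — so $Eg\in H^1(\mathbb R^N)$; the intermediate exponents follow from \eqref{int.res.} by interpolation. (For $s>1$ even reflection would force a Neumann-type compatibility condition on $\partial_N g$, which is precisely why the statement stops at $s=1$.)

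Next I would establish the intertwining identity $e^{\Delta_N t}g=R\,e^{\Delta t}(Eg)$ for $g\in L^2(\mathbb R^N_+)$. This follows from the classical Neumann heat kernel $\Gamma_N(x,y,t)=\Gamma(x-y,t)+\Gamma(x-\bar y,t)$ on the half-space, with $\bar y=(y',-y_N)$ and $\Gamma$ the Gaussian kernel, after the change of variables $y\mapsto\bar y$ in the reflected term; alternatively, more softly, $Eg$ is even in $x_N$, hence $e^{\Delta t}(Eg)$ is even in $x_N$, hence has vanishing normal derivative on $\{x_N=0\}$, solves the heat equation in $\mathbb R^N_+$, and therefore coincides with $e^{\Delta_N t}g$ by uniqueness of the Neumann heat semigroup. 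Pulling the bounded operator $R$ through the Duhamel integral then yields
\begin{equation}\label{refl.duhamel}
w(t)=\int_0^t e^{\Delta_N(t-\tau)}f(\tau)\,d\tau=R\,\widetilde w(t),\qquad \widetilde w(t):=\int_0^t e^{\Delta(t-\tau)}(Ef)(\tau)\,d\tau .
\end{equation}

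Finally I would invoke the whole-space result. Put $\widetilde f:=Ef$. By the boundedness of $E$, $\widetilde f\in L^2((0,T);H^s(\mathbb R^N))$ with $\|\widetilde f\|_{L^2((0,T);H^s(\mathbb R^N))}\lesssim\|f\|_{L^2((0,T);H^s(\mathbb R^N_+))}$, and when $T=\infty$ also $\widetilde f\in L^1(\mathbb R_+;L^1(\mathbb R^N))$ with $\|\widetilde f\|_{L^1(\mathbb R_+;L^1(\mathbb R^N))}=2\|f\|_{L^1(\mathbb R_+;L^1(\mathbb R^N_+))}$; in particular $\widetilde f\in L^2((0,T);L^2(\mathbb R^N))$, so by Lemma~\ref{l.sm.es.inh.2} (index $1$) the function $\widetilde w$ in \eqref{refl.duhamel} lies in $X^1_T(\mathbb R^N)$, i.e.\ it is exactly the function to which Lemma~\ref{l.sm.es.RN.} applies. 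That lemma then gives $\widetilde w\in X^{s+1}_T(\mathbb R^N)$ together with \eqref{sm.es.RN.1}–\eqref{sm.es.RN.2}, and restricting via \eqref{refl.duhamel} together with the boundedness of $R\colon H^{\sigma}(\mathbb R^N)\to H^{\sigma}(\mathbb R^N_+)$ for $\sigma=s+1,s+2$ yields $w=R\widetilde w\in X^{s+1}_T(\mathbb R^N_+)$ and the estimates \eqref{sm.es.hs.1}–\eqref{sm.es.hs.2}. I expect the only delicate point to be the $H^s$-boundedness of the even extension up to $s=1$ (and the accompanying observation that one cannot go past $s=1$ without a Neumann condition); the rest is routine bookkeeping with the reflection operators and a direct appeal to Lemmas~\ref{l.sm.es.inh.2} and~\ref{l.sm.es.RN.}.
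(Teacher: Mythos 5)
Your proposal is correct, but it takes a different route from the paper. The paper stays on the half-space: it uses the intertwining relations $\partial_j e^{\Delta_N t}=e^{\Delta_N t}\partial_j$ for $j=1,\dots,N-1$ and $\partial_N e^{\Delta_N t}=e^{\Delta_D t}\partial_N$, and then applies the energy smoothing estimate of Lemma \ref{l.sm.es.inh.1} to each component of $\nabla w$ (this is precisely why that lemma was also proved for $A=\Delta_D$). You instead globalize: the identity $e^{\Delta_N t}=R\,e^{\Delta t}E$ with $E$ the even extension reduces everything to the whole-space Lemma \ref{l.sm.es.RN.}, with the constraint $s\le 1$ entering through the $H^1$-boundedness of $E$ rather than through "only one derivative can be taken''. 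The two arguments rest on the same structural fact about the half-space Neumann kernel, but yours buys two things: it never needs the Dirichlet semigroup, and it handles fractional $s\in(0,1)$ cleanly by inheritance from the whole-space lemma, whereas the paper's one-line instruction to "apply Lemma \ref{l.sm.es.inh.1} to $\nabla w$'' requires $\nabla f\in L^{2/(2-s)}((0,T);L^2)$, which for $f\in L^2((0,T);H^s)$ with $s<1$ is not literally available and has to be repaired by interpolating the $s=0$ and $s=1$ endpoints. What the paper's intrinsic route buys in exchange is that its philosophy (differentiate, track the change of boundary condition, reuse the $L^2$ energy estimate) is the one that survives in the exterior-domain case of Lemma \ref{l.sm.es.ex.}, where no global reflection exists; your argument is genuinely specific to the half-space. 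All the individual steps you flag — $E\colon H^s(\mathbb R^N_+)\to H^s(\mathbb R^N)$ bounded for $s\in[0,1]$ with failure past $s=1$ absent a Neumann compatibility condition, the kernel identity $\Gamma_N(x,y,t)=\Gamma(x-y,t)+\Gamma(x-\bar y,t)$, commuting the bounded operator $R$ with the Bochner integral, and the boundedness of restriction on $H^{s+1}$ and $H^{s+2}$ — are sound, so the proof goes through.
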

\begin{proof}\hfill\\
It is sufficient to notice that 
$$ \partial_je^{\Delta_Nt} = e^{\Delta_Nt}\partial_j \quad j=1,\ldots, N-1,\quad \partial_Ne^{\Delta_Nt}=e^{\Delta_Dt}\partial_N. $$
Then we can apply for $\nabla w$ Lemma \ref{l.sm.es.inh.1} in order to conclude.
\end{proof}
Again, in the exterior case, we get the smoothing estimate only for the heat equation with Neumann boundary conditions. The strategy is to apply the Tubular Neighbourhood Theorem as a special parametrization for $\Omega$ near the boundary. In this way, we can trasfer the problem in the half-space, where we can apply an argument similar to the previous one.
\begin{lem}\label{l.sm.es.ex.}
    Let $N\ge 3$, let $\Omega\subseteq \mathbb R^N$ be an exterior domain with sufficiently smooth boundary, let $T>0$, let $f\in L^2((0,T);H^s(\Omega))$, let $w\in X^1_T(\Omega)$ be the function from Lemma \ref{l.sm.es.inh.2} for $A=\Delta_N$, then for $T<\infty$ we have that $w\in X^{s+1}_T(\Omega)$ with
    \begin{equation}\label{sm.es.ex.1}
    \left\|w\right\|_{X^{s+1}_T(\Omega)}  \le C(T,\Omega,s)\|f\|_{L^2((0,T);H^s(\Omega))}.
    \end{equation}
Finally, if $T=\infty$ and $f\in L^1(\mathbb R_+;L^1(\Omega))$, the same result holds with
\begin{equation}\label{sm.es.ex.2}
\left\|w\right\|_{X^{s+1}(\Omega)} \le C(\Omega,s)\left[  \|f\|_{L^1(\mathbb R_+;L^1(\Omega))} + \|f\|_{L^2(\mathbb R_+;H^s(\Omega))} \right].
\end{equation}
\end{lem}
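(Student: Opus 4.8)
The plan is to reduce to the flat-domain estimates already proved, by a partition of unity together with a flattening of the boundary via the tubular neighbourhood theorem, and then to interpolate in $s$. Below we abbreviate $L^p_t Y\coloneqq L^p((0,T);Y)$ (read with $\mathbb R_+$ in place of $(0,T)$ when $T=\infty$). By Lemma~\ref{l.sm.es.inh.2} applied with parameter $1$ we already know $w\in X^1_T(\Omega)$, with $\|w\|_{X^1_T(\Omega)}\le C(T,\Omega)\|f\|_{L^2_tL^2(\Omega)}$ when $T<\infty$ and $\|w\|_{X^1(\Omega)}\le C(\Omega)\big(\|f\|_{L^1_tL^1(\Omega)}+\|f\|_{L^2_tL^2(\Omega)}\big)$; in particular the case $s=0$ of the statement is already contained in Lemma~\ref{l.sm.es.inh.2}. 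Since $f\mapsto w$ is linear, by \eqref{int.res.}, $[L^2_tL^2,L^2_tH^1]_\theta=L^2_tH^\theta$ and $[X^1_T(\Omega),X^2_T(\Omega)]_\theta\hookrightarrow X^{1+\theta}_T(\Omega)$, it suffices to treat the endpoint $s=1$: given $f\in L^2_tH^1(\Omega)$, to show $w\in X^2_T(\Omega)$ with $\|w\|_{X^2_T(\Omega)}\lesssim\|f\|_{L^2_tH^1(\Omega)}+\|w\|_{X^1_T(\Omega)}$, all $L^1$-in-space information then entering through the known $X^1$-bound alone.

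Fix a finite partition of unity $1=\chi_0+\sum_{k=1}^M\chi_k$ near $\overline\Omega$, with $\chi_0$ supported inside $\Omega$ at positive distance from $\partial\Omega$ and each $\chi_k$ supported in a coordinate ball $U_k$ of (small) radius $r$ around a boundary point. Every $\chi_\ell w$ has zero initial value and satisfies $(\partial_t-\Delta)(\chi_\ell w)=\chi_\ell f+[\Delta,\chi_\ell]w$ with $[\Delta,\chi_\ell]w=2\nabla\chi_\ell\cdot\nabla w+(\Delta\chi_\ell)w\in L^2_tH^1$ of norm $\lesssim\|w\|_{X^1_T(\Omega)}$ (only one derivative of $w$ appears and $w\in X^1_T(\Omega)\hookrightarrow L^2_tH^2$). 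For the interior term, $\chi_0 w$ extended by zero solves a heat equation on $\mathbb R^N$ with forcing $F_0=\chi_0 f+[\Delta,\chi_0]w\in L^2_tH^1(\mathbb R^N)$; commuting $(-\Delta)^{1/2}$ through the semigroup as in Lemma~\ref{l.sm.es.RN.} and applying Lemma~\ref{l.sm.es.inh.1} (valid up to $T=\infty$ with a $T$-independent constant) gives $\chi_0 w\in X^2_T(\mathbb R^N)$ and $\|\chi_0 w\|_{X^2_T}\lesssim\|F_0\|_{L^2_tH^1}+\|\chi_0 w\|_{X^0_T}\lesssim\|f\|_{L^2_tH^1(\Omega)}+\|w\|_{X^1_T(\Omega)}$, the $X^0$-part being quoted from the bound on $w$.

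For the boundary pieces, on each $U_k$ take Fermi coordinates $\Phi_k\colon U_k\cap\Omega\to V_k\subseteq\mathbb R^N_+$ whose last coordinate is the distance to $\partial\Omega$. Then $\partial\Omega$ is straightened to $\{x_N=0\}$, $|\nabla x_N|\equiv1$, and the pulled-back Laplacian becomes $\mathcal L_k=\sum_{i,j}a_k^{ij}\partial_i\partial_j+\sum_i b_k^i\partial_i$ with smooth, uniformly elliptic $(a_k^{ij})$ satisfying $a_k^{NN}\equiv1$, $a_k^{N\alpha}\equiv0$; crucially, the Neumann condition $\partial_\nu v=0$ transforms into the \emph{exact} flat Neumann condition $\partial_N\widetilde w_k=0$ on $\{x_N=0\}$, where $\widetilde w_k=(\chi_k w)\circ\Phi_k^{-1}$, extended by zero to $\mathbb R^N_+$. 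Shrinking $r$, we may arrange $\|a_k^{ij}-\delta^{ij}\|_{L^\infty}\le r$ while $\|\nabla a_k^{ij}\|_{L^\infty}+\|b_k^i\|_{L^\infty}\le C_r$. Thus $\widetilde w_k\in X^1_T(\mathbb R^N_+)$ solves, with zero initial value and $\partial_N\widetilde w_k|_{x_N=0}=0$,
$$ (\partial_t-\Delta_N)\widetilde w_k=\widetilde g_k+(\mathcal L_k-\Delta)\widetilde w_k,\qquad \widetilde g_k=\big(\chi_k f+[\Delta,\chi_k]w\big)\circ\Phi_k^{-1},\quad \|\widetilde g_k\|_{L^2_tH^1(\mathbb R^N_+)}\lesssim\|f\|_{L^2_tH^1(\Omega)}+\|w\|_{X^1_T(\Omega)}. $$

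Let $S_N$ denote the flat half-space Neumann Duhamel operator. The proof of Lemma~\ref{l.sm.es.hs.} — commuting $\nabla$ through $e^{\Delta_N t}$ (tangentially) and $e^{\Delta_D t}$ (normally) and then using Lemma~\ref{l.sm.es.inh.1} — gives the $T$-uniform bound $\|\nabla S_N g\|_{X^1_T(\mathbb R^N_+)}\le C\|g\|_{L^2_tH^1(\mathbb R^N_+)}$, while the remaining $X^0$-part of $\widetilde w_k=S_N(\widetilde g_k+(\mathcal L_k-\Delta)\widetilde w_k)$ is controlled by $\|w\|_{X^1_T(\Omega)}$. Since $a_k^{ij}-\delta^{ij}$ is small in $L^\infty$,
$$ \big\|(\mathcal L_k-\Delta)\widetilde w_k\big\|_{L^2_tH^1(\mathbb R^N_+)}\le Cr\,\|\widetilde w_k\|_{L^2_tH^3}+C_r\,\|\widetilde w_k\|_{L^2_tH^2}\le Cr\,\|\widetilde w_k\|_{X^2_T}+C_r\,\|\widetilde w_k\|_{X^1_T}, $$
so for $r$ small the first term on the right is absorbed and $\|\widetilde w_k\|_{X^2_T(\mathbb R^N_+)}\lesssim\|f\|_{L^2_tH^1(\Omega)}+\|w\|_{X^1_T(\Omega)}$ (made rigorous by running the estimate on the $\varepsilon$-regularisations of Lemmas~\ref{l.sm.es.hom.1}–\ref{l.sm.es.inh.2} and passing to the limit). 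Pulling back and summing over $\ell$ yields $w\in X^2_T(\Omega)$ with the required bound, whence \eqref{sm.es.ex.1} and \eqref{sm.es.ex.2} for $s=1$; interpolation with $s=0$ covers $s\in(0,1)$, and $w\in C((0,T);H^{s+1}(\Omega))$ follows by approximation as in the earlier lemmas. \emph{The main obstacle} is precisely this boundary analysis: checking that the Fermi-coordinate change keeps the Neumann condition exactly flat while bringing the principal part of $\mathcal L_k$ to within $O(r)$ of $\Delta$ in $L^\infty$, so that $(\mathcal L_k-\Delta)\widetilde w_k$ — which at the level of the $X^2$-estimate costs one more derivative of $\widetilde w_k$ than the forcing supplies — can be absorbed after shrinking the charts; and, when $T=\infty$, bookkeeping so that the non-$L^2$-in-time ($L^1_tL^1$) input is used only once, via the background $X^1$-bound for $w$.
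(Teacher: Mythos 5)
Your argument is correct and follows the same overall strategy as the paper (interior/boundary localization, flattening via the distance function and the tubular neighbourhood, reduction to the flat model estimates of Lemmas \ref{l.sm.es.RN.} and \ref{l.sm.es.hs.}), but it executes the decisive boundary step by a genuinely different mechanism. The paper never treats the curved Laplacian as a perturbation of the flat one: it writes $\Delta=\partial_r^2+(N-1)H_{\Gamma_r}\partial_r+\Delta_{\Gamma_r}$ in tubular coordinates, differentiates the equation tangentially and normally, and notes that the resulting commutators $R_1,R_2$ contain at most two derivatives of the solution, so they are forcing terms already controlled in $L^2_tL^2$ by the known $X^1_T$-bound; Lemma \ref{l.sm.es.inh.1} (with $\Delta_N$ for the tangential derivatives and $\Delta_D$ for the normal one --- this is exactly why that lemma was stated for $A=\Delta_D$ too) then closes the estimate for every $s\in[0,1]$ at once, with no smallness assumption and no absorption. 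You instead keep the equation undifferentiated, write it as $(\partial_t-\Delta_N)\widetilde w_k=\widetilde g_k+(\mathcal L_k-\Delta)\widetilde w_k$, and absorb the top-order part of the perturbation using $\|a_k^{ij}-\delta^{ij}\|_{L^\infty}\le r$. This is legitimate, but it costs two things the paper's route avoids: you need the qualitative a priori finiteness of $\|\widetilde w_k\|_{X^2_T}$ before absorbing (you correctly defer this to the $\varepsilon$-regularisations), and you must check that the large constants $C_r$ produced by shrinking the charts only multiply the lower-order norm. Your reduction to $s\in\{0,1\}$ by interpolating the solution map is a harmless repackaging, since Lemma \ref{l.sm.es.inh.1} is itself proved by interpolation. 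One detail to repair: for $\widetilde w_k$ to satisfy the \emph{exact} flat Neumann condition you need $\partial_\nu\chi_k=0$ on $\partial\Omega$, since $\partial_\nu(\chi_k w)=(\partial_\nu\chi_k)w$ there; generic cutoffs supported in coordinate balls do not have this property, so you should take the $\chi_k$ constant along normal lines near the boundary, as the paper does implicitly by using $\varphi(d(x))$ with $\varphi\equiv1$ near $0$.
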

\begin{proof}\hfill\\
Let $\varphi\in C^\infty([0,\infty))$ such that 
$$ \varphi(r)=\left\{\begin{array}{ll}
    1 & r\in [0,r_0]\\
    0 & r\ge 2r_0,
\end{array}\right. $$
where $r_0>0$ will be choose properly later. Let us call $d(x)\coloneqq d(x,\partial\Omega)$. From \cite{PS16}, we know that $d$ is a $C^2$ function. In particular, we consider
$$ w(t,x)= w(t,x)\varphi(d(x)) + w(t,x)(1-\varphi(d(x))). $$
If we denote $E_0[g]$ the 0-extension of a function $g$ in $\mathbb R^N$, then 
$$ \left\{\begin{array}{ll}
    (\partial_t-\Delta)E_0[w(1-\varphi\circ d)]= E_0[f(1-\varphi\circ d) - 2\nabla w\cdot \nabla (\varphi\circ d) - w\Delta(\varphi\circ d)] & (0,T)\times\mathbb R^N \\
    E_0[w(1-\varphi\circ d)](0)= E_0[w_0(1-\varphi\circ d)] & \mathbb R^N.
\end{array}\right. $$
So we get the estimate using the smoothing lemma for $\mathbb R^N$. Let us consider now the function $w\varphi\circ d$. As it can be seen by \cite{PS16}, if we take 
$$ r_0\le \frac{1}{\max\{\kappa_i(y)\mid y\in\partial\Omega, \:\:i=1,\ldots, N-1\}}, $$
where $\kappa_i$ are the principal curvatures of $\partial\Omega$, then each point $x$ in the tubular neighbourhood of $\partial\Omega$ with distance $r_0$ can be written as
\begin{equation} \label{eq.noc90}
x=\phi(y,r)=y + r\nu(y), 
\end{equation}
where $y\in\partial\Omega$ and $r=d(x)$. Up to consider a partition of unity of $\partial\Omega$ and proceed as in the previous case, we can suppose $y=\psi(\theta)$ with $\psi\in C^4(B_\delta;\partial\Omega)$, where $B_\delta$ is a ball of $\mathbb R^{N-1}$ of radius $\delta>0$.  In \cite{PS16} they take $\Omega$ of class $C^2$ and then prove $\phi$ to be a $C^1$-diffeomorphism if we take $r_0$ sufficiently small. They did it by the Inverse Function Theorem so, since in our case $\phi\in C^3$, we get that $\phi$ is a $C^3$-diffeomorphism. It is well-known that
$$ \Delta =  \partial_r^2 + (N-1)H_{\Gamma_r}\partial_r + \Delta_{\Gamma_r}, $$
where $\Gamma_r$ is the surface parallel to $\partial\Omega$ with distance $r$, $\Delta_{\Gamma_r}$ is the Laplace-Beltrami Operator on the surface $\Gamma_r$ and $H_{\Gamma_r}$ is the mean curvature of $\Gamma_r$. Finally, we notice that 
$$ \partial_{\nu}w=\partial_r w\circ \phi. $$
If we call $v(y,r)\coloneqq w(\phi(y,r))$, then $w$ solves
$$ \left\{ \begin{array}{ll}
     (\partial_t-\Delta_{\Gamma_r} - \partial_r^2 - H_{\Gamma_r}\partial_r) v = f\circ \phi & (0,T)\times \partial\Omega \times(0,r_0) \\
     \partial_rv=0 & (0,T)\times \partial\Omega\times\{0\} \\
     v(0)= u\circ \phi & \partial\Omega\times(0,r_0).
\end{array}\right. $$
Now we can take the derivatives in $y$ (by the parametrization of $\partial\Omega)$ and $r$: It is well-known that for any $\Sigma$ Hypersurfaces the Laplace-Beltrami Operator can be written as 
$$ \Delta_{\Sigma}=g^{ij}(\Sigma)\left(\partial_{ij}-\Lambda^k_{ij}(\Sigma)\partial_k\right), $$
where $G=(g_{ij})$ is the first fundamental form of $\Sigma$ and $(g^{ij})=G^{-1}$ and where $\Lambda^k_{ij}$ are the Christoffel Symbols of $\Sigma$. Therefore $\nabla_y v$ solves
$$ \left\{ \begin{array}{ll}
     (\partial_t-\Delta_{\Gamma_r} - \partial_r^2 - H_{\Gamma_r}\partial_r) \nabla_yv = \nabla_y(f\circ \phi) + R_1(y,r,\nabla_{y,r}v,\nabla_{y,r}^2v) & (0,T)\times \partial\Omega \times(0,r_0) \\
     \partial_r\nabla_yv=0 & (0,T)\times \partial\Omega\times\{0\} \\
     \nabla_yv(0)= 0 & \partial\Omega\times(0,r_0),
\end{array}\right. $$
where 
$$ R_1(y,r,\nabla_{y,r}v,\nabla_{y,r}^2v)=\nabla_yg^{ij}(\Gamma_r)\left(\partial_{ij}-\Lambda^k_{ij}(\Gamma_r)\partial_k\right)v - g^{ij}(\Gamma_r)\nabla_y\Lambda^k_{ij}(\Gamma_r)\partial_k v - \nabla_yH(\Gamma_r)\partial_rv.  $$
On the other hand 
$$ \left\{ \begin{array}{ll}
     (\partial_t-\Delta_{\Gamma_r} - \partial_r^2 - H_{\Gamma_r}\partial_r) \partial_rv = \partial_r(f\circ \phi) + R_2(r,y,\nabla_{y,r}v,\nabla_{y,r}^2v) & (0,T)\times \partial\Omega \times(0,r_0) \\
     \partial_rv=0 & (0,T)\times \partial\Omega\times\{0\} \\
     \partial_rv(0)= 0 & \partial\Omega\times(0,r_0),
\end{array}\right. $$
where 
$$ R_2(y,r,\nabla_{y,r}v,\nabla_{y,r}^2v)=\partial_rg^{ij}(\Gamma_r)\left(\partial_{ij}-\Lambda^k_{ij}(\Gamma_r)\partial_k\right)v - g^{ij}(\Gamma_r)\partial_r\Lambda^k_{ij}(\Gamma_r)\partial_k v - \partial_rH(\Gamma_r)\partial_rv.  $$
Finally, if we turn back to the equation in $\Omega$ and we apply Lemma \ref{l.sm.es.inh.1}, we get that $\nabla_{y,r}v\circ\phi^{-1}(\tau)\in D((-\Delta_N)^{s+1/2})$ and
$$ \|(-\Delta_N)^{s/2}\nabla_{y,r}v\circ\phi^{-1}\|_{L^\infty((0,T);L^2(\Omega))} + \|(-\Delta_N)^{s+1/2}\nabla_{y,r}v\circ\phi^{-1}\|_{L^2((0,T);L^2(\Omega))} \lesssim $$
$$ \lesssim \|\nabla_{y,r}(f\circ\phi)\circ\phi^{-1}\|_{L^2((0,T);L^2(\Omega))} + \|R_1 + R_2 \|_{L^2((0,T);L^2(\Omega))} + \|\nabla_{y,r}v\circ\phi^{-1}\|_{L^2((0,T);L^2(\Omega))}. $$
By the regularity of $\phi$, $R_1$ and $R_2$ and from Lemma \ref{l.sm.es.inh.2} we get
$$ \|\nabla w\|_{X^s_T}\lesssim \|\nabla_{y,r}v\circ\phi^{-1}\|_{X^s_T}\lesssim $$
$$ \lesssim \|(1-\Delta_N)^{s/2}\nabla_{y,r}v\circ\phi^{-1}\|_{L^\infty((0,T);L^2(\Omega))} + \|(1-\Delta_N)^{s+1/2}\nabla_{y,r}v\circ\phi^{-1}\|_{L^2((0,T);L^2(\Omega))} \lesssim $$
$$ \lesssim \|f\|_{L^1(\mathbb R_+;L^1(\Omega))} + \|f\|_{L^2(\mathbb R_+;H^1(\Omega))}, $$
where in the second estimate we used Lemma \ref{l.res.eq.}. The proof is then concluded.
\end{proof}

Finally we can prove the linear estimates for the systems \eqref{lin.sys.} and \eqref{lin.sys-proj.}. In $\mathbb R^N$ we can use a particular structure of the system: in the Stokes equation of \eqref{EL.sys.red.}, the nonlinearity can be rewritten as 
$$ -u\cdot \nabla u - {\rm Div}(\nabla d\odot \nabla d)={\rm Div}\left(-u\otimes u - \nabla d\odot \nabla d\right), $$
where 
$$ (w\otimes z)_{i,j}=w_iz_j\quad i,j=1,\ldots, N,\quad z,w\colon\mathbb R^N\to\mathbb R^N. $$
This because ${\rm div}u=0$. So we can rewrite the function $f$ in the linear systems \eqref{lin.sys.} and \eqref{lin.sys-proj.} as $f={\rm Div}F$ for some $F\colon(0,T)\times\mathbb R^N\to\mathbb R^{N^2}$.
\begin{thm}\label{t.lin.ex.RN}
Let $N\ge 3$, $s\ge 0$, let 
$$ u_0\in H^s\left(\mathbb{R}^N;\mathbb{R}^N\right), \quad d_0\in H^{s+1}\left(\mathbb{R}^N;\mathbb R^N \right), $$
let $T>0$ and
$$ F\in L^2\left((0,T);H^s\left(\mathbb{R}^N;\mathbb{R}^{N^2}\right)\right), \quad g\in L^2\left((0,T);H^{s}\left(\mathbb{R}^N\right)\right), $$
then there is a solution
$$ (u,p,d)\in X^s_T\left(\mathbb R^N\right)\times L^2\left((0,T);L^2_{loc}\left(\mathbb R^N\right)\right) \times X^{s+1}_T\left(\mathbb R^N\right), $$
unique up to additive functions $\rho(t)$ in the pressure terms,
for \eqref{lin.sys.} in $(0,T)$ with $f={\rm Div}F$, 
where $X^s_T(\mathbb R^N)$ is defined in \eqref{def.X} and 
$$  \|u\|_{X^s_T(\mathbb R^N)} + \|d\|_{X^{s+1}_T(\mathbb R^N)} \le C(N,s,T)\left[\|u_0\|_{H^s(\mathbb{R}^N)}+\|d_0\|_{H^{s+1}(\mathbb{R}^N)} + \right.$$
$$ \left. + \|F\|_{L^2((0,T);H^s(\mathbb{R}^N))} + \|g\|_{L^2((0,T);H^s(\mathbb{R}^N))}\right]. $$
Moreover, if $T=\infty$ and in addiction
$$ u_0\in L^1\left(\mathbb R^N;\mathbb R^N\right),\quad d_0\in L^1\left(\mathbb R^N;\mathbb R^N\right), $$
$$ g\in L^1\left(\mathbb R_+;L^1\left(\mathbb R^N;\mathbb R^N\right)\right), \quad {\rm Div}F\in L^1\left((0,T);L^1\left(\mathbb R^N;\mathbb R^N\right)\right), $$
then the result is still true with 
$$  \|u\|_{X^s(\mathbb R^N)} + \|d\|_{X^{s+1}(\mathbb R^N)}\le C(N,s)\left[\|u_0\|_{H^s\cap L^1(\mathbb{R}^N)}+\|d_0\|_{H^{s+1}\cap L^1(\mathbb{R}^N)} + \right.$$
$$ \left. + \|({\rm Div}F,g)\|_{L^1(\mathbb R_+;L^1(\mathbb R^N))} + \|F\|_{L^2(\mathbb R_+;H^s(\mathbb{R}^N))}  + \|g\|_{L^2(\mathbb R_+;H^s(\mathbb{R}^N))}\right]. $$
\end{thm}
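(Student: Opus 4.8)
\hfill\\
The strategy is to apply the Helmholtz projection $\mathbb P$ to \eqref{lin.sys.}, passing to the decoupled system \eqref{lin.sys-proj.}: the pressure drops out and one is left with a Stokes equation for $u$ and a heat equation for $d$. On $\mathbb R^N$ one has $\Delta_N=\Delta_D=\Delta$, $\mathbb P\Delta_D=\mathbb P\Delta$, and $\mathbb P$ is a Mikhlin multiplier, hence bounded on every $H^\sigma(\mathbb R^N)$ and, by Remark~\ref{rem.der.P.}, commuting with each $\partial_j$. Writing the two components by Duhamel,
\[ u(t)=e^{\mathbb P\Delta t}u_0+\int_0^t e^{\mathbb P\Delta(t-\tau)}\mathbb P\,{\rm Div}\,F(\tau)\,d\tau,\qquad d(t)=e^{\Delta t}d_0+\int_0^t e^{\Delta(t-\tau)}g(\tau)\,d\tau, \]
I would estimate the four resulting pieces separately, using Lemma~\ref{l.res.eq.} to convert the $H^\sigma_A$-norms of the smoothing lemmas into the $H^\sigma$-norms of the statement.

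For the data terms, Lemma~\ref{l.sm.es.hom.2} gives $e^{\mathbb P\Delta t}u_0\in X^s_T(\mathbb R^N)$ and $e^{\Delta t}d_0\in X^{s+1}_T(\mathbb R^N)$ directly when the exponent is $\le2$; for larger exponents one differentiates and reduces to that range, since $\partial^\alpha$ commutes with $e^{\Delta t}$ and with $\mathbb P$. The hypotheses $u_0,d_0\in L^1$ are used only when $T=\infty$, to make the $L^2((1,\infty);L^2)$ part of the norm finite through the decay of Theorem~\ref{t.sem.es.0}. The inhomogeneous heat term $\int_0^t e^{\Delta(t-\tau)}g\,d\tau$ is controlled by Lemma~\ref{l.sm.es.RN.} with source $g\in L^2((0,T);H^s)$, together with $g\in L^1(\mathbb R_+;L^1)$ when $T=\infty$.

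The genuinely delicate piece is the inhomogeneous Stokes term, since $F$ --- and not ${\rm Div}\,F$ --- is bounded in $L^2 H^s$. Using the identity $\mathbb P\,{\rm Div}\,F={\rm Div}(\mathbb P F)$, legitimate on $\mathbb R^N$ because $\mathbb P$ commutes with $\partial_j$ (with $\mathbb P$ acting columnwise on $F$), the Stokes Duhamel integral becomes ${\rm Div}\,W(t)$ with $W(t)=\int_0^t e^{\mathbb P\Delta(t-\tau)}\mathbb P F(\tau)\,d\tau$; as $\mathbb P F\in L^2((0,T);H^s)$, Lemma~\ref{l.sm.es.RN.} yields $W\in X^{s+1}_T(\mathbb R^N)$ and hence ${\rm Div}\,W\in X^s_T(\mathbb R^N)$ with norm $\lesssim\|F\|_{L^2 H^s}$, which settles the case $T<\infty$. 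For $T=\infty$ one cannot feed $W$ into Lemma~\ref{l.sm.es.RN.} (this would require $\mathbb P F\in L^1 L^1$, which the Riesz transforms do not preserve), so the $L^2((1,\infty);L^2)$-norm of ${\rm Div}\,W$ must be bounded directly. Splitting $\int_0^t=\int_0^{t-1}+\int_{t-1}^t$: on the near part one shifts one derivative onto the semigroup and uses the $p=q=2$ gradient estimate of Theorem~\ref{t.sem.es.1-easy}, $\|{\rm Div}\,e^{\mathbb P\Delta(t-\tau)}\mathbb P F(\tau)\|_{L^2}\lesssim(t-\tau)^{-1/2}\|F(\tau)\|_{L^2}$, then Young's inequality with $(\cdot)^{-1/2}\mathbbm{1}_{(0,1)}\in L^1\cap L^2$; on the far part ($t-\tau\ge1$) one uses $e^{\mathbb P\Delta(t-\tau)}\mathbb P=e^{\Delta(t-\tau)}\mathbb P$ from \eqref{St-Heat.sem.RN} and the $L^2$-boundedness of the Riesz transforms to get $\|e^{\mathbb P\Delta(t-\tau)}\mathbb P\,{\rm Div}\,F(\tau)\|_{L^2}\lesssim(t-\tau)^{-N/4}\|{\rm Div}\,F(\tau)\|_{L^1}$ --- precisely the point where the hypothesis ${\rm Div}\,F\in L^1(\mathbb R_+;L^1)$ enters --- followed by Young's inequality with $t^{-N/4}\mathbbm{1}_{(1,\infty)}\in L^2\cap L^\infty$, which lies in $L^2$ exactly because $N\ge3$. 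I expect this $T=\infty$ Stokes estimate to be the main obstacle.

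Finally, given $u\in X^s_T$ with ${\rm div}\,u=0$ solving the projected equation, the original equation forces $\nabla p={\rm Div}\,F-(\partial_t-\Delta)u=(I-\mathbb P)\,{\rm Div}\,F$, which is a gradient, annihilates $C^\infty_\sigma(\mathbb R^N)$, and lies in $L^2((0,T);W^{-1,2}_{loc}(\mathbb R^N))$; Lemma~\ref{l.ex.press.} then produces $p\in L^2((0,T);L^2_{loc}(\mathbb R^N))$, unique up to an additive $\rho(t)$, so that $(u,p,d)$ solves \eqref{lin.sys.}. Uniqueness of $(u,d)$ follows because their difference solves \eqref{lin.sys-proj.} with vanishing data and forcing, so the energy identities underlying Lemmas~\ref{l.sm.es.hom.1} and \ref{l.sm.es.inh.1} force it to vanish; then $\nabla p\equiv0$, i.e.\ $p$ is a function of $t$ alone. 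Collecting the above estimates gives the stated bound for $T<\infty$ and, keeping track of the $T$-independent constants, for $T=\infty$.
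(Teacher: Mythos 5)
Your proposal is correct, and the overall architecture (Duhamel, the smoothing lemmas, Lemma~\ref{l.ex.press.} for the pressure, energy uniqueness) matches the paper's; but you treat the one genuinely delicate term --- the Stokes forcing $\mathbb P\,{\rm Div}\,F$ with only $F\in L^2H^s$ --- by a different device. You commute ${\rm Div}$ and $\mathbb P$ with the semigroup to write the Duhamel term as ${\rm Div}\,W$ with $W$ the Duhamel integral of $\mathbb P F\in L^2((0,T);H^s)$, then invoke Lemma~\ref{l.sm.es.RN.} to gain a full derivative. The paper instead runs the energy argument of Lemma~\ref{l.sm.es.inh.1} at $s=0$ on the regularized equation and integrates by parts, $\langle {\rm Div}\,F^\varepsilon,u^\varepsilon\rangle=-\langle F^\varepsilon,\nabla u^\varepsilon\rangle\le\|F\|_{L^2}\|(-\mathbb P\Delta)^{1/2}u^\varepsilon\|_{L^2}$, absorbing the gradient into the dissipation by Young, and then interpolates with the case $s\ge1$ (where ${\rm Div}\,F\in L^2H^{s-1}$ and Lemma~\ref{l.sm.es.RN.} applies directly). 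Your commutation trick is cleaner on $\mathbb R^N$ and gives all $s\ge0$ in one stroke, but it is exactly the step that fails in $\mathbb R^N_+$ and exterior domains; the paper's integration-by-parts route is the one whose philosophy survives there, which is presumably why it is the one recorded. Your handling of the $T=\infty$ low-frequency part is also essentially the paper's splitting from Lemma~\ref{l.sm.es.inh.2}, correctly adapted via $e^{\mathbb P\Delta t}\mathbb P=\mathbb P e^{\Delta t}$ and $L^2$-boundedness of $\mathbb P$ to reach $\|{\rm Div}\,F(\tau)\|_{L^1}$ despite Theorem~\ref{t.sem.es.0} excluding $p=1$ for Stokes.

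One small inaccuracy: $\sigma^{-1/2}\mathbbm{1}_{(0,1)}$ is \emph{not} in $L^2(0,1)$ (its square is $\sigma^{-1}$), so the Young pairing $L^2\ast L^2\to L^\infty_t$ on the near part is unavailable. This is harmless for your stated goal, since the $L^2((1,\infty);L^2)$-norm only needs the $L^1$ membership of that kernel, and the $L^\infty_tL^2_x$-norm of ${\rm Div}\,W$ is already controlled, uniformly in $T$ and without any $L^1$ hypothesis, by $\|(-\mathbb P\Delta)^{1/2}W\|_{L^\infty L^2}\lesssim\|F\|_{L^2L^2}$ from Lemma~\ref{l.sm.es.inh.1} with exponent $1$; but the claim should be corrected.
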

\begin{proof}\hfill\\
The estimate for $d$ and for $u$ when $s\ge 1$ follows by Lemma \ref{l.sm.es.RN.}. On the other hand, if we turn back to the proof of Lemma \ref{l.sm.es.inh.1}, in the case $s=0$ we take 
$$ (u^\varepsilon,u_0^\varepsilon,f^\varepsilon)\coloneqq (1-\varepsilon\mathbb P\Delta)^{-m/2} (u,u_0,{\rm Div}F), $$
for $m\in\mathbb N$ sufficiently large. In this case, we multiply the equation for $u^\varepsilon$ by $u^\varepsilon$ and we get
$$ \frac{1}{2}\frac{d}{dt}\|u^\varepsilon(t)\|_{L^2(\mathbb R^N)}^2 + \|(-\mathbb P\Delta)^{1/2} u^\varepsilon(t)\|_{L^2(\mathbb R^N)}^2 = \frac{1}{2}\|u_0^\varepsilon\|_{L^2(\mathbb R^N)}^2 + \left<\mathbb Pf^\varepsilon(t),u^\varepsilon(t)\right>_{L^2(\mathbb R^N)}. $$
Firstly we notice that $\mathbb Pf^\varepsilon=f^\varepsilon$. Moreover
$$ \left<f^\varepsilon(t),u^\varepsilon(t)\right>_{L^2(\mathbb R^N)}=\left<(1-\varepsilon\mathbb P\Delta)^{-m/2}{\rm Div}F(t),(1-\varepsilon\mathbb P\Delta)^{-m/2}u(t)\right>_{L^2(\mathbb R^N)} = $$
$$ = \left<F(t),\nabla (1-\varepsilon \mathbb P\Delta )^{-m}u(t)\right>_{L^2(\mathbb R^N)}\le \|F(t)\|_{L^2(\mathbb R^N)}\|\nabla (1-\varepsilon \mathbb P\Delta )^{-m/2}u^\varepsilon(t)\|_{L^2(\mathbb R^N)}. $$
If we call $w^\varepsilon=(1-\varepsilon\mathbb P\Delta)^{-m/2}u^\varepsilon$, then
$$ \|(-\mathbb P\Delta)^{1/2}w^\varepsilon(t)\|_{L^2(\mathbb R^N)}^2=\left<(-\mathbb P\Delta)^{1/2}w^\varepsilon(t),(-\mathbb P\Delta)^{1/2}w^\varepsilon(t)\right>_{L^2(\mathbb R^N)} = $$
$$ \left<w^\varepsilon(t),(-\mathbb P\Delta)w^\varepsilon(t)\right>_{L^2(\mathbb R^N)}= - \left<w^\varepsilon(t),\Delta w^\varepsilon(t)\right>_{L^2(\mathbb R^N)}=\|\nabla w^\varepsilon(t)\|_{L^2(\mathbb R^N)}^2. $$
So
$$ \|\nabla (1-\varepsilon \mathbb P\Delta )^{-m/2}u^\varepsilon(t)\|_{L^2(\mathbb R^N)}= $$
$$ = \|(-\mathbb P\Delta)^{1/2}(1-\varepsilon \mathbb P\Delta )^{-m/2}u^\varepsilon(t)\|_{L^2(\mathbb R^N)}\lesssim \|(-\mathbb P\Delta)^{1/2}u^\varepsilon(t)\|_{L^2(\mathbb R^N)}, $$
and therefore
$$ \left<f^\varepsilon(t),u^\varepsilon(t)\right>_{L^2(\mathbb R^N)}\le \|F(t)\|_{L^2(\mathbb R^N)}\|(-\mathbb P\Delta)^{1/2}u^\varepsilon(t)\|_{L^2(\mathbb R^N)}. $$
Proceeding as before we get then 
$$ \|u\|_{X^0_T(\mathbb R^N)}\lesssim \|u_0\|_{L^2(\mathbb R^N)} + \|F\|_{L^2((0,T);L^2(\mathbb R^N))}. $$
By interpolation with the result for $s\ge 1$ we conclude the existence of a solution $(u,d)\in X^s_T(\mathbb R^N)\times X^{s+1}_T(\mathbb R^N)$ for the system \eqref{EL.sys-proj.}. For the pressure part, it is sufficient to apply Lemma \ref{l.ex.press.} in order to conclude. 
\end{proof}
Finally, we can also prove the linear estimate for the linear system in the half-plane and in the exterior case:
\begin{thm}\label{t.lin.ex.dom.}
Let $s\in[0,1]$, let $\Omega=\mathbb R^N_+$ or $\Omega\subseteq \mathbb R^N$ be an exterior domain with sufficiently smooth boundary, let 
$$ u_0\in H^s_{\mathbb P\Delta_D}\left(\Omega;\mathbb R^N\right), \quad d_0 \in H^{s+1}_{\Delta_N}\left(\Omega;\mathbb R^N\right),  $$
let $T>0$ and let 
$$ F\in L^2\left((0,T);L^2\left(\Omega;\mathbb R^{N^2}\right)\right),\quad {\rm Div}F\in L^\frac{2}{2-s}\left((0,T);L^2\left(\Omega;\mathbb R^N\right)\right), $$
$$ g\in L^2\left((0,T);H^s\left(\Omega;\mathbb R^N\right)\right), $$
then there is a solution
$$ (u,p,d)\in X^s_T(\Omega)\times L^2_{loc}\left((0,T);L^2_{loc}(\Omega)\right)\times X^{s+1}_T(\Omega), $$ 
unique up to additive functions $\rho(t)$ in the pressure term, for the linear system \eqref{lin.sys.}, where $X^s_T(\Omega)$ is defined in \eqref{def.X}, and 
$$  \|u\|_{X^s_T(\Omega)}+ \|d\|_{X^{s+1}_T(\Omega)} \le  C(T,\Omega,s)\left[\|u_0\|_{H^s(\Omega)} + \|d_0\|_{H^{s+1}\Omega)}\right] +  $$
$$ +C(\Omega,s)\left[ \|{\rm Div}F\|_{L^\frac{2}{2-s}((0,T);L^2(\Omega))} + \|g\|_{L^2((0,T);H^s(\Omega))}\right]. $$
Moreover, if $T=\infty$ and, in addiction
$$ u_0,d_0\in  L^1\left(\Omega;\mathbb R^N\right),\quad {\rm Div}F,g\in L^1\left((0,T);L^1\left(\Omega;\mathbb R^N\right)\right), $$
then it holds the same result with 
$$  \|u\|_{X^s(\Omega)}+ \|d\|_{X^{s+1}(\Omega)} \le C(\Omega,s)\left[ \|u_0\|_{L^1\cap H^s(\Omega)} + \|d_0\|_{L^1\cap H^{1}(\Omega)} \right]+   $$
$$  + C(\Omega,s)\left[] \|({\rm Div}F,g)\|_{L^1(\mathbb R_+;L^1(\Omega))} + \|{\rm Div}F\|_{L^\frac{2}{2-s}(\mathbb R_+;L^2(\Omega))} + \|g\|_{L^2(\mathbb R_+;H^s(\Omega))}\right]. $$
\end{thm}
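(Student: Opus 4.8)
\emph{Proof strategy.} The plan is to pass to the projected system \eqref{lin.sys-proj.}, which decouples into a Stokes--Dirichlet problem for $u$ and a heat--Neumann problem for $d$, represent each component by Duhamel's formula, estimate the homogeneous and inhomogeneous contributions separately using the smoothing lemmas of this section, and finally reconstruct the pressure. Note that $s\in[0,1]$ forces $s+1\in[1,2]$, which is exactly the range in which Lemmas \ref{l.sm.es.hom.2}, \ref{l.sm.es.inh.2}, \ref{l.sm.es.hs.} and \ref{l.sm.es.ex.} are available for $\Omega=\mathbb R^N_+$ and for exterior domains. I will freely use Lemma \ref{l.res.eq.} to identify $H^\sigma_A(\Omega)$ with $H^\sigma(\Omega)$ (equivalent norms) for $\sigma\in[0,2]$ and $A=\Delta_N,\mathbb P\Delta_D$.

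For the director field, Duhamel's formula gives $d(t)=e^{\Delta_N t}d_0+\int_0^t e^{\Delta_N(t-\tau)}g(\tau)\,d\tau$. Since $d_0\in H^{s+1}_{\Delta_N}(\Omega)$ with $s+1\in[0,2]$, the first term is handled by Lemma \ref{l.sm.es.hom.2}, landing in $X^{s+1}_T(\Omega)$; when $T=\infty$ the extra hypothesis $d_0\in L^1(\Omega)$ takes care of the $L^2$-in-time tail. For the second term I would first apply Lemma \ref{l.sm.es.inh.2} with $A=\Delta_N$ and exponent $1$ to place it in $X^1_T(\Omega)$ using only $g\in L^2((0,T);L^2(\Omega))$, and then invoke the bootstrapping Lemma \ref{l.sm.es.hs.} in the half-space, or Lemma \ref{l.sm.es.ex.} in the exterior case, with the full hypothesis $g\in L^2((0,T);H^s(\Omega))$, to gain the extra derivative and reach $X^{s+1}_T(\Omega)$; the $L^1(\mathbb R_+;L^1(\Omega))$ term enters in the global case. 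Summing gives the $d$-part of the estimate.

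For the velocity, Duhamel in \eqref{lin.sys-proj.} gives $u(t)=e^{\mathbb P\Delta_D t}u_0+\int_0^t e^{\mathbb P\Delta_D(t-\tau)}\mathbb P\,{\rm Div}F(\tau)\,d\tau$, and the Stokes semigroup automatically produces ${\rm div}\,u=0$ and $u|_{\partial\Omega}=0$. The homogeneous term is controlled in $X^s_T(\Omega)$ by Lemma \ref{l.sm.es.hom.2} with $A=\mathbb P\Delta_D$ (again with the $L^1$ tail bound when $T=\infty$). For the forcing term, the only regularity at hand is $\mathbb P\,{\rm Div}F\in L^{2/(2-s)}((0,T);L^2(\Omega))$, so Lemma \ref{l.sm.es.inh.2} with $A=\mathbb P\Delta_D$ places it in $X^s_T(\Omega)$; unlike for $d$, no derivative can be gained here, since $\nabla$ does not commute with $e^{\mathbb P\Delta_D t}$ on these domains, which is why the conclusion only asserts $u\in X^s_T(\Omega)$. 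In the global case the additional assumption ${\rm Div}F\in L^1(\mathbb R_+;L^1(\Omega))$ supplies the dispersive $L^1$--$L^2$ control of the tail exactly as inside Lemma \ref{l.sm.es.inh.2}.

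With $(u,d)$ in hand, I would recover the pressure by setting $\nabla p\coloneqq {\rm Div}F-\partial_t u+\Delta u$ in the sense of distributions: because $u$ solves the projected equation and $C^\infty_\sigma(\Omega)$ test functions are fixed by $\mathbb P$, this distribution annihilates every $\varphi\in C^\infty_0((0,T);C^\infty_\sigma(\Omega))$; moreover $u\in L^2((0,T);H^{s+1}(\Omega))$ together with $\partial_t u=\mathbb P\Delta u+\mathbb P\,{\rm Div}F$ shows that the right-hand side belongs to $L^{2/(2-s)}((0,T);W^{-1,2}_{loc}(\Omega))$, so Lemma \ref{l.ex.press.} yields a $p\in L^2((0,T);L^2_{loc}(\Omega))$, unique once normalized --- that is, unique up to an additive $\rho(t)$ --- and $(u,p,d)$ solves \eqref{lin.sys.}. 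Uniqueness of $(u,d)$ in $X^s_T(\Omega)\times X^{s+1}_T(\Omega)$ is the special case $u_0=d_0=0$, $F=0$, $g=0$ of the same estimates (equivalently, of the energy identities behind Lemmas \ref{l.sm.es.hom.2} and \ref{l.sm.es.inh.2}). I expect the only genuinely delicate point to be the negative-regularity forcing ${\rm Div}F$ in the Stokes block: since the Helmholtz projection is unbounded on $L^1(\Omega)$, the global bound on the $u$-forcing cannot simply be read off from Theorem \ref{t.sem.es.0} with $p=1$, which is exactly why the hypotheses are phrased through ${\rm Div}F$ and routed into Lemma \ref{l.sm.es.inh.2}; a secondary, purely bookkeeping matter is lining up the mixed time-integrability exponent $2/(2-s)$ across all terms so that the final estimate closes.
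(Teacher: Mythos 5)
Your proposal is correct and follows essentially the same route as the paper's (very terse) proof: homogeneous parts via Lemma \ref{l.sm.es.hom.2}, the Stokes forcing via Lemma \ref{l.sm.es.inh.2} with $A=\mathbb P\Delta_D$, the heat forcing via Lemma \ref{l.sm.es.inh.2} bootstrapped through Lemmas \ref{l.sm.es.hs.} and \ref{l.sm.es.ex.}, and the pressure recovered from Lemma \ref{l.ex.press.}. Your additional remarks on why $u$ gains no extra derivative and on the $L^1$ issue for the Stokes block are accurate elaborations of what the paper leaves implicit.
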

\begin{proof}\hfill\\
The estimate for the homogeneous part follows from Lemma \ref{l.sm.es.hom.2}, while the estimate for the inhomogeneous part follows from Lemma \ref{l.sm.es.inh.2} for $u$ and from Lemmas \ref{l.sm.es.hs.} and \ref{l.sm.es.ex.} for $d$. 
\end{proof}

\section{Local Existence}\label{sec.loc.ex.}

We focus on the local reduced system
\begin{equation}\label{sys.loc-red.}
    \left\{\begin{array}{ll}
       (\partial_t-\mathbb P\Delta)u= - \mathbb P\left(u\cdot \nabla u+{\rm Div}\left(\nabla d\odot\nabla d\right)\right)  & (0,T)\times \Omega \\
       {\rm div}u=0 & (0,T)\times\Omega \\
       (\partial_t-\Delta)d +u\cdot \nabla d = |\nabla d|^2(d+\eta) & (0,T)\times\Omega \\
       \partial_\nu d=0,\quad u=0 & (0,T)\times \partial\Omega \\
       u(0)=u_0,\quad d(0)=d_0 & \Omega,
    \end{array}\right.     
\end{equation}
for $d_0=v_0-\eta$. Again, if it is not specified, $\Omega=\mathbb R^N,\mathbb R^N_+$ or an exterior sufficiently smooth with $N\ge 3$. We are going to use a classical contraction argument to prove the local existence in $X^s_T(\Omega)$. 

\subsection{Leibniz Rule in $H^s(\Omega)$ and Polylinear Local Estimates}

The first task is to estimate the nonlinearities which arise from \eqref{sys.loc-red.} using Theorems \ref{t.lin.ex.RN} and \ref{t.lin.ex.dom.}. This part is in common for every choice of $\Omega$. In order to get it, we need a Leibniz estimate in $H^s(\Omega)$. Leibniz estimates can be found in \cite{I23}, \cite{IMT18}, \cite{GT19}, \cite{FGO18}. This approach is based on functional calculus of self-adjoint operators associated with $\Omega$, therefore they need specific boundary conditions. Anyway, we need the estimate in Sobolev spaces without boundary conditions, a variant of the previous results for the local existence:
\begin{lem}\label{l.Leib.1}
    Let $s>\frac{N}{2}-1$, let  $f,g\in H^{s+1}(\Omega)$, then we can find $\theta=\theta(s)\in(0,1]$ such that
    $$ \|fg\|_{H^s(\Omega)}\le C(\Omega,s)\left[\|f\|_{H^s(\Omega)}^\theta\|f\|_{H^{s+1}(\Omega)}^{1-\theta}\|g\|_{H^s(\Omega)} + \|g\|_{H^s(\Omega)}^\theta\|g\|_{H^{s+1}(\Omega)}^{1-\theta}\|f\|_{H^s(\Omega)}\right]. $$
\end{lem}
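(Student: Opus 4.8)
The plan is to reduce everything to the whole-space case by a universal extension, apply there the classical fractional Leibniz (Kato--Ponce) inequality, and then interpolate the $L^\infty$ norms that appear so as to produce the fractional power of the $H^{s+1}$ norm.

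First I would fix a Stein-type extension operator $E\colon H^\sigma(\Omega)\to H^\sigma(\mathbb R^N)$, bounded for every $\sigma\ge 0$ with operator norm depending only on $\sigma$ and $\Omega$: for $\Omega=\mathbb R^N$ it is the identity, and for $\Omega=\mathbb R^N_+$ or an exterior domain with sufficiently smooth boundary such an operator is provided by Stein's construction. Since $(Ef)(Eg)$ restricts on $\Omega$ to $fg$, the definition of $H^s(\Omega)$ as a space of restrictions yields
$$ \|fg\|_{H^s(\Omega)}\le \|(Ef)(Eg)\|_{H^s(\mathbb R^N)}, $$
so it suffices to prove the estimate on $\mathbb R^N$ for $F\coloneqq Ef$, $G\coloneqq Eg$ and then use $\|F\|_{H^\sigma(\mathbb R^N)}\lesssim\|f\|_{H^\sigma(\Omega)}$ for $\sigma\in\{s,s+1\}$ (and symmetrically for $G,g$).

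On $\mathbb R^N$ I would invoke the fractional Leibniz rule: for every $s\ge 0$,
$$ \|FG\|_{H^s(\mathbb R^N)}\le C(N,s)\left(\|F\|_{L^\infty(\mathbb R^N)}\|G\|_{H^s(\mathbb R^N)}+\|F\|_{H^s(\mathbb R^N)}\|G\|_{L^\infty(\mathbb R^N)}\right). $$
This is classical; a self-contained proof uses the Bony paraproduct decomposition $FG=T_FG+T_GF+R(F,G)$, in which the two low--high paraproducts are bounded by $\|F\|_{L^\infty}\|G\|_{H^s}$ and $\|G\|_{L^\infty}\|F\|_{H^s}$ respectively, while the high--high remainder $R(F,G)$ is summable precisely because $s\ge 0$ and is controlled by the same right-hand side. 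This is the only genuinely technical step and it is routine (for $s\in(0,1)$ one can even avoid paraproducts altogether by splitting $f(x)g(x)-f(y)g(y)$ inside the Gagliardo seminorm \eqref{Hs-norm.}, but that argument does not reach the integer value $s=1$ nor large $s$).

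Finally I would remove the $L^\infty$ norms. Since $s>\frac N2-1$, the interval $\big(\max\{s,\tfrac N2\},\,s+1\big)$ is non-empty; fix $\sigma$ in it and set $\theta\coloneqq s+1-\sigma\in(0,1)$, so that $\sigma=\theta s+(1-\theta)(s+1)$ and $\sigma>\frac N2$. Then the Sobolev embedding $H^\sigma(\mathbb R^N)\hookrightarrow L^\infty(\mathbb R^N)$, the complex interpolation identity \eqref{int.res.}, and the fundamental interpolation inequality $\|F\|_{[H^s,H^{s+1}]_{1-\theta}}\lesssim\|F\|_{H^s}^{\theta}\|F\|_{H^{s+1}}^{1-\theta}$ give
$$ \|F\|_{L^\infty(\mathbb R^N)}\le C\,\|F\|_{H^\sigma(\mathbb R^N)}\le C\,\|F\|_{H^s(\mathbb R^N)}^{\theta}\,\|F\|_{H^{s+1}(\mathbb R^N)}^{1-\theta}, $$
and likewise for $G$. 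Inserting these two bounds into the fractional Leibniz inequality and undoing the extension as explained above yields the assertion. I expect the main --- though still standard --- obstacle to be the proof or the precise citation of the Kato--Ponce inequality for general, possibly non-integer or large, $s\ge 0$; the remaining ingredients (extension, Sobolev embedding, interpolation) combine softly.
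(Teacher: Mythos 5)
Your argument is correct, but it follows a genuinely different route from the paper. The paper works intrinsically on $\Omega$: for integer $s$ it expands $D^\beta(fg)$ by the classical Leibniz formula and controls each term by H\"older plus Sobolev embeddings (producing the exponent $\theta$ from an interpolation of Lebesgue norms), and for $s\in(0,1)$ it splits the difference $f(x)g(x)-f(y)g(y)$ inside the Gagliardo seminorm \eqref{Hs-norm.}, with the remaining non-integer $s\ge 1$ case only sketched. You instead extend to $\mathbb R^N$ by a Stein operator, invoke the Kato--Ponce inequality $\|FG\|_{H^s}\lesssim\|F\|_{L^\infty}\|G\|_{H^s}+\|F\|_{H^s}\|G\|_{L^\infty}$, and then convert the $L^\infty$ norms via $H^\sigma(\mathbb R^N)\hookrightarrow L^\infty(\mathbb R^N)$ for a $\sigma\in\bigl(\max\{s,\tfrac N2\},s+1\bigr)$ (non-empty precisely because $s>\tfrac N2-1$) together with the interpolation bound $\|F\|_{H^\sigma}\lesssim\|F\|_{H^s}^{\theta}\|F\|_{H^{s+1}}^{1-\theta}$ with $\theta=s+1-\sigma$. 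All three steps are sound: the extension is available for $\mathbb R^N$, $\mathbb R^N_+$ and smooth exterior domains, the inequality $\|fg\|_{H^s(\Omega)}\le\|(Ef)(Eg)\|_{H^s(\mathbb R^N)}$ is immediate from the intrinsic norm, and your $\theta\in(0,1)$ sits inside the admissible range $(0,1]$. What your route buys is uniformity in $s$ --- no case distinction between integer, small fractional, and large fractional exponents --- at the price of two external black boxes (Stein extension and the fractional Leibniz rule on $\mathbb R^N$); the paper's route is more elementary and self-contained but more laborious, and is genuinely complete only for $s\in\mathbb N$ and $s\in(0,1)$ as written. Either argument establishes the lemma as stated.
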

\begin{proof}\hfill\\
It is well-known that $H^s(\Omega)$ is an algebra for $s>\frac{N}{2}$ (see Theorem 4.39 of \cite{AF03}), so we consider only $s\le \frac{N}{2}$. Let us suppose at the beginning $s\in\mathbb N$ and, consequently, $s\ge 1$. Firstly we notice that 
$$ \|fg\|_{L^2(\Omega)}\le \|f\|_{L^\frac{2N}{N-2}(\Omega)}\|g\|_{L^N(\Omega)}\lesssim \|f\|_{H^1(\Omega)}\|g\|_{H^s(\Omega)}\le \|f\|_{H^s(\Omega)}\|g\|_{H^s(\Omega)}, $$
where we used Sobolev embeddings with the condition $s>\frac{N}{2}-1$. Let now $\beta\in\mathbb N^N$ with $|\beta|=s$, then by the standard Leibniz formula
\begin{equation}\label{proof.Leib.}
    \|D^\beta(fg)\|_{L^2(\Omega)}\le \|D^\beta f g\|_{L^2(\Omega)} + \|fD^\beta g\|_{L^2(\Omega)} + \sum_{\gamma_1+\gamma_2=\beta,\:|\gamma_j|\ge 1} \|D^{\gamma_1}fD^{\gamma_2}g\|_{L^2(\Omega)}.
\end{equation} 
Let us see the first term: if $s<\frac{N}{2}$ then
$$ \|D^\beta f g\|_{L^2(\Omega)}\le \|D^\beta f\|_{L^\frac{N}{s}(\Omega)}\|g\|_{L^\frac{2N}{N-2s}(\Omega)}.  $$
On the other hand, if $s=\frac{N}{2}$, we can take $\varepsilon\in\left(0,\frac{2}{N}\right)$ and 
$$ \|D^\beta f g\|_{L^2(\Omega)}\le \|D^\beta f\|_{L^\frac{2}{1-\varepsilon}(\Omega)}\|g\|_{L^\frac{2}{\varepsilon}(\Omega)}. $$
In both case, by Sobolev embedding, we have $p\in\left(2,\frac{2N}{N-2}\right)$ and $q\in(1,\infty)$ such that $L^q(\Omega)\hookrightarrow H^s(\Omega)$ such that 
    $$ \|D^\beta f g\|_{L^2(\Omega)}\le \|D^\beta f\|_{L^p(\Omega)}\|g\|_{L^q(\Omega)}\lesssim $$
    $$ \lesssim \|D^\beta f\|_{L^2(\Omega)}^\theta \|D^\beta f \|_{L^\frac{2N}{N-2}(\Omega)}^{1-\theta}\|g\|_{H^s(\Omega)}\lesssim  \|f\|_{H^s(\Omega)}^\theta \|f\|_{H^{s+1}(\Omega)}^{1-\theta}\|g\|_{H^s(\Omega)}, $$
    for some $\theta\in(0,1)$.

The estimate for second term is the same with $g$ in place of $f$. For the last term of \eqref{proof.Leib.}, we notice that 
$$ \frac{N-2(s+1-|\gamma_1|)}{2N} + \frac{N-2(s-|\gamma_2|)}{2N}=1-\frac{s+1}{N}<\frac{1}{2}, $$
so we can find $p,q\in(2,\infty)$ and $r<s+1$ such that 
$$ \frac{1}{p} + \frac{1}{q}=\frac{1}{2}, \quad L^p(\Omega)\hookrightarrow H^{r-|\gamma_1|}(\Omega),\quad L^q(\Omega)\hookrightarrow H^{s-|\gamma_2|}(\Omega). $$
So
$$ \|D^{\gamma_1}fD^{\gamma_2}g\|_{L^2(\Omega)}\le \|D^{\gamma_1}f\|_{L^p(\Omega)}\|D^{\gamma_2}g\|_{L^q(\Omega)}\lesssim $$
$$ \lesssim \|f\|_{H^r(\Omega)}\|g\|_{H^s(\Omega)}\lesssim \|f\|^\theta_{H^s(\Omega)}\|f\|_{H^{s+1}(\Omega)}^{1-\theta}\|g\|_{H^s(\Omega)}, $$
for some $\theta\in (0,1)$. Let us consider now the case $s\in(0,1)$, then from \eqref{Hs-norm.}
$$ \|fg\|_{H^s(\Omega)}^2 = \|fg\|_{L^2(\Omega)}^2 + \int_\Omega\int_\Omega \frac{|(fg)(x) - (fg)(y)|^2}{|x-y|^{2s+N}}dxdy. $$
We have already verified the estimate for the $L^2$-norm, so we focus on the double integral.
$$ \int_\Omega\int_\Omega \frac{|(fg)(x) - (fg)(y)|^2}{|x-y|^{2s+N}}dxdy \lesssim $$
$$ \lesssim \int_\Omega\int_\Omega |f(x)|^2\frac{|g(x)-g(y)|^2}{|x-y|^{2s+N}}dxdy + \int_\Omega\int_\Omega |g(y)|^2\frac{|f(x)-f(y)|^2}{|x-y|^{2s+N}}dxdy. $$
For what concerns the first term, we can find $\frac{N}{2}<r<s+1$ and $\theta\in(0,1)$ as before:
$$ \int_\Omega\int_\Omega |f(x)|^2\frac{|g(x)-g(y)|^2}{|x-y|^{2s+N}}dxdy\le $$
$$ \le \|f\|_{L^\infty(\Omega)}^2\|g\|_{H^s(\Omega)}^2\lesssim \|f\|_{H^r(\Omega)}^2\|g\|_{H^s(\Omega)}^2\lesssim \|f\|_{H^s(\Omega)}^{2\theta}\|f\|_{H^{s+1}(\Omega)}^{2(1-\theta)}\|g\|_{H^s(\Omega)}^2. $$
The last term can be done as before with $g$ in place of $f$. Finally, the case $s\ge 1$ and $s\not\in\mathbb N$ can be got following the previous strategies.
\end{proof}
As it follows from the proof, we can ask $\theta>0$ because $s>\frac{N}{2}-1$. As a consequence of Lemma \ref{l.Leib.1}, we get a Leibniz formula which will be useful for the global existence.
\begin{lem}\label{l.Leib.2}
    Let $s> \frac{N}{2}-1$, let  $f,g\in H^{s+1}(\Omega)$ then it holds
    $$ \|fg\|_{H^s(\Omega)}\le C(\Omega,s)\left[ \|f\|_{H^{s+1}(\Omega)}\|g\|_{H^s(\Omega)} + \|g\|_{H^{s+1}(\Omega)}\|f\|_{H^s(\Omega)}\right]. $$
\end{lem}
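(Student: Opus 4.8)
The plan is to deduce this directly from Lemma \ref{l.Leib.1} by replacing the interpolated factors there with their cruder endpoint bound. Recall that Lemma \ref{l.Leib.1} furnishes an exponent $\theta=\theta(s)\in(0,1]$ with
$$ \|fg\|_{H^s(\Omega)}\le C(\Omega,s)\left[\|f\|_{H^s(\Omega)}^\theta\|f\|_{H^{s+1}(\Omega)}^{1-\theta}\|g\|_{H^s(\Omega)} + \|g\|_{H^s(\Omega)}^\theta\|g\|_{H^{s+1}(\Omega)}^{1-\theta}\|f\|_{H^s(\Omega)}\right], $$
so the only thing to add is the elementary monotonicity $\|h\|_{H^s(\Omega)}\le \|h\|_{H^{s+1}(\Omega)}$ for $h\in H^{s+1}(\Omega)$. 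On $\mathbb R^N$ this is immediate from the Fourier-multiplier definition of the norm, since $(1+|\xi|^2)^{s/2}\le (1+|\xi|^2)^{(s+1)/2}$; for $\Omega=\mathbb R^N_+$ or an exterior domain it follows by choosing an almost-optimal extension $F\in H^{s+1}(\mathbb R^N)$ of $h$ and using that $H^s(\Omega)$ is defined by restriction, together with \eqref{int.res.} if one prefers the interpolation route.

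Applying this monotonicity to the mixed factors gives $\|f\|_{H^s(\Omega)}^\theta\|f\|_{H^{s+1}(\Omega)}^{1-\theta}\le \|f\|_{H^{s+1}(\Omega)}$, and symmetrically with $g$ in place of $f$; here the requirement $\theta\in(0,1]$ from Lemma \ref{l.Leib.1} is exactly what guarantees $1-\theta\ge 0$ so that the estimate may be applied to the factor carrying the $(s+1)$-norm (for $\theta=1$ the bound is trivial). Substituting these two inequalities into the estimate of Lemma \ref{l.Leib.1} produces precisely
$$ \|fg\|_{H^s(\Omega)}\le C(\Omega,s)\left[ \|f\|_{H^{s+1}(\Omega)}\|g\|_{H^s(\Omega)} + \|g\|_{H^{s+1}(\Omega)}\|f\|_{H^s(\Omega)}\right], $$
which is the claim. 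There is no genuine obstacle in this proof: it is purely a matter of weakening a Gagliardo--Nirenberg-type interpolated product estimate to its endpoint form, and the entire content is already contained in Lemma \ref{l.Leib.1}.
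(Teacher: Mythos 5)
Your proposal is correct and is exactly the argument the paper intends: Lemma \ref{l.Leib.2} is stated as an immediate consequence of Lemma \ref{l.Leib.1}, obtained by bounding the interpolated factor $\|f\|_{H^s(\Omega)}^{\theta}\|f\|_{H^{s+1}(\Omega)}^{1-\theta}$ by $C\|f\|_{H^{s+1}(\Omega)}$ via the embedding $H^{s+1}(\Omega)\hookrightarrow H^s(\Omega)$ (and symmetrically for $g$). The only cosmetic remark is that with the intrinsic norm \eqref{Hs-norm.} on a domain the monotonicity holds up to a constant $C(\Omega,s)$ rather than with constant $1$, which is harmlessly absorbed into the constant of the lemma.
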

For what concerns the trilinear estimates, we get similar results:
\begin{lem}\label{l.Leib.3}
    Let $s> \frac{N}{2}-1$ and $f,g,h\in H^{s+1}(\Omega)$, then there is $\theta=\theta(s)\in(0,1]$ such that
    $$ \|fg\|_{H^s(\Omega)}\le $$
    $$ \le C(\Omega,s) \|h\|_{H^{s+1}(\Omega)}\left[\|f\|_{H^s(\Omega)}^\theta\|f\|_{H^{s+1}(\Omega)}^{1-\theta}\|g\|_{H^s(\Omega)} + \|g\|_{H^s(\Omega)}^\theta\|g\|_{H^{s+1}(\Omega)}^{1-\theta}\|f\|_{H^s(\Omega)}\right], $$
    $$ \|fgh\|_{H^s(\Omega)}\le C(\Omega,s) \|h\|_{H^{s+1}(\Omega)}\left[\|f\|_{H^{s+1}(\Omega)}\|g\|_{H^s(\Omega)} + \|g\|_{H^{s+1}(\Omega)}\|f\|_{H^s(\Omega)}  \right]. $$
\end{lem}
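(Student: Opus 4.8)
The plan is to deduce both inequalities from the bilinear estimates already established (Lemmas \ref{l.Leib.1} and \ref{l.Leib.2}) by peeling off the factor $h$ with a single multiplier bound. A naive attempt — applying Lemma \ref{l.Leib.2} to the pair $(fg,h)$ or to $(f,gh)$ — runs in a circle: it leaves one needing either $\|fg\|_{H^{s+1}(\Omega)}\lesssim\|f\|_{H^{s+1}(\Omega)}\|g\|_{H^s(\Omega)}+\|g\|_{H^{s+1}(\Omega)}\|f\|_{H^s(\Omega)}$, which is false in general (one really loses a full derivative on a high‑frequency factor that a low‑frequency factor cannot give back, e.g. for $s$ near $\tfrac N2-1$), or the genuine multiplier bound. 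So the real content is the auxiliary estimate, valid for $s>\tfrac N2-1$, $g\in H^s(\Omega)$ and $h\in H^{s+1}(\Omega)$,
$$ \|gh\|_{H^s(\Omega)}\le C(\Omega,s)\,\|h\|_{H^{s+1}(\Omega)}\,\|g\|_{H^s(\Omega)}; $$
call it $(\star)$. Granting $(\star)$, the lemma follows immediately: writing $fgh=(fg)\,h$ and applying $(\star)$ with $g$ replaced by $fg$ gives $\|fgh\|_{H^s(\Omega)}\le C\,\|h\|_{H^{s+1}(\Omega)}\,\|fg\|_{H^s(\Omega)}$, after which Lemma \ref{l.Leib.1} applied to $\|fg\|_{H^s(\Omega)}$ produces the first claimed inequality with the same exponent $\theta=\theta(s)$, and Lemma \ref{l.Leib.2} applied to $\|fg\|_{H^s(\Omega)}$ produces the second.

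To prove $(\star)$ I would argue by interpolation of the multiplication operator $T_h:w\mapsto hw$. Since $s+1>\tfrac N2$, the space $H^{s+1}(\Omega)$ is an algebra (Theorem 4.39 of \cite{AF03}), so $T_h$ is bounded on $H^{s+1}(\Omega)$ with norm $\lesssim\|h\|_{H^{s+1}(\Omega)}$; and $T_h$ is trivially bounded on $L^2(\Omega)$ with norm $\lesssim\|h\|_{L^\infty(\Omega)}\lesssim\|h\|_{H^{s+1}(\Omega)}$, the last step being the Sobolev embedding $H^{s+1}(\Omega)\hookrightarrow L^\infty(\Omega)$ (again $s+1>\tfrac N2$). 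Interpolating these two mapping properties by the complex method and using $\big[L^2(\Omega),H^{s+1}(\Omega)\big]_{\theta_0}=H^{\theta_0(s+1)}(\Omega)$ from \eqref{int.res.} with $\theta_0=\tfrac{s}{s+1}\in(0,1)$ (recall $s>\tfrac N2-1\ge\tfrac12>0$), $T_h$ is bounded on $H^s(\Omega)$ with norm $\lesssim\|h\|_{L^\infty(\Omega)}^{1-\theta_0}\|h\|_{H^{s+1}(\Omega)}^{\theta_0}\lesssim\|h\|_{H^{s+1}(\Omega)}$, which is exactly $(\star)$.

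An alternative, closer to the self‑contained style of Lemma \ref{l.Leib.1}, is to prove $(\star)$ directly: for integer $s$, expand $D^\beta(gh)$ by the Leibniz rule, put the undifferentiated $h$ in $L^\infty$, bound $g\,D^\beta h$ by $\|g\|_{L^N(\Omega)}\|D^\beta h\|_{L^{2N/(N-2)}(\Omega)}$ using $H^s(\Omega)\hookrightarrow L^N(\Omega)$ (valid since $s\ge\tfrac N2-1$), and distribute the mixed terms $D^{\gamma_1}g\,D^{\gamma_2}h$ by Hölder and Sobolev; for non‑integer $s$ run the Gagliardo‑seminorm splitting as in Lemma \ref{l.Leib.1}. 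I do not expect a serious obstacle: the lemma is essentially a corollary once $(\star)$ is in hand, and the only point that genuinely needs care — exactly as in Lemma \ref{l.Leib.1} — is that on the unbounded domains $\Omega$ under consideration there is no finite‑measure Hölder trick, so one must check that the Lebesgue exponents produced by the expansion lie in the admissible ranges of the Sobolev embeddings $H^j(\Omega)\hookrightarrow L^p(\Omega)$; the strict inequality $s>\tfrac N2-1$ provides precisely the slack that makes this work. I would present the short interpolation proof of $(\star)$ and then the two‑line deduction of the two inequalities.
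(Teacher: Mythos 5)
Your proposal is correct and follows essentially the same route as the paper: the paper likewise reduces the lemma to the single multiplier bound $\|(fg)h\|_{H^s(\Omega)}\lesssim \|h\|_{H^{s+1}(\Omega)}\|fg\|_{H^s(\Omega)}$ and then invokes Lemmas \ref{l.Leib.1} and \ref{l.Leib.2} on $\|fg\|_{H^s(\Omega)}$, proving that bound exactly by your second, direct method (Leibniz expansion with the undifferentiated $h$ in $L^\infty$ via $H^{s+1}(\Omega)\hookrightarrow L^\infty(\Omega)$, the term $fg\,D^\beta h$ via $L^N\times L^{2N/(N-2)}$ using $H^s(\Omega)\hookrightarrow L^N(\Omega)$, and H\"older--Sobolev for the mixed terms). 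Your interpolation argument for the multiplier bound via the operator $T_h$ is a clean alternative for that one step, but the overall architecture coincides with the paper's.
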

\begin{proof}\hfill\\
The proof is similar to the previous one, so we consider just the case $s\in\mathbb N$ for simplicity. We notice that it is sufficient to prove that 
$$ \|fgh\|_{H^s(\Omega)}\lesssim \|h\|_{H^{s+1}(\Omega)}\|fg\|_{H^s(\Omega)}, $$
because the estimate will follow then by Lemma \ref{l.Leib.1} and \ref{l.Leib.2}. When $s=0$
$$ \|fgh\|_{L^2(\Omega)}\le \|h\|_{L^\infty(\Omega)}\|fg\|_{L^2(\Omega)}\lesssim \|h\|_{H^{s+1}(\Omega)}\|fg\|_{L^2(\Omega)}, $$
where we used that $L^\infty(\Omega)\hookrightarrow H^{s+1}(\Omega)$ for $s>\frac{N}{2}-1$. On the other hand, for any $\beta\in\mathbb N^N$ with $|\beta|=s$, we have
$$  \|D^\beta(fgh)\|_{L^2(\Omega)}\le \|D^\beta (fg)h\|_{L^2(\Omega)} + \|fg D^\beta h\|_{L^2(\Omega)} + \sum_{\gamma_1+\gamma_2=\beta,\:|\gamma_j|\ge 1} \|D^{\gamma_1}(fg)D^{\gamma_2}h\|_{L^2(\Omega)}. $$
The first term can be treated similarly: 
$$ \|D^\beta (fg)h\|_{L^2(\Omega)}\le \|h\|_{L^\infty(\Omega)}\|fg\|_{H^s(\Omega)}\lesssim  \|h\|_{H^{s+1}(\Omega)}\|fg\|_{H^s(\Omega)}. $$
For the second term we notice that $L^N(\Omega)\hookrightarrow H^s(\Omega)$ for $s\ge \frac{N}{2}-1$, so 
$$ \|fg D^\beta  h\|_{L^2(\Omega)}\le \|fg\|_{L^N(\Omega)}\|D^\beta h\|_{L^\frac{2N}{N-2}(\Omega)}\lesssim \|fg\|_{H^s(\Omega)}\|D^\beta h\|_{H^1(\Omega)}\lesssim  \|h\|_{H^{s+1}(\Omega)}\|fg\|_{H^s(\Omega)}. $$
For the last term, we take $p,q$ as in the previous proof so that 
$$ \sum_{\gamma_1+\gamma_2=\beta,\:|\gamma_j|\ge 1} \|D^{\gamma_1}(fg)D^{\gamma_2}h\|_{L^2(\Omega)}\le \|D^{\gamma_1}(fg)\|_{L^p(\Omega)}\|D^{\gamma_2}h\|_{L^q(\Omega)}\lesssim \|h\|_{H^{s+1}(\Omega)}\|fg\|_{H^s(\Omega)}. $$
\end{proof}

Now we can bound the nonlinear terms of \eqref{sys.loc-red.}:

\begin{lem}\label{l.bil.e.loc.}
Let $T>0$, $s>\frac{N}{2}-1$ and $z,w\in X^s_T(\Omega)$, then we can find $\gamma=\gamma(s)\in(0,1)$ such that

\begin{equation}\label{bil.e.1}
    \|z\nabla w\|_{L^1((0,T);L^1(\Omega))} \le C(N,s) T^{1/2} \|z\|_{X^s_T(\Omega)}\|w\|_{X^s_T(\Omega)},
\end{equation}
\begin{equation}\label{bil.e.2}
    \|z\nabla w\|_{L^1((0,T);L^2(\Omega))} \le C(N,s) T^{\gamma} \|z\|_{X^s_T(\Omega)}\|w\|_{X^s_T(\Omega)},
\end{equation}
\begin{equation}\label{bil.e.3}
    \|z \nabla w\|_{L^\frac{2}{2-s}((0,T);L^2(\Omega))} \le C(N,s)  T^\gamma \|z\|_{X^s_T(\Omega)}\|w\|_{X^s_T(\Omega)}\quad s\in[0,1],
    \end{equation}
\begin{equation}\label{bil.e.4}
    \|z w\|_{L^2((0,T);H^s(\Omega))} \le C(N,s)  T^\gamma \|z\|_{X^s_T(\Omega)}\|w\|_{X^s_T(\Omega)}.
    \end{equation}
\end{lem}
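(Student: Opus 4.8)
The plan is to establish the four inequalities one at a time, in each case first applying Hölder's (or Cauchy--Schwarz's) inequality in the space variable together with the Sobolev embeddings and the interpolation scale \eqref{int.res.} for $H^s(\Omega)$, and then applying Hölder's inequality in the time variable on the bounded interval $(0,T)$ to produce the power of $T$. The hypothesis $s>\frac N2-1$ enters only to guarantee that the relevant Sobolev exponents, and the exponent $\theta$ appearing in Lemma~\ref{l.Leib.1}, can be chosen strictly inside their admissible ranges, which is precisely what yields an exponent $\gamma>0$; the single $\gamma=\gamma(s)\in(0,1)$ is then obtained by taking the minimum of the exponents produced for \eqref{bil.e.2}, \eqref{bil.e.3} and \eqref{bil.e.4}. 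Estimate \eqref{bil.e.1} is immediate: Cauchy--Schwarz in $x$ gives $\|z\nabla w\|_{L^1(\Omega)}\le\|z\|_{L^2(\Omega)}\|\nabla w\|_{L^2(\Omega)}\lesssim\|z\|_{H^s(\Omega)}\|w\|_{H^{s+1}(\Omega)}$, and since $t\mapsto\|z(t)\|_{H^s(\Omega)}\in L^\infty(0,T)$ while $t\mapsto\|w(t)\|_{H^{s+1}(\Omega)}\in L^2(0,T)$, the product lies in $L^2((0,T))$ with norm bounded by $\|z\|_{X^s_T(\Omega)}\|w\|_{X^s_T(\Omega)}$, whence the inclusion $L^2(0,T)\hookrightarrow L^1(0,T)$ costs exactly $T^{1/2}$.

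For \eqref{bil.e.2} and \eqref{bil.e.3} I would start from $\|z\nabla w\|_{L^2(\Omega)}\le\|z\|_{L^\infty(\Omega)}\|\nabla w\|_{L^2(\Omega)}$. Fixing $\mu\in(0,1)$ with $s+\mu>\frac N2$ (available exactly because $s>\frac N2-1$), interpolation and $H^{s+\mu}(\Omega)\hookrightarrow L^\infty(\Omega)$ give $\|z\|_{L^\infty(\Omega)}\lesssim\|z\|_{H^s(\Omega)}^{1-\mu}\|z\|_{H^{s+1}(\Omega)}^\mu$, so that $t\mapsto\|z(t)\|_{L^\infty(\Omega)}\in L^{2/\mu}((0,T))$ with norm $\lesssim\|z\|_{X^s_T(\Omega)}$. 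With $\nu\coloneqq\max\{0,1-s\}\in[0,1]$ one likewise has $\|\nabla w\|_{L^2(\Omega)}\le\|w\|_{H^{s+\nu}(\Omega)}\lesssim\|w\|_{H^s(\Omega)}^{1-\nu}\|w\|_{H^{s+1}(\Omega)}^\nu$, hence $t\mapsto\|\nabla w(t)\|_{L^2(\Omega)}\in L^{2/\nu}((0,T))$ (an $L^\infty$ bound when $\nu=0$), with norm $\lesssim\|w\|_{X^s_T(\Omega)}$. Hölder's inequality in $t$ then gives $z\nabla w\in L^r((0,T);L^2(\Omega))$ with $\frac1r=\frac{\mu+\nu}2<1$ and norm $\lesssim\|z\|_{X^s_T(\Omega)}\|w\|_{X^s_T(\Omega)}$; since $r>1$ the inclusion $L^r(0,T)\hookrightarrow L^1(0,T)$ yields \eqref{bil.e.2} with $\gamma=1-\frac{\mu+\nu}2$. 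When $s\in[0,1]$ we have $\nu=1-s$ and, as $\mu<1$, $\frac1r=\frac{\mu+1-s}2<\frac{2-s}2$, so $L^r(0,T)\hookrightarrow L^{2/(2-s)}(0,T)$ costs $T^{(1-\mu)/2}$, which is \eqref{bil.e.3}.

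For \eqref{bil.e.4} I would use the interpolated Leibniz estimate of Lemma~\ref{l.Leib.1} at each fixed time, namely $\|z(t)w(t)\|_{H^s(\Omega)}\lesssim\|z\|_{H^s(\Omega)}^\theta\|z\|_{H^{s+1}(\Omega)}^{1-\theta}\|w\|_{H^s(\Omega)}+\|w\|_{H^s(\Omega)}^\theta\|w\|_{H^{s+1}(\Omega)}^{1-\theta}\|z\|_{H^s(\Omega)}$ with $\theta=\theta(s)\in(0,1]$. The factors $\|z\|_{H^s(\Omega)}^\theta$ and $\|w\|_{H^s(\Omega)}$ are in $L^\infty(0,T)$ and $\|z\|_{H^{s+1}(\Omega)}^{1-\theta}\in L^{2/(1-\theta)}(0,T)$, so by Hölder in $t$ the right-hand side lies in $L^{2/(1-\theta)}((0,T))$ with norm $\lesssim\|z\|_{X^s_T(\Omega)}\|w\|_{X^s_T(\Omega)}$, and $L^{2/(1-\theta)}(0,T)\hookrightarrow L^2(0,T)$ costs $T^{\theta/2}$, establishing \eqref{bil.e.4} with $\gamma=\theta/2>0$.

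The step I expect to be the real obstacle is not any individual computation but rather making sure each estimate carries a \emph{strictly positive} power of $T$: the naive endpoint Sobolev embedding and the endpoint form of the Leibniz rule would only produce a $T^0$ bound, useless for the contraction argument of Section~\ref{sec.loc.ex.}. The positive power is squeezed out by trading a small amount of Sobolev regularity --- the parameters $\mu$, $\nu$, $1-\theta$ above --- for a little room in the time-Lebesgue exponent and then invoking Hölder on the finite interval $(0,T)$, which is possible only because $s>\frac N2-1$ is strict. For \eqref{bil.e.3} one moreover needs $s\le1$, so that the target exponent $\frac2{2-s}$ is compatible with the exponent $r$ coming from the space estimate; this, together with $s>\frac N2-1$, is what forces $N=3$ and $s\in(\frac12,1]$ in Theorem~\ref{t.lin.ex.dom.}.
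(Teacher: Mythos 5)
Your proposal is correct and follows essentially the same strategy as the paper's proof: Hölder/Sobolev/interpolation in the space variable to place each factor in a time-Lebesgue space, then Hölder on the bounded interval $(0,T)$ to extract the positive power $T^\gamma$, with Lemma~\ref{l.Leib.1} supplying \eqref{bil.e.4}. The only differences are cosmetic choices of spatial Hölder pairs (the paper uses $L^{2N/(N-2)}\times L^N$ for \eqref{bil.e.3} where you use an interpolated $L^\infty\times L^2$), and your exponent bookkeeping for \eqref{bil.e.2}--\eqref{bil.e.4} is, if anything, slightly more careful than the paper's.
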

When $s=\frac{N}{2}-1$ the previous estimates are true with $\gamma=0$. We will prove the local existence by a contraction argument, therefore is important to have $\gamma>0$.
\begin{proof}\hfill\\
The first estimate is immediate:
$$ \|z\nabla w\|_{L^1((0,T);L^1(\Omega))}\le \left\|\|z(t)\|_{L^2(\Omega)}\|\nabla w(t)\|_{L^2(\Omega)}\right\|_{L^1((0,T))}\le $$
$$ \le \|z\|_{L^\infty((0,T);H^s(\Omega))}\|w\|_{L^1((0,T);H^{s+1}(\Omega))}\le T^{1/2}\|z\|_{X^s_T(\Omega)}\|w\|_{X^s_T(\Omega)}. $$
The second one is similar:
$$ \|z(t)\nabla w(t)\|_{L^2(\Omega)}\le \|z(t)\|_{L^\infty(\Omega)}\|w(t)\|_{H^1(\Omega)}\lesssim \|z(t)\|_{H^{s+1}(\Omega)}\|w(t)\|_{H^1(\Omega)}. $$
If $s\ge 1$ then
$$ \|z\nabla w\|_{L^1((0,T);L^2(\Omega))}\lesssim \left\|\|z(t)\|_{H^{s+1}(\Omega)}\|w(t)\|_{H^s(\Omega)}\right\|_{L^1((0,T))}\le $$
$$ \le \|w\|_{L^\infty((0,T);H^s(\Omega))}\|z\|_{L^1((0,T);H^{s+1}(\Omega))}\le T^\frac{1}{2}\|w\|_{X^s_T(\Omega)}\|z\|_{X^s_T(\Omega)}.  $$
Otherwise, if $s<1$, then 
$$ \|w(t)\|_{H^1(\Omega)}\lesssim \|w(t)\|_{H^s(\Omega)}^s\|w(t)\|_{H^{s+1}(\Omega)}^{1-s}, $$
so 
$$ \|z\nabla w\|_{L^1((0,T);L^2(\Omega))}\lesssim \|w\|_{L^\infty((0,T);H^s(\Omega))}^s\left\|\|z(t)\|_{H^{s+1}(\Omega)}\|w(t)\|_{H^{s+1}(\Omega)}^{1-s}\right\|_{L^1((0,T))}\le $$
$$ \le T^\frac{2-s}{2}\|z\|_{X^s_T(\Omega)}\|w\|_{X^s_T(\Omega)}. $$
For what concerns the third estimate \eqref{bil.e.3}
$$ \|z(t)\nabla w(t)\|_{L^2(\Omega)}\le \|z(t)\|_{L^\frac{2N}{N-2}(\Omega)}\|\nabla w(t)\|_{L^N(\Omega)}. $$
By Sobolev embedding and interpolation estimates (we recall $s\in[0,1]$)
$$ \|z(t)\|_{L^\frac{2N}{N-2}(\Omega)}\lesssim \|z(t)\|_{H^1(\Omega)}\lesssim \|z(t)\|_{H^s(\Omega)}^s\|z(t)\|_{H^{s+1}(\Omega)}^{1-s}. $$
On the other hand, since $s>\frac{N}{2}-1$, by Sobolev embedding and interpolation estimates we can find $\varepsilon>0$ such that 
$$ \|\nabla w(t)\|_{L^N(\Omega)}\lesssim \|w\|_{H^s(\Omega)}^\varepsilon \|w\|_{H^{s+1}(\Omega)}^{1-\varepsilon}. $$
Therefore
$$ \|z\nabla w\|_{L^\frac{2}{2-s}((0,T);L^2(\Omega))}\lesssim $$
$$ \lesssim \|z\|_{L^\infty((0,T);H^s(\Omega))}^s\|w\|_{L^\infty((0,T);H^s(\Omega))}^\varepsilon\left\|\|z(t)\|_{H^{s+1}(\Omega)}^{1-s}\|w(t)\|_{H^{s+1}(\Omega)}^{1-\varepsilon}\right\|_{L^\frac{2}{2-s}((0,T))} \le $$
$$ \le T^\frac{\varepsilon}{2}\|z\|_{L^\infty((0,T);H^s(\Omega))}^s\|w\|_{L^\infty((0,T);H^s(\Omega))}^\varepsilon \|z\|_{L^2((0,T);H^{s+1}(\Omega))}^{1-s}\|w\|_{L^2((0,T);H^{s+1}(\Omega))}^{1-\varepsilon}. $$
Finally, the estimate \eqref{bil.e.4} comes from Lemma \ref{l.Leib.1}:
$$ \|z(t)w(t)\|_{H^s(\Omega)}\lesssim $$
$$ \lesssim \|z(t)\|_{H^s(\Omega)}^\theta\|z(t)\|_{H^{s+1}(\Omega)}^{1-\theta}\|w(t)\|_{H^s(\Omega)} + \|w(t)\|_{H^s(\Omega)}^\theta\|w(t)\|_{H^{s+1}(\Omega)}^{1-\theta}\|z(t)\|_{H^s(\Omega)}, $$
for some $\theta\in(0,1]$. So
$$ \|z w\|_{L^2((0,T);H^s(\Omega))}\lesssim \|z\|_{L^\infty((0,T);H^s(\Omega))}^\theta\|w\|_{L^\infty((0,T);H^s(\Omega))}\|z\|_{L^{2(1-\theta)}((0,T);H^{s+1}(\Omega))}^{1-\theta} + $$
$$ + \|w\|_{L^\infty((0,T);H^s(\Omega))}^\theta\|z\|_{L^\infty((0,T);H^s(\Omega))}\|w\|_{L^{2(1-\theta)}((0,T);H^{s+1}(\Omega))}^{1-\theta}\le $$
$$ \le T^\theta \|z\|_{X^s_T(\Omega)}\|w\|_{X^s_T(\Omega)}. $$
\end{proof}
\begin{lem}\label{l.tril.e.loc.}
Let $s> \frac{N}{2}-1$, $T>0$, let $w,z\in X^s_T(\Omega)$ and $h\in X^{s+1}_T(\Omega)$, then
\begin{equation}\label{tril.e.1}
    \|zwh\|_{L^1((0,T);L^1(\Omega))}\le C(\Omega,s)T\|z\|_{X^s_T(\Omega)}\|w\|_{X^s_T(\Omega)}\|h\|_{X^{s+1}_T(\Omega)},
\end{equation}
\begin{equation}\label{tril.e.2}
        \|zwh\|_{L^2((0,T);H^{s}(\Omega))}\le C(\Omega,s) T^\gamma\|z\|_{X^s_T(\Omega)}\|w\|_{X^s_T(\Omega)}\|h\|_{X^{s+1}_T(\Omega)},
    \end{equation}
    for some $\gamma=\gamma(s)\in(0,1)$.
\end{lem}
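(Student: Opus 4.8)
The plan is to derive \eqref{tril.e.1} and \eqref{tril.e.2} by combining the pointwise (in time) trilinear Leibniz estimate of Lemma~\ref{l.Leib.3} with an interpolation in the time variable, exactly in the spirit of the proof of Lemma~\ref{l.bil.e.loc.}.

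For \eqref{tril.e.1} I would argue directly. Since $s>\frac N2-1$ we have the Sobolev embedding $H^{s+1}(\Omega)\hookrightarrow L^\infty(\Omega)$, so Hölder's inequality with exponents $2,2,\infty$ gives, for a.e.\ $t$,
\[
\|z(t)w(t)h(t)\|_{L^1(\Omega)}\le \|z(t)\|_{L^2(\Omega)}\|w(t)\|_{L^2(\Omega)}\|h(t)\|_{L^\infty(\Omega)}\lesssim \|z(t)\|_{H^s(\Omega)}\|w(t)\|_{H^s(\Omega)}\|h(t)\|_{H^{s+1}(\Omega)}.
\]
Integrating over $t\in(0,T)$ and estimating each factor by its $L^\infty_t$ norm produces the factor $T$ and the bound $\|zwh\|_{L^1((0,T);L^1(\Omega))}\le C\,T\,\|z\|_{X^s_T}\|w\|_{X^s_T}\|h\|_{X^{s+1}_T}$, as claimed.

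For \eqref{tril.e.2} I would apply the first trilinear inequality of Lemma~\ref{l.Leib.3} (the one carrying the exponent $\theta$) with the functions $z,w$ and third factor $h$: for a.e.\ $t$,
\[
\|z(t)w(t)h(t)\|_{H^s(\Omega)}\le C\|h(t)\|_{H^{s+1}(\Omega)}\Bigl(\|z(t)\|_{H^s}^{\theta}\|z(t)\|_{H^{s+1}}^{1-\theta}\|w(t)\|_{H^s}+\|w(t)\|_{H^s}^{\theta}\|w(t)\|_{H^{s+1}}^{1-\theta}\|z(t)\|_{H^s}\Bigr),
\]
where $\theta=\theta(s)\in(0,1]$ and, crucially, $\theta>0$ because $s>\frac N2-1$ strictly (the remark after Lemma~\ref{l.Leib.1}). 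Now take the $L^2((0,T))$ norm, treating the two summands symmetrically. In each summand I put $\|h(t)\|_{H^{s+1}}$, $\|z(t)\|_{H^s}^{\theta}$ (resp.\ $\|w(t)\|_{H^s}^{\theta}$) and the remaining $\|w(t)\|_{H^s}$ (resp.\ $\|z(t)\|_{H^s}$) in $L^\infty_t$, and keep only the top-order factor $\|z(t)\|_{H^{s+1}}^{1-\theta}$ inside the $L^2_t$ norm. Since $2(1-\theta)<2$, Hölder in time on the finite interval $(0,T)$ gives
\[
\bigl\|\,\|z(t)\|_{H^{s+1}}^{1-\theta}\,\bigr\|_{L^2((0,T))}=\bigl\|\,\|z(t)\|_{H^{s+1}}\,\bigr\|_{L^{2(1-\theta)}((0,T))}^{1-\theta}\le T^{\theta/2}\,\|z\|_{L^2((0,T);H^{s+1})}^{1-\theta},
\]
and likewise with $w$. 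Collecting the bounds yields
\[
\|zwh\|_{L^2((0,T);H^s(\Omega))}\lesssim T^{\theta/2}\,\|h\|_{L^\infty_tH^{s+1}}\Bigl(\|z\|_{L^\infty_tH^s}^{\theta}\|z\|_{L^2_tH^{s+1}}^{1-\theta}\|w\|_{L^\infty_tH^s}+\|w\|_{L^\infty_tH^s}^{\theta}\|w\|_{L^2_tH^{s+1}}^{1-\theta}\|z\|_{L^\infty_tH^s}\Bigr)\lesssim T^{\theta/2}\|z\|_{X^s_T}\|w\|_{X^s_T}\|h\|_{X^{s+1}_T},
\]
which is \eqref{tril.e.2} with $\gamma=\theta/2\in(0,1)$ (in the degenerate case $\theta=1$ the top-order factor simply disappears and one gains $T^{1/2}=T^{\theta/2}$ from $\|1\|_{L^2((0,T))}$).

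The computation is essentially routine once Lemma~\ref{l.Leib.3} is in hand; the only point requiring genuine care is the bookkeeping of the time-Lebesgue exponents. Specifically, $h$ must be estimated through its $L^\infty_tH^{s+1}$ norm (its $L^2_tH^{s+2}$ norm carries one derivative too many and is the wrong quantity), and the top-order factors $\|z\|_{H^{s+1}},\|w\|_{H^{s+1}}$ may only be placed in $L^2_t$ after the Hölder-in-time step — which is exactly the step that manufactures the positive power $T^{\theta/2}$. Positivity of $\gamma$ here relies entirely on $\theta>0$, i.e.\ on the strict inequality $s>\frac N2-1$, and it is precisely this gain that will drive the contraction argument in Section~\ref{sec.loc.ex.}.
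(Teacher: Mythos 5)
Your proof is correct and follows essentially the same route as the paper: H\"older together with the embedding $H^{s+1}(\Omega)\hookrightarrow L^\infty(\Omega)$ for \eqref{tril.e.1}, and Lemma \ref{l.Leib.3} combined with the time-interpolation argument of Lemma \ref{l.bil.e.loc.} for \eqref{tril.e.2}. Your bookkeeping of the time exponent is in fact slightly more careful than the paper's (which records $T^{\theta}$ where the H\"older-in-time step actually yields $T^{\theta/2}$); since both exponents lie in $(0,1)$, the conclusion is unaffected.
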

\begin{proof}\hfill\\
Again, the first estimate is easy:
$$ \|zwh\|_{L^1((0,T);L^1(\Omega))}\le \left\|\|h(t)\|_{L^\infty(\Omega)}\|w(t)\|_{L^2(\Omega)}\|z(t)\|_{L^2(\Omega)}\right\|_{L^1((0,T))}\lesssim  $$
$$ \lesssim T\|h\|_{L^\infty((0,T);H^{s+1}(\Omega))}\|w\|_{L^\infty((0,T);H^s(\Omega)}\|z\|_{L^\infty((0,T);H^{s}(\Omega))}, $$
where we have used that $L^\infty(\Omega)\hookrightarrow H^{s+1}(\Omega)$ for $s>\frac{N}{2}-1$. For the second estimate it is sufficient to apply Lemma \ref{l.Leib.3} and proceed as in the previous lemma.
\end{proof}

\subsection{Proof of Theorems \ref{t.loc.ex.RN} and \ref{t.loc.ex.dom.}}

At the beginning of the Section \ref{sec.lin.es.}, we decided to study the Ericksen-Leslie system without the condition $|\eta+d|=1$. In fact, 
we are going to prove the property:
$$ |\eta+d_0|=1\:\Rightarrow\: |\eta+d(t)|=1\quad \text{for a.e.}\:\:t\in(0,T). $$
If it is true, it is sufficient to prove the local existence for the simplified system \eqref{sys.loc-red.} in order to get Theorems \ref{t.loc.ex.RN} and \ref{t.loc.ex.dom.}. As we mentioned above, we will follow the same argument of \cite{HNP16}.
\begin{lem}\label{l.un.loc.}
    Let $s> \frac{N}{2}-1$, $T>0$, $\eta\in \mathbb R^N$,
    $$ u_0\in H^s_{\mathbb P\Delta_D}\left(\Omega;\mathbb{R}^N\right),\quad d_0\in H^{s+1}_{\Delta_N}\left(\Omega;\mathbb R^N\right), $$
    with $|\eta+d_0|=1$, let $(u,d)\in X^s_T(\Omega)\times X^{s+1}_T(\Omega)$ be a solution for \eqref{sys.loc-red.}, then $|\eta+d(t)|=1$ for a.e. $t\in(0,T)$.
\end{lem}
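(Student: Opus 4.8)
The plan is to follow the argument of \cite{HNP16}. Write $v=\eta+d$ and introduce the \emph{defect} $\rho\coloneqq|v|^2-1$; the goal becomes to show $\rho\equiv 0$. First I observe that the hypothesis is only consistent when $|\eta|=1$: since $d_0\in H^{s+1}(\Omega)$ and $\Omega$ is unbounded, $|v_0(x)|\to|\eta|$ as $|x|\to\infty$, so $|v_0|=1$ forces $|\eta|=1$; consequently $\rho=2\,\eta\cdot d+|d|^2$, which together with $d\in X^{s+1}_T(\Omega)$ and the embedding $H^{s+1}(\Omega)\hookrightarrow L^2(\Omega)\cap L^\infty(\Omega)$ (valid since $s>\tfrac N2-1$) shows $\rho\in L^\infty\big((0,T);L^2(\Omega)\big)\cap L^\infty((0,T)\times\Omega)$, while $\nabla v=\nabla d\in L^\infty\big((0,T);H^s(\Omega)\big)\cap L^2\big((0,T);H^{s+1}(\Omega)\big)$.

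Next I would derive a closed equation for $\rho$. Using the $d$-equation of \eqref{sys.loc-red.} in the form $(\partial_t-\Delta)v+u\cdot\nabla v=|\nabla v|^2v$ (recall $\eta$ is constant) together with the pointwise identities $\Delta|v|^2=2v\cdot\Delta v+2|\nabla v|^2$, $\ u\cdot\nabla|v|^2=2\,v\cdot(u\cdot\nabla v)$ and $v\cdot\big(|\nabla v|^2v\big)=|\nabla v|^2(\rho+1)$, a direct computation gives
\begin{equation}\label{eq.defect}
(\partial_t-\Delta)\rho+u\cdot\nabla\rho=2|\nabla v|^2\rho\quad\text{in }(0,T)\times\Omega,\qquad \partial_\nu\rho=0\ \text{on }(0,T)\times\partial\Omega,
\end{equation}
with initial datum $\rho(0)=|\eta+d_0|^2-1=0$; here the boundary condition follows from $\partial_\nu\rho=2v\cdot\partial_\nu v=2v\cdot\partial_\nu d=0$. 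Thus $\rho$ solves a linear parabolic Cauchy--Neumann problem with vanishing data, of which $\rho\equiv 0$ is also a solution, so the lemma is an instance of uniqueness for \eqref{eq.defect}, which I would prove by the energy method. After a routine regularization (mollifying in time, or using that $\rho$ inherits enough parabolic regularity from $u,d$, exactly as in \cite{HNP16}), multiply \eqref{eq.defect} by $\rho$ and integrate over $\Omega$. Since ${\rm div}\,u=0$ and $u=0$ on $\partial\Omega$ one has $\int_\Omega(u\cdot\nabla\rho)\rho\,dx=\tfrac12\int_\Omega u\cdot\nabla(\rho^2)\,dx=0$, and since $\partial_\nu\rho=0$ one has $\int_\Omega(\Delta\rho)\rho\,dx=-\|\nabla\rho\|_{L^2(\Omega)}^2$; hence
\begin{equation}\label{eq.energy.defect}
\frac12\frac{d}{dt}\|\rho(t)\|_{L^2(\Omega)}^2+\|\nabla\rho(t)\|_{L^2(\Omega)}^2=2\int_\Omega|\nabla v(t)|^2\rho(t)^2\,dx.
\end{equation}

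The main obstacle is controlling the right-hand side of \eqref{eq.energy.defect}, because in the regime that actually occurs for $\mathbb R^3_+$ and exterior domains (and, more generally, whenever $\tfrac N2-1<s\le\tfrac N2$) we do not have $\nabla v\in L^\infty_x$. Instead I would use that $s>\tfrac N2-1$ gives a Sobolev embedding $H^s(\Omega)\hookrightarrow L^{N+\delta}(\Omega)$ for some $\delta>0$, so by H\"older $\int_\Omega|\nabla v|^2\rho^2\le\|\nabla v\|_{L^{N+\delta}(\Omega)}^2\,\|\rho\|_{L^q(\Omega)}^2$ with $q\coloneqq\frac{2(N+\delta)}{N+\delta-2}<\frac{2N}{N-2}$, followed by Gagliardo--Nirenberg, $\|\rho\|_{L^q(\Omega)}\le C\|\rho\|_{L^2(\Omega)}^{1-\theta}\|\rho\|_{H^1(\Omega)}^{\theta}$ with $\theta=\frac{N}{N+\delta}\in(0,1)$. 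Young's inequality then absorbs the factor $\|\nabla\rho\|_{L^2(\Omega)}^{2\theta}$ into the left-hand side of \eqref{eq.energy.defect} and leaves $\frac{d}{dt}\|\rho(t)\|_{L^2(\Omega)}^2\le\Phi(t)\,\|\rho(t)\|_{L^2(\Omega)}^2$ with $\Phi(t)\le C\big(1+\|d(t)\|_{H^{s+1}(\Omega)}^{2/(1-\theta)}\big)\in L^1(0,T)$. Since $\rho(0)=0$, Gr\"onwall's inequality yields $\rho(t)=0$ for a.e. $t\in(0,T)$, i.e. $|\eta+d(t)|=1$, proving the lemma. The only genuinely delicate points are the justification of the identity \eqref{eq.energy.defect} and the bookkeeping of exponents that makes $\Phi$ integrable in time; everything else is routine.
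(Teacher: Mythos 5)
Your proposal is correct, and it reaches the conclusion by a genuinely different mechanism than the paper, even though the reduction is identical: both arguments introduce the defect $\varphi=|\eta+d|^2-1$, derive the same transport--diffusion equation $(\partial_t-\Delta)\varphi+u\cdot\nabla\varphi=2|\nabla d|^2\varphi$ with $\partial_\nu\varphi=0$ and $\varphi(0)=0$, and then prove $\varphi\equiv0$. The paper closes the argument by treating $\varphi$ as an element of $X^{s+1}_T(\Omega)$ (which it is, since $\varphi=2\eta\cdot d+|d|^2$ and $H^{s+1}$ is an algebra), applying the linear estimate of Theorem \ref{t.lin.ex.dom.} to the Duhamel representation and the multilinear bounds of Lemmas \ref{l.bil.e.loc.} and \ref{l.tril.e.loc.} to obtain $\|\varphi\|_{X^{s+1}_T}\le CT^\gamma\bigl(\|u\|_{X^s_T}+\|d\|_{X^{s+1}_T}^2\bigr)\|\varphi\|_{X^{s+1}_T}$ with $\gamma>0$, and iterating over small time intervals; this recycles machinery already built and sidesteps any direct justification of an energy identity. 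You instead run a plain $L^2$ energy estimate, control $\int|\nabla v|^2\varphi^2$ by H\"older with $H^s\hookrightarrow L^{N+\delta}$ (available precisely because $s>\tfrac N2-1$), Gagliardo--Nirenberg with interpolation exponent $\theta=\tfrac{N}{N+\delta}<1$, and Young to absorb $\|\nabla\varphi\|_{L^2}^2$, ending with a Gr\"onwall factor $\Phi(t)\lesssim 1+\|d(t)\|_{H^{s+1}}^{2/(1-\theta)}$ that is bounded (hence locally integrable) since $d\in L^\infty((0,T);H^{s+1})$; Gr\"onwall then gives $\varphi\equiv0$ on all of $(0,T)$ in one step, with no iteration. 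Your exponent bookkeeping is right, and the strict inequality $s>\tfrac N2-1$ enters your argument exactly where it enters the paper's (there to get $\gamma>0$, here to get $\theta<1$ so the absorption works). The only point you defer --- the rigorous justification of the energy identity --- is genuinely needed but unproblematic at this regularity: $\varphi\in L^\infty H^{s+1}\cap L^2H^{s+2}$ with $\partial_t\varphi\in L^2L^2$, so the chain rule for $t\mapsto\|\varphi(t)\|_{L^2}^2$ and the boundary terms (using $\partial_\nu\varphi=2(\eta+d)\cdot\partial_\nu d=0$, ${\rm div}\,u=0$, $u|_{\partial\Omega}=0$) are all standard. Your observation that $|\eta|=1$ is forced is correct but not needed. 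On balance your route is more elementary and self-contained; the paper's route has the advantage that the identical estimates are reused verbatim for the global analogue (Lemma \ref{l.un.gl.}).
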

\begin{proof}\hfill\\
    Let $\varphi\coloneqq |\eta+d|^2-1$, then
    $$ \partial_j\varphi=2(d+\eta)\cdot \partial_jd,\quad \Delta \varphi=2(d+\eta)\cdot\Delta d+2|\nabla d|^2. $$
    So
    $$ (\partial_t-\Delta)\varphi=2(d+\eta)\cdot(\partial_t-\Delta)d-2|\nabla d|^2= $$
    $$ = 2(d+\eta)\cdot\left[-u\cdot\nabla d+|\nabla d|^2(d+\eta)\right]-2|\nabla d|^2 = $$
    $$ = -2(d+\eta)\cdot(u\cdot \nabla d)+2|\nabla d|^2\varphi=-u\cdot\nabla \varphi+2|\nabla d|^2\varphi. $$
    In particular we notice that
    $$ \partial_\nu\varphi=2(d+\eta)\cdot \partial_\nu d, $$
    so $\varphi$ solves in the sense of distributions the system
    $$ \left\{\begin{array}{ll}
        (\partial_t-\Delta)\varphi=-u\cdot\nabla \varphi+2|\nabla d|^2\varphi & (0,T)\times \Omega \\
        \partial_\nu \varphi=0 & (0,T)\times\partial\Omega \\
        \varphi(0)=0 & \Omega.
    \end{array}\right. $$
    Then by Theorem \ref{t.lin.ex.RN} and \ref{t.lin.ex.dom.} (considering just the heat equation) we get that
    $$ \|\varphi\|_{X^{s+1}_T(\Omega)}\lesssim \|-u\cdot\nabla \varphi+2|\nabla d|^2\varphi\|_{L^1((0,T);L^1(\Omega))}+ \|-u\cdot\nabla \varphi+2|\nabla d|^2\varphi\|_{L^2((0,T);H^{s}(\Omega))}. $$
    From \eqref{bil.e.1} and \eqref{tril.e.1} we have that 
    $$  \|-u\cdot\nabla \varphi+2|\nabla d|^2\varphi\|_{L^1((0,T);L^1(\Omega))}\lesssim T^{\gamma}\left(\|u\|_{X^s_T(\Omega)}+\|d\|_{X^{s+1}_T(\Omega)}^2\right)\|\varphi\|_{X^{s+1}_T(\Omega)}, $$
    while, by the estimates \eqref{bil.e.4} and \eqref{tril.e.2}, we get 
    $$ \|-u\cdot\nabla \varphi+2|\nabla d|^2\varphi\|_{L^2((0,T);H^{s}(\Omega))}\le T^\gamma\left(\|u\|_{X^s_T(\Omega)}+\|d\|_{X^{s+1}_T(\Omega)}^2\right)\|\varphi\|_{X^{s+1}_T(\Omega)}, $$
    for some $\gamma>0$. In conclusion
    $$ \|\varphi\|_{X^{s+1}_T(\Omega)}\lesssim T^\gamma\left[\|u\|_{X^s_T(\Omega)}+\|d\|_{X^{s+1}_T(\Omega)}^2\right]\|\varphi\|_{X^{s+1}_T(\Omega)}. $$
    So, for $T_0\ll1$, we get that $\varphi=0$ for a.e. $t\in(0,T_0)$ and since the choice of $T_0$ depends just on global data, we can prove easily that $\varphi=0$ for a.e. $t\in(0,T)$.
    \end{proof}
We can now prove the local existence theorems. The proof of Theorem \ref{t.loc.ex.RN}, up to use estimate \eqref{bil.e.2} in place of \eqref{bil.e.1}, can be done analogously. 
\begin{proof}[Proof of Theorem \ref{t.loc.ex.dom.}]\hfill\\
Let us define the space
$$ Y_T\coloneqq \{(w,\theta)\in X^s_T(\Omega)\times X^{s+1}_T(\Omega)\mid w(t)\in J_2(\Omega)\:\:\text{for a.e.}\:\:t\in(0,T), \:\:\|(w,\theta)\|_{Y_T}<+\infty\},  $$
where
$$ \|(w,\theta)\|_{Y_T}\coloneqq\|w\|_{X^s_T(\Omega)}+\|\theta\|_{X^{s+1}_T(\Omega)}. $$
Now we define the map $\Phi\colon Y_T\to Y_T$ such that the function $\Phi(w,\theta)=(u,d)$ solves
\begin{equation}
    \left\{\begin{array}{ll}
    (\partial_t-\mathbb{P}\Delta)u=-\mathbb{P}(w\cdot \nabla w +{\rm Div}(\nabla \theta\odot\nabla \theta)) & (0,T)\times\Omega \\
    (\partial_t-\Delta)d=-w\cdot \nabla \theta+|\nabla \theta|^2(\eta+\theta) &(0,T)\times\Omega \\
    u=0 \quad \partial_\nu d=0 & (0,T)\times\partial\Omega \\
    u(0)=u_0,\quad d(0)=d_0 & \Omega,
    \end{array}\right.
\end{equation}
where $d_0\coloneqq v_0-\eta$. The plan of the proof is to prove that $\Phi$ is a contraction in a suitable subspace of $Y_T$. 

\textbf{Step 1}: As we did in the proof of Lemma \ref{l.un.loc.}, using Theorem \ref{t.lin.ex.dom.} and the estimates \eqref{bil.e.1}, \eqref{bil.e.3}, \eqref{bil.e.4}, \eqref{tril.e.1} and \eqref{tril.e.2}, we can find $\gamma>0$ such that
\begin{equation}\label{nl.loc.1}
    \begin{aligned}
 \|\Phi(w,\theta)\|_{Y_T}=\|u\|_{X^s_T(\Omega)}+\|d\|_{X^{s+1}_T(\Omega)} & \le C(T)\left[\|u_0\|_{H^s(\Omega)}+\|d_0\|_{H^{s+1}(\Omega)}\right] + \\
 & + T^\gamma\left(\|(w,\theta)\|_{Y_T}^2+\|(w,\theta)\|_{Y_T}^3\right),
    \end{aligned}
\end{equation}
where we defined $\Phi$ at the beginning of the proof. Similarly, for any $(w_1,\theta_1),(w_2,\theta_2)\in Y_T$ it holds 
\begin{equation}\label{nl.loc.2}
    \begin{aligned}
        & \|\Phi(w_1,\theta_1)-\Phi(w_2,\theta_2)\|_{Y_T}\lesssim  \\
        \lesssim T^\gamma\left(\|(w_1,\theta_1)\|_{Y_T}+\|(w_2,\theta_2)\|_{Y_T}\right. & \left.+ \|(w_1,\theta_1)\|_{Y_T}^2+\|(w_2,\theta_2)\|_{Y_T}^2\right)\|(w_1,\theta_1)-(w_2,\theta_2)\|_{Y_T}.
    \end{aligned}
\end{equation}

\textbf{Step 2}: Let us define now the space
$$ Z_\omega\coloneqq \{(w,\theta)\in Y_T\mid w(0)=u_0,\quad \theta(0)=d_0,\quad \|(w,\theta)\|_{Y_T}\le \omega\}, $$
for $\omega>0$ to be defined. We want to prove $\Phi\colon Z_\omega\to Z_\omega$ is a contraction for a suitable choice of $\omega$: thanks to \eqref{nl.loc.1} we know that exists $C>0$ such that
$$ \|\Phi(w,\theta)\|_{Y_T}\le C(T)\left[\|u_0\|_{H^s(\Omega)}+\|d_0\|_{H^{s+1}(\Omega)}\right] + CT^\gamma(\omega^2+\omega^3)\le $$
$$ \le C\left[RC(T)+T^\gamma(\omega^2+\omega^3)\right]. $$
Looking at the proofs of Lemma \ref{l.bil.e.loc.} and \ref{l.tril.e.loc.}, it can be seen that $C(T)\le K$. So, if we choose $\omega$ such that
$$ KCR\le \frac{\omega}{2}, $$
and $T\in(0,1)$ sufficiently small, then we have $\Phi(w,\theta)\in Z_\omega$. On the other hand, thanks to \eqref{nl.loc.2} there is $M>0$ such that, for any $(w_1,\theta_1),(w_2,\theta_2)\in Z_\omega$, it holds 
$$ \|\Phi(w_1,\theta_1)-\Phi(w_2,\theta_2)\|_{Y_T}\le 2M(\omega+\omega^2) T^\gamma \|(w_1,\theta_1)-(w_2,\theta_2)\|_{Y_T}. $$
Therefore, choosing $T>0$ sufficiently small, we get that $\Phi\colon Z_\omega\to Z_\omega$ is a contraction and we get a solution for the system \eqref{sys.loc-red.} in $(0,T)$.

\vspace{2mm}

\textbf{Step 3:} The solution we found is unique in $X^s_T(\Omega)\times X^{s+1}_T(\Omega)$: let $(u_1,d_1), (u_2,d_2)\in Y_T$ and let 
$$ R_1=\|(u_1,d_1)\|_{Y_T},\quad R_2=\|(u_2,d_2)\|_{Y_T}. $$
In particular they are fixed points for the function $\Phi$. Then, for any $T_0<T$, by the estimate \eqref{nl.loc.2} we get
$$ \|(u_1,d_1)-(u_2,d_2)\|_{Y_{T_0}}\lesssim T_0^\gamma \left(R_1+R_2+R_1^2+R_2^2\right)\|(u_1,d_1)-(u_2,d_2)\|_{Y_{T_0}}. $$
So, if we choose $T_0=T_0(R_1,R_2)$ sufficiently small, we get that $(u_1,d_1)=(u_2,d_2)$ in $Y_{T_0}$. In particular
$$ u_1(t,x)=u_2(t,x),\quad d_1(t,x)=d_2(t,x)\quad\forall t\in(0,T_0)\:\: \text{for a.e.}\:\:x\in \Omega. $$
Since the choice of $T_0$ does not depend of $u_0$ and $d_0$, we can repeat the argument choosing as starting point $T_0$. In this way, in a finite number of steps, we prove that $(u_1,d_1)=(u_2,d_2)$ in $Y_T$.

\vspace{2mm}

\textbf{Step 4:} Finally, we consider $v(t,x)=d(t,x)+\eta$. Thanks to Lemma \ref{l.un.loc.}, the couple $(u,v)$ solves the Ericksen-Leslie system \eqref{EL.sys-proj.} in $(0,T)$. The existence of $p$ follows again by Theorem \ref{t.lin.ex.dom.}.
\end{proof}

\section{Global Existence}\label{sec.gl.ex.}

We focus on the reduced global system
\begin{equation}\label{sys.gl-red.}
    \left\{\begin{array}{ll}
       (\partial_t-\Delta)u= - \mathbb P\left(u\cdot \nabla u + {\rm Div}\left(\nabla d\odot\nabla d\right)\right)  & \mathbb R_+\times \Omega \\
       {\rm div}u=0 & \mathbb R_+\times\Omega \\
       (\partial_t-\Delta)d +u\cdot \nabla d = |\nabla d|^2(d+\eta) & \mathbb R_+\times\Omega \\
       \partial_\nu d=0,\quad u=0 & \mathbb R_+\times \partial\Omega \\
       u(0)=u_0,\quad d(0)=d_0 & \Omega.
    \end{array}\right.     
\end{equation}
As for the local case, the first task is to prove that the nonlinearities of \eqref{sys.gl-red.} are bounded in $X^s(\Omega)\times X^{s+1}_T(\Omega)$. In the second part we will introduce a subspace of $X^s(\Omega)\times X^{s+1}(\Omega)$ for the contraction argument. This space will allow us to control the time decay of the solution.

\subsection{Multilinear Global Estimates}

Thanks to the Leibniz rules proved in Lemma \ref{l.Leib.2} and \ref{l.Leib.3} we can bound the bilinear and the trilinear terms from the nonlinearities of \eqref{sys.gl-red.}: 
\begin{lem}
Let $s> \frac{N}{2}-1$ and $z,w\in X^{s}(\Omega)$, then 
\begin{equation}\label{bil.e.5}
    \|z\nabla w\|_{L^1(\mathbb R_+;L^1(\Omega))} \le C(\Omega,s)\|z\|_{X^{s}}\|w\|_{X^s(\Omega)},
\end{equation}
\begin{equation}\label{bil.e.6}
    \|z\nabla w\|_{L^1(\mathbb R_+;L^2(\Omega))} \le C(\Omega,s)\|z\|_{X^s(\Omega)}\|w\|_{X^s(\Omega)},
\end{equation}
\begin{equation}\label{bil.e.7}
    \|z\nabla w\|_{L^\frac{2}{2-s}(\mathbb R_+;L^2(\Omega))} \le C(\Omega,s) \|z\|_{X^s(\Omega)}\|w\|_{X^s(\Omega)}\quad s\in[0,1],
\end{equation}
\begin{equation}\label{bil.e.8}
    \|z w\|_{L^2(\mathbb R_+;H^s(\Omega))} \le C(\Omega,s) \|z\|_{X^s(\Omega)}\|w\|_{X^s(\Omega)}.
    \end{equation}
\end{lem}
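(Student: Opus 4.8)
The plan is to follow the proof of Lemma \ref{l.bil.e.loc.}, with the crucial difference that on the unbounded interval $(0,\infty)$ there is no power $T^\gamma$ to spend, so the time integrals must be estimated in a scaling-consistent way; this is exactly where the strict inequality $s>\frac N2-1$ will be used. The device we would isolate first is the parabolic interpolation embedding
$$ X^s(\Omega)\hookrightarrow L^p\big((0,\infty);H^{s+2/p}(\Omega)\big),\qquad p\in[2,\infty], $$
with $\big\|f\big\|_{L^p((0,\infty);H^{s+2/p}(\Omega))}\le\|f\|_{X^s(\Omega)}$: it follows from the interpolation identity \eqref{int.res.}, which gives $\|f(t)\|_{H^{s+2/p}(\Omega)}\lesssim\|f(t)\|_{H^s(\Omega)}^{1-2/p}\|f(t)\|_{H^{s+1}(\Omega)}^{2/p}$ pointwise in $t$, followed by H\"older's inequality in $t$ and Young's inequality. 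In what follows we abbreviate $L^p_tH^\sigma_x=L^p((0,\infty);H^\sigma(\Omega))$.

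With this in hand, \eqref{bil.e.5}, \eqref{bil.e.6} and \eqref{bil.e.8} are quick. For \eqref{bil.e.5}, Cauchy--Schwarz in $x$ and in $t$ gives $\|z\nabla w\|_{L^1_tL^1_x}\le\|z\|_{L^2_tL^2_x}\|\nabla w\|_{L^2_tL^2_x}\le\|z\|_{L^2_tH^{s+1}_x}\|w\|_{L^2_tH^{s+1}_x}\le\|z\|_{X^s(\Omega)}\|w\|_{X^s(\Omega)}$ (only $s\ge0$ is used here). For \eqref{bil.e.6}, since $s+1>\frac N2$ we have $H^{s+1}(\Omega)\hookrightarrow L^\infty(\Omega)$, hence $\|z(t)\nabla w(t)\|_{L^2}\le\|z(t)\|_{L^\infty}\|\nabla w(t)\|_{L^2}\lesssim\|z(t)\|_{H^{s+1}}\|w(t)\|_{H^1}$, and integrating with Cauchy--Schwarz in $t$ gives $\|z\nabla w\|_{L^1_tL^2_x}\lesssim\|z\|_{L^2_tH^{s+1}_x}\|w\|_{L^2_tH^1_x}\le\|z\|_{X^s(\Omega)}\|w\|_{X^s(\Omega)}$. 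For \eqref{bil.e.8}, the pointwise-in-$t$ Leibniz bound of Lemma \ref{l.Leib.2} gives $\|z(t)w(t)\|_{H^s}\lesssim\|z(t)\|_{H^{s+1}}\|w(t)\|_{H^s}+\|z(t)\|_{H^s}\|w(t)\|_{H^{s+1}}$, and taking the $L^2_t$-norm and applying H\"older ($L^2_t\cdot L^\infty_t$) on each summand yields $\|zw\|_{L^2_tH^s_x}\lesssim\|z\|_{L^2_tH^{s+1}_x}\|w\|_{L^\infty_tH^s_x}+\|z\|_{L^\infty_tH^s_x}\|w\|_{L^2_tH^{s+1}_x}\lesssim\|z\|_{X^s(\Omega)}\|w\|_{X^s(\Omega)}$.

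The estimate \eqref{bil.e.7} is the one genuinely needing the embedding. Fix $s\in[0,1]$ and set $q=\frac2{2-s}$; pick $p_1=p_2=\frac4{2-s}\in[2,\infty)$, so that $\frac1{p_1}+\frac1{p_2}=\frac1q$. By the embedding, $z\in L^{p_1}_tH^{(s+2)/2}_x$ and, since $\nabla\colon H^{(s+2)/2}(\Omega)\to H^{s/2}(\Omega)$ is bounded, $\nabla w\in L^{p_2}_tH^{s/2}_x$, with norms controlled by $\|z\|_{X^s(\Omega)}$ and $\|w\|_{X^s(\Omega)}$. Because $\frac{s+2}2+\frac s2=s+1>\frac N2$ and both indices are nonnegative, the Sobolev product estimate $\|fg\|_{L^2(\Omega)}\lesssim\|f\|_{H^a(\Omega)}\|g\|_{H^b(\Omega)}$, valid for $a,b\ge0$ with $a+b>\frac N2$ (after extension to $\mathbb R^N$), applies pointwise in $t$ with $a=\frac{s+2}2$, $b=\frac s2$; together with H\"older in $t$ with exponents $p_1,p_2$ this yields $\|z\nabla w\|_{L^q_tL^2_x}\lesssim\|z\|_{L^{p_1}_tH^{(s+2)/2}_x}\|w\|_{L^{p_2}_tH^{(s+2)/2}_x}\le\|z\|_{X^s(\Omega)}\|w\|_{X^s(\Omega)}$.

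The main obstacle is precisely this bookkeeping of exponents: with no $T^\gamma$ available the time exponents are forced, and one must check in each case that the strict gap $s-\left(\frac N2-1\right)>0$ keeps the underlying spatial product and Leibniz estimates safely away from their critical endpoints --- which is exactly why the global statements require $s>\frac N2-1$ and, for \eqref{bil.e.7}, additionally $s\in[0,1]$ (so that $N=3$).
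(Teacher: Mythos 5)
Your proof is correct and follows essentially the same strategy as the paper: \eqref{bil.e.5} and \eqref{bil.e.8} are identical, \eqref{bil.e.6} differs only in splitting the spatial H\"older as $L^\infty\times L^2$ rather than $L^{2N/(N-2)}\times L^N$, and \eqref{bil.e.7} repackages the paper's interpolation step ($\|z\|_{H^1}\lesssim\|z\|_{H^s}^s\|z\|_{H^{s+1}}^{1-s}$ followed by H\"older in $t$) into the equivalent embedding $X^s(\Omega)\hookrightarrow L^p((0,\infty);H^{s+2/p}(\Omega))$ combined with a symmetric Sobolev product estimate. The exponent bookkeeping in your version of \eqref{bil.e.7} checks out (with $N=3$, $s\in(\tfrac12,1]$ one has $a+b=s+1>\tfrac N2$ with strict room), so no gap.
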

\begin{proof}\hfill\\
The first and second estimates are similar to \eqref{bil.e.1} and \eqref{bil.e.2}:
$$ \|z\nabla w\|_{L^1(\mathbb R_+;L^1(\Omega))}\le \left\|\|z(t)\|_{L^2(\Omega)}\|\nabla w(t)\|_{L^2(\Omega)}\right\|_{L^1(\mathbb R_+)}\le \|z\|_{L^2(\mathbb R_+;L^2(\Omega))}\|w\|_{L^2(\mathbb R_+;H^1(\Omega))}, $$
$$ \|z\nabla w\|_{L^1(\mathbb R_+;L^2(\Omega))}\lesssim \left\|\|z(t)\|_{H^1(\Omega)}\|w\|_{H^{s+1}(\Omega)}\right\|_{L^1(\mathbb R_+)}\le \|z\|_{L^2(\mathbb R_+;H^{s+1}(\Omega))}\|w\|_{L^2(\mathbb R_+;H^{s+1}(\Omega))}. $$
For what concerns the third estimate \eqref{bil.e.7}, we have that
$$ \|z(t)\nabla w(t)\|_{L^2(\Omega)}\lesssim \|z(t)\|_{L^\frac{2N}{N-2}(\Omega)}\|\nabla w(t)\|_{L^N(\Omega)}\lesssim \|z(t)\|_{H^1(\Omega)}\|\nabla w(t)\|_{H^s(\Omega)}. $$
By interpolation
$$ \|z(t)\|_{H^1(\Omega)}\lesssim \|z(t)\|_{H^s(\Omega)}^s\|z(t)\|_{H^{s+1}(\Omega)}^{1-s}, $$
provided $s\ge \frac{N}{2}-1$. Therefore
$$ \|z\nabla w\|_{L^\frac{2}{2-s}(\mathbb R_+;L^2(\Omega))}\lesssim \|z\|_{L^\infty(\mathbb R_+;H^s(\Omega))}^s \left\|\|z(t)\|_{H^{s+1}(\Omega)}^{1-s}\|\nabla w(t)\|_{H^s(\Omega)}\right\|_{L^\frac{2}{2-s}(\mathbb R_+)}\lesssim $$
$$ \lesssim \|z\|_{L^\infty(\mathbb R_+;H^s(\Omega))}^s \|z\|_{L^2(\mathbb R+;H^{s+1}(\Omega))}^{1-s}\|w\|_{L^2(\mathbb R_+;H^{s+1}(\Omega))}. $$
Finally the estimate \eqref{bil.e.8} follows from Lemma \ref{l.Leib.2}:
$$ \|z(t) w(t)\|_{H^s(\Omega)}\lesssim \|z(t)\|_{H^{s+1}(\Omega)}\|w(t)\|_{H^s(\Omega)} + \|w(t)\|_{H^{s+1}(\Omega)}\|z(t)\|_{H^s(\Omega)}. $$
So
$$ \|zw\|_{L^2(\mathbb R_+;H^s(\Omega))}\lesssim $$
$$ \lesssim \|z\|_{L^2(\mathbb R_+;H^{s+1}(\Omega))}\|w\|_{L^\infty(\mathbb R_+;H^{s}(\Omega))} + \|w\|_{L^2(\mathbb R_+;H^{s+1}(\Omega))}\|z\|_{L^\infty(\mathbb R_+;H^{s}(\Omega))}. $$
\end{proof}
The bilinear estimates we just proved are true also for $s\ge \frac{N}{2}-1$ as it can be seen from the proof (we only ask $L^N(\Omega)\hookrightarrow H^s(\Omega)$). However, the trilinear estimate we are going to prove, needs the condition $s>\frac{N}{2}-1$:
\begin{lem}
Let $s> \frac{N}{2}-1$, let $w,z\in X^s(\Omega)$, $h\in X^{s+1}(\Omega)$, then
\begin{equation}\label{tril.e.3}
    \|zwh\|_{L^1(\mathbb R_+;L^1(\Omega))}\le C(\Omega,s) \|z\|_{X^s(\Omega)}\|w\|_{X^s(\Omega)}\|h\|_{X^{s+1}(\Omega)},
\end{equation}
\begin{equation}\label{tril.e.4}
        \|zwh\|_{L^2(\mathbb R_+;H^{s}(\Omega))}\le C(\Omega,s)\|z\|_{X^s(\Omega)}\|w\|_{X^s(\Omega)}\|h\|_{X^{s+1}(\Omega)}.
    \end{equation}
\end{lem}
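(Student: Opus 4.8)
The plan is to reduce both trilinear estimates to the bilinear estimates already proved, using the fact that $h\in X^{s+1}(\Omega)$ controls an $L^\infty$ norm in space. First I would observe that since $s>\frac{N}{2}-1$ we have the Sobolev embedding $H^{s+1}(\Omega)\hookrightarrow L^\infty(\Omega)$, so $\|h(t)\|_{L^\infty(\Omega)}\lesssim \|h(t)\|_{H^{s+1}(\Omega)}$ pointwise in $t$; in particular $\|h\|_{L^\infty(\mathbb R_+;L^\infty(\Omega))}\lesssim \|h\|_{X^{s+1}(\Omega)}$.

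For \eqref{tril.e.3}, I would write
$$ \|zwh\|_{L^1(\mathbb R_+;L^1(\Omega))}\le \big\|\,\|h(t)\|_{L^\infty(\Omega)}\|z(t)\|_{L^2(\Omega)}\|w(t)\|_{L^2(\Omega)}\,\big\|_{L^1(\mathbb R_+)}, $$
and then pull out $\|h\|_{L^\infty(\mathbb R_+;L^\infty(\Omega))}\lesssim \|h\|_{X^{s+1}(\Omega)}$, leaving $\big\|\|z(t)\|_{L^2(\Omega)}\|w(t)\|_{L^2(\Omega)}\big\|_{L^1(\mathbb R_+)}$, which by Cauchy--Schwarz in time is bounded by $\|z\|_{L^2(\mathbb R_+;L^2(\Omega))}\|w\|_{L^2(\mathbb R_+;L^2(\Omega))}\le \|z\|_{X^s(\Omega)}\|w\|_{X^s(\Omega)}$ (using $L^2\subseteq H^s$ via the $L^\infty((0,\infty);H^s)$ and $L^2((0,\infty);H^{s+1})$ control — indeed $\|z\|_{L^2_tL^2_x}\le \|z\|_{L^2_t H^{s+1}_x}$ is immediate since $s+1\ge 0$). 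This mirrors the proof of \eqref{tril.e.1}.

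For \eqref{tril.e.4}, I would invoke the first inequality of Lemma \ref{l.Leib.3}, or more directly apply Lemma \ref{l.Leib.2} twice: first $\|zwh\|_{H^s(\Omega)}\lesssim \|h\|_{H^{s+1}(\Omega)}\|zw\|_{H^s(\Omega)}$ (this is exactly the $L^\infty$-type estimate contained in Lemma \ref{l.Leib.3}, valid because $s>\frac N2-1$), and then $\|zw\|_{H^s(\Omega)}\lesssim \|z\|_{H^{s+1}(\Omega)}\|w\|_{H^s(\Omega)}+\|w\|_{H^{s+1}(\Omega)}\|z\|_{H^s(\Omega)}$ from Lemma \ref{l.Leib.2}. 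Taking the $L^2(\mathbb R_+)$ norm in time and distributing $L^\infty_t$ onto the $H^s$ factors and $L^2_t$ onto the $H^{s+1}$ factors (using $\|h\|_{L^\infty_tH^{s+1}_x}\le\|h\|_{X^{s+1}}$) yields
$$ \|zwh\|_{L^2(\mathbb R_+;H^s(\Omega))}\lesssim \|h\|_{X^{s+1}(\Omega)}\Big(\|z\|_{L^2_tH^{s+1}_x}\|w\|_{L^\infty_tH^s_x}+\|w\|_{L^2_tH^{s+1}_x}\|z\|_{L^\infty_tH^s_x}\Big)\le \|h\|_{X^{s+1}(\Omega)}\|z\|_{X^s(\Omega)}\|w\|_{X^s(\Omega)}, $$
which is \eqref{tril.e.4}.

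Neither step presents a real obstacle; the only point requiring care is the bookkeeping of which factor carries the $L^\infty_t$ norm and which carries the $L^2_t$ norm, so that no time-integrability is lost — this is why, unlike the local estimates \eqref{tril.e.1}--\eqref{tril.e.2}, there is no gain of a positive power of $T$ and the constant is uniform in $\mathbb R_+$. Everything else is a direct consequence of the Leibniz rules of Lemma \ref{l.Leib.2} and \ref{l.Leib.3}, the Sobolev embedding $H^{s+1}(\Omega)\hookrightarrow L^\infty(\Omega)$ for $s>\frac{N}{2}-1$, and Hölder/Cauchy--Schwarz in the time variable, exactly as in the proof of the preceding lemma.
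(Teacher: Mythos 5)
Your proposal is correct and follows essentially the same route as the paper: estimate \eqref{tril.e.3} by placing $h$ in $L^\infty_t L^\infty_x$ via the embedding $H^{s+1}(\Omega)\hookrightarrow L^\infty(\Omega)$ (valid since $s>\frac N2-1$) and Cauchy--Schwarz in time on the remaining $L^2_x$ factors, and estimate \eqref{tril.e.4} by the trilinear Leibniz rule of Lemma \ref{l.Leib.3} followed by the same $L^\infty_t$/$L^2_t$ bookkeeping. The only cosmetic slip is the phrase ``$L^2\subseteq H^s$'' (the inclusion goes the other way), but the inequality you actually use, $\|z\|_{L^2_tL^2_x}\le\|z\|_{L^2_tH^{s+1}_x}\le\|z\|_{X^s(\Omega)}$, is the right one and is exactly what the paper uses.
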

\begin{proof}\hfill\\
Again, the first estimate is easy
$$ \|zwh\|_{L^1(\mathbb R_+;L^1(\Omega))}\le \left\|\|h(t)\|_{L^\infty(\Omega)}\|w(t)\|_{L^2(\Omega)}\|z(t)\|_{L^2(\Omega)}\right\|_{L^1(\mathbb R_+)}\lesssim $$
$$ \lesssim \|h\|_{L^\infty(\mathbb R_+;H^{s+1}(\Omega))}\|w\|_{L^2(\mathbb R_+;L^2(\Omega)}\|z\|_{L^2(\mathbb R_+;L^2(\Omega))}. $$
For the second estimate we use Lemma \ref{l.Leib.3}:
$$ \|z(t)w(t)h(t)\|_{H^{s}(\Omega)}\lesssim   $$
$$ \left[\|z(t)\|_{H^{s+1}(\Omega)}\|w(t)\|_{H^{s}(\Omega)}+ \|z(t)\|_{H^{s}(\Omega)}\|w(t)\|_{H^{s+1}(\Omega)}\right]\|h(t)\|_{H^{s+1}(\Omega)}. $$
So we get  
$$  \|zwh\|_{L^2(\mathbb R_+;H^{s}(\Omega))}\lesssim $$
$$ \lesssim \left[\|z\|_{L^2(\mathbb R_+;H^{s+1}(\Omega))}\|w\|_{L^\infty(\mathbb R_+;H^{s}(\Omega)} + \|w\|_{L^2(\mathbb R_+;H^{s+1}(\Omega))}\|z\|_{L^\infty(\mathbb R_+;H^{s}(\Omega)}\right] \|h\|_{L^\infty(\mathbb R_+;H^{s+1}(\Omega))}. $$
\end{proof}

\subsection{Multilinear Decay Estimates in $\mathbb R^N$ and $\mathbb R^N_+$}

As it can be seen from the statements of the global existence theorems, the order of decay in times changes with the choice of $\Omega$. In fact, in order to control the decay of derivatives of the semi group flow we have to treat different cases of boundary conditions on the boundary of the domain. This implies that the commutators of derivatives and the semigroup are nonzero so we have difficulties in establishing dispersive estimates for derivatives of the semigroup flow. As first preliminary step, we consider the simpler case, when $\Omega=\mathbb R^N$ or $\mathbb R^N_+$. 

As we mentioned at the beginning of the section, we consider a subset of $X^s(\Omega)$ where the functions have a time decay:
\begin{defn}\hfill\\
Let $N\ge 3$, $s\ge 0$ and $k\in\mathbb N$, then we defined 
$$ X^{s}_k(\Omega)\coloneqq \{w\in X^s(\Omega)\mid \|w\|_{X^{s}_k(\Omega)}<+\infty\}, $$
with 
$$ \|w\|_{X^{s}_k(\Omega)}\coloneqq \|w\|_{X^s(\Omega)}+\sup_{t\le 2}t^{\alpha_{s,k}}\|w\|_{W^{k,\infty}(\Omega)} + \sum_{j=0}^{k+1}\sup_{t\ge 1}t^{\frac{N}{2}+\frac{j}{2}}\|\nabla^jw(t)\|_{L^\infty(\Omega)},
$$
where 
$$ \alpha_{s,k}=\left\{\begin{array}{ll}
    \frac{N-2(s-k)}{4} & s-k\le\frac{N}{2} \\
    0 & s-k>\frac{N}{2},
\end{array}\right. $$
with $X^{s}(\Omega)$ is defined in \eqref{def.X}.
\end{defn}
Even if we are interested on the time decay, we need to control the order of the singularity for $t$ near to 0. This will be clear in the proof.
\begin{rem}
About the choice of $\alpha_{s,k}$, we notice that $\alpha\in\left[0,\frac{1}{2}\right)$ when $s-k>\frac{N}{2}-1$.
\end{rem}
Before showing that the semi group flows for the heat and the Stokes equation are in $X^s_k(\Omega)$ we need a technical lemma:
\begin{lem}\label{l.tec.int.1}
Let $0\le\alpha_1,\alpha_2<1$, then it holds
$$ \int_0^t (t-\tau)^{-\alpha_1}\tau^{-\alpha_2}d\tau\le C(\alpha_1,\alpha_2) t^{1-\alpha_1-\alpha_2} \quad \forall t>0. $$
\end{lem}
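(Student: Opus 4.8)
The plan is to reduce the estimate to the convergence of a Beta-type integral by exploiting the scaling structure of the integrand. First I would perform the change of variables $\tau = ts$ with $s\in(0,1)$, $d\tau = t\,ds$, which turns the left-hand side into
$$ \int_0^t (t-\tau)^{-\alpha_1}\tau^{-\alpha_2}\,d\tau = t^{1-\alpha_1-\alpha_2}\int_0^1 (1-s)^{-\alpha_1}s^{-\alpha_2}\,ds. $$
It then remains only to observe that the remaining integral is finite: near $s=0$ the integrand behaves like $s^{-\alpha_2}$, which is integrable since $\alpha_2<1$, and near $s=1$ it behaves like $(1-s)^{-\alpha_1}$, which is integrable since $\alpha_1<1$. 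Setting $C(\alpha_1,\alpha_2) \coloneqq \int_0^1 (1-s)^{-\alpha_1}s^{-\alpha_2}\,ds$ (the Beta function $B(1-\alpha_2,1-\alpha_1)$) concludes the proof.

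Alternatively, to avoid the explicit substitution, I would split the integral at $\tau = t/2$. On $(0,t/2)$ one bounds $(t-\tau)^{-\alpha_1}\le (t/2)^{-\alpha_1}$ and integrates $\tau^{-\alpha_2}$, obtaining a term of size $\tfrac{1}{1-\alpha_2}(t/2)^{-\alpha_1}(t/2)^{1-\alpha_2}$; on $(t/2,t)$ one bounds $\tau^{-\alpha_2}\le (t/2)^{-\alpha_2}$ and integrates $(t-\tau)^{-\alpha_1}$, obtaining a term of size $\tfrac{1}{1-\alpha_1}(t/2)^{-\alpha_2}(t/2)^{1-\alpha_1}$. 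Adding the two pieces yields the claimed bound with an explicit constant, the one-variable power integrals converging precisely because $\alpha_1,\alpha_2<1$.

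There is essentially no obstacle here; the only place the hypotheses enter is the integrability at the two endpoints (equivalently, the convergence of the two one-dimensional power integrals in the second approach), while the homogeneity $t^{1-\alpha_1-\alpha_2}$ is forced by scaling. The borderline cases $\alpha_1=0$ or $\alpha_2=0$ are covered with no change.
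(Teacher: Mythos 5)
Your first argument is exactly the paper's proof: the substitution $\tau=ts$ extracts the factor $t^{1-\alpha_1-\alpha_2}$, and the remaining Beta-type integral converges because $\alpha_1,\alpha_2<1$ (the paper checks this by splitting the unit interval at $1/2$, which is the same observation). The proposal is correct and takes essentially the same route.
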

\begin{proof}\hfill\\
With the change of variable $s=\frac{\tau}{t}$
$$ \int_0^t (t-\tau)^{-\alpha_1}\tau^{-\alpha_2}d\tau=t^{1-\alpha_1-\alpha_2}\int_0^1(1-s)^{-\alpha_1}s^{-\alpha_2}ds. $$
In order to conclude it is sufficient to notice that
$$ \int_0^1(1-s)^{-\alpha_1}s^{-\alpha_2}ds=\int_0^{1/2}(1-s)^{-\alpha_1}s^{-\alpha_2}ds+\int_{1/2}^1(1-s)^{-\alpha_1}s^{-\alpha_2}ds\lesssim  $$
$$ \lesssim \int_0^{1/2}s^{-\alpha_2}ds+\int_{1/2}^1(1-s)^{-\alpha_1} ds\simeq 1. $$
\end{proof}
As for the linear estimates, we start from the homogeneous term:
\begin{lem}\label{l.dec.1.RN}
    Let $k\in\mathbb N$ and $s>0$ such that $s-k>\frac{N}{2}-1$, let $u_0\in H^s\cap L^1(\mathbb R^N)$ and $d_0\in H^{s+1}\cap L^1(\mathbb R^N)$, then
    $$ \left\|e^{\mathbb P\Delta t}u_0\right\|_{X^s_k(\mathbb R^N)} \le C(N,s,k) \|u_0\|_{H^s\cap L^1(\mathbb R^N)}, $$
    $$ \left\|e^{\Delta t}d_0\right\|_{X^{s+1}_{k+1}(\mathbb R^N)} \le C(N,s,k) \|d_0\|_{H^{s+1}\cap L^1(\mathbb R^N)}. $$
\end{lem}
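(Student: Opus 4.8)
The plan is to reduce everything in $\mathbb R^N$ to the heat semigroup. Since $u_0$ is divergence free (it is the initial velocity), $\mathbb P u_0=u_0$, and by \eqref{St-Heat.sem.RN} we have $e^{\mathbb P\Delta t}u_0=e^{\Delta t}u_0$; thus both claimed bounds become smoothing and decay estimates for $e^{\Delta t}$ applied to $u_0\in H^s\cap L^1$, respectively to $d_0\in H^{s+1}\cap L^1$. The $X^s(\mathbb R^N)$-part of $\|\cdot\|_{X^s_k}$ is exactly the content of Lemma \ref{l.sm.es.hom.2} when $s\le 2$; for $s>2$ one commutes derivatives with the semigroup as in Lemma \ref{l.sm.es.RN.}. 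Here the hypothesis $u_0\in L^1$ together with $N\ge 3$ is precisely what makes the low frequencies integrable in the $L^2\bigl((0,\infty);H^{s+1}\bigr)$ part (the weight $|\xi|^{-2}$ near $\xi=0$ is integrable only for $N\ge 3$, and $\widehat{u_0}\in L^\infty$ controls it). Membership of $e^{\mathbb P\Delta t}u_0$ in $X^s(\mathbb R^N)$, in particular continuity in time into $H^s$, is inherited from the same lemma. It then remains to bound the two spatial-$L^\infty$ terms of $\|\cdot\|_{X^s_k}$.

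For the long-time term $\sum_{j=0}^{k+1}\sup_{t\ge 1}t^{\frac N2+\frac j2}\|\nabla^j e^{\Delta t}u_0\|_{L^\infty(\mathbb R^N)}$ I would apply Theorem \ref{t.sem.es.1-easy} with $A=\Delta_D$, $p=1$, $q=\infty$ and gradient order $j$, which gives $\|\nabla^j e^{\Delta t}u_0\|_{L^\infty(\mathbb R^N)}\le C\,t^{-\frac N2-\frac j2}\|u_0\|_{L^1(\mathbb R^N)}$ for all $t>0$; multiplying by $t^{\frac N2+\frac j2}$ and taking $\sup_{t\ge 1}$ gives the bound by $\|u_0\|_{L^1}$. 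The same argument applies verbatim to $d_0\in L^1$, now for $j=0,\dots,k+2$ as required by $X^{s+1}_{k+1}(\mathbb R^N)$.

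For the short-time term $\sup_{t\le 2}t^{\alpha_{s,k}}\|e^{\Delta t}u_0\|_{W^{k,\infty}(\mathbb R^N)}$ the range $1\le t\le 2$ is already covered by the previous step, so it suffices to treat $0<t\le 1$. For $0\le j\le k$ write $\nabla^j e^{\Delta t}u_0=m_{j,t}(D)\,(1-\Delta)^{s/2}u_0$ with Fourier multiplier $m_{j,t}(\xi)=(i\xi)^{\otimes j}e^{-t|\xi|^2}(1+|\xi|^2)^{-s/2}$; by $\|m_{j,t}(D)g\|_{L^\infty}\lesssim\|m_{j,t}\|_{L^2_\xi}\|g\|_{L^2}$ (Plancherel and Hausdorff--Young) this yields $\|\nabla^j e^{\Delta t}u_0\|_{L^\infty}\lesssim\|m_{j,t}\|_{L^2_\xi}\|u_0\|_{H^s}$. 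Computing $\|m_{j,t}\|_{L^2_\xi}^2=\int_{\mathbb R^N}|\xi|^{2j}e^{-2t|\xi|^2}(1+|\xi|^2)^{-s}\,d\xi$, splitting into $|\xi|\le 1$ and $|\xi|\ge1$ and rescaling $\rho=\sqrt t\,|\xi|$ in the high-frequency part, one sees that the integral converges precisely because $s-j\ge s-k>\tfrac N2-1$, and that $\|m_{j,t}\|_{L^2_\xi}\lesssim t^{-\frac{1}{4}\max\{N-2(s-j),\,0\}}$ for $0<t\le1$. Since $j\le k$ gives $s-j\ge s-k$, this exponent is $\le\alpha_{s,k}$, so $\sup_{t\le 2}t^{\alpha_{s,k}}\|\nabla^j e^{\Delta t}u_0\|_{L^\infty}\lesssim\|u_0\|_{H^s}$. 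The identical computation with $d_0\in H^{s+1}$, $0\le j\le k+1$, together with the observation $\alpha_{s+1,k+1}=\alpha_{s,k}$, handles the second statement.

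The only genuinely delicate point is this last step: one must verify that the power of $t$ coming out of the frequency integral never exceeds $\alpha_{s,k}$, which is exactly where the assumption $s-k>\tfrac N2-1$ (equivalently $\alpha_{s,k}<\tfrac12$, as noted in the remark) is used; all the remaining estimates are direct applications of the $L^p$--$L^q$ and gradient semigroup bounds from Section \ref{subsec.sem.est.}.
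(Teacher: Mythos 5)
Your proposal is correct and has the same three-part architecture as the paper's proof: the $X^s$ (resp.\ $X^{s+1}$) part is delegated to the linear theory, the long-time $L^\infty$ decay comes from the $L^1$--$L^\infty$ gradient semigroup bounds of Theorem \ref{t.sem.es.1-easy}, and the short-time weighted $W^{k,\infty}$ bound is extracted from the $H^s$ norm with the loss $t^{-\alpha_{s,k}}$. The one step you handle differently is the short-time bound: the paper commutes $\nabla^j$ with the semigroup and applies the $L^p$--$L^\infty$ estimate with $p=\tfrac{2N}{N-2(s-k)}$ together with the Sobolev embedding $H^{s-j}(\mathbb R^N)\hookrightarrow L^p(\mathbb R^N)$, which produces exactly $t^{-\frac N2\cdot\frac1p}=t^{-\alpha_{s,k}}$; you instead compute the $L^2_\xi$ norm of the multiplier $(i\xi)^{\otimes j}e^{-t|\xi|^2}(1+|\xi|^2)^{-s/2}$ and use Hausdorff--Young plus Plancherel. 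The two mechanisms are equivalent on $\mathbb R^N$ and give the same exponent; your version is more self-contained (it does not invoke the semigroup estimates of Section \ref{subsec.sem.est.} for this step), while the paper's version is the one that survives in the half-space and exterior settings where no explicit Fourier representation is available. Two small inaccuracies in your accounting of hypotheses, neither of which affects the result: the frequency integral $\int|\xi|^{2j}e^{-2t|\xi|^2}(1+|\xi|^2)^{-s}d\xi$ converges for every $t>0$ simply because of the Gaussian factor, not because $s-j>\tfrac N2-1$; and the fact that the exponent $\tfrac14\max\{N-2(s-j),0\}$ never exceeds $\alpha_{s,k}$ follows from $j\le k$ alone. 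The hypothesis $s-k>\tfrac N2-1$ (equivalently $\alpha_{s,k}<\tfrac12$) is not really needed for this homogeneous lemma; it becomes essential only in the Duhamel estimates of Lemma \ref{l.dec.2.RN}, where the singularity $\tau^{-2\alpha_{s,k}}$ must be integrable.
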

\begin{proof}\hfill\\
The estimates in $X^s(\mathbb R^N)$ and $X^{s+1}(\mathbb R^N)$ follows from Theorem \ref{t.lin.ex.RN}. For the $L^\infty(\mathbb R^N)$ estimates, let us suppose $s-k<\frac{N}{2}$ (otherwise the proof is easier). Let $t\le 2$ and $j=1,\ldots, k$, then we have 
$$ \|\nabla^je^{\mathbb P\Delta t}u_0\|_{L^\infty(\mathbb R^N)}=\|e^{\mathbb P\Delta t}\nabla^j u_0\|_{L^\infty(\mathbb R^N)}\lesssim t^{-\alpha_{s,k}}\|\nabla^ju_0\|_{L^\frac{2N}{N-2(s-k)}(\mathbb R^N)}\lesssim t^{-\alpha_{s,k}}\|u_0\|_{H^s(\mathbb R^N)}. $$
When $t\ge 1$
$$ \|\nabla^{j+1}e^{\mathbb P\Delta t}u_0\|_{L^\infty(\mathbb R^N)}\lesssim t^{-\frac{j+1}{2}-\frac{N}{2}}\|u_0\|_{L^1(\mathbb R^N)}. $$
Similar argument for $e^{\Delta t}d_0$.
\end{proof}

Let us pass to the inhomogeneous term:
\begin{lem}\label{l.dec.2.RN}

    Let $k\in\mathbb N$ and $s>0$ such that $s-k>\frac{N}{2}-1$, let $z,w\in X^s_k(\mathbb R^N)$ and $\theta,h\in X^{s+1}_{k+1}(\mathbb R^N)$, then
    \begin{equation}
    \left\|\int_0^te^{\mathbb P\Delta(t-\tau)}\mathbb Pw(\tau)\nabla z(\tau)d\tau\right\|_{X^s_k(\mathbb R^N)}\le C(N,s,k)\|z\|_{X^s_{k}(\mathbb R^N)}\|w\|_{X^s_{k}(\mathbb R^N)}.       
    \end{equation}
        \begin{equation}
    \left\|\int_0^te^{\mathbb P\Delta(t-\tau)}\mathbb P\nabla \theta(\tau)\nabla^2h(\tau)d\tau\right\|_{X^s_k(\mathbb R^N)}\le C(N,s,k)\|\theta\|_{X^{s+1}_{k+1}(\mathbb R^N)}\|\theta\|_{X^{s+1}_{k+1}(\mathbb R^N)}.       
    \end{equation}
\end{lem}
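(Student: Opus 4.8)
The plan is to estimate the Duhamel operator $G(t)=\int_0^t e^{\mathbb P\Delta(t-\tau)}\mathbb P(w\nabla z)(\tau)\,d\tau$ (and its analogue built on $\nabla\theta\,\nabla^2h$, whose datum comes from the divergence-form term $\mathrm{Div}(\nabla\theta\odot\nabla h)$) separately in the three constituents of $\|\cdot\|_{X^s_k(\mathbb R^N)}$: the $X^s$-norm, the short-time seminorm $\sup_{t\le2}t^{\alpha_{s,k}}\|\cdot\|_{W^{k,\infty}}$, and the long-time tail $\sum_{j\le k+1}\sup_{t\ge1}t^{N/2+j/2}\|\nabla^j\cdot\|_{L^\infty}$. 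Before anything else I would note that the second estimate reduces to the first: $\nabla\theta,\nabla h\in X^s_k(\mathbb R^N)$ with $\|\nabla\theta\|_{X^s_k}\le C\|\theta\|_{X^{s+1}_{k+1}}$ — the index ranges match and $\alpha_{s+1,k+1}=\alpha_{s,k}$ — so it suffices to prove the first estimate for arbitrary (solenoidal) $w$ and arbitrary $z$ in $X^s_k(\mathbb R^N)$ and then apply it with $(w,z)=(\nabla\theta,\nabla h)$ and datum $F=\nabla\theta\odot\nabla h$. The $X^s$-part is immediate from Theorem \ref{t.lin.ex.RN} with $T=\infty$: on $\mathbb R^N$ the forcing is genuinely of divergence form ($w\cdot\nabla z=\mathrm{Div}(w\otimes z)$ since $\mathrm{div}\,w=0$, and $\mathrm{Div}(\nabla\theta\odot\nabla h)$ already is one), and the hypotheses of that theorem are exactly the bilinear bounds \eqref{bil.e.5} and \eqref{bil.e.8}.

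For the short-time part ($t\le2$, $0\le j\le k$) I would use that derivatives commute with the $\mathbb R^N$ Stokes flow, $\nabla^j e^{\mathbb P\Delta t}\mathbb P=e^{\Delta t}\mathbb P\nabla^j$ (Remark \ref{rem.der.P.}, \eqref{St-Heat.sem.RN}), so $\nabla^jG(t)=\int_0^t e^{\Delta(t-\tau)}\mathbb P\nabla^j(w\nabla z)(\tau)\,d\tau$, and bound the propagator by the $L^p\to L^\infty$ estimate of Theorem \ref{t.sem.es.0} with $p=p(s-k)\in(1,\infty)$ small enough that $\sigma^{-N/(2p)}$ stays locally integrable. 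The forcing is split by Leibniz (Lemma \ref{l.Leib.1}), every product $\|\nabla^a w\|\,\|\nabla^b z\|$ being controlled by the weighted $W^{k,\infty}$-part of $\|w\|_{X^s_k},\|z\|_{X^s_k}$ for the low-order factors and by their $L^2(\mathbb R_+;H^{s+1})$-part for the top-order factor $\nabla^{k+1}z$ (which only sits in $L^2$ in time); Lemma \ref{l.tec.int.1} then turns $\int_0^t(t-\tau)^{-\beta_1}\tau^{-\beta_2}\,d\tau$ into $t^{1-\beta_1-\beta_2}$, the exponent $\alpha_{s,k}$ being precisely what makes $1-\beta_1-\beta_2=-\alpha_{s,k}$ (the in-time-$L^2$ factors are absorbed by an extra Hölder split in $\tau$).

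For the long-time decay ($t\ge1$, $0\le j\le k+1$) I would split $\int_0^t=\int_0^{t/2}+\int_{t/2}^t$. On the far piece $t-\tau\ge t/2$ I keep the divergence form and use that on $\mathbb R^N$ the operator $e^{\mathbb P\Delta\sigma}\mathbb P\,\mathrm{Div}=e^{\Delta\sigma}\mathbb P\,\mathrm{Div}$ has an explicit kernel — the symbol of $\mathbb P\,\mathrm{Div}$ is a bounded homogeneous degree-one symbol, so the kernel is a Riesz transform of a derivative of the Gaussian and is dominated by $\sigma^{-(N+j+1)/2}$ times a rapidly decaying profile — hence $\|\nabla^j e^{\mathbb P\Delta\sigma}\mathbb P\,\mathrm{Div}\,F\|_{L^\infty}\lesssim\sigma^{-(j+1)/2-N/2}\|F\|_{L^1}$, in the spirit of the explicit-heat-kernel bounds behind Theorem \ref{t.sem.es.0}. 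Since $\|F\|_{L^1(\mathbb R_+;L^1)}\lesssim\|w\|_{X^s}\|z\|_{X^s}$ (because $\|w(\tau)\|_{L^2},\|z(\tau)\|_{L^2}\in L^2_\tau$), the far piece is $\lesssim t^{-(j+1)/2-N/2}\lesssim t^{-j/2-N/2}$. On the near piece $t/2\le\tau\le t$ one has $\tau\sim t$, so every $\nabla^aw(\tau),\nabla^bz(\tau)$ with $a,b\le k+1$ obeys the $X^s_k$-decay $\lesssim\tau^{-N/2-a/2}$; moving one derivative onto the semigroup ($\nabla^jG=\nabla e^{\mathbb P\Delta\sigma}\mathbb P\nabla^{j-1}(w\nabla z)$, with Theorem \ref{t.sem.es.1-easy} for $p>N$ so that $\sigma^{-1/2-N/(2p)}$ is integrable near $0$) and estimating $\|\nabla^{j-1}(w\nabla z)(\tau)\|_{L^p}$ by interpolating its pointwise $L^\infty$-decay against an $L^2$-bound coming from $\|w\|_{X^s_k},\|z\|_{X^s_k}$, the integral $\int_{t/2}^t\sigma^{-1/2-N/(2p)}\tau^{-\gamma}\,d\tau$ produces a power of $t$ strictly below $-N/2-j/2$ for $p$ large; the range $t\in[1,2]$ coincides with the short-time regime and is covered by the previous step.

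The main obstacle, which I expect to be the crux, is obtaining the sharp far-field rate $t^{-N/2}$: since $\mathbb P$ is unbounded on $L^1(\mathbb R^N)$ one cannot put $w\nabla z$ in $L^1$ directly, and using $L^p$ with $p>1$ or $L^2$ always loses a positive power of $t$; the remedy is to never split $\mathbb P\,\mathrm{Div}$ apart, i.e. to exploit that the nonlinearities are genuinely of divergence form so that $e^{\mathbb P\Delta\sigma}\mathbb P\,\mathrm{Div}$ behaves like a gradient of the heat kernel (the degree-one vanishing of its symbol at the origin makes the relevant Riesz-transformed Gaussians bounded). The other delicate point is the top derivative $j=k+1$ in the near region when $s$ is only just above $\tfrac N2-1+k$: there $\|\nabla^{k+1}z(\tau)\|_{L^2}$ is controlled only in $L^2_\tau$ via the $L^2(\mathbb R_+;H^{s+1})$ component of $X^s_k$, so the corresponding time integral must be closed by a Hölder inequality in $\tau$ rather than by extracting a supremum, and the borderline case $k=0,\ s\in(\tfrac12,1)$ needs $\nabla z\in L^2(\mathbb R_+;H^s)$ in place of an unavailable pointwise $L^2$-bound.
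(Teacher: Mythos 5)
Your proposal is correct and follows essentially the same architecture as the paper's proof: reduce the second estimate to the first via $\nabla\theta,\nabla h\in X^s_k(\mathbb R^N)$, get the $X^s$-component from Theorem \ref{t.lin.ex.RN} with the bilinear bounds, treat the weighted $W^{k,\infty}$ part for $t\le 2$ with $L^p\to L^\infty$ semigroup bounds, Leibniz and Lemma \ref{l.tec.int.1}, and split $\int_0^t=\int_0^{t/2}+\int_{t/2}^t$ for the long-time decay, using $L^1\to L^\infty$ on the far piece and the pointwise $X^s_k$-decay of the factors (with one derivative moved onto the semigroup) on the near piece. The one place where you genuinely deviate is the far piece: the paper simply writes $\|e^{\mathbb P\Delta(t-\tau)}\mathbb P(w\nabla z)\|_{L^\infty}\lesssim (t-\tau)^{-N/2}\|w\nabla z\|_{L^1}$, leaving implicit why an $L^1\to L^\infty$ bound is available despite $\mathbb P$ being unbounded on $L^1$; you instead keep the divergence structure and bound the kernel of $e^{\Delta\sigma}\mathbb P\,{\rm Div}$ directly, which yields the (even better) rate $\sigma^{-N/2-(j+1)/2}$ and makes that step airtight. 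Your remark that the top-order factor $\nabla^{k+1}z$ must be closed by a H\"older split in $\tau$ in the short-time regime is a sound alternative to the paper's device of always peeling one derivative onto the semigroup so that only $\nabla^{\le k}z$ appears; either route closes.
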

\begin{proof}\hfill\\
The function
$$ u(t)=\int_0^t e^{\mathbb P\Delta(t-\tau)}\mathbb P\left(w(\tau)\nabla z (\tau) + \nabla \theta(\tau) \nabla^2 h(\tau)\right)d\tau, $$
solves the system
$$ \left\{ \begin{array}{ll}
    (\partial_t-\mathbb P\Delta)u=\mathbb P\left( w\nabla z + \nabla \theta \nabla^2 h\right) & \mathbb R_+\times\mathbb R^N \\
    {\rm div}u=0 & \mathbb R_+\times\mathbb R^N \\
    u(0)=0 & \mathbb R^N. 
\end{array}\right. $$
Thanks to Theorem \ref{t.lin.ex.RN} (considering only the Stokes equation) and using the estimates \eqref{bil.e.6} and \eqref{bil.e.8}, we get that 
$$ \|u\|_{X^{s}(\mathbb R^N)} \lesssim \|w\|_{X^s(\mathbb R^N)}\|z\|_{X^{s}(\mathbb R^N)} + \|\theta\|_{X^{s+1}(\mathbb R^N)}\|h\|_{X^{s+1}(\mathbb R^N)}. $$
When $s-k>\frac{N}{2}$, the previous estimate concludes the proof by Sobolev embedding since
$$ W^{k,\infty}\left(\mathbb R^N\right)\hookrightarrow H^s\left(\mathbb R^N\right). $$
Let us consider then $s-k\le \frac{N}{2}$. Moreover, since $\nabla \theta,\nabla h\in X^s_k(\mathbb R^N)$, we can focus just on the term $w\nabla z$. Let us start with the case $k=0$. If $t\le 2$
$$ \left\|\int_0^te^{\mathbb P\Delta(t-\tau)}\mathbb P(w(\tau)\nabla z(\tau))d\tau\right\|_{L^\infty(\mathbb R^N)}\lesssim \int_0^t (t-\tau)^{-\frac{N}{2r}}\|w(\tau)\nabla z(\tau)\|_{L^r(\mathbb R^N)}d\tau, $$
for some $r\in(1,\infty)$. If we take $r=\frac{2N}{N-2s}$ and we use Sobolev embedding
$$ \int_0^t (t-\tau)^{-\frac{N}{2r}}\|w(\tau)\nabla z(\tau)\|_{L^r(\mathbb R^N)}d\tau\lesssim \|w\|_{X^s_0(\mathbb R^N)}\int_0^t (t-\tau)^{-\frac{N-2s}{4}}\tau^{-\alpha_{s,k}}\|\nabla z(\tau)\|_{H^s(\mathbb R^N)}d\tau\le $$
$$ \le \|w\|_{X^s_0(\mathbb R^N)}\|z\|_{X^s_0(\mathbb R^N)} \left(\int_0^t (t-\tau)^{-\frac{N-2s}{2}}\tau^{-2\alpha_{s,k}}d\tau\right)^\frac{1}{2}. $$
Since
$$ \frac{N-2s}{2}=2\alpha_{s,k}<1, $$
we can apply Lemma \ref{l.tec.int.1} so that
$$ \left(\int_0^t (t-\tau)^{-\frac{N-2s}{2}}\tau^{-2\alpha_{s,k}}d\tau\right)^\frac{1}{2}\simeq t^{\frac{1-N+2s}{2}}\lesssim t^{-\alpha_{s,k}}, $$
for $t\le 2$ and $s>\frac{N}{2}-1$. Otherwise, when $t\ge 2$ we split the integral
$$ \int_0^t e^{\mathbb P\Delta(t-\tau)}\mathbb Pw(\tau)\nabla z(\tau) d\tau= I_1 + I_2,  $$
where 
$$ I_1=\int_0^{t/2} e^{\mathbb P\Delta(t-\tau)}\mathbb Pw(\tau)\nabla z(\tau) d\tau, $$
$$ I_2= \int_{t/2}^t e^{\mathbb P\Delta(t-\tau)}\mathbb Pw(\tau)\nabla z(\tau) d\tau. $$
Then
$$ \|I_1\|_{L^\infty(\mathbb R^N)}\lesssim \int_0^{t/2}(t-\tau)^{-\frac{N}{2}}\|w(\tau)\nabla z(\tau)\|_{L^1(\mathbb R^N)}d\tau\lesssim $$
$$ \lesssim t^{-\frac{N}{2}}\int_0^{t/2} \|w(\tau)\|_{L^2(\mathbb R^N)}\|\nabla z(\tau)\|_{L^2(\mathbb R^N)} d\tau \le $$
$$ \le t^{-\frac{N}{2}}\|w\|_{L^2(\mathbb R_+;L^2(\mathbb R^N))}\|\nabla z\|_{L^2(\mathbb R_+;L^2(\mathbb R^N))}\le t^{-\frac{N}{2}}\|w\|_{X^s_0(\mathbb R^N)}\|z\|_{X^s_0(\mathbb R^N)}. $$
On the other hand
$$ \|I_2\|_{L^\infty(\mathbb R^N)} \lesssim \int_{t/2}^t (t-\tau)^{-\frac{N}{2r}}\|w(\tau)\nabla z(\tau)\|_{L^r(\mathbb R^N)}d\tau, $$
for some $r\in[1,\infty)$. We notice that
$$ \|w(\tau)\nabla z(\tau)\|_{L^r(\mathbb R^N)}\le \tau^{-\frac{N}{2}-\frac{1}{2}}\|z\|_{X^s_0(\mathbb R^N)}\|w(\tau)\|_{L^r(\mathbb R^N)}\le \tau^{-N-\frac{1}{2}+\frac{N}{r}}\|z\|_{X^s_0(\mathbb R^N)}\|w\|_{X^s_0(\mathbb R^N)}. $$
Therefore, for $r>\frac{N}{2}$, it holds
$$ \|I_2\|_{L^\infty(\mathbb R^N)}\lesssim  t^{-N-\frac{1}{2}+\frac{N}{r}}\|z\|_{X^s_0(\mathbb R^N)}\|w\|_{X^s_0(\mathbb R^N)}\int_{t/2}^t (t-\tau)^{-\frac{N}{2r}}d\tau \le  $$
$$ \le t^{-N+\frac{1}{2}+\frac{N}{2r}}\|w\|_{X^s_1(\mathbb R^N)}\|z\|_{X^s_1(\mathbb R^N)}. $$
Since for $N\ge 3$
$$ N-\frac{1}{2}>\frac{N}{2}, $$
then we can find $r$ sufficiently large such that 
$$ \|I_2\|_{L^\infty(\mathbb R^N)}\lesssim t^{-\frac{N}{2}}\|z\|_{X^s_0(\mathbb R^N)}\|w\|_{X^s_0(\mathbb R^N)}. $$
With the same approach it can be seen that 
$$ \|\nabla I_1\|_{L^\infty(\mathbb R^N)} \lesssim \int_0^{t/2}(t-\tau)^{-\frac{N}{2}-\frac{1}{2}}\|z(\tau)\nabla w(\tau)\|_{L^1(\mathbb R^N)}d\tau\lesssim t^{-\frac{N}{2}-\frac{1}{2}}\|z\|_{X^s_0(\mathbb R^N)}\|w\|_{X^s_0(\mathbb R^N)}, $$
$$ \|\nabla I_2\|_{L^\infty(\mathbb R^N)}\lesssim t^{-N+\frac{N}{2r}}\|z\|_{X^s_0(\mathbb R^N)}\|w\|_{X^s_0(\mathbb R^N)}\lesssim t^{-\frac{N}{2}-\frac{1}{2}}\|z\|_{X^s_0(\mathbb R^N)}\|w\|_{X^s_0(\mathbb R^N)}\quad r\gg1. $$
This prove the case $k=0$. Let now $k\ge 1$ and let $\beta\in\mathbb N^N$ with $|\beta|=k$. If $t\le 2$ we can find $\ell=1,\ldots, N$ and $\widetilde \beta\in\mathbb N^N$ such that $\beta=e_\ell+\widetilde \beta$. So for any $r\in(1,\infty)$
$$ \left\|D^\beta\int_0^t e^{\mathbb P\Delta(t-\tau)}\mathbb Pw(\tau)\nabla z(\tau)d\tau\right\|_{L^\infty(\mathbb R^N)}\lesssim \int_0^t (t-\tau)^{-\frac{1}{2}-\frac{N}{2r}}\left\|D^{\widetilde \beta}(w(\tau)\nabla z(\tau))\right\|_{L^r(\mathbb R^N)}d\tau. $$
On the other hand, for any $r\ge 2$
$$ \left\|D^{\widetilde \beta}(w(\tau)\nabla z(\tau))\right\|_{L^r(\mathbb R^N)}\lesssim \sum_{\gamma_1+\gamma_2=\widetilde \beta,\:\:|\gamma_j|\ge 1}\left\|D^{\gamma_1}w(\tau)D^{\gamma_2}\nabla z(\tau)\right\|_{L^r(\mathbb R^N)} \le $$
$$ \le \sum_{\gamma_1+\gamma_2=\widetilde \beta,\:\:|\gamma_j|\ge 1} \left\|D^{\gamma_1}\nabla z(\tau)\right\|_{L^\infty(\mathbb R^N)}\left\|D^{\gamma_2}w(\tau)\right\|_{L^\infty(\mathbb R^N)}^{1-\frac{2}{r}}\left\|D^{\gamma_2}w(\tau)\right\|_{L^2(\mathbb R^N)}^\frac{2}{r}. $$
Since $|\gamma_1|,|\gamma_2|\le k-1$, we have that 
$$ \left\|D^{\gamma_1}\nabla z(\tau)\right\|_{L^\infty(\mathbb R^N)}\le \tau^{-\alpha_{s,k}}\|z\|_{X^s_{k}(\mathbb R^N)}.\quad \left\|D^{\gamma_2}w(\tau)\right\|_{L^\infty(\mathbb R^N)}\le \tau^{-\alpha_{s,k}}\|w\|_{X^s_{k}(\mathbb R^N)}. $$
In particular, since $k-1\le s$, we have
$$ \left\|D^{\gamma_1}\nabla z(\tau)\right\|_{L^\infty(\mathbb R^N)}\left\|D^{\gamma_2}w(\tau)\right\|_{L^\infty(\mathbb R^N)}^{1-\frac{2}{r}}\left\|D^{\gamma_2}w(\tau)\right\|_{L^2(\mathbb R^N)}^\frac{2}{r}\le \tau^{-2\alpha_{s,k}+\frac{2\alpha_{s,k}}{r}}\|z\|_{X^s_{k}(\mathbb R^N)}\|w\|_{X^s_{k}(\mathbb R^N)}. $$
Therefore, since $2\alpha_{s,k}<1$, choosing $r$ sufficiently large we get by Lemma \ref{l.tec.int.1} that
$$ \int_0^t (t-\tau)^{-\frac{1}{2}-\frac{N}{2r}}\left\|D^{\widetilde \beta}(z(\tau)w(\tau))\right\|_{L^r(\mathbb R^N)}d\tau\lesssim  $$
$$ \lesssim \|z\|_{X^s_k(\mathbb R^N)}\|w\|_{X^s_{k}(\mathbb R^N)}\int_0^t (t-\tau)^{-\frac{1}{2}-\frac{N}{2r}}\tau^{-2\alpha_{s,k}+\frac{2\alpha_{s,k}}{r}}d\tau \lesssim $$
$$ \lesssim t^{\frac{1}{2}-2\alpha_{s,k}-\frac{N}{2r}+\frac{2\alpha_{s,k}}{r}}\|z\|_{X^s_k(\mathbb R^N)}\|w\|_{X^s_{k}(\mathbb R^N)}\lesssim t^{-\alpha_{s,k}}\|z\|_{X^s_k(\mathbb R^N)}\|w\|_{X^s_k(\mathbb R^N)}\quad t\le 2, $$
where in the last inequality we used that $\alpha_{s,k}<\frac{1}{2}$ when $s>\frac{N}{2}-1$.

Let us consider now the case $t\ge 2$. 
$$ D^\beta\int_0^t e^{\mathbb P\Delta(t-\tau)}\mathbb Pw(\tau)\nabla z(\tau)d\tau= I_1+I_2, $$
with 
$$ I_1\coloneqq D^\beta\int_0^{t/2} e^{\mathbb P\Delta(t-\tau)}\mathbb Pw(\tau)\nabla z(\tau)d\tau, $$
$$ I_2\coloneqq D^\beta\int_{t/2}^t e^{\mathbb P\Delta(t-\tau)}\mathbb Pw(\tau)\nabla z(\tau)d\tau. $$
So
$$ \|I_1\|_{L^\infty(\mathbb R^N)}\lesssim \int_0^{t/2}(t-\tau)^{-\frac{k}{2}-\frac{N}{2}}\|w(\tau)\nabla z(\tau)\|_{L^1(\mathbb R^N)}d\tau\le t^{-\frac{k}{2}-\frac{N}{2}} \|z\|_{X^s_{k}(\mathbb R^N)}\|w\|_{X^s_{k}(\mathbb R^N)}.  $$
On the other hand, as before we can find $\widetilde\beta$ and $\ell=1,\ldots, N$ such that $\beta=\widetilde\beta + e_\ell$ and 
$$ \|I_2\|_{L^\infty(\mathbb R^N)}\lesssim \int_{t/2}^t (t-\tau)^{-\frac{1}{2}-\frac{N}{2r}}\left\|D^{\widetilde \beta}(w(\tau)\nabla z(\tau))\right\|_{L^r(\mathbb R^N)}d\tau. $$
With the same strategy as before, we get 
$$ \left\|D^{\widetilde \beta}(z(\tau)w(\tau))\right\|_{L^r(\mathbb R^N)}\lesssim \|z\|_{X^s_{k}(\mathbb R^N)}\|w\|_{X^s_{k}(\mathbb R^N)}\sum_{\gamma_1+\gamma_2=\widetilde \beta}t^{-\left(\frac{N}{2}+\frac{|\gamma_1|+1}{2}\right)-\left(\frac{N}{2}+\frac{|\gamma_2|}{2}\right)\left(1-\frac{2}{r}\right)}. $$
Now we notice that 
$$ \frac{N}{2}+\frac{|\gamma_1|+1}{2}+\left(\frac{N}{2}+\frac{|\gamma_2|}{2}\right)\left(1-\frac{2}{r}\right)=N+\frac{k}{2}-\frac{2}{r}\left(\frac{N}{2}+\frac{|\gamma_2|}{2}\right)\ge  $$
$$ \ge  N+\frac{k}{2}-\frac{2}{r}\left(\frac{N}{2}+\frac{k-1}{2}\right). $$
So, choosing $r$ sufficiently large 
$$ \|I_2\|_{L^\infty(\mathbb R^N)}\lesssim t^{-N-\frac{k}{2}+\frac{2}{r}\left(\frac{N}{2}+\frac{k-1}{2}\right)}\|z\|_{X^s_k(\mathbb R^N)}\|w\|_{X^s_k(\mathbb R^N)}\int_{t/2}^t (t-\tau)^{-\frac{1}{2}-\frac{N}{2r}}d\tau\lesssim $$
$$ \lesssim t^{-N-\frac{k}{2}+\frac{1}{2}+\frac{1}{r}\left(\frac{N}{2}+k-1\right)}\|z\|_{X^s_k(\mathbb R^N)}\|w\|_{X^s_k(\mathbb R^N)}\lesssim t^{-\frac{N}{2}-\frac{k}{2}}\|z\|_{X^s_k(\mathbb R^N)}\|w\|_{X^s_k(\mathbb R^N)}\quad t\ge 2. $$
The estimate for $\nabla D^\beta$ in $L^\infty(\mathbb R^N)$ can be done as before.
\end{proof}
\begin{lem}\label{l.dec.3.RN}
    Let $k\in\mathbb N$ and $s>0$ such that $s-k> \frac{N}{2}-1$, let $\zeta\in X^s_k(\mathbb R^N)$ and $h,\theta_1,\theta_2,\theta_3\in X^{s+1}_{k+1}(\mathbb R^N)$, then it holds
    \begin{equation}
        \left\|\int_0^t e^{\Delta(t-\tau)}\zeta(\tau)\nabla h(\tau)d\tau\right\|_{X^{s+1}_{k+1}(\mathbb R^N)}\le C(N,s,k)\|\zeta\|_{X^{s}_k(\mathbb R^N)}\|h\|_{X^{s+1}_{k+1}(\mathbb R^N)},
    \end{equation}
     \begin{equation}
        \left\|\int_0^t e^{\Delta(t-\tau)}\nabla \theta_1(\tau)\nabla \theta_2(\tau)\theta_3(\tau)d\tau\right\|_{X^{s+1}_{k+1}(\mathbb R^N)}\le C(N,s,k)\|\theta_1\|_{X^{s+1}_{k+1}(\mathbb R^N)}\|\theta_2\|_{X^{s+1}_{k+1}(\mathbb R^N)}\|\theta_3\|_{X^{s+1}_{k+1}(\mathbb R^N)}.
    \end{equation}
\end{lem}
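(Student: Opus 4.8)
The plan is to mirror the proof of Lemma~\ref{l.dec.2.RN}, since with
$$ d(t)=\int_0^t e^{\Delta(t-\tau)}\mathcal N(\tau)\,d\tau, $$
and $\mathcal N=\zeta\nabla h$ for the first estimate, $\mathcal N=\nabla\theta_1\nabla\theta_2\,\theta_3$ for the second, the function $d$ solves the heat equation $(\partial_t-\Delta)d=\mathcal N$, $d(0)=0$, so Theorem~\ref{t.lin.ex.RN} (considering only the heat equation) applies. First I would establish the bound in $X^{s+1}(\mathbb R^N)$. Since $\nabla h,\nabla\theta_j\in X^s(\mathbb R^N)$, the bilinear estimates \eqref{bil.e.5} and \eqref{bil.e.8} give
$$ \|\zeta\nabla h\|_{L^1(\mathbb R_+;L^1(\mathbb R^N))}+\|\zeta\nabla h\|_{L^2(\mathbb R_+;H^s(\mathbb R^N))}\lesssim \|\zeta\|_{X^s(\mathbb R^N)}\|h\|_{X^{s+1}(\mathbb R^N)}, $$
and the trilinear estimates \eqref{tril.e.3}, \eqref{tril.e.4} give the analogous bound for $\nabla\theta_1\nabla\theta_2\,\theta_3$ with the three factors $\|\theta_j\|_{X^{s+1}(\mathbb R^N)}$; Theorem~\ref{t.lin.ex.RN} then controls $\|d\|_{X^{s+1}(\mathbb R^N)}$. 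If $s-k>\tfrac N2$ the weight $\alpha_{s,k}$ vanishes and the $W^{k+1,\infty}$-in-time part follows from $H^{s+1}(\mathbb R^N)\hookrightarrow W^{k+1,\infty}(\mathbb R^N)$, so the genuine case for that part is $s-k\le\tfrac N2$; in all cases the $t\ge1$ decay norms require the dispersive argument below.

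For the pointwise-in-time $W^{k+1,\infty}$ and decay norms I would run the two-regime argument of Lemma~\ref{l.dec.2.RN}, using $\nabla^j e^{\Delta t}=e^{\Delta t}\nabla^j$ and Theorems~\ref{t.sem.es.0} and~\ref{t.sem.es.1-easy}. For $t\le 2$ and a multi-index $\beta$ with $|\beta|=k+1$ (and $|\beta|=k+2$ for the top gradient) I would factor $D^\beta=D^{e_\ell}D^{\widetilde\beta}$, use
$$ \|e^{\Delta(t-\tau)}D^{e_\ell}g\|_{L^\infty(\mathbb R^N)}\lesssim (t-\tau)^{-1/2-N/(2r)}\|g\|_{L^r(\mathbb R^N)} $$
for large $r$, expand $D^{\widetilde\beta}(\zeta\nabla h)$ (resp. $D^{\widetilde\beta}(\nabla\theta_1\nabla\theta_2\theta_3)$) by the Leibniz rule, place every low-order factor in $L^\infty$ with its time weight $\tau^{-\alpha_{s,k}}$ supplied by the $X^s_k(\mathbb R^N)$ and $X^{s+1}_{k+1}(\mathbb R^N)$ norms (interpolating one $L^\infty$ factor with its $L^2$ bound to form the $L^r$ norm), and then close the $\tau$-integral with Lemma~\ref{l.tec.int.1}; the output is $\lesssim t^{-\alpha_{s,k}}$ precisely because $s-k>\tfrac N2-1$ forces $2\alpha_{s,k}<1$ and $\alpha_{s,k}<\tfrac12$. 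For $t\ge 2$ I would split $\int_0^t=\int_0^{t/2}+\int_{t/2}^t$: on $[0,t/2]$ put all derivatives on the semigroup and use the $L^1\to L^\infty$ decay $(t-\tau)^{-|\beta|/2-N/2}$ against $\|\mathcal N(\tau)\|_{L^1}$, bounded by the $L^2_{t,x}$ pieces of the $X$-norms; on $[t/2,t]$ keep one derivative on the semigroup and estimate the remaining $D^{\widetilde\beta}\mathcal N$ in $L^r$ using the decay of each factor for $\tau\ge1$, choosing $r$ large so the $\tau$-integral converges with rate $t^{-N/2-|\beta|/2}$ — here $N\ge3$ is used, e.g. through $N-\tfrac12>\tfrac N2$. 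The gradient of the top-order term is treated the same way.

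The trilinear estimate is handled identically; the only extra ingredient is that the factor $\theta_3$ enters in $L^\infty$ both near $t=0$ (via the $W^{k+1,\infty}$-part of $\|\theta_3\|_{X^{s+1}_{k+1}(\mathbb R^N)}$) and with decay $t^{-N/2}$ for $t\ge1$, which is exactly what the $X^{s+1}_{k+1}(\mathbb R^N)$ norm records. I expect the main obstacle to be purely the bookkeeping: distributing the $k+1$ (or $k+2$) derivatives among two or three factors while keeping the total singularity at $\tau=0$ strictly below order one and the total decay at $t=\infty$ at least $t^{-N/2-(k+1)/2}$ forces a $k$-dependent choice of the auxiliary exponent $r$ and repeated use of Lemma~\ref{l.tec.int.1}; no estimate beyond those already in the excerpt is needed.
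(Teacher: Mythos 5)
Your proposal is correct and follows essentially the same route as the paper: the $X^{s+1}(\mathbb R^N)$ bound via Theorem~\ref{t.lin.ex.RN} together with the multilinear estimates \eqref{bil.e.5}, \eqref{bil.e.8}, \eqref{tril.e.3}, \eqref{tril.e.4}, and the weighted $L^\infty$ decay norms via the two-regime argument of Lemma~\ref{l.dec.2.RN} applied to $\nabla h,\nabla\theta_j\in X^s_k(\mathbb R^N)$ with $\theta_3\in L^\infty(\mathbb R_+;L^\infty(\mathbb R^N))$. The paper's proof is just a terser version of this, deferring the dispersive part to the proof of Lemma~\ref{l.dec.2.RN} exactly as you reconstruct it.
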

\begin{proof}\hfill\\
The function
$$ d(t)=\int_0^t e^{\Delta(t-\tau)}\left(\zeta(\tau)\nabla h (\tau) + \nabla \theta_1(\tau)\nabla \theta_2(\tau)\theta_3(\tau)\right)d\tau, $$
solves the system
$$ \left\{ \begin{array}{ll}
    (\partial_t-\Delta)d= \zeta\nabla h + \nabla \theta_1\nabla\theta_2\theta_3 & \mathbb R_+\times\mathbb R^N \\
    d(0)=0 & \mathbb R^N. 
\end{array}\right. $$
Thanks to Theorem \ref{t.lin.ex.RN} (considering only the heat equation) and using the estimates \eqref{bil.e.5}, \eqref{bil.e.8}, \eqref{tril.e.3} and \eqref{tril.e.4} we get that 
$$ \|d\|_{X^{s+1}(\mathbb R^N)} \lesssim \|\zeta\|_{X^s(\mathbb R^N)}\|h\|_{X^{s+1}(\mathbb R^N)} + \|\theta_1\|_{X^{s+1}(\mathbb R^N)}\|\theta_2\|_{X^{s+1}(\mathbb R^N)}\|\theta_3\|_{X^{s+1}(\mathbb R^N)}. $$
For what concerns the $L^\infty$ estimates, they can be done as in Lemma \ref{l.dec.2.RN}, using the fact that 
$$ \nabla h,\nabla \theta_j\in X^s_k(\mathbb R^N)\quad j=1,2,3,\quad \theta_3\in L^\infty(\mathbb R_+;L^\infty(\mathbb R^N)), $$
where the last fact follows by Sobolev embedding with $s>\frac{N}{2}-1$.
\end{proof}
The estimates for the half-space are similar, but unlike the $\mathbb R^N$ case, it is not always true that 
$$ \nabla e^{At}=e^{At}\nabla, $$
for $A=\mathbb P\Delta_D,\Delta_N$. For this reason, we prove decay only for $u$, for $d$ and $\nabla d$:
\begin{lem}\label{l.dec.1.hs}
    Let $s\in\left(\frac{1}{2},1\right]$, let $u_0\in H^s\cap L^1(\mathbb R^3_+)$ and $d_0\in H^{s+1}\cap L^1(\mathbb R^3_+)$, then
    $$ \left\|e^{\mathbb P\Delta_D t}u_0\right\|_{X^s_0(\mathbb R^3_+)}\le C(s)\|u_0\|_{H^s\cap L^1(\mathbb R^3_+)}, $$
    $$ \left\|e^{\Delta_Nt}d_0\right\|_{X^{s+1}_1(\mathbb R^3_+)}\le C(s)\|d_0\|_{H^{s+1}\cap L^1(\mathbb R^3_+)}. $$
\end{lem}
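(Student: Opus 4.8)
The plan is to follow the proof of Lemma \ref{l.dec.1.RN} essentially line by line, making the two changes that the half-space forces: the Helmholtz projection is now nontrivial, and neither $e^{\Delta_N t}$ nor $e^{\mathbb P\Delta_D t}$ commutes with an arbitrary derivative. Since $N=3$ and $s\in\left(\frac12,1\right]$ we are always in the regime $s-k\le\frac N2$ of the definition of $\alpha_{s,k}$, and one computes $\alpha_{s,0}=\alpha_{s+1,1}=\frac{3-2s}{4}\in\left[\frac14,\frac12\right)$; moreover $H^s(\mathbb R^3)\hookrightarrow L^{6/(3-2s)}(\mathbb R^3)$ and $H^{s+1}(\mathbb R^3)\hookrightarrow L^\infty(\mathbb R^3)$ because $s>\frac12$. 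The $X^s$ and $X^{s+1}$ parts of the two norms are immediate: $\left\|e^{\mathbb P\Delta_D t}u_0\right\|_{X^s(\mathbb R^3_+)}\lesssim\|u_0\|_{H^s\cap L^1(\mathbb R^3_+)}$ and $\left\|e^{\Delta_N t}d_0\right\|_{X^{s+1}(\mathbb R^3_+)}\lesssim\|d_0\|_{H^{s+1}\cap L^1(\mathbb R^3_+)}$ follow from Lemma \ref{l.sm.es.hom.2} (estimate \eqref{sm.es.hom.3}, using $u_0,d_0\in L^1$) together with Lemma \ref{l.res.eq.} to identify the $H^\sigma_A$ and $H^\sigma$ norms (legitimate since $s,s+1\in[0,2]$). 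It then remains to bound the short-time singularity $\sup_{t\le2}t^{\alpha}\|\cdot\|_{W^{k,\infty}}$ and the large-time decay $\sum_{j}\sup_{t\ge1}t^{3/2+j/2}\|\nabla^j\cdot\|_{L^\infty}$.

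For the short-time bounds, since $u_0\in J_2(\mathbb R^3_+)$ we have $e^{\mathbb P\Delta_D t}u_0=e^{\mathbb P\Delta_D t}\mathbb P u_0$, so Theorem \ref{t.sem.es.0} with $p=\frac{6}{3-2s}\in(1,\infty)$ and $q=\infty$ gives, for $t\le2$, $\left\|e^{\mathbb P\Delta_D t}u_0\right\|_{L^\infty}\lesssim t^{-\alpha_{s,0}}\|u_0\|_{L^{6/(3-2s)}}\lesssim t^{-\alpha_{s,0}}\|u_0\|_{H^s}$ by Sobolev embedding; since $k=0$ this is the entire $W^{0,\infty}$ bound for $u$. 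For $d$ the $j=0$ term is handled the same way with $A=\Delta_N$ (admissible in the heat case of Theorem \ref{t.sem.es.0}). For the $j=1$ term we use the half-space identities recalled in the proof of Lemma \ref{l.sm.es.hs.}: $\partial_\ell e^{\Delta_N t}=e^{\Delta_N t}\partial_\ell$ for $\ell=1,2$ and $\partial_3 e^{\Delta_N t}=e^{\Delta_D t}\partial_3$, whence $\left\|\partial_\ell e^{\Delta_N t}d_0\right\|_{L^\infty}\lesssim t^{-\alpha}\|\partial_\ell d_0\|_{L^{6/(3-2s)}}$ and $\left\|\partial_3 e^{\Delta_N t}d_0\right\|_{L^\infty}\lesssim t^{-\alpha}\|\partial_3 d_0\|_{L^{6/(3-2s)}}$, both $\lesssim t^{-\alpha}\|d_0\|_{H^{s+1}}$ since $\nabla d_0\in H^s\hookrightarrow L^{6/(3-2s)}$. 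Summing gives $\sup_{t\le2}t^{\alpha}\left\|e^{\Delta_N t}d_0\right\|_{W^{1,\infty}}\lesssim\|d_0\|_{H^{s+1}}$.

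For the large-time bounds ($t\ge1$) we exploit the semigroup identity $e^{At}=e^{At/2}e^{At/2}$ (with $A=\Delta_N$ or $A=\mathbb P\Delta_D$), which bypasses the absence of a commutation rule for $\nabla$. For $d$: $\left\|\nabla^j e^{\Delta_N t}d_0\right\|_{L^\infty}\le\left\|\nabla^j e^{\Delta_N(t/2)}\right\|_{L^2\to L^\infty}\left\|e^{\Delta_N(t/2)}d_0\right\|_{L^2}\lesssim t^{-j/2-3/4}\cdot t^{-3/4}\|d_0\|_{L^1}=t^{-3/2-j/2}\|d_0\|_{L^1}$ for $j=0,1,2$, where the first factor comes from Theorems \ref{t.sem.es.0} and \ref{t.sem.es.1-easy} and the second from the heat $L^1$–$L^2$ estimate (Theorem \ref{t.sem.es.0}, $p=1$). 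The same splitting works for $u$: $\left\|\nabla^j e^{\mathbb P\Delta_D t}u_0\right\|_{L^\infty}\le\left\|\nabla^j e^{\mathbb P\Delta_D(t/2)}\right\|_{L^2\to L^\infty}\left\|e^{\mathbb P\Delta_D(t/2)}u_0\right\|_{L^2}\lesssim t^{-j/2-3/4}\cdot t^{-3/4}\|u_0\|_{L^1}$ for $j=0,1$, where the first factor is Theorems \ref{t.sem.es.0} and \ref{t.sem.es.1-easy} (with $p=2$, $q=\infty$ in the Stokes case) and the second is the $L^1$–$L^2$ decay of the half-space Stokes semigroup on the solenoidal datum $\mathbb P u_0=u_0$, contained in the Green-function estimates of \cite{KLT23} underlying Theorem \ref{t.sem.es.0}. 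Collecting the short-time and large-time estimates with the $X^s$-bounds, and invoking Lemma \ref{l.res.eq.} once more, proves the two inequalities.

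The only genuinely delicate point is the behaviour of the \emph{derivatives} of the Stokes flow: unlike in $\mathbb R^N$ there is no identity $\nabla e^{\mathbb P\Delta_D t}=e^{\mathbb P\Delta_D t}\nabla$, which is what forces the half-time decomposition $e^{At}=e^{At/2}e^{At/2}$ at large $t$ together with the $L^1$-based half-space bounds. Everything else is bookkeeping: the choice of the Sobolev-subcritical exponent $\frac{6}{3-2s}$ makes the resulting power of $t$ match $\alpha_{s,k}$, and it is precisely here that the hypothesis $s>\frac12=\frac N2-1$ enters.
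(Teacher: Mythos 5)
Your proof follows the route the paper intends --- the paper's own proof of this lemma is the single sentence that it ``can be done as in Lemma \ref{l.dec.1.RN}'' --- and your write-up is a correct adaptation of that argument to the half-space: the $X^s$/$X^{s+1}$ parts from Lemma \ref{l.sm.es.hom.2}, the short-time $t^{-\alpha_{s,k}}$ singularity from the $L^{6/(3-2s)}\to L^\infty$ semigroup bounds combined with the commutation identities $\partial_\ell e^{\Delta_N t}=e^{\Delta_N t}\partial_\ell$ ($\ell=1,2$) and $\partial_3 e^{\Delta_N t}=e^{\Delta_D t}\partial_3$, and the long-time decay from $L^1$-based dispersive bounds; your exponent bookkeeping ($\alpha_{s,0}=\alpha_{s+1,1}=\tfrac{3-2s}{4}$, $\tfrac{3}{2}\cdot\tfrac{3-2s}{6}=\alpha_{s,0}$) is right. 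The one place where you genuinely deviate is the large-time treatment of the Stokes flow, where you insert the splitting $e^{At}=e^{At/2}e^{At/2}$ so as to invoke Theorem \ref{t.sem.es.1-easy} only with $p=2$; this is a clean workaround for the absence of a commutation rule (in $\mathbb{R}^N$ the paper instead uses $e^{\mathbb P\Delta t}\mathbb P f=e^{\Delta t}\mathbb P f$ and applies the heat $L^1$--$L^\infty$ bound directly), and for the Neumann heat part it is not even needed since Theorem \ref{t.sem.es.1-easy} allows $p=1$ there. Be aware, though, that the splitting does not fully discharge the $L^1$ endpoint: the factor $\|e^{\mathbb P\Delta_D(t/2)}u_0\|_{L^2}\lesssim t^{-3/4}\|u_0\|_{L^1}$ is still a $p=1$ Stokes estimate, which Theorem \ref{t.sem.es.0} as stated excludes (it requires $p>1$ in the Stokes case), so this step must indeed be imported from the Green-tensor bounds of \cite{KLT23} for solenoidal $L^1$ data, exactly as you indicate. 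This is not a defect of your argument relative to the paper: the paper's own (unwritten) proof needs the identical input, since the claimed decay $t^{-3/2}\|u_0\|_{L^1}$ of $\|e^{\mathbb P\Delta_D t}u_0\|_{L^\infty(\mathbb R^3_+)}$ cannot be reached from the stated $p>1$ estimates alone.
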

The proof can be done as in Lemma \ref{l.dec.1.RN}. Let us pass to the inhomogeneous term:
\begin{lem}\label{l.dec.2.hs}
    Let $s\in\left(\frac{1}{2},1\right]$, let $z,w,\zeta\in X^s_0(\mathbb R^3_+)$ and $h,\theta,\theta_1,\theta_2,\theta_3\in X^{s+1}_{1}(\mathbb R^3_+)$, then it holds
    \begin{equation}
        \left\|\int_0^t e^{\mathbb P\Delta_D(t-\tau)}\mathbb Pw(\tau)\nabla z(\tau)d\tau\right\|_{X^{s}_{0}(\mathbb R^3_+)}\le C(s)\|z\|_{X^{s}_0(\mathbb R^3_+)}\|w\|_{X^{s}_{0}(\mathbb R^3_+)},
    \end{equation}
    \begin{equation}
        \left\|\int_0^t e^{\mathbb P\Delta_D(t-\tau)}\mathbb P\nabla \theta(\tau)\nabla^2 h(\tau)d\tau\right\|_{X^{s}_{0}(\mathbb R^3_+)}\le C(s)\|\theta\|_{X^{s+1}_1(\mathbb R^3_+)}\|h\|_{X^{s+1}_{1}(\mathbb R^3_+)},
    \end{equation}
    \begin{equation}
        \left\|\int_0^t e^{\Delta_N(t-\tau)}\zeta(\tau)\nabla h(\tau)d\tau\right\|_{X^{s+1}_{1}(\mathbb R^3_+)}\le C(s)\|\zeta\|_{X^{s}_0(\mathbb R^3_+)}\|h\|_{X^{s+1}_{1}(\mathbb R^3_+)},
    \end{equation}
     \begin{equation}
        \left\|\int_0^t e^{\Delta_N(t-\tau)}\nabla \theta_1(\tau)\nabla \theta_2(\tau)\theta_3(\tau)d\tau\right\|_{X^{s+1}_{1}(\mathbb R^3_+)}\le C(s)\|\theta_1\|_{X^{s+1}_{1}(\mathbb R^3_+)}\|\theta_2\|_{X^{s+1}_{1}(\mathbb R^3_+)}\|\theta_3\|_{X^{s+1}_{1}(\mathbb R^3_+)}.
    \end{equation}
\end{lem}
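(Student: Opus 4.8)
The plan is to follow closely the proofs of Lemmas~\ref{l.dec.2.RN} and~\ref{l.dec.3.RN}, isolating the two features proper to the half--space. First, the $X^s(\mathbb R^3_+)$-- and $X^{s+1}(\mathbb R^3_+)$--parts of all four norms are obtained exactly as there: each Duhamel integral solves the corresponding linear system with right--hand side $\mathbb P(w\nabla z)$, $\mathbb P(\nabla\theta\,\nabla^2 h)$, $\zeta\nabla h$ or $\nabla\theta_1\nabla\theta_2\,\theta_3$, so Theorem~\ref{t.lin.ex.dom.} together with the multilinear bounds \eqref{bil.e.5}--\eqref{bil.e.8}, \eqref{tril.e.3} and \eqref{tril.e.4} gives the $X^s$, resp.\ $X^{s+1}$, estimate. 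What remains are the weighted $L^\infty$ pieces of the $X^s_0$-- and $X^{s+1}_1$--norms. Throughout one uses the embeddings $H^{s+1}(\mathbb R^3)\hookrightarrow L^\infty(\mathbb R^3)$ and $H^s(\mathbb R^3)\hookrightarrow L^{6/(3-2s)}(\mathbb R^3)$, valid for $s\in(\frac12,1]$ (the first yields $\theta_3\in L^\infty(\mathbb R_+;L^\infty)$), the fact that $\nabla h,\nabla\theta_j\in X^s_0(\mathbb R^3_+)$ with norms bounded by $\|\cdot\|_{X^{s+1}_1(\mathbb R^3_+)}$, and the trivial embedding $H^{s+1}(\mathbb R^3)\hookrightarrow L^2(\mathbb R^3)$, which makes the quadratic and cubic nonlinearities square integrable in time.

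For the heat Duhamel terms (the third and fourth estimates) there is no genuinely new difficulty. Even though $\nabla$ does not commute with $e^{\Delta_N t}$, one has $\partial_j e^{\Delta_N t}=e^{\Delta_N t}\partial_j$ for tangential $j$ and $\partial_N e^{\Delta_N t}=e^{\Delta_D t}\partial_N$, and symmetrically with the roles of $\Delta_N,\Delta_D$ exchanged; hence every derivative hitting $d$ can be transferred onto the nonlinearity, at worst converting $e^{\Delta_N t}$ into $e^{\Delta_D t}$, and both semigroups satisfy the same dispersive estimates (Theorems~\ref{t.sem.es.0},~\ref{t.sem.es.1-easy}). One then repeats the argument of Lemma~\ref{l.dec.3.RN}: for $t\le2$, Sobolev embedding and Lemma~\ref{l.tec.int.1} produce the weight $t^{-\alpha_{s+1,1}}$, with $\alpha_{s+1,1}=\frac{3-2s}{4}<\frac12$; for $t\ge2$, split the integral at $\tau=t/2$, bound the part over $(0,t/2)$ by the $L^1$--$L^\infty$ estimate against $\|\cdot\|_{L^1(\mathbb R_+;L^1)}$, and the part over $(t/2,t)$ by an $L^r$--$L^\infty$ estimate with $r$ large against the pointwise decay of the factors recorded in the $t\ge1$ part of the $X^s_0$-- and $X^{s+1}_1$--norms.

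The place where commutation genuinely fails is the Stokes Duhamel (the first two estimates, for $u\in X^s_0$); there one keeps the derivative on the semigroup and uses the gradient dispersive bounds of Theorem~\ref{t.sem.es.1-easy}, together with the $L^1$--$L^\infty$ gradient estimate $\|\nabla e^{\mathbb P\Delta_D t}\mathbb P f\|_{L^\infty(\mathbb R^3_+)}\lesssim t^{-2}\|f\|_{L^1(\mathbb R^3_+)}$ available from \cite{KLT23} (already used implicitly in Lemma~\ref{l.dec.1.hs}). Since $k=0$, for $t\le2$ only $\|u(t)\|_{L^\infty}$ is needed: writing $\|u(t)\|_{L^\infty}\lesssim\int_0^t(t-\tau)^{-\alpha_{s,0}}\|w(\tau)\nabla z(\tau)\|_{L^{6/(3-2s)}}\,d\tau$, bounding $\|w(\tau)\|_{L^\infty}\le\tau^{-\alpha_{s,0}}\|w\|_{X^s_0}$ and $\|\nabla z(\tau)\|_{L^{6/(3-2s)}}\lesssim\|z(\tau)\|_{H^{s+1}}$, and applying Cauchy--Schwarz in $\tau$ and Lemma~\ref{l.tec.int.1} (which requires $2\alpha_{s,0}<1$, i.e.\ $s>\frac12$), yields $\|u(t)\|_{L^\infty}\lesssim t^{-\alpha_{s,0}}$. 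For $t\ge2$ and $j\in\{0,1\}$ one splits at $\tau=t/2$: over $(0,t/2)$ one applies the $L^1$--$L^\infty$ gradient estimate and $\|w\nabla z\|_{L^1}\le\|w\|_{L^2}\|\nabla z\|_{L^2}$, whose time integral is finite since $w,\nabla z\in L^2(\mathbb R_+;L^2)$; over $(t/2,t)$ one uses an $L^r$--$L^\infty$ estimate with $r$ large and $\|w(\tau)\nabla z(\tau)\|_{L^r}\lesssim\tau^{-N-\frac12+\frac Nr}\|w\|_{X^s_0}\|z\|_{X^s_0}$ (from the $t\ge1$ parts of the norm), so that $\int_{t/2}^t(t-\tau)^{-\frac j2-\frac{3}{2r}}d\tau\simeq t^{1-\frac j2-\frac{3}{2r}}$ is absorbed into the $t^{-\frac N2-\frac j2}$ weight; this step works because $N-\frac12>\frac N2$ for $N=3$, so $r$ can be taken large enough. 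The term $\nabla\theta\,\nabla^2 h$ is treated identically, using $\nabla\theta,\nabla h\in X^s_0$.

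The main obstacle is precisely this last point: lacking $\nabla e^{\mathbb P\Delta_D t}=e^{\mathbb P\Delta_D t}\nabla$ in the half--space, the derivative in the $\nabla u$--decay must be carried by the semigroup, and one must verify that the time convolutions stay integrable both near $\tau=0$ (guaranteed by the time square--integrability built into the $X^s$--norm through $H^{s+1}\hookrightarrow L^2$) and near $\tau=t$ (guaranteed by choosing the intermediate Lebesgue exponent large, which is possible only because $N=3$). The restriction $s\in(\frac12,1]$ is used twice: to keep $\alpha_{s,k}=\frac{3-2s}{4}<\frac12$, so that the time weights at $t\le2$ are admissible, and to validate the Sobolev embeddings invoked above.
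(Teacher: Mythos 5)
Your proposal is correct and follows essentially the same route as the paper: the $X^s$/$X^{s+1}$ parts via Theorem \ref{t.lin.ex.dom.} and the multilinear bounds \eqref{bil.e.5}--\eqref{tril.e.4}, the weighted $L^\infty$ parts by repeating the splitting arguments of Lemmas \ref{l.dec.2.RN} and \ref{l.dec.3.RN}, using the commutation relations $\partial_j e^{\Delta_N t}=e^{\Delta_N t}\partial_j$ ($j=1,2$), $\partial_3 e^{\Delta_N t}=e^{\Delta_D t}\partial_3$ for the heat term and keeping the derivative on the semigroup for the Stokes term. If anything, you are more explicit than the paper about the $L^1$--$L^\infty$ gradient bound for the half-space Stokes semigroup from \cite{KLT23}, which the paper's terse proof leaves implicit.
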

\begin{proof}\hfill\\
Let 
$$ u= \int_0^t e^{\mathbb P\Delta_D(t-\tau)}\mathbb P\left(w(\tau)\nabla z(\tau) + \nabla \theta(\tau)\nabla^2 h(\tau)\right)d\tau, $$
$$ d= \int_0^t e^{\Delta_N(t-\tau)}\left(\zeta(\tau)\nabla h(\tau) + \nabla \theta_1(\tau)\nabla \theta_2(\tau)\theta_3(\tau)\right)d\tau, $$
then using Theorem \ref{t.lin.ex.dom.}, estimate \eqref{bil.e.5} and estimates from \eqref{bil.e.7} to \eqref{tril.e.4} we get as before that
$$ \|u\|_{X^s(\mathbb R^3_+)}\lesssim \|z\|_{X^s(\mathbb R^3_+)}\|w\|_{X^s(\mathbb R^3_+)} + \|\theta\|_{X^{s+1}(\mathbb R^3_+)}\|h\|_{X^{s+1}(\mathbb R^3_+)}, $$
$$ \|d\|_{X^{s+1}(\mathbb R^3_+)}\lesssim \|\zeta\|_{X^s(\mathbb R^3_+)}\|h\|_{X^{s+1}(\mathbb R^3_+)} + \|\theta_1\|_{X^{s+1}(\mathbb R^3_+)}\|\theta_2\|_{X^{s+1}(\mathbb R^3_+)}\|\theta_3\|_{X^{s+1}(\mathbb R^3_+)}. $$
The rest can be done as before: for the estimate in $X^s_0(\mathbb R^3_+)$ we can repeat the argument for Lemma \ref{l.dec.2.RN}. On the other hand,
$$ \partial_je^{\Delta_Nt}=\left\{\begin{array}{ll}
    e^{\Delta_Nt}\partial_j & j=1,2 \\
    e^{\Delta_Dt}\partial_3 & j=3.
\end{array}\right.$$
So we can repeat again the previous argument in order to get the estimate in $X^{s+1}_1(\mathbb R^3_+)$.
\end{proof}

\subsection{Decay Estimate in Exterior Domains}

For the exterior domains, as it can be expected, we have weaker results. As we saw in the previous section, we got the decay in time using properly the semigroup estimates. In exterior domains it is hard to treat derivatives greater than two for semigroups and therefore, the nonlinearity ${\rm Div}(\nabla d\odot \nabla d)$ is complicated to be dealt with. For this reason, we consider weaker spaces:

\begin{defn}\label{def.T.}
    Let $s\ge 0$, then we define
    $$ \Theta^s_u(\Omega)\coloneqq \left\{w\in X^s(\Omega)\mid \|w\|_{\Theta^s_u(\Omega)}<+\infty\right\}, $$
    $$ \Theta^s_d(\Omega)\coloneqq \left\{h\in X^{s+1}(\Omega)\mid \|h\|_{\Theta^s_d(\Omega)}<+\infty\right\}, $$
    where 
    $$ \|w\|_{\Theta^s_u(\Omega)}\coloneqq \|w\|_{X^s(\Omega)} + \sup_{t>0} t^{\frac{1}{2}}\|w(t)\|_{L^\infty(\Omega)}, $$
    $$ \|h\|_{\Theta^s_d(\Omega)}\coloneqq \|h\|_{X^{s+1}(\Omega)} + \sup_{t\le 2} t^{\frac{1}{2}}\|\nabla h(t)\|_{L^\infty(\Omega)} + \sup_{t\ge 1}t^{\frac{N}{2}+\frac{1}{2}}\|\nabla h(t)\|_{L^\infty(\Omega)}. $$
\end{defn}
As the name suggests, the space $\Theta^s_u(\Omega)$ is the one devoted for the velocity field $u$, while $\Theta^s_d(\Omega)$ is devoted to the direction field $d$. As before, we start our estimates from the homogeneous terms:
\begin{lem}\label{l.dec.ex.1}
    Let $N\ge 3$, $\Omega\subseteq\mathbb R^N$ exterior domain with sufficiently smooth boundary, let $s\in[0,1]$ then
    $$ \left\|e^{\mathbb P\Delta_Dt}u_0\right\|_{\Theta^s_u(\Omega)} + \left\|e^{\Delta_N t}d_0\right\|_{\Theta^s_d(\Omega)}\le C(\Omega,s)\left[\|u_0\|_{H^s(\Omega)} + \|d_0\|_{H^{s+1}\cap L^1\cap L^\infty(\Omega)}\right]. $$
\end{lem}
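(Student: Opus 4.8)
The plan is to estimate the Stokes flow $e^{\mathbb P\Delta_Dt}u_0$ and the Neumann heat flow $e^{\Delta_Nt}d_0$ separately, and for each of them to bound the $X$-part and then the weighted $L^\infty$-parts of the relevant $\Theta$-norm in turn. For the $X^s(\Omega)$ and $X^{s+1}(\Omega)$ components there is essentially nothing new: since $u_0\in H^s_{\mathbb P\Delta_D}(\Omega)\subseteq J_2(\Omega)$ one has $\mathbb Pu_0=u_0$, and applying the homogeneous smoothing estimate \eqref{sm.es.hom.3} of Lemma \ref{l.sm.es.hom.2} (with $A=\mathbb P\Delta_D$, resp.\ $A=\Delta_N$) together with the norm equivalences of Lemma \ref{l.res.eq.} (applicable because $s,\,s+1\in[0,2]$) gives $\|e^{\mathbb P\Delta_Dt}u_0\|_{X^s(\Omega)}\lesssim\|u_0\|_{H^s\cap L^1(\Omega)}$ and $\|e^{\Delta_Nt}d_0\|_{X^{s+1}(\Omega)}\lesssim\|d_0\|_{H^{s+1}\cap L^1(\Omega)}$; equivalently one may simply invoke Theorem \ref{t.lin.ex.dom.} with vanishing forcing terms.

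For the decay of $u$ I would use the $L^p$--$L^\infty$ bound for the Stokes semigroup from Theorem \ref{t.sem.es.0} with the exponent $p=N$ (admissible, since $N\in(1,\infty)$), which produces exactly the rate $t^{-N/(2N)}=t^{-1/2}$:
\[
\|e^{\mathbb P\Delta_Dt}u_0\|_{L^\infty(\Omega)}\le C\,t^{-1/2}\,\|u_0\|_{L^N(\Omega)}\qquad(t>0).
\]
Combined with the Sobolev embedding $H^s(\Omega)\hookrightarrow L^N(\Omega)$, valid in the relevant regime $s\ge \tfrac N2-1$ (in particular for $N=3$ and $s\in(\tfrac12,1]$), this yields $\sup_{t>0}t^{1/2}\|e^{\mathbb P\Delta_Dt}u_0\|_{L^\infty(\Omega)}\lesssim\|u_0\|_{L^N(\Omega)}\lesssim\|u_0\|_{H^s(\Omega)}$. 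The point is to use the \emph{slow} $L^N$--$L^\infty$ rate, and not the faster $L^1$--$L^\infty$ one, so that the weight $t^{1/2}$ precisely cancels the singularity at $t=0$ rather than merely rendering it integrable.

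For the decay of $\nabla d$ I would feed the gradient estimate for the Neumann heat semigroup, Theorem \ref{t.der.sem.}, namely $\|\nabla e^{\Delta_Nt}f\|_{L^\infty(\Omega)}\le C\,t^{-1/2-N/(2p)}\|f\|_{L^p(\Omega)}$, with two choices of $p$. On $0<t\le 2$ take $p=\infty$, so $\|\nabla e^{\Delta_Nt}d_0\|_{L^\infty(\Omega)}\le C\,t^{-1/2}\|d_0\|_{L^\infty(\Omega)}$ and hence $\sup_{t\le 2}t^{1/2}\|\nabla e^{\Delta_Nt}d_0\|_{L^\infty(\Omega)}\lesssim\|d_0\|_{L^\infty(\Omega)}$; on $t\ge 1$ take $p=1$, so $\|\nabla e^{\Delta_Nt}d_0\|_{L^\infty(\Omega)}\le C\,t^{-1/2-N/2}\|d_0\|_{L^1(\Omega)}$ and hence $\sup_{t\ge 1}t^{N/2+1/2}\|\nabla e^{\Delta_Nt}d_0\|_{L^\infty(\Omega)}\lesssim\|d_0\|_{L^1(\Omega)}$. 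Adding up the four contributions gives the asserted inequality.

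I expect no serious obstacle here: the lemma is essentially a matching of the already-established semigroup bounds of Section \ref{subsec.sem.est.} against the four seminorms defining $\Theta^s_u(\Omega)$ and $\Theta^s_d(\Omega)$. The only point requiring genuine attention is the selection of the Lebesgue exponents so that the singularities at $t=0$ are compensated by the regularity actually available on the data — this is exactly why $\|d_0\|_{L^\infty(\Omega)}$ must appear on the right-hand side (it is not controlled by $\|d_0\|_{H^{s+1}(\Omega)}$ when $s\le\tfrac N2-1$), and why one has to keep the $L^1$ integrability of the data in order to close the $L^2(\mathbb R_+;H^{s+1})$ factor of the $X$-norms over the unbounded time interval.
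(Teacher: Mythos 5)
Your proof is correct and follows exactly the route the paper intends: the paper's own proof of Lemma \ref{l.dec.ex.1} is a one-line reference to the argument of Lemma \ref{l.dec.1.RN} combined with Theorem \ref{t.der.sem.}, and your exponent choices ($p=N$, $q=\infty$ in Theorem \ref{t.sem.es.0} for the Stokes flow, giving the exact rate $t^{-1/2}$ uniformly in $t$; $p=\infty$ on $t\le 2$ and $p=1$ on $t\ge 1$ in Theorem \ref{t.der.sem.} for $\nabla e^{\Delta_Nt}d_0$) are precisely the right ones. Two remarks on hypotheses rather than on your argument: your bound for the $X^s(\Omega)$-component yields $\|u_0\|_{H^s\cap L^1(\Omega)}$ on the right, whereas the lemma as printed omits $L^1$ from the $u_0$-norm — but the $L^2(\mathbb R_+;L^2(\Omega))$ part of the $X^s$-norm genuinely requires $u_0\in L^1(\Omega)$ via \eqref{sm.es.hom.3}, so the printed statement appears to carry a typo and your version (consistent with the hypotheses of Theorem \ref{t.gl.ex.dom.}, where the lemma is applied) is the correct one; similarly, the embedding $H^s(\Omega)\hookrightarrow L^N(\Omega)$ you invoke for the $t^{1/2}\|u(t)\|_{L^\infty}$ bound requires $s\ge\frac{N}{2}-1$, which is not implied by the stated hypothesis $s\in[0,1]$ for general $N\ge 3$, but does hold in the only regime where the lemma is used ($N=3$, $s\in\left(\frac{1}{2},1\right]$), and you correctly flag this restriction.
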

The proof can be done as in Lemma \ref{l.dec.1.RN} with the help of Theorem \ref{t.der.sem.}. Let us pass to the inhomogeneous terms:
\begin{lem}\label{l.dec.ex.2}
    Let $\Omega\subseteq\mathbb R^N$ exterior domain with sufficiently smooth boundary, let $s>\frac{N}{2}-1$ and let $z,w\in\Theta^s_u(\Omega)$ and $\theta,h\in \Theta^s_d(\Omega)$, then
    $$ \left\|\int_0^te^{\mathbb P\Delta_Dt}\mathbb Pw(\tau)\nabla z(\tau)d\tau\right\|_{\Theta^s_u(\Omega)}\le C(\Omega,s) \|z\|_{\Theta^s_u(\Omega)}\|w\|_{\Theta^s_u(\Omega)}, $$
    $$ \left\|\int_0^te^{\mathbb P\Delta_Dt}\mathbb P\nabla \theta(\tau)\nabla^2h(\tau)d\tau\right\|_{\Theta^s_u(\Omega)}\le C(\Omega,s) \|z\|_{\Theta^s_u(\Omega)}\|w\|_{\Theta^s_u(\Omega)}. $$
\end{lem}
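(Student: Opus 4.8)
The function
\[
U(t):=\int_0^t e^{\mathbb P\Delta_D(t-\tau)}\mathbb P\big(w(\tau)\nabla z(\tau)+\nabla\theta(\tau)\nabla^2 h(\tau)\big)\,d\tau
\]
is by construction the mild solution of the inhomogeneous Stokes system on $\mathbb R_+\times\Omega$ with zero initial datum and forcing $\mathbb P f$, where $f:=w\nabla z+\nabla\theta\nabla^2 h$, so it suffices to bound $\|U\|_{X^s(\Omega)}$ and $\sup_{t>0}t^{1/2}\|U(t)\|_{L^\infty(\Omega)}$ separately. For the first, note that $\nabla\theta,\nabla h\in X^s(\Omega)$ because $\theta,h\in X^{s+1}(\Omega)$, so $f$ is a sum of two terms of the form $(\text{element of }X^s)\cdot\nabla(\text{element of }X^s)$; then \eqref{bil.e.5} and \eqref{bil.e.7} control $\|f\|_{L^1(\mathbb R_+;L^1(\Omega))}+\|f\|_{L^{2/(2-s)}(\mathbb R_+;L^2(\Omega))}$, and Lemma~\ref{l.sm.es.inh.2} with $A=\mathbb P\Delta_D$ (equivalently Theorem~\ref{t.lin.ex.dom.}) gives $\|U\|_{X^s(\Omega)}\lesssim\|w\|_{\Theta^s_u}\|z\|_{\Theta^s_u}+\|\theta\|_{\Theta^s_d}\|h\|_{\Theta^s_d}$.

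For the $L^\infty$ decay I would first reduce to a single bilinear inequality. From the definitions, $h\in\Theta^s_d(\Omega)$ implies $\nabla h\in\Theta^s_u(\Omega)$ with $\|\nabla h\|_{\Theta^s_u}\lesssim\|h\|_{\Theta^s_d}$ (the $X^s$-part is immediate, and for $t\ge1$ the decay $\|\nabla h(t)\|_{L^\infty}\lesssim t^{-N/2-1/2}$ dominates the required $t^{-1/2}$); likewise for $\nabla\theta$, and $\nabla^2 h=\nabla(\nabla h)$. Hence it is enough to show, for $a,b\in\Theta^s_u(\Omega)$,
\[
\sup_{t>0}t^{1/2}\Big\|\int_0^t e^{\mathbb P\Delta_D(t-\tau)}\mathbb P\big(a(\tau)\nabla b(\tau)\big)\,d\tau\Big\|_{L^\infty(\Omega)}\le C(\Omega,s)\,\|a\|_{\Theta^s_u(\Omega)}\|b\|_{\Theta^s_u(\Omega)};
\]
call the integral $V(t)$. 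Throughout I would use that $s>\tfrac N2-1$ forces the Sobolev exponent $q_0:=\tfrac{2N}{N-2s}$ of $H^s(\Omega)$ to satisfy $q_0>N$, that $H^{s+1}(\Omega)\hookrightarrow L^\infty(\Omega)$ so $\|a(\tau)\|_{L^\infty}\lesssim\|a(\tau)\|_{H^{s+1}}$ (in $L^2_\tau$) and $\|a\nabla b(\tau)\|_{L^2}\lesssim\|a(\tau)\|_{H^{s+1}}\|b(\tau)\|_{H^{s+1}}$ (in $L^1_\tau$), the complementary pointwise bound $\|a(\tau)\|_{L^\infty}\lesssim\tau^{-1/2}\|a\|_{\Theta^s_u}$, and $\nabla b\in L^\infty_\tau L^{q_{-1}}\cap L^2_\tau L^{q_0}$ with $q_{-1}:=\tfrac{2N}{N+2-2s}$.

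For $0<t\le2$ I would bound, via Theorem~\ref{t.sem.es.0} with an exponent $m>N/2$ and Hölder in space, $\|V(t)\|_{L^\infty}\lesssim\int_0^t(t-\tau)^{-N/(2m)}\|a(\tau)\|_{L^\infty}\|\nabla b(\tau)\|_{L^m}d\tau$; interpolating $\nabla b\in L^\infty_\tau L^{q_{-1}}$ with $\nabla b\in L^2_\tau L^{q_0}$ gives $\nabla b\in L^\rho_\tau L^m$ for a one-parameter family with $\rho>2$ and $m>N/2$ (here $q_0>N$ is used), so Hölder in $\tau$ and Lemma~\ref{l.tec.int.1} produce a bound $C\,t^{\sigma}\|a\|_{\Theta^s_u}\|\nabla b\|_{L^\rho_\tau L^m}$ with $\sigma>-\tfrac12$, i.e. $t^{1/2}\|V(t)\|_{L^\infty}\lesssim1$ on $(0,2]$. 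For $t\ge2$ I would split $V(t)=\int_0^{t-1}+\int_{t-1}^t$. On $(t-1,t)$ one has $\tau\sim t$, so $\|a(\tau)\|_{L^\infty}\lesssim t^{-1/2}$; the $L^{q_0}\!\to\!L^\infty$ estimate, Cauchy–Schwarz with $\|\nabla b\|_{L^{q_0}}\lesssim\|b\|_{H^{s+1}}\in L^2_\tau$, and $\int_0^1\sigma^{-N/q_0}d\sigma<\infty$ (again $q_0>N$) bound this by $t^{-1/2}\|a\|_{\Theta^s_u}\|b\|_{X^s}$. On $(0,t-1)$ one has $t-\tau\ge1$, so the $L^2\!\to\!L^\infty$ estimate reduces matters to $\int_0^{t-1}(t-\tau)^{-N/4}\|a\nabla b(\tau)\|_{L^2}d\tau$; splitting at $\tau=t/2$, on $(0,t/2)$ use $(t-\tau)^{-N/4}\lesssim t^{-N/4}$ and $\|a\nabla b\|_{L^1_\tau L^2_x}\lesssim\|a\|_{X^s}\|b\|_{X^s}$ to get $\lesssim t^{-N/4}\lesssim t^{-1/2}$ ($N\ge3$), and on $(t/2,t-1)$ use $\|a(\tau)\|_{L^\infty}\lesssim\tau^{-1/2}\sim t^{-1/2}$, Cauchy–Schwarz with $\|b\|_{H^{s+1}}\in L^2_\tau$, and the convergent $\int_{t/2}^{t-1}(t-\tau)^{-N/2}d\tau\le\int_1^\infty\sigma^{-N/2}d\sigma<\infty$ (again $N\ge3$) to get $\lesssim t^{-1/2}\|a\|_{\Theta^s_u}\|b\|_{X^s}$. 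Summing the three pieces finishes the estimate.

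The main obstacle is that, unlike in $\mathbb R^N$ (cf. Lemma~\ref{l.dec.2.RN}), the Stokes semigroup on an exterior domain has no $L^1\!\to\!L^\infty$ smoothing and no handy gradient estimate, so the derivative $\nabla b$ cannot be moved onto the semigroup and must be absorbed into an $L^2$-in-time norm. Forcing all three contributions above to decay like $t^{-1/2}$ then requires careful bookkeeping of Lebesgue exponents in space and time, and every integrability threshold and every exponent of $t$ ends up hinging on the two inequalities $q_0=\tfrac{2N}{N-2s}>N$ (that is, $s>\tfrac N2-1$) and $N\ge3$; the tight regimes are $\tau$ close to $t$ when $t$ is small and $t/2<\tau<t-1$ when $t$ is large.
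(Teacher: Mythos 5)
Your proposal is correct in substance and follows the same skeleton as the paper's argument: the reduction to a single bilinear estimate for $a\nabla b$ with $a,b\in\Theta^s_u(\Omega)$ via $\|\nabla h\|_{\Theta^s_u(\Omega)}\lesssim\|h\|_{\Theta^s_d(\Omega)}$, the $X^s(\Omega)$ bound via Theorem~\ref{t.lin.ex.dom.} together with \eqref{bil.e.5} and \eqref{bil.e.7}, and the basic mechanism for the pointwise decay, namely extracting $\tau^{-1/2}$ from $\|a(\tau)\|_{L^\infty(\Omega)}$, placing $\nabla b(\tau)$ in a Lebesgue space through Sobolev embedding and interpolation between the $L^\infty_t H^s$ and $L^2_t H^{s+1}$ components of the $X^s$ norm, and invoking the $L^p\to L^\infty$ smoothing of the Stokes semigroup from Theorem~\ref{t.sem.es.0} (everything hinging on $q_0=\tfrac{2N}{N-2s}>N$, i.e.\ $s>\tfrac N2-1$). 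Where you genuinely diverge is the handling of the time integral: the paper runs a single computation valid for all $t>0$, bounding the integral by $\int_0^t(t-\tau)^{-1/2}\tau^{-1/2}\|\nabla z(\tau)\|_{L^N(\Omega)}d\tau$, applying H\"older with exponents $\tfrac{2}{1\pm\varepsilon}$ and Lemma~\ref{l.tec.int.1}, and then letting $\varepsilon\to0^+$; you instead split $t\le2$ from $t\ge2$ and decompose the large-time integral at $t/2$ and $t-1$ with different $L^p\to L^\infty$ exponents on each piece, in the style of Lemmas~\ref{l.dec.2.RN} and~\ref{l.dec.ex.3}. Your splitting is, if anything, the more robust route for large $t$, since the constant in Lemma~\ref{l.tec.int.1} degenerates as the exponents approach $1$ and the paper's $\varepsilon\to0^+$ limit has to be read with some care there.

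One technical slip should be repaired. On an unbounded domain the endpoint claim $\nabla b\in L^\infty_\tau L^{q_{-1}}$ with $q_{-1}=\tfrac{2N}{N+2-2s}<2$ is false for $s<1$: from $b\in L^\infty_\tau H^s$ you only get $\nabla b(\tau)\in H^{s-1}(\Omega)$, and a negative-order Sobolev space does not embed into $L^q$ with $q<2$ (that would force $L^2(\Omega)\hookrightarrow L^q(\Omega)$). This does not break the proof, because the only spaces you actually use are $L^\rho_\tau L^m$ with $\rho$ slightly above $2$ and $m$ slightly below $q_0$, and these are reached by interpolating in the Sobolev scale instead of the Lebesgue scale: choose $r\in(\max\{1,s\},s+1)$ with $H^{r-1}(\Omega)\hookrightarrow L^m(\Omega)$, so that
\[
\|\nabla b(\tau)\|_{L^m(\Omega)}\lesssim\|b(\tau)\|_{H^r(\Omega)}\lesssim\|b(\tau)\|_{H^s(\Omega)}^{s+1-r}\|b(\tau)\|_{H^{s+1}(\Omega)}^{r-s}\in L^{2/(r-s)}_\tau ,
\]
which is exactly the interpolation the paper performs with its exponent $r_\varepsilon$. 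With that substitution your small-time estimate, and hence the whole proof, goes through.
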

\begin{proof}\hfill\\
    Since $\nabla h,\nabla \theta\in \Theta^s_u(\Omega)$ by definition of $\Theta^s_d(\Omega)$, we give the proof just for the first estimate: the estimate in the $X^s(\Omega)$-norm can be done as before by Theorem \ref{t.lin.ex.dom.}. Let us focus then on the estimate in $L^\infty$ in time and space: 
    $$ \left\|\int_0^t e^{\mathbb P\Delta_D t}\mathbb Pw(\tau)\nabla z(\tau)d\tau\right\|_{L^\infty(\Omega)}\lesssim \int_0^t (t-\tau)^{-\frac{1}{2}}\|w(\tau)\nabla z(\tau)\|_{L^N(\Omega)}d\tau\le $$
    $$ \le \|w\|_{\Theta^s_u(\Omega)}\int_0^t (t-\tau)^{-\frac{1}{2}}\tau^{-\frac{1}{2}}\|\nabla z(\tau)\|_{L^N(\Omega)}d\tau. $$
    Since $s>\frac{N}{2}-1$, we can find $r_0\in(s,s+1)$ such that $L^N(\Omega)\hookrightarrow H^{r_0}(\Omega)$. In particular
    $$ \|\nabla z(\tau)\|_{L^N(\Omega)}\lesssim \|z(\tau)\|_{H^{r_0}(\Omega)}. $$
    On the other hand, by interpolation for any $r\in(s,s+1)$ it holds
    $$ \|z(\tau)\|_{H^r(\Omega)}\lesssim \|z(\tau)\|_{H^s(\Omega)}^{s+1-r}\|z(\tau)\|_{H^{s+1}(\Omega)}^{r-s}. $$
    So, for any $\varepsilon\in(0,1)$ sufficiently small, we can take $r_\varepsilon\in[r_0,s+1)$ sufficiently near to $s+1$ such that
    $$ \|z(\tau)\|_{H^{r_0}(\Omega)}\le \|z(\tau)\|_{H^{r_\varepsilon}(\Omega)}\lesssim \|z(\tau)\|_{H^s(\Omega)}^{\varepsilon}\|z(\tau)\|_{H^{s+1}(\Omega)}^{1-\varepsilon}. $$
    Therefore, by Lemma \ref{l.tec.int.1}, for any $\varepsilon>0$ sufficiently small we get
    $$ \|w\|_{\Theta^s_u(\Omega)}\int_0^t (t-\tau)^{-\frac{1}{2}}\tau^{-\frac{1}{2}}\|\nabla z(\tau)\|_{L^N(\Omega)}d\tau\lesssim $$
    $$ \lesssim \|z\|_{\Theta^s_u(\Omega)}\|w\|_{\Theta^s_u(\Omega)} \left(\int_0^t (t-\tau)^{-\frac{1}{1+\varepsilon}}\tau^{-\frac{1}{1+\varepsilon}}d\tau\right)^\frac{1+\varepsilon}{2}\lesssim t^{-1+\frac{1+\varepsilon}{2}}\|z\|_{\Theta^s_u(\Omega)}\|w\|_{\Theta^s_u(\Omega)}. 
    $$
    Finally, since the estimate holds for any $\varepsilon\in(0,1)$ sufficiently small, we can take the limit as $\varepsilon\to0^+$ to conclude.
\end{proof}
\begin{lem}\label{l.dec.ex.3}
    Let $\Omega\subseteq\mathbb R^N$ exterior domain with sufficiently smooth boundary, let $s>\frac{N}{2}-1$ and let $z\in\Theta^s_u(\Omega)$ and $h,\theta_1,\theta_2,\theta_3\in \Theta^s_d(\Omega)$, then
    $$ \left\|\int_0^te^{\Delta_Nt}z(\tau)\nabla h(\tau)d\tau\right\|_{\Theta^s_d(\Omega)}\le C(\Omega,s)\|z\|_{\Theta^s_u(\Omega)}\|h\|_{\Theta^s_d(\Omega)}, $$
    $$ \left\|\int_0^te^{\Delta_Nt}\nabla \theta_1(\tau)\nabla \theta_2(\tau)\theta_3(\tau)d\tau\right\|_{\Theta^s_d(\Omega)}\le C(\Omega,s)  \|\theta_1\|_{\Theta^s_d(\Omega)}\|\theta_2\|_{\Theta^s_d(\Omega)}\|\theta_3\|_{\Theta^s_d(\Omega)}. $$
\end{lem}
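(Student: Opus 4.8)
The plan is to work with
$$ d(t)=\int_0^t e^{\Delta_N(t-\tau)}\bigl(z(\tau)\nabla h(\tau)+\nabla\theta_1(\tau)\nabla\theta_2(\tau)\theta_3(\tau)\bigr)\,d\tau, $$
which solves the heat equation with homogeneous Neumann data, zero initial datum and right-hand side $g\coloneqq z\nabla h+\nabla\theta_1\nabla\theta_2\theta_3$, and to estimate separately the two pieces of the norm $\|d\|_{\Theta^s_d(\Omega)}$: the $X^{s+1}(\Omega)$ part and the weighted $L^\infty$-in-space bounds on $\nabla d$. For the $X^{s+1}(\Omega)$ part I would invoke Theorem \ref{t.lin.ex.dom.} for the heat component alone; since $h,\theta_3\in X^{s+1}(\Omega)$ gives $\nabla h,\nabla\theta_1,\nabla\theta_2\in X^s(\Omega)$, the bilinear estimates \eqref{bil.e.5} and \eqref{bil.e.8} applied to $z\cdot\nabla h$ and the trilinear estimates \eqref{tril.e.3} and \eqref{tril.e.4} applied to $\nabla\theta_1\cdot\nabla\theta_2\cdot\theta_3$ yield $\|g\|_{L^1(\mathbb R_+;L^1)}+\|g\|_{L^2(\mathbb R_+;H^s)}\lesssim \|z\|_{\Theta^s_u}\|h\|_{\Theta^s_d}+\|\theta_1\|_{\Theta^s_d}\|\theta_2\|_{\Theta^s_d}\|\theta_3\|_{\Theta^s_d}$, hence the $X^{s+1}(\Omega)$ bound.

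For the short-time weighted bound $\sup_{t\le 2}t^{1/2}\|\nabla d(t)\|_{L^\infty}$ I would follow the argument of Lemma \ref{l.dec.ex.2}. Since $s>\frac N2-1$ one can fix $p>N$ with $H^s(\Omega)\hookrightarrow L^p(\Omega)$; by Theorem \ref{t.der.sem.},
$$ \|\nabla d(t)\|_{L^\infty(\Omega)}\lesssim \int_0^t (t-\tau)^{-\frac12-\frac N{2p}}\|g(\tau)\|_{L^p(\Omega)}\,d\tau. $$
One bounds $\|z(\tau)\nabla h(\tau)\|_{L^p}\le\|z(\tau)\|_{L^\infty}\|\nabla h(\tau)\|_{L^p}\lesssim \tau^{-1/2}\|z\|_{\Theta^s_u}\|h\|_{\Theta^s_d}$, using the uniform-in-time control $\|\nabla h(\tau)\|_{L^p}\lesssim\|\nabla h(\tau)\|_{H^s}\le\|h(\tau)\|_{H^{s+1}}\lesssim\|h\|_{\Theta^s_d}$ rather than the decay of $\nabla h$; the term $\nabla\theta_1\nabla\theta_2\theta_3$ is handled the same way, with $\theta_3\in L^\infty(\mathbb R_+;L^\infty(\Omega))$ coming from $H^{s+1}(\Omega)\hookrightarrow L^\infty(\Omega)$. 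Then Lemma \ref{l.tec.int.1} (with $\alpha_1=\frac12+\frac N{2p}<1$, $\alpha_2=\frac12$) gives $\|\nabla d(t)\|_{L^\infty}\lesssim t^{-\frac N{2p}}\lesssim t^{-1/2}$ for $t\le 2$, since $\frac N{2p}<\frac12$.

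For the long-time decay $\sup_{t\ge 1}t^{\frac N2+\frac12}\|\nabla d(t)\|_{L^\infty}$ it suffices to treat $t\ge 2$ (on $[1,2]$ the previous step already gives boundedness) and to split $d(t)=\int_0^{t/2}+\int_{t/2}^t$. In the first integral $t-\tau\ge t/2$, so the $L^1$–$L^\infty$ gradient bound of Theorem \ref{t.der.sem.} with $p=1$ produces a factor $(t-\tau)^{-\frac12-\frac N2}\lesssim t^{-\frac12-\frac N2}$, while $\int_0^{t/2}\|g(\tau)\|_{L^1}\,d\tau\lesssim \|z\|_{L^2(L^2)}\|\nabla h\|_{L^2(L^2)}+\|\theta_3\|_{L^\infty(L^\infty)}\|\nabla\theta_1\|_{L^2(L^2)}\|\nabla\theta_2\|_{L^2(L^2)}$, all controlled by the $\Theta$-norms via the $X$-norms. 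In the second integral $\tau\ge t/2\ge1$, so I would insert the $L^\infty$-decay of $z,\nabla h,\nabla\theta_j$ available in the $\Theta$ spaces: $\|(z\nabla h)(\tau)\|_{L^\infty}\lesssim\tau^{-\frac12}\tau^{-\frac N2-\frac12}=\tau^{-\frac N2-1}$ and $\|(\nabla\theta_1\nabla\theta_2\theta_3)(\tau)\|_{L^\infty}\lesssim\tau^{-N-1}$, combined with $\|\nabla e^{\Delta_N(t-\tau)}f\|_{L^\infty}\lesssim(t-\tau)^{-1/2}\|f\|_{L^\infty}$ (Theorem \ref{t.der.sem.} with $p=\infty$); since $\tau^{-\frac N2-1}\lesssim t^{-\frac N2-1}$ on $(t/2,t)$ and $\int_{t/2}^t(t-\tau)^{-1/2}\,d\tau\lesssim t^{1/2}$, this contributes $t^{-\frac N2-\frac12}$ (and even faster decay for the trilinear term). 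Collecting all contributions gives the claimed estimates.

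The main obstacle, and the only genuinely delicate point, is the short-time estimate: the product $z\nabla h$ is borderline $\tau^{-1}$-singular near $\tau=0$ if one uses only $L^\infty\times L^\infty$ bounds, and $\tau^{-1}$ is not integrable against any gradient-of-heat-kernel weight. The fix is to refrain from using the decay of $\nabla h$ there and exploit instead its uniform $H^s$ bound (which holds because $h\in X^{s+1}$), paired with an $L^p$-estimate for some $p$ strictly larger than $N$, so that the kernel singularity $(t-\tau)^{-1/2-N/(2p)}$ remains integrable. This is exactly where the hypothesis $s>\frac N2-1$ is used, via the existence of such a $p$ with $H^s\hookrightarrow L^p$ and $p>N$, and it is the same mechanism already exploited in Lemma \ref{l.dec.ex.2}; the bookkeeping for $\nabla\theta_1\nabla\theta_2\theta_3$ is entirely analogous, using $\theta_3\in L^\infty(\mathbb R_+;L^\infty(\Omega))$.
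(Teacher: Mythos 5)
Your proposal is correct and follows essentially the same route as the paper: the $X^{s+1}(\Omega)$ part via Theorem \ref{t.lin.ex.dom.} with the multilinear estimates \eqref{bil.e.5}, \eqref{bil.e.8}, \eqref{tril.e.3}, \eqref{tril.e.4}, and the weighted $L^\infty$ bounds via the splitting $\int_0^{t/2}+\int_{t/2}^t$ with Theorem \ref{t.der.sem.} at $p=1$ and $p=\infty$ for large times. The only (harmless) divergence is in the short-time bound, where the paper refers back to the $\varepsilon$-interpolation scheme of Lemma \ref{l.dec.ex.2} while you fix a single $p>N$ with $H^s(\Omega)\hookrightarrow L^p(\Omega)$ and use the uniform $L^\infty_tH^{s+1}$ control of $h$; your version is if anything the cleaner way to keep the kernel singularity $(t-\tau)^{-\frac12-\frac{N}{2p}}$ integrable once the gradient is present.
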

\begin{proof}\hfill\\
The estimate in the $X^{s+1}(\Omega)$-norm can be done as before by Theorem \ref{t.lin.ex.dom.}. For the $L^\infty$ estimates, the boundedness in $t\le 2$ can be done as in Lemma \ref{l.dec.ex.2}. Let us prove now the decay for $t\ge2$ (the case $t\in[1,2]$ follows from the estimate near $t=0$). We consider just the first estimate, the second one can be done similarly using $\theta_3\in L^\infty(\mathbb R_+;L^\infty(\Omega))$. We split the integral in two pieces
$$ \int_0^t e^{\Delta_N(t-\tau)}z(\tau)\nabla h(\tau) d\tau= I_1 + I_2,  $$
where 
$$ I_1=\int_0^{t/2} e^{\Delta_N(t-\tau)}z(\tau)\nabla h(\tau) d\tau, $$
$$ I_2= \int_{t/2}^t e^{\Delta_N(t-\tau)}z(\tau)\nabla h(\tau) d\tau. $$
Then
$$ \|I_1\|_{L^\infty(\Omega)}\lesssim \int_0^{t/2}(t-\tau)^{-\frac{N}{2}}\|z(\tau)\nabla h(\tau)\|_{L^1(\Omega)}d\tau\lesssim $$
$$ \lesssim t^{-\frac{N}{2}}\int_0^{t/2} \|z(\tau)\|_{L^2(\Omega)}\|\nabla h(\tau)\|_{L^2(\Omega)} d\tau \le t^{-\frac{N}{2}}\|z\|_{\Theta^s_u(\Omega)}\|h\|_{\Theta^s_d(\Omega)}. $$
On the other hand
$$ \|I_2\|_{L^\infty(\Omega)} \lesssim \int_{t/2}^t (t-\tau)^{-\frac{1}{2}}\|w(\tau)\nabla z(\tau)\|_{L^N(\Omega)}d\tau. $$
We notice that
$$ \|z(\tau)\nabla h(\tau)\|_{L^N(\Omega)}\le \tau^{-\frac{N}{2}-\frac{1}{2}}\|h\|_{\Theta^s_d(\Omega)}\|z(\tau)\|_{L^N(\Omega)}\lesssim \tau^{-\frac{N}{2}-\frac{1}{2}}\|h\|_{\Theta^s_d(\Omega)}\|z\|_{\Theta^s_u(\Omega)}, $$
where we used that $L^N(\Omega)\hookrightarrow H^s(\Omega)$ for $s>\frac{N}{2}-1$. Therefore
$$ \|I_2\|_{L^\infty(\mathbb R^N)}\lesssim  t^{-\frac{N}{2}-\frac{1}{2}}\|z\|_{\Theta^s_u(\Omega)}\|h\|_{\Theta^s_d(\Omega)}\int_{t/2}^t (t-\tau)^{-\frac{1}{2}}d\tau \le  $$
$$ \le t^{-\frac{N}{2}}\|z\|_{\Theta^s_u(\Omega)}\|h\|_{\Theta^s_d(\Omega)}. $$
The estimate for the gradient is similar: 
$$ \|\nabla_xI_1\|_{L^\infty(\Omega)}\lesssim \int_0^{t/2}(t-\tau)^{-\frac{N}{2}-\frac{1}{2}}\|z(\tau)\nabla h(\tau)\|_{L^1(\Omega)}d\tau\lesssim $$
$$ \lesssim t^{-\frac{N}{2}-\frac{1}{2}}\int_0^{t/2} \|z(\tau)\|_{L^2(\Omega)}\|\nabla h(\tau)\|_{L^2(\Omega)} d\tau \le t^{-\frac{N}{2}-\frac{1}{2}}\|z\|_{\Theta^s_u(\Omega)}\|h\|_{\Theta^s_d(\Omega)}. $$
On the other hand
$$ \|\nabla_xI_2\|_{L^\infty(\Omega)} \lesssim \int_{t/2}^t (t-\tau)^{-\frac{1}{2}}\|z(\tau)\nabla h(\tau)\|_{L^\infty(\Omega)}d\tau\le  $$
$$ \le \|z\|_{\Theta^s_u(\Omega)}\|h\|_{\Theta^s_d(\Omega)}\int_{t/2}^t (t-\tau)^{-\frac{1}{2}}\tau^{-\frac{N}{2}-1}d\tau\lesssim t^{-\frac{N}{2}-\frac{1}{2}}\|z\|_{\Theta^s_u(\Omega)}\|h\|_{\Theta^s_d(\Omega)}. $$
\end{proof}

\subsection{Proofs of Theorems \ref{t.gl.ex.RN} and \ref{t.gl.ex.dom.}}

As for the local case, we need a Lemma which ensures that $|d_0+\eta|=1$ implies $|d(t)+\eta|=1$ for a.e. $t>0$:
\begin{lem}\label{l.un.gl.}
    Let $s> \frac{N}{2}-1$, $\eta\in \mathbb R^N$,
    $$ u_0\in H^s_{\mathbb P\Delta_D}\cap L^1\left(\Omega;\mathbb{R}^N\right),\quad d_0\in H^{s+1}_{\Delta_N}\cap L^1\left(\Omega;\mathbb R^N\right), $$
    with $|\eta+d_0|=1$, let $(u,d)\in X^s(\Omega)\times X^{s+1}(\Omega)$ be a solution for \eqref{sys.gl-red.}, then $|\eta+d(t)|=1$ for a.e. $t>0$.
\end{lem}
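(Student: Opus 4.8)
The proof runs exactly parallel to that of Lemma \ref{l.un.loc.}, with the global multilinear estimates replacing the local ones. I would begin by setting
$$ \varphi \coloneqq |\eta+d|^2-1 = 2\,\eta\cdot d + |d|^2 . $$
Because $s>\frac{N}{2}-1$ we have $s+1>\frac{N}{2}$, so both $H^{s+1}(\Omega)$ and $H^{s+2}(\Omega)$ are algebras and $H^{s+1}(\Omega)\hookrightarrow L^\infty(\Omega)$; hence $\||d(t)|^2\|_{H^{s+1}}\lesssim\|d(t)\|_{H^{s+1}}^2$ and $\||d(t)|^2\|_{H^{s+2}}\lesssim\|d(t)\|_{H^{s+1}}\|d(t)\|_{H^{s+2}}$, which shows the a priori membership $\varphi\in X^{s+1}(\Omega)$. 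Next I would differentiate, using $\nabla\varphi=2(d+\eta)\cdot\nabla d$ and $\Delta\varphi=2(d+\eta)\cdot\Delta d+2|\nabla d|^2$, and substitute the third equation of \eqref{sys.gl-red.}; exactly as in Lemma \ref{l.un.loc.} this gives, in the sense of distributions,
$$ (\partial_t-\Delta)\varphi=-u\cdot\nabla\varphi+2|\nabla d|^2\varphi \quad\text{in }\mathbb R_+\times\Omega,\qquad \partial_\nu\varphi=0 \quad\text{on }\mathbb R_+\times\partial\Omega,\qquad \varphi(0)=0 , $$
where one uses $2(d+\eta)\cdot(u\cdot\nabla d)=u\cdot\nabla|d+\eta|^2=u\cdot\nabla\varphi$, $2|\nabla d|^2(|d+\eta|^2-1)=2|\nabla d|^2\varphi$, and $\partial_\nu\varphi=2(d+\eta)\cdot\partial_\nu d=0$.

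The second step is to feed this into the linear theory. Applying the heat part of Theorem \ref{t.lin.ex.RN} (if $\Omega=\mathbb R^N$) or of Theorem \ref{t.lin.ex.dom.} (if $\Omega=\mathbb R^N_+$ or an exterior domain), with $T=\infty$, vanishing initial datum and $F=0$, yields
$$ \|\varphi\|_{X^{s+1}(\Omega)}\lesssim \big\|{-u\cdot\nabla\varphi+2|\nabla d|^2\varphi}\big\|_{L^1(\mathbb R_+;L^1(\Omega))} + \big\|{-u\cdot\nabla\varphi+2|\nabla d|^2\varphi}\big\|_{L^2(\mathbb R_+;H^s(\Omega))}. $$
I would then bound the convection term by \eqref{bil.e.5} (with $z=u$, $w=\varphi$) and by \eqref{bil.e.8} (with $z=u$, $w=\nabla\varphi$, legitimate since $\nabla\varphi\in X^s(\Omega)$ because $\varphi\in X^{s+1}(\Omega)$), and the cubic term by \eqref{tril.e.3} and \eqref{tril.e.4} (with $z=w=\nabla d$ and $h=\varphi$). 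This produces the closed inequality
$$ \|\varphi\|_{X^{s+1}(\Omega)}\le C(\Omega,s)\big(\|u\|_{X^s(\Omega)}+\|d\|_{X^{s+1}(\Omega)}^2\big)\,\|\varphi\|_{X^{s+1}(\Omega)}. $$

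Finally I would absorb the right-hand side. For the solutions constructed in Theorems \ref{t.gl.ex.RN} and \ref{t.gl.ex.dom.} the quantity $\|u\|_{X^s(\Omega)}+\|d\|_{X^{s+1}(\Omega)}$ is of size $\varepsilon$, so it can be arranged that $C(\Omega,s)(\|u\|_{X^s}+\|d\|_{X^{s+1}}^2)<1$, whence $\varphi\equiv 0$, i.e. $|\eta+d(t)|=1$ for a.e. $t>0$; if one prefers not to invoke smallness, one may instead observe that a solution of \eqref{sys.gl-red.} restricts, for every finite $T$, to a solution of \eqref{sys.loc-red.}, so that Lemma \ref{l.un.loc.} already gives $|\eta+d(t)|=1$ for a.e. $t\in(0,T)$ and it suffices to let $T\to\infty$. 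I do not expect a genuine obstacle here: the only real computation is the identification of the equation solved by $\varphi$, which is verbatim the local one, and the only point needing a little care is the a priori membership $\varphi\in X^{s+1}(\Omega)$, which is what makes the closed estimate meaningful and which follows from the algebra property of $H^{s+1}(\Omega)$.
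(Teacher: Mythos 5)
Your proposal is correct and follows essentially the same route as the paper, which simply declares the proof to be that of Lemma \ref{l.un.loc.} run with the global multilinear estimates \eqref{bil.e.5}--\eqref{tril.e.4}. The one point worth keeping is your fallback: since the global estimates carry no factor $T^\gamma$, the closed inequality $\|\varphi\|_{X^{s+1}(\Omega)}\le C(\Omega,s)\left(\|u\|_{X^s(\Omega)}+\|d\|_{X^{s+1}(\Omega)}^2\right)\|\varphi\|_{X^{s+1}(\Omega)}$ cannot be absorbed for a general solution in $X^s(\Omega)\times X^{s+1}(\Omega)$ (smallness is not a hypothesis of the lemma), so the clean way to conclude is exactly your second option --- restrict to $(0,T)$, apply Lemma \ref{l.un.loc.}, and let $T\to\infty$.
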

The proof is the same of the local case, with the help of estimates from \eqref{bil.e.5} to \eqref{tril.e.4} and Lemmas \ref{l.dec.2.RN} to \ref{l.dec.2.hs}.

We are now ready to prove the global existence and decay results. We give the details of Theorem \ref{t.gl.ex.RN}, the proof of Theorem \ref{t.gl.ex.dom.} is similar.
\begin{proof}[Proof of Theorem \ref{t.gl.ex.RN}]
We define the space
$$ Y\coloneqq \{(w,\theta)\in X^s_k(\mathbb R^N)\times X^{s+1}_{k+1}(\mathbb R^N)\mid w(t)\in J_2(\mathbb R^N)\:\:\text{for a.e.}\:\:t>0, \:\:\|(w,\theta)\|_{Y}<+\infty\},  $$
where $k\in\mathbb N$ is such that $s-k>\frac{N}{2}-1$ and
$$ \|(w,\theta)\|_{Y}\coloneqq\|w\|_{X^s_k(\mathbb R^N)}+\|\theta\|_{X^{s+1}_{k+1}(\mathbb R^N)}. $$
Now we define the map $\Phi\colon Y\to Y$ such that the function $\Phi(w,\theta)=(u,d)$ solves
\begin{equation}
    \left\{\begin{array}{ll}
    (\partial_t-\mathbb{P}\Delta)u=-\mathbb{P}(w\cdot \nabla w +{\rm Div}(\nabla \theta\odot\nabla \theta)) & \mathbb R_+\times\mathbb R^N \\
    {\rm div}u=0 & \mathbb R_+\times \mathbb R^N \\
    (\partial_t-\Delta)d=-w\cdot \nabla \theta+|\nabla \theta|^2(\eta+\theta) & \mathbb R_+\times\mathbb R^N \\
    u(0)=u_0,\quad d(0)=d_0 & \mathbb R^N,
    \end{array}\right.
\end{equation}
where $d_0\coloneqq v_0-\eta$. Again, we want to prove that $\Phi$ is a contraction argument in order to prove the existence of a fixed point. 

\textbf{Step 1}: From estimates \eqref{bil.e.5} to \eqref{tril.e.4} and from Lemma \ref{l.dec.1.RN} to Lemma \ref{l.dec.3.RN} we get
\begin{equation}\label{nl.gl.1}
\begin{aligned}
   & \|\Phi(w,\theta)\|_{Y}\lesssim \\
 \lesssim  \|u_0\|_{H^s\cap L^1(\mathbb R^N)}+ & \|d_0\|_{H^{s+1}\cap L^1(\mathbb R^N)} + \|(w,\theta)\|_{Y}^2+\|(w,\theta)\|_{Y}^3,
\end{aligned}
\end{equation}
\begin{equation}\label{nl.gl.2}
\begin{aligned}
    & \|\Phi(w_1,\theta_1)-\Phi(w_2,\theta_2)\|_{Y}\lesssim \\
    \lesssim & \left(\|(w_1,\theta_1)\|_Y + \|(w_2,\theta_2)\|_Y + \|(w_1,\theta_1)\|_{Y}^2+\|(w_2,\theta_2)\|_{Y}^2\right)\|(w_1,\theta_1)-(w_2,\theta_2)\|_{Y}.
\end{aligned}
\end{equation}

\textbf{Step 2}: Let $\varepsilon>0$, so we define the space
$$ Z_\varepsilon\coloneqq \{(w,\theta)\in Y\mid w(0)=u_0,\quad \theta(0)=d_0 ,\quad \|(w,\theta)\|_{Y}\le 2C\varepsilon\}, $$
where $C>0$ comes from the estimate \eqref{nl.gl.1}: 
$$ \|\Phi(w,\theta)\|_{Y}\le C\left[\|u_0\|_{H^s\cap L^1(\mathbb R^N)}+\|d_0\|_{H^{s+1}\cap L^1(\mathbb R^N)} + \varepsilon^2+\varepsilon^3\right]\le $$
$$ \le C\varepsilon\left[1 + \varepsilon^2 + \varepsilon^3\right]. $$
We notice that, if we take $\varepsilon>0$ such that 
$$ 1+\varepsilon^2 + \varepsilon^3\le 2, $$
then $\Phi\colon Z_\varepsilon\to Z_\varepsilon$. Moreover, $\Phi$ is a contraction if we choose $\varepsilon>0$ sufficiently small: thanks to \eqref{nl.gl.2} there is $M>0$ such that, for any $(w_1,\theta_1),(w_2,\theta_2)\in Z_\omega$, it holds 
$$ \|\Phi(w_1,\theta_1)-\Phi(w_2,\theta_2)\|_{Y}\le 2MC\varepsilon(1+2C\varepsilon) \|(w_1,\theta_1)-(w_2,\theta_2)\|_{Y}, $$
so we can take $\varepsilon\le [2MC(1+2C\varepsilon)]^{-1}$ in order to have a contraction. Therefore, we get a solution for the system \eqref{EL.sys-proj.}.

\vspace{2mm}

\textbf{Step 3:} The solution we found is unique in $X^s(\mathbb R^N)\times X^{s+1}(\mathbb R^N)$: let $(u_1,d_1), (u_2,d_2)\in X^s(\mathbb R^N)\times X^{s+1}(\mathbb R^N)$ and let 
$$ R_1=\|(u_1,d_1)\|_{X^s(\mathbb R^N)\times X^{s+1}(\mathbb R^N)},\quad R_2=\|(u_2,d_2)\|_{X^s(\mathbb R^N)\times X^{s+1}(\mathbb R^N)}. $$
As we did in the local case, similarly to the proof of the estimate \eqref{nl.gl.2}, we can find $T_0=T_0(R_1,R_2)$ such that 
$$ u_1(t,x)=u_2(t,x),\quad d_1(t,x)=d_2(t,x)\quad\forall t\in(0,T_0)\:\: \text{for a.e.}\:\:x\in \mathbb R^N. $$
Then we conclude: let us suppose by contradiction it exists $t_0\in\mathbb R_+$ such that the couples of functions are not equal in a set of measure strictly positive, then by the previous argument, with a finite number of steps, we get the contradiction. 

\vspace{2mm}

\textbf{Step 4:} Finally, we consider $v(t,x)=d(t,x)+\eta$. Thanks to Lemma \ref{l.un.gl.}, the couple $(u,v)$ solves the Ericksen-Leslie system \eqref{EL.sys-proj.}. The existence of $p$ follows from Theorem \ref{t.lin.ex.RN}.
\end{proof}

\bibliographystyle{plain}
 \bibliography{EL_BG}

\end{document}